\numberwithin{equation}{section}
 \newtheorem{theorem}{Theorem}[section]
\newtheorem{lemma}[theorem]{Lemma}
\newtheorem{corollary}[theorem]{Corollary}
\newtheorem{remark}[theorem]{Remark}
\newtheorem{definition}[theorem]{Definition}
\newtheorem{proposition}[theorem]{Proposition}
\title{\bf Localizations and Essential Commutant of Toeplitz Algebra on Polydisk}
\author{\large  Jingming Zhu, Chaohua Zhang$^{\diamond}$
\footnote{
College of Data Science, Jiaxing University, Jiaxing, 314001, P.R.China.
E-mail: jingmingzhu@zjxu.edu.cn,\
$^{\diamond}$ College of Data Science, Jiaxing University, Jiaxing, 314001, P.R.China.
E-mail: nyxzch@163.com}}
\date{}
\begin{document}
\maketitle \abstract
%
%
%
%

Usually, the norm closure of a family of operators is not equal to the $C^*$-algebra generated by this family of operators. But, similar with the Bergman space $L^2_a(\textbf{B}, dv)$ of the unit ball in $\mathbb{C}^n$, we show that the norm closure of $\{T_f : f\in L^{\infty}(\mathbb{D}, dv)\}$ on Bergman space $L^2_a(\mathbb{D}, dv)$ of the ploydisk $\mathbb{D}$ in $\mathbb{C}^n$ actually coincides with the Toeplitz algebra $\mathcal{T}(\mathbb{D})$. A key ingredient in the proof is the class of operators $\mathcal{D}$ recently introduced by Yi Wang and Jingbo Xia. In fact, as a by-product, we simultaneously proved that $\mathcal{T}(\mathbb{D})$ also coincides with $\mathcal{D}$. Based on these results, we further proved that the essential commutant of Toeplitz algebra $\mathcal{T}(\mathbb{D})$ equals to $\{T_g: g\in VO_{bdd}\} + \mathcal{K}$ where $VO_{bdd}$ is the collection of functions of vanishing oscillation on polydisk $\mathbb{D}$ and $\mathcal{K}$ denotes the collection of compact operators on $L^2_a(\mathbb{D}, dv)$. On the other hand, we also prove that the essential commutant of $\{T_g: g\in VO_{bdd}\}$ is $\mathcal{T}(\mathbb{D})$, which implies that image of $\mathcal{T}(\mathbb{D})$ in the Calkin algebra satisfies the double commutant relation: $\pi(\mathcal{T}(\mathbb{D}))=\pi(\mathcal{T}(\mathbb{D}))''$.\\

\ {\textbf{Keywords:}} Double commutant; Essential commutant; Bergman space; Toeplitz algebra; Localization
\

\section{Introduction}
\

Suppose that $\mathcal{Z}$ is a collection of bounded operators on a Hilbert space $\mathcal{H}$. Then its essential commutant is defined to be
$$EssCom(\mathcal{Z}) = \{A \in B(\mathcal{H}):[A, T]\text{ is compact for every }T\in \mathcal{Z}\}.$$

The study of essential commutants began with the classic papers of Johnson-Parrott \cite{Parrott1972}, Voiculescu \cite{Voiculescu1976} and Popa \cite{S. Popa 1987}. Ever since, essential-commutant problems have become a mainstay of operator theory and operator algebras. Recently, base on the localization of the Toeplitz operator which would be described below, Jingbo Xia gave the description of the essential commutant of the Toeplitz algebra \cite{J. Xia 2017} and the double essential commutant of Toeplitz algebra \cite{J. Xia 2018} in the case of Bergman space on unit ball in $\mathbb{C}^n$. Later, Jingbo Xia and Yi Wang generalized the results in the case of strongly pseudo-convex domains with smooth boundary in $\mathbb{C}^n$ \cite{Y. Wang 2020}. But we should point out that unlike the s-weakly localized operators used in \cite{J. Xia 2017} and \cite{J. Xia 2018}, Jingbo Xia and Yi Wang used the collection of operators with the form of sum of tensor product of normalized reproducing kernel with bounded coefficients, which would be denoted by $\mathcal{D}$ below, as technical tool and proved that $\mathcal{D}$ is contained in Toeplitz algebra $\mathcal{T}$. Inspired by the results above, we will consider the case of polydisk $\mathbb{D}$ in $\mathbb{C}^n$.

Let
\[
\mathbb{D}=\{z=(z_1,\cdots,z_n)\in \mathbb{C}^n:|z_i|\leq 1\text{ for any }i=1,\cdots,n\}.
\]
For $n=1$, we denote $\mathbb{D}=D$. Let $dv$ denote the normalized Lebesgue Volume measure on $\mathbb{D}$, i.e.
$$dv(z)=dA_1(z_1)\cdots dA_n(z_n),$$
 in which $dA_i$ is the normalized area measure on $D$. Then the normalized measure defined by the formula
$$d\Lambda(z)=d\Lambda_1(z_1)\cdots d\Lambda_n(z_n)=\frac{dA_1(z_1)\cdots dA_n(z_n)}{(1-|z_1|^2)^2\cdots (1-|z_n|^2)^2}$$
is M\"obius invariant on $\mathbb{D}$. Recall that the Bergman space $L_a^2({\mathbb{D}})$ is the closure subspace
$$\{h\in L^2(\mathbb{D},dv): h\text{ is analytic on }\, \mathbb{D}\}.$$
of $L^2(\mathbb{D},dv)$. It is well known that $L_a^2({\mathbb{D}})$ is a reproducing kernel Hilbert space with reproducing kernel given by the formula
$$K_z(w)=\prod \limits_{i=1}^n\frac{1}{(1-w_i\overline{z_i})^2},\text{ for any } z=(z_1,\dots,z_n),\,w=(w_1,\dots,w_n)\in \mathbb{D}.$$
 Note that
 \begin{equation}\label{norm}
 \|K_z\|=\prod_{i=1}^n (1-|z_i|^2).
 \end{equation}
 As usual, let $k_z$ denote the normalized reproducing kernel for $L_a^2({\mathbb{D}})$, that is,
 $$k_z(w)=\prod \limits_{i=1}^n\frac{1-|z_i|^2}{(1-w_i\overline{z_i})^2},\text{ for any }  z=(z_1,\dots,z_n),\,w=(w_1,\dots,w_n)\in \mathbb{D}.$$

Let $P:L^2(\mathbb{D},dv)\rightarrow L_a^2({\mathbb{D}})$ be the Bergman projection. For each $f\in L^{\infty}(\mathbb{D},dv)$, the following formula gives rise to the Toeplitz operator
$$T_fh=P(fh),\text{ for any } h\in L_a^2({\mathbb{D}}).$$
Let $\mathcal{T}(\mathbb{D})$ be the $C^*$-algebra generated by $\{T_f:f\in L^{\infty}(\mathbb{D},dv)\},$ the collection of Toeplitz operators with bounded symbols. For any $z,w\in \mathbb{D}$, let $\varphi_z(w)$ be the M\"{o}bius transform in polydisk defined by
$$
\varphi_z(w)=(\varphi_{z_1}(w_1),\cdots, \varphi_{z_n}(w_n)),
$$
where $\varphi_{z_i}(w_i)=(z_i-w_i)(1-w_i\bar{z}_i)^{-1}\text{ for any }1\leq i\leq n$ is the M\"obius  transform of the unit disc $D$.

Recall that the Bergman metric on $D$ is given by the formula
$$\beta(u,v)=\frac12\log\frac{1+|\varphi_u(v)|}{1-|\varphi_u(v)|},\quad u,v\in D.$$
For ploydisk $\mathbb{D}$, we will use the following metric defined by
$$d(z,w)=\sum_{i=1}^n \beta(z_i,w_i),\text{ for any }  z=(z_1,\dots,z_n),\,w=(w_1,\dots,w_n)\in \mathbb{D},$$
where $\beta(z_i,w_i)$ is the Bergman metric on $D$ for each $i\in\{1,\cdots,n\}$.

For any $z=(z_1,\dots,z_n),\,w=(w_1,\dots,w_n)\in \mathbb{D}$,
\begin{equation}\label{indentity}
|\langle k_z,k_w \rangle|=\prod_{i=1}^n\frac{(1-|z_i|^2)(1-|w_i|^2)}{|1-w_i\bar{z}_i|^2}=\prod_{i=1}^n (1-|\varphi_{z_i}(w_i)|^2).
\end{equation}
A direct computation shows that for any $u,v \in D$,
$$\beta(u,v)=\frac12\log\frac{1}{1-|\varphi_u(v)|^2}+c\text{ for some }0<c<\log 2,$$
that is, $1-|\varphi_u(v)|^2=e^{2c}e^{-2\beta(u,v)}$.
Combining this with (\ref{indentity}), we have that for any $z,\,w\in \mathbb{D}$, there exists a $0<C<\infty$ such that
\begin{equation}\label{inequality}
|\langle k_z,k_w \rangle| = Ce^{-2d(z,w)}.
\end{equation}

\begin{itemize}
  \item For any $u\in D$ and $a>0$, we denote the $\beta$-ball by $B(z,a)=\{w\in D:\beta(u,w)<a\};$
  \item For any $z\in \mathbb{D}$ and $a>0$, we denote the $d$-ball by $D(z,a)=\{w\in\mathbb{D}:d(z,w)<a\};$
  \item For any $z\in \mathbb{D}$ and $a>0$, we denote the $d$-square by $$\widetilde{D}(z,a)=\{w\in\mathbb{D}:\beta(z_i,w_i)<a\text{ for any }i=1,\cdots,n\}.$$
\end{itemize}
By the the M\"{o}bius invariance of $\beta$ and $d\Lambda$, for any $a>0$, we have
\begin{equation}\label{invariance}
\Lambda(\widetilde{D}(z,a))=\prod_{i=1}^n \Lambda_i(B(z_i,a))=\prod_{i=1}^n \Lambda_i(\varphi_{z_i}(B(0,a)))=\prod_{i=1}^n \Lambda_i((B(0,a))=\Lambda(\widetilde{D}(0,a)).
\end{equation}
To state our results, we need the notion of `localized' operators of the following form on polydisk $\mathbb{D}$ in section 3, which was first introduced in \cite{Y. Wang 2020} for strongly pseudo-convex domains.

\begin{definition}\label{Definition 0}
(a) Let $\mathcal{D}_{0,0}$ denote the collection of operators of the form
$$\sum_{z\in\Gamma}c_zk_z\otimes k_z,$$
where $\Gamma$ is any separated set in $\mathbb{D}$, $\{c_z:z\in \Gamma\}$ is any bounded set of complex coefficients.

(b) Let $\mathcal{D}_0$ denote the collection of operators of the form
$$\sum_{z\in\Gamma}c_zk_z\otimes k_{\gamma(z)},$$
where $\Gamma$ is any separated set in $\mathbb{D}$, $\{c_z:z\in \Gamma\}$ is any bounded set of complex coefficients, and $\gamma:\Gamma\to \mathbb{D}$ is any map for which there is a $0\leq C<\infty$ such that for every $z\in\Gamma$,
$$d(z,\gamma(z))\leq C.$$

(c) Let $\mathcal{D}$ denote the closure of the linear span of $\mathcal{D}_0$ with respect to the operator norm.

(d) Let $C^*(\mathcal{D})$ denote the $C^*$-algebra generated by $\mathcal{D}$.
\end{definition}

As it turns out, similar techniques in \cite{Y. Wang 2020} would give our first main result for classic compactness criterion for $A\in C^*(\mathcal{D})$, in terms of its Berezin transform.\\

\noindent
{\bf Theorem A:}
For $A\in C^*(\mathcal{D})$,
\begin{equation}\label{G1}
  \lim_{z\to\partial \mathbb{D}}\langle Ak_z,k_z\rangle=0 \text{ if and only if }A\in\mathcal{K}.
\end{equation}

Although the compactness criterion in Theorem A is seemed to be different with the classic compactness criterion for $A\in \mathcal{T}$ in \cite{Y. Wang 2020} and \cite{Suarez 2007}, our second main result will fill the gap.\\

\noindent
{\bf Theorem B:}
For Toeplitz algebra $\mathcal{T}(\mathbb{D})$ on polydisk, we have $C^*(\mathcal{D})=\mathcal{T}(\mathbb{D})$.\\

To state our last two main results, we need the notion of vanishing oscillation, which was first introduced in \cite{Berger1988}, \cite{CoburnZhu1990} for functions on bounded symmetric domains with respect to the Bergman metric.

\begin{definition}\label{Definition 2}
 For any $f\in L^{\infty}(\mathbb{D},dv)$ and any $t\geq 0$, we denote
$$\mathrm{diff}_t(f)=\sup\{|f(z)-f(w)|:\text{ for any } z,w\in\mathbb{D}\text{ with }d(z,w)\le 1 \text{ and }d(0,z)\geq t\}.$$
We denote $\mathrm{diff}_0(f)$ by $\mathrm{diff}(f)$ for simplicity.
\end{definition}

\begin{definition}\label{Definition 3}
A continuous function $f$ on $\mathbb{D}$ is said to have vanishing oscillation if
$$\lim_{t\to\infty}\mathrm{diff}_t(f) = 0.$$
We denote by $VO_{bdd}(\mathbb{D})$ the collection of continuous functions having vanishing oscillation on $\mathbb{D}$ that are also bounded.
\end{definition}

Our last two main results are the two theorems below:\\

\noindent
{\bf Theorem C:}
For polydisk $\mathbb{D}\subset \mathbb{C}^n$, $EssCom\{T_g:g\in VO_{bdd}\}= \mathcal{T}(\mathbb{D}).$\\

\noindent
{\bf Theorem D:}
For polydisk $\mathbb{D}\subset \mathbb{C}^n$, $EssCom(\mathcal{T}(\mathbb{D}))=\{T_g:g\in VO_{bdd}\}+\mathcal{K}$.\\

Let $\mathcal{Q}$ denote the Calkin algebra $B(L^2_a(\mathbb{D}))/\mathcal{K}$. For any $\mathcal{X}\subset \mathcal{Q}$, write $\mathcal{X}'= \{Y\in \mathcal{Q}: [Y,X] = 0 \text{ for every }X \in \mathcal{X}\}$ for its commutant in $\mathcal{Q}$. Let $\pi: B(L^2_a(\mathbb{D}))\to \mathcal{Q}$ be the quotient homomorphism. Then $\pi(EssCom(\mathcal{Z})) = \{\pi(\mathcal{Z})\}'$ for every subset $\mathcal{Z}\subset B(L^2_a(\mathbb{D}))$. Obviously, a subset $\mathcal{A}$ of $\mathcal{Q}$ satisfies the double-commutant relation $\mathcal{A} = \mathcal{A}''$ if and only if $\mathcal{A} = \mathcal{G}'$ for some $\mathcal{G}\subset\mathcal{Q}$. Thus Theorem C and D implies that $\pi(\mathcal{T}(\mathbb{D}))$ satisfies the double-commutant relation in $\mathcal{Q}$.\

We conclude the Introduction by summarizing the main steps in the proofs.\

First of all, in Section 2 we presented some techniques to make the parameter for separation flexible. Section 2 also contains some basic norm estimation for the operators of tensor product of normalized reproducing kernels. Section 3 brings in another important ingredient for our analysis, the collection of operators $\mathcal{D}$. The main result of this section is that $C^*(\mathcal{D})=\mathcal{D}$, which would be used in the proof of Theorem A and B.\

Sections 4 is devoted to the radial-spherical decomposition, which supplies a method to decompose operators in the forms of sum of tensor product of normalized reproducing kernels in the Proof of Theorem A in next section. In section 5, we investigate the membership of some collections of operators and proved that $\mathcal{D}_0\subset \mathcal{T}^{(1)}$ and $\{T_{f}:f\in L^{\infty}(\mathbb{D})\}\subset C^*(\mathcal{D})$, which would be used in the proof of Theorem B.

In section 6, we introduce $LOC(A)$, the class of localized versions of $A$ for any bounded operator $A$ on $L_a^2(\mathbb{D})$, and proved that $LOC(A)\subset \mathcal{D}(A)$, in which $\mathcal{D}(A)$ is the class of localized versions of $A$ which is `localized' by Toeplitz operators. Section 7 is devoted to matters called $\varepsilon-\delta$ condition. In section 7, we proved that any operators in $EssCom\{T_g:g\in VO_{bdd}\}$ would satisfy  $\varepsilon-\delta$ condition. While in section 8, by the estimation from radial-spherical decomposition, we have that any bounded operator $X$ in $EssCom\{T_g:g\in VO_{bdd}\}$ belongs to the norm closure of $Span\{LOC(X)+\mathcal{K}\}$.\

With all the preparation above, we finally prove Theorem C and D in section 9. The gist of the proof is that, combined with the fact $LOC(A)\subset \mathcal{D}(A)$ and $\mathcal{D}_0\subset \mathcal{T}^{(1)}$ proved in previous sections, the $\varepsilon-\delta$ condition of the operators in $EssCom\{T_g:g\in VO_{bdd}\}$ would ensure its membership of $\mathcal{T}(\mathbb{D})$ in Remark 9.10. While some basic estimation $\|[M_f,P]\|\leq C \text{diff}(f)$ would easily drive to the other inclusion, which completes the proof of Theorem C. Then with the work in Section 9, this is now relatively easy to prove Theorem D. First of all, $EssCom(\mathcal{T}(\mathbb{D}))\supset\{T_g:g\in VO_{bdd}\}$ is already proven in the proof of Theorem C. Then for the opposite direction, the membership $A\in  EssCom(\mathcal{T}(\mathbb{D}))$ implies that the Berezin transform
$\widetilde{A}$ of $A$ belongs to $VO_{bdd}$. Since $A-T_{\widetilde{A}}\in \mathcal{T}(\mathbb{D})$ , the membership $\widetilde{A}\in VO_{bdd}$ and the work in Section 9 lead to an easy proof of the membership $A-T_{\widetilde{A}} \in \mathcal{K}$, which proves Theorem D.

\textbf{Acknowledgment:} We would like to thank Jingbo Xia for many inspiring discussions. We appreciate all members of Functional analysis Seminar in Jiaxing University very much for their helpful comments and suggestions.

\section{Separated Sets}
\

The technical details begin with

\begin{definition}\label{separatedsets}
For $a>0$, a subset $\Gamma$ of $\mathbb{D}$ is said to be $a-$separated if $D(u,a)\cap D(v,a)= \emptyset$ holds for all $u \neq v$ in $\Gamma$. And a subset $\Gamma$ of $\mathbb{D}$ is said to be separated, if $\Gamma$ is $a-$separated for some $a>0$.
\end{definition}

The following lemma is an analog of Lemma 2.2 in \cite{J. Xia 2015}, which shows that there is a bound for cardinality of bounded sets in $\Gamma$ and we can adjust the parameters for separation of the image of M\"{o}bius transform by taking finite partitions. Since the proof can be translated word by word from the proof of Lemma 2.2 in \cite{J. Xia 2015}, we omit the proof here.

\begin{lemma}\label{Lemma 0.1}Let $\Gamma$ be a separated set in $\mathbb{D}$.

(a) For each $0<R<\infty$, there is an $N_0=N_0(\Gamma, R)\in \mathbb{N}$ (the set of zero and all positive integers) such that $$\mathrm{card}\{w\in \Gamma:\beta(z_i,w_i)\le R,\,\text{ for any }i=1,\cdots,n\}\le N_0,\text{ for every } \,z\in \Gamma.$$

(b) For every $0<a<\infty$, there is a partition $\Gamma=\Gamma_1\sqcup\dots \sqcup \Gamma_m$ such that for every $1\le i\le m$, $\Gamma_i$ is $a$-separated.

(c) For every pair of $z\in \mathbb{D}$ and $\rho>0$, there exists a finite partition $\Gamma=\Gamma_1\sqcup\dots \sqcup \Gamma_m$ such that for every $j\in \{1,\dots,m\}$, any two distinct elements $w,\xi \in \Gamma_j$,
$$\beta(\varphi_{w_i}(z_i),\varphi_{{\xi}_i}(z_i))>\rho,\,\text{ for any }i=1,\cdots,n.$$
\end{lemma}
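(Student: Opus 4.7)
All three parts rest on packing arguments that combine the Möbius invariance recorded in (\ref{invariance}) with the coordinate bound $\beta(z_i,w_i)\le d(z,w)$, which forces $D(z,a)\subset\widetilde{D}(z,a)$ for every $z\in\mathbb{D}$ and $a>0$.

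For (a), I would let $a_0>0$ witness the separation of $\Gamma$, so that the balls $\{D(w,a_0/2):w\in\Gamma\}$ are pairwise disjoint. If $w\in\Gamma$ has $\beta(z_i,w_i)\le R$ for every $i$, then $w\in\widetilde{D}(z,R)$, and the coordinatewise triangle inequality places $D(w,a_0/2)$ inside $\widetilde{D}(z,R+a_0/2)$. By (\ref{invariance}), the square $\widetilde{D}(z,R+a_0/2)$ has finite $\Lambda$-mass depending only on $R$ and $a_0$, while each $D(w,a_0/2)$ has the fixed positive $\Lambda$-mass $\Lambda(D(0,a_0/2))$, obtained by applying the analogous Möbius change of variables in each coordinate. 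The ratio of the two provides the required upper bound $N_0$.

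For (b), I would fix $a>0$ and form the conflict graph on $\Gamma$ whose edges are the pairs $w\sim v$ with $d(w,v)<a$. Any such edge forces $\beta(w_i,v_i)<a$ for every $i$, so (a) with $R=a$ bounds the vertex degree by $N_0(\Gamma,a)-1$. A standard greedy coloring on the countable vertex set then partitions $\Gamma$ into at most $N_0$ classes, each of which is $a$-separated.

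For (c), the plan is to translate the argument of Lemma 2.2 in \cite{J. Xia 2015} to the polydisk setting. The skeleton: first apply (b) to pass to a very widely $d$-separated version of $\Gamma$; then, for each index $i$, refine the partition by the location in $D$ of the image point $\varphi_{w_i}(z_i)$ via a packing argument supported by (a); finally intersect the $n$ coordinate refinements to obtain the required partition. I expect this refinement to be the main obstacle, because the map $w_i\mapsto\varphi_{w_i}(z_i)$ is not holomorphic in $w_i$ (it mixes $w_i$ and $\bar w_i$) and so need not preserve $\beta$, which means one cannot simply transfer the $d$-separation of $\Gamma$ to a separation of the image set through an isometry; the refinement must instead be carried out directly on the images, and it is the per-coordinate Möbius invariance of $\Lambda_i$ together with the product structure of $\mathbb{D}$ that keeps the total number of classes finite.
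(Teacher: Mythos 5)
Your arguments for (a) and (b) are sound in substance. For (a), the packing argument is exactly the right one: the $d$-balls about the points of $\Gamma$ are disjoint, each has the fixed $\Lambda$-mass $\Lambda(D(0,\cdot))$ by coordinatewise M\"obius invariance, and all the relevant ones sit inside $\widetilde{D}(z,R+\cdot)$, so the ratio of masses bounds the cardinality. For (b), the greedy-coloring idea on the conflict graph is standard and works, but mind the threshold: since ``$a$-separated'' means $D(u,a)\cap D(v,a)=\emptyset$, equivalently $d(u,v)\ge 2a$, the edges should be pairs with $d(w,v)<2a$ (bounded in degree by part (a) with $R=2a$); coloring then yields classes with pairwise distance $\ge 2a$, i.e.\ $a$-separated. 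Your threshold $d<a$ only yields $(a/2)$-separated classes.

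For (c) you have not given a proof, and the refinement plan you outline would in fact fail. The obstruction you identify --- that $w_i\mapsto\varphi_{w_i}(z_i)$ is not a $\beta$-isometry --- is real, but ``refining directly on the images coordinate by coordinate'' cannot succeed: a $d$-separated set $\Gamma$ may contain infinitely many points sharing the same $i$-th coordinate (separation is enforced only by the sum $d=\sum_i\beta$, not per coordinate), and for any two such points $w,\xi$ one has $\varphi_{w_i}(z_i)=\varphi_{\xi_i}(z_i)$, so \emph{no} finite partition can force $\beta(\varphi_{w_i}(z_i),\varphi_{\xi_i}(z_i))>\rho$ for those pairs. (This also shows that the clause ``for any $i=1,\dots,n$'' in the stated (c) is too strong as written when $n\ge 2$; what the paper actually needs and uses --- e.g.\ to conclude that $\{\varphi_w(z):w\in\Gamma_j\}$ is a separated set --- is the aggregate estimate $d(\varphi_w(z),\varphi_\xi(z))>\rho$.) The idea you are missing, which reduces (c) to an immediate application of (b), is a one-line triangle inequality exploiting M\"obius invariance: since $\varphi_{w_i}$ is a $\beta$-isometry with $\varphi_{w_i}(0)=w_i$, we have $\beta(w_i,\varphi_{w_i}(z_i))=\beta(0,z_i)$, and likewise for $\xi$, so
\[
\beta(\varphi_{w_i}(z_i),\varphi_{\xi_i}(z_i))\;\ge\;\beta(w_i,\xi_i)-\beta(w_i,\varphi_{w_i}(z_i))-\beta(\xi_i,\varphi_{\xi_i}(z_i))\;=\;\beta(w_i,\xi_i)-2\beta(0,z_i),
\]
and summing over $i$ gives $d(\varphi_w(z),\varphi_\xi(z))\ge d(w,\xi)-2d(0,z)$. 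Applying (b) with separation parameter large enough that $d(w,\xi)>\rho+2d(0,z)$ in each class completes the proof. This is the content of Lemma 2.2(c) in Xia's paper, which the present paper invokes without reproducing.
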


The following norm estimation would be used frequently in this paper.

\begin{proposition}\label{Proposition A}
Let $\Gamma$ be a separated set in $\mathbb{D}$ and let $\{c_z:z\in \Gamma\}$ be a subset in $\mathbb{C}$ with $\mathop{\mathrm{sup}}\limits_{z\in \Gamma}|c_z|< \infty$. Then  there is a constant $0<C<\infty$ such that for any orthonormal set $\{e_z:z\in \Gamma\}$,
\[
\|\sum_{z\in\Gamma}c_zk_z\otimes e_z\|\leq C\sup_{z\in \Gamma}|c_z|.
\]
\end{proposition}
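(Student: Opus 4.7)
The plan is to bound $\|T\|$, where $T := \sum_{z\in \Gamma} c_z\, k_z \otimes e_z$, by computing $TT^*$ explicitly using the orthonormality of $\{e_z\}$ and then invoking a standard Bergman sampling inequality on the polydisk. Setting $M := \sup_z |c_z|$, the key algebraic observation is that $\langle e_z, e_w\rangle = \delta_{zw}$ collapses all cross terms in $TT^*$, giving
\[
TT^* \;=\; \sum_{w\in \Gamma} |c_w|^2\, k_w\otimes k_w,
\]
which one first verifies for finite truncations $T_F = \sum_{z\in F} c_z k_z\otimes e_z$ and then extends by a limit once the uniform bound below is in place. This cancellation is essential: a direct Schur-test on $T^*T$ would ask for $\sum_{w\in\Gamma}|\langle k_z,k_w\rangle| = C\sum_{w\in\Gamma}e^{-2d(z,w)}$ to be uniformly bounded, but since the cardinality of $\Gamma\cap\widetilde D(z,R)$ grows roughly like $e^{2nR}$, the series diverges for $n\ge 2$. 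Using the orthonormality of $\{e_z\}$ sidesteps this obstruction entirely.

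From the $TT^*$ identity,
\[
\langle TT^* f,f\rangle \;=\; \sum_{w\in\Gamma} |c_w|^2\,|\langle f,k_w\rangle|^2 \;\le\; M^2\sum_{w\in\Gamma}|\langle f,k_w\rangle|^2,
\]
and the reproducing property combined with the explicit formula for $k_w$ yields $|\langle f,k_w\rangle|^2 = |f(w)|^2\prod_{i=1}^n(1-|w_i|^2)^2$. Thus the whole proof reduces to the Bergman sampling inequality
\[
\sum_{w\in\Gamma}|f(w)|^2\prod_{i=1}^n(1-|w_i|^2)^2 \;\le\; C_\Gamma\,\|f\|^2 \qquad (f\in L_a^2(\mathbb{D})).
\]

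To prove this inequality I iterate subharmonicity one coordinate at a time. For each fixed $a>0$ and each $i$, the $\beta$-ball $B(w_i,a)\subset D$ has Euclidean area comparable to $(1-|w_i|^2)^2$; since $|f|^2$ is subharmonic in the variable $w_i$ over $B(w_i,a)$, iterating over $i=1,\dots,n$ gives
\[
|f(w)|^2\prod_{i=1}^n(1-|w_i|^2)^2 \;\le\; C_a\int_{\widetilde D(w,a)}|f|^2\,dv.
\]
Summing over $w\in\Gamma$ and exchanging the sum with the integral,
\[
\sum_{w\in\Gamma}|f(w)|^2\prod_i(1-|w_i|^2)^2 \;\le\; C_a\int_{\mathbb{D}}\Bigl(\sum_{w\in\Gamma}\mathbf{1}_{\widetilde D(w,a)}(v)\Bigr)|f(v)|^2\,dv,
\]
and the bracketed sum is uniformly bounded by $N_0(\Gamma,a)$ from Lemma \ref{Lemma 0.1}(a), since $v\in\widetilde D(w,a)$ iff $w\in\widetilde D(v,a)$ (adjoining $v$ to $\Gamma$ if necessary to handle arbitrary $v\in\mathbb{D}$, using invariance of $\Lambda$ via (\ref{invariance}) and a ball-packing count). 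Combining, $\|T\|^2 = \|TT^*\|\le C_\Gamma M^2$ as desired.

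The only real obstacle is the sampling inequality above (subharmonicity together with the bounded-overlap statement), and within that, arranging the bounded overlap at an arbitrary point $v\in\mathbb{D}$ rather than just at $v\in\Gamma$. The operator-theoretic manipulations are then immediate thanks to the orthonormality of $\{e_z\}$.
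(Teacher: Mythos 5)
Your proof is correct, and it takes a genuinely different route from the paper's. You compute the factorization $TT^*$, where the orthonormality of $\{e_z\}$ collapses every cross term and leaves the positive operator $\sum_{w}|c_w|^2\,k_w\otimes k_w$; the norm bound then reduces to a Bergman sampling inequality for separated sets, which you prove by iterating subharmonicity in each coordinate and using the bounded overlap of the sets $\widetilde D(w,a)$. The paper instead estimates the \emph{other} factorization, $T^*T=\sum_{z,w}\bar c_w c_z\langle k_z,k_w\rangle\,e_w\otimes e_z$, whose cross terms do not vanish, by applying Schur's test on $\ell^2(\Gamma)$ with the non-constant test function $h(z)=\prod_i(1-|z_i|^2)^{1/2}$, converting the discrete sum to an integral via the disjointness of the $\widetilde D(w,\tfrac1n)$, and closing with a Rudin--Forelli estimate. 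Your route trades Rudin--Forelli for the sampling inequality plus the bounded-overlap count at an arbitrary point $v\in\mathbb{D}$ (a small extension of Lemma \ref{Lemma 0.1}(a) that you correctly flag and that follows from the usual packing argument); both are self-contained given the paper's preliminaries.

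One remark in your write-up is mistaken, though, and worth correcting. You assert that ``a direct Schur-test on $T^*T$ would ask for $\sum_{w\in\Gamma}|\langle k_z,k_w\rangle|$ to be uniformly bounded, but\dots the series diverges,'' concluding that the cancellation in $TT^*$ is essential. That condition is only what Schur's test requires for the \emph{constant} test function $h\equiv 1$. The paper uses $h(z)=\prod_i(1-|z_i|^2)^{1/2}$, which turns the required row-sum bound into (coordinatewise) $\int_D\frac{(1-|z_i|^2)}{|1-\xi_i\bar z_i|^2(1-|\xi_i|^2)^{1/2}}\,dA_i(\xi_i)\lesssim(1-|z_i|^2)^{1/2}$ after converting the sum to an integral, and this \emph{does} hold by Rudin--Forelli. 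So the collapse of $TT^*$ is convenient rather than essential, and the orthonormality of $\{e_z\}$ is not a way to ``sidestep'' an obstruction: the paper also relies on it (to view $T^*T$ as a matrix on $\ell^2(\Gamma)$), just without getting the cross terms to vanish. (Incidentally, the constant-test-function series already diverges at $n=1$, not just for $n\ge 2$, so the asserted dichotomy is also off.)
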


\begin{proof}
By Lemma \ref{Lemma 0.1} (b), we can assume without loss of generality that $\Gamma$ is 1-separated set in $\mathbb{D}$.
Since $\Gamma$ is 1-separated, then, for any $z\neq w$ in $\Gamma$, we have
\begin{equation}\label{bujiaoA}
\widetilde{D}(z,\frac1n)\cap \widetilde{D}(w,\frac1n)=\emptyset.
\end{equation}
Write $F=\sum_{z\in\Gamma}c_zk_z\otimes e_z$. Then $F^*F=\sum_{z,w\in\Gamma}\overline{c}_w c_z\langle k_z,k_w\rangle e_w\otimes e_z$
. We will estimate the operator norm of $F^*F$ by using Schur's test with $h(z)=\prod \limits_{i=1}^n(1-|z_i|^2)^{\frac12}$. For any $z\in\Gamma$, we have

\begin{eqnarray*}
  \sum_{w\in\Gamma}|\overline{c}_wc_z|\cdot|\langle k_z,k_w\rangle| h(w)&=& \sum_{w\in\Gamma}|\overline{c}_wc_z|\cdot \prod_{i=1}^n\frac{(1-|z_i|^2)(1-|w_i|^2)^{\frac{3}{2}}}{|1-w_i\bar{z}_i|^2} \\
   &\le & (\mathop{\mathrm{sup}}\limits_{z\in \Gamma}|c_z|)^2\sum_{w\in\Gamma}\prod_{i=1}^n\frac{1}{\Lambda_i(B(w_i,\frac1n))}\int_{B(w_i,\frac1n))}\frac{(1-|z_i|^2)(1-|\xi_i|^2)^{\frac{3}{2}}}{|1-\xi_i\bar{z}_i|^2}d\Lambda_i(\xi_i) \\
 &= & (\mathop{\mathrm{sup}}\limits_{z\in \Gamma}|c_z|)^2 \sum _{w\in \Gamma}\frac{1}{\Lambda(\widetilde{D}(w,\frac1n))}\int_{\widetilde{D}(w,\frac1n)} |\langle k_z,k_{\xi}\rangle|\cdot \|K_{\xi}\|^{\frac12} d\Lambda(\xi)\\
 &\le & (\mathop{\mathrm{sup}}\limits_{z\in \Gamma}|c_z|)^2 \frac{1}{\Lambda(\widetilde{D}(0,\frac1n))}\int_{\mathbb{D}} |\langle k_z,k_{\xi}\rangle|\cdot \|K_{\xi}\|^{\frac12} d\Lambda(\xi)\\
   &\leq & (\mathop{\mathrm{sup}}\limits_{z\in \Gamma}|c_z|)^2 \frac{1}{\Lambda(\widetilde{D}(0,\frac1n))} \prod_{i=1}^n\int_{D}\frac{(1-|z_i|^2)}{|1-\xi_i\bar{z}_i|^2(1-|\xi_i|^2)^{\frac{1}{2}}}dA_i(\xi_i),
\end{eqnarray*}
in which the first inequality follows from the Lemma 4.11 in \cite{Zhu2007}, the second inequality follows from the M\"{o}bius invariance of d$\Lambda$ and (\ref{bujiaoA}) and the second equality follows as $
\|K_{\xi}\|=\prod_{i=1}^n (1-|{\xi}_i|^2).
$

Combined with Rudin-Forelli estimates, it gives that there exists $0<C_1<\infty$ such that
\[
\sum_{w\in\Gamma}|\overline{c}_wc_z|\cdot|\langle k_z,k_w\rangle|h(w)\leq (C_1\mathop{\mathrm{sup}}\limits_{z\in \Gamma}|c_z|)^2 \prod_{i=1}^n\frac{(1-|z_i|^2)}{(1-|z_i|^2)^{\frac{1}{2}}}=(C_1\mathop{\mathrm{sup}}\limits_{z\in \Gamma}|c_z|)^2h(z).
\]
Similarly, we have that there exists $0<C_2<\infty$ such that
\[
\sum_{w\in\Gamma}|\overline{c}_wc_z|\cdot |\langle k_z,k_w\rangle|h(z)\leq (C_2\mathop{\mathrm{sup}}\limits_{z\in \Gamma}|c_z|)^2 h(w).
\]
It follows from Schur's test that $$\|F\|^2=\|F^*F\|\le C_1C_2(\mathop{\mathrm{sup}}\limits_{z\in \Gamma}|c_z|)^2.$$
\end{proof}
Recall that each Toeplitz operator on Polydisk $\mathbb{D}$ has an `integral representation' in terms of normalized reproducing kernel $\{k_w:w\in\mathbb{D}\}$. For the orthogonal projection $P$ on Bergman space $L_a^2(\mathbb{D})$, we have
\begin{equation}\label{-1}
P=\int_{\mathbb{D}}k_z\otimes k_zd\Lambda(z),
\end{equation}
which is the identity operator on $L_a^2(\mathbb{D})$.

Let $\Delta$ be a subset of $\mathbb{D}$ which is maximal with respect to the property of being $1-$separated. Then we have
\[
\mathbb{D}\subset \bigcup_{z\in\Delta}D(z,2)\subset \bigcup_{z\in\Delta}\widetilde{D}(z,2)
\]
by the maximality.

For the fixed $\Delta$, applying Lemma \ref{Lemma 0.1} (a) and the property of $1-$separated set $\Delta$, there is a natural number $N_{\Delta}\in\mathbb{N}$ such that
\begin{equation}\label{B2}
\mathrm{card}\{z\in\Delta:z\in \widetilde{D}(w,4)\}\leq N_{\Delta} \text{ for any }w\in\Delta,
\end{equation}
which ensures $\mathrm{card}\{z\in\Delta:w\in \widetilde{D}(z,2)\}\leq N_{\Delta} \text{ for any }w\in\mathbb{D}$.

Now for fixed $\Delta$, we define
\begin{equation}\label{B3}
A_{\Delta}=\sum_{z\in\Delta}T_{\chi_{\widetilde{D}(z,2)}}=\sum_{z\in\Delta}\int_{\widetilde{D}(z,2)}k_w\otimes k_w d\Lambda(w) = \sum_{z\in\Delta}\int_{\widetilde{D}(0,2)}k_{\varphi_z(w)}\otimes k_{\varphi_z(w)} d\Lambda(w),
\end{equation}
where the third equality holds by the M\"{o}bius invariance of $\beta$ and $d\Lambda$.
By (\ref{B2}) and (\ref{B3}), we have $1\leq A_{\Delta}\leq N_{\Delta}$, then the self-adjoint Toeplitz operator $A_{\Delta}$ is invertible with $\|A_{\Delta}^{-1}\|\le 1$.

For each $z\in\mathbb{D}$, define
\[
(U_zf)(w)=f(\varphi_z(w))k_z(w)\text{ for every }f\in L_a^2(\mathbb{D}).
\]
Then $U_z$ is a unitary and $U_zU_z=1$.
 A direct computation shows that
\begin{equation}\label{yousuanzi} U_zk_w=g_{z,w}k_{\varphi_z(w)},\text{ for any } z=(z_1,\dots,z_n),\,w=(w_1,\dots,w_n)\in \mathbb{D},\end{equation}
where
$$g_{z,w}=\prod_{i=1}^n\frac{|1-z_i\overline{w_i}|^2}{(1-z_i\overline{w_i})^2}.$$
Then we can rewrite \begin{equation}\label{yousuanzi2}k_{\varphi_z(w)}\otimes k_{\varphi_z(w)}=(U_zk_w)\otimes (U_zk_w).\end{equation} And the operator $A_{\Delta}$ can be represented as
\begin{equation}\label{B45}
A_{\Delta}=\int_{\widetilde{D}(0,2)}G_wd\Lambda(w),
\end{equation}
in which
\begin{equation}\label{B4}
G_w=\sum_{z\in \Delta}k_{\varphi_z(w)}\otimes k_{\varphi_z(w)}=\sum_{z\in \Delta}(U_zk_w)\otimes (U_zk_w).
\end{equation}

\begin{lemma}\label{Lemma 1} Let $G$ be a separated set in $\mathbb{D}$. Then there is a $0<C<\infty$ such that for any subset $\{h_z:z\in G\}$ of the collection of bounded analytic functions on $\mathbb{D}$ and any orthonormal set $\{u_z:z\in G\}$,
\[
\|\sum_{z\in G}(U_zh_z)\otimes u_z\|\leq C\mathop{\mathrm{sup}} \limits_{z\in G}\|h_z\|_{\infty}.
\]
\end{lemma}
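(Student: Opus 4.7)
The plan is to mimic the Schur-test argument of Proposition \ref{Proposition A}, using the pointwise domination
\[
|(U_zh_z)(\xi)|=|h_z(\varphi_z(\xi))|\,|k_z(\xi)|\le \|h_z\|_\infty\,|k_z(\xi)|,\qquad \xi\in\mathbb D,
\]
as a substitute for the exact identity $\langle k_z,k_w\rangle$ that drove the previous proof. Set $F=\sum_{z\in G}(U_zh_z)\otimes u_z$ and $C_0=\sup_{z\in G}\|h_z\|_\infty$. By Lemma \ref{Lemma 0.1}(b) I may assume $G$ is $1$-separated. Integrating the pointwise bound gives
\[
|\langle U_zh_z,U_wh_w\rangle|\le C_0^{\,2}\int_{\mathbb D}|k_z(\xi)|\,|k_w(\xi)|\,dv(\xi),
\]
so the matrix entries of $F^*F$ with respect to the orthonormal family $\{u_z\}$ are controlled by this absolute-value analogue of $|\langle k_z,k_w\rangle|$.

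Next I would run Schur's test on $F^*F$ with the weight $h(z)=\prod_{i=1}^{n}(1-|z_i|^2)^{1/2}$ that worked in Proposition \ref{Proposition A}. Swapping sum and integral in $\sum_{w\in G}|\langle U_zh_z,U_wh_w\rangle|\,h(w)$, the inner sum $\sum_{w\in G}|k_w(\xi)|\,h(w)=\sum_{w\in G}\prod_{i=1}^{n}(1-|w_i|^2)^{3/2}/|1-\xi_i\bar w_i|^{2}$ is exactly the expression already bounded in the proof of Proposition \ref{Proposition A}: replace each summand by its sub-mean-value average over $\widetilde D(w,1/n)$ via Lemma 4.11 of \cite{Zhu2007} applied coordinatewise, combine using the M\"obius invariance of $d\Lambda$ and the disjointness of these squares to pass to an integral over all of $\mathbb D$, and finish with the coordinatewise Rudin--Forelli estimate, obtaining a bound of the form $C/h(\xi)$. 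Substituting this and applying Rudin--Forelli once more to $\int_{\mathbb D}|k_z(\xi)|/h(\xi)\,dv(\xi)$ yields $\sum_{w\in G}|\langle U_zh_z,U_wh_w\rangle|\,h(w)\le C'C_0^{\,2}\,h(z)$; the column-sum estimate is symmetric, so Schur's test gives $\|F\|^2=\|F^*F\|\le C'C_0^{\,2}$, i.e., $\|F\|\le \sqrt{C'}\,C_0$.

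The main obstacle is the step in which I pass from the complex coupling $\langle k_z,k_w\rangle$ to the absolute-value integral $\int|k_z||k_w|\,dv$: one has to verify that this crude substitution does not destroy the decay in $d(z,w)$ needed for Schur's test to close. It does not, because the weight $h(z)$ and the exponent $3/2$ appearing in the inner sum are precisely those that allow the Rudin--Forelli estimate to convert $(1-|w_i|^2)^{-1/2}$ losses in each coordinate back into $(1-|z_i|^2)^{1/2}$ gains after integration. A minor geometric point that should be checked early is that $1$-separation of $G$ in the metric $d$ still makes the squares $\widetilde D(w,1/n)$ pairwise disjoint, which follows from $d(u,v)\ge 2$ for distinct $u,v\in G$ together with the triangle inequality $d(u,\eta)+d(\eta,v)<2$ forced by any common point $\eta\in \widetilde D(u,1/n)\cap\widetilde D(v,1/n)$.
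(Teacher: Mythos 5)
Your argument is correct, and it is a genuinely different and more streamlined route than the paper's. The paper's proof of this lemma is considerably more indirect: it invokes the invertible operator $A_\Delta=\int_{\widetilde{D}(0,2)}G_w\,d\Lambda(w)$ from \eqref{B45} to reduce $\|T\|$ to $\sup_w\|G_wT\|$, factors $G_w=H_wH_w^*$ with $H_w=\sum_{a\in\Delta}(U_ak_w)\otimes e_a$, bounds $\|H_w\|$ via Proposition~\ref{Proposition A}, and then runs Schur's test on the matrix $\bigl(\langle U_zh_z,U_ak_w\rangle\bigr)_{a\in\Delta,\,z\in G}$; the key technical input there is the averaging estimate $|k_{\bar w}(\overline{\varphi_a(z)})|\le C|\langle k_a,k_z\rangle|^{-1/3}$, which converts the mixed pairing into a power of $|\langle k_a,k_z\rangle|$. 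You instead Schur-test $F^*F$ directly on $\{u_z\}_{z\in G}$, dominate the entries $\langle U_wh_w,U_zh_z\rangle$ by $C_0^2\int|k_z||k_w|\,dv$, and close the estimate by the same sub-mean-value / disjointness / M\"obius-invariance / Rudin--Forelli chain used in Proposition~\ref{Proposition A}. This avoids $A_\Delta$, $G_w$ and $H_w$ entirely and uses only the ideas already in the Proposition~\ref{Proposition A} proof, at the cost of an extra (benign) Tonelli swap; what the paper's detour buys is that $G_w$ and $H_w$ are reused immediately afterward in Proposition~\ref{Proposition B}, so those authors amortize the setup.

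Two small points worth tightening if you write this out in full. First, the phrase ``exactly the expression already bounded in the proof of Proposition~\ref{Proposition A}'' is a slight overstatement: there the summand carries an extra factor $\prod_i(1-|z_i|^2)$, which is absent from your inner sum $\sum_{w}\prod_i(1-|w_i|^2)^{3/2}/|1-\xi_i\bar w_i|^2$. The technique (coordinatewise sub-mean-value over $\widetilde D(w,1/n)$, disjointness, M\"obius invariance of $d\Lambda$, then Rudin--Forelli with exponent $s=-1/2$, $t=1/2$) is identical, but the output is $C/h(\xi)$ rather than $Ch(z)$, which is exactly what your subsequent second application of Rudin--Forelli to $\int_{\mathbb D}|k_z|/h\,dv\le C\,h(z)$ requires. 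Second, your observation that $1$-separation forces $\widetilde D(u,1/n)\cap\widetilde D(v,1/n)=\emptyset$ (via $d(u,v)\ge 2$ and the triangle inequality) is precisely the fact stated without proof as \eqref{bujiaoA} in the paper, so you are right to flag it; it is needed to pass from the lattice sum to a single integral over $\mathbb D$.
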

\begin{proof}
By Lemma \ref{Lemma 0.1} (b), we can assume that $\Gamma$ is 1-separated set in $\mathbb{D}$ and $\mathop{\mathrm{sup}} \limits_{z\in G}\|h_z\|_{\infty}<\infty$ in the statement.
Write $T=\sum_{z\in G}(U_zh_z)\otimes u_z$. Since the operator $A_{\Delta}$ in (\ref{B3}) is invertible with $\|A_{\Delta}^{-1}\|\le 1$, we have that $\|T\|\le \|A_{\Delta}^{-1}A_{\Delta}T\|\le \|A_{\Delta}T\|$. Combining this with (\ref{B45}), we have $$\|T\|\le \Lambda(\widetilde{D}(0,2))\mathop{\mathrm{sup}} \limits_{w\in \widetilde{D}(0,2)}\|G_wT\|,$$
where $G_w$ is the operator that appear in (\ref{B4}).
So it suffices to estimate the norm of $G_wT$ for $w\in \widetilde{D}(0,2).$

Let $\{e_z:z\in \Delta\}$ be the orthonormal set. Then the operator $G_w$ can be written as $G_w=H_wH_w^*$, where
\begin{equation}\label{B5}
  H_w=\sum_{z\in\Delta}(U_zk_w)\otimes e_z=\sum_{z\in\Delta}(g_{z,w}k_{\varphi_z(w)})\otimes e_z.
\end{equation}
By Proposition \ref{Proposition A} and Lemma \ref{Lemma 0.1} (c), there is a $C_1>0$ such that $\|H_w\|\leq C_1$. It suffices to prove that there exists $C>0$ such that
\[
\|H_w^*T\|\leq C\mathop{\mathrm{sup}} \limits_{z\in G}\|h_z\|_{\infty},\text{ for any }w\in \widetilde{D}(0,2).
\]

We will estimate the operator norm of $H_w^*T=\sum_{a\in\Delta}\sum_{z\in G}\langle U_zh_z,U_ak_w\rangle e_a\otimes u_z$
by Schur's test for any $w\in\widetilde{D}(0,2)$. But firstly, we need to estimate the coefficients $\{\langle U_zh_z,U_ak_w\rangle\}_{a\in\Delta,z\in G}$.

Since $U_zh_z=h_z\circ\varphi_z\cdot k_z$ and $k_z\circ \varphi_a=g_{a,z} \frac{k_{\varphi_a(z)}}{k_a}$, then for any $z\in G$, $a\in \Delta$ and $w\in \widetilde{D}(0,2)$,
\begin{multline}\label{UzhzUakw}
\langle U_zh_z,U_ak_w\rangle =\langle U_a(U_zh_z),k_w\rangle =\langle U_a(h_z\circ \varphi_z\cdot k_z),k_w\rangle \\
=\langle (h_z\circ \varphi_z\circ \varphi_a \cdot k_z\circ \varphi_a)\cdot k_a ,k_w\rangle =g_{a,z}\cdot d_{z,a}(w)\langle   k_{\varphi_a(z)},k_w\rangle
\end{multline}
where $g_{a,z}=\prod_{i=1}^n\frac{|1-a_i\overline{z_i}|^2}{(1-a_i\overline{z_i})^2}$, $d_{z,a}(w)=h_z\circ \varphi_z\circ \varphi_a(w)$.

Recall that
$
\prod_{i=1}^n (1-|\varphi_{a_i}(z_i)|^2)=|\langle k_a, k_z \rangle|.
$
A direct computation shows that
\begin{equation}\label{kazw}
\langle k_{\varphi_a(z)},k_w\rangle= \prod_{i=1}^n\frac{(1-|\varphi_{a_i}(z_i)|^2)(1-|w_i|^2)}{(1-\overline{\varphi_{a_i}(z_i)}w_i)^2}=|\langle k_a, k_z \rangle|\cdot k_{\bar{w}}(\overline{\varphi_a(z)}).
\end{equation}
By Rudin-Forelli estimates and (\ref{norm}), we have that there exists a $0<C_2<\infty$ such that
\begin{eqnarray*}
\|K_{w}\|^{\frac{2}{3}}|k_{\bar{w}}(\overline{\varphi_a(z)})|&=& \prod_{i=1}^n\frac{(1-|w_i|^2)^{\frac53}}{|1-\overline{\varphi_{a_i}(z_i)}w_i|^2} \\
&\leq& \prod_{i=1}^n \frac{1}{\Lambda_i(B(0,2))}\int_{B(0,2)}\frac{|1-|\xi_i|^2|^{\frac{5}{3}}}{|1-\overline{\varphi_{a_i}(z_i)}\xi_i|^2}\frac{dA_i(\xi_i)}{(1-|\xi_i|^2)^2}\\
&\leq& C_2\prod_{i=1}^n|1-|\varphi_{a_i}(z_i)|^2|^{-\frac{1}{3}}
=C_2|\langle k_a,k_z\rangle |^{-\frac{1}{3}},
\end{eqnarray*}
where the first inequality follows from the Lemma 4.11 in \cite{Zhu2007} and integration in polar coordinates.
For any $w\in \widetilde{D}(0,2)$, it is easy to check that $\|K_{w}\|^{-1}\le e^{4n}$ by the definition of Bergman metric. Write $C_3=C_2\cdot e^{\frac83 n}.$
Then we have $|k_{\bar{w}}(\overline{\varphi_a(z)})|\leq C_3|\langle k_a,k_z\rangle |^{-\frac{1}{3}}$.
Combining this with (\ref{UzhzUakw}) and (\ref{kazw}), we have
\begin{eqnarray*}
|\langle U_zh_z,U_ak_w\rangle|&\leq&  \mathop{\mathrm{sup}} \limits_{z\in G}\|h_z\|_{\infty} \cdot |\langle k_{\varphi_a(z)},k_w\rangle|\\
&\le& \mathop{\mathrm{sup}} \limits_{z\in G}\|h_z\|_{\infty} \cdot|\langle k_a, k_z \rangle |\cdot |k_{\bar{w}}(\overline{\varphi_a(z)})| \\
 &\leq& C_3 \mathop{\mathrm{sup}} \limits_{z\in G}\|h_z\|_{\infty}\cdot |\langle k_a,k_z\rangle |^{\frac{2}{3}}.
\end{eqnarray*}
Therefore, to estimate the norm of $H_w^*T$, we only need to estimate the norm of the opertor $\sum_{a\in\Delta}\sum_{z\in G}\mathop{\mathrm{sup}} \limits_{z\in G}\|h_z\|_{\infty}(\langle k_a,k_z\rangle )^{\frac{2}{3}}e_a\otimes u_z$.
By an estimate similar to $\|F^*F\|$ in the proof of Proposition \ref{Proposition A} with the testing function $h(z)=\prod \limits_{i=1}^n(1-|z_i|^2)^{\frac12}$, there exists $C_4>0$, such that for any $w\in \widetilde{D}(0,2)$,
 $\|F_w\|\le C_4\cdot \mathop{\mathrm{sup}} \limits_{z\in G}\|h_z\|_{\infty}.$
Then we have that for any $w\in \widetilde{D}(0,2)$, $\|H_w^*T\| \leq C_3C_4\mathop{\mathrm{sup}} \limits_{z\in G}\|h_z\|_{\infty}$, which completes the proof.
\end{proof}

\section{The Operators in $C^*(\mathcal{D})$}
\

For the operators defined in Definition \ref{Definition 0}, we have

\begin{lemma}\label{Corollary 1}
If the operator $T$ belongs $\mathcal{D}_{0}$, then $T$ is bounded on the Bergman space $L_a^2(\mathbb{D})$.
\end{lemma}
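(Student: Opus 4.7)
The plan is to factor any $T\in\mathcal{D}_0$ as a product $AB^*$ and bound each factor directly by Proposition \ref{Proposition A}. Given $T=\sum_{z\in\Gamma}c_zk_z\otimes k_{\gamma(z)}$, I introduce an auxiliary orthonormal set $\{e_z:z\in\Gamma\}$ and set
\[
A=\sum_{z\in\Gamma}c_zk_z\otimes e_z,\qquad B=\sum_{z\in\Gamma}k_{\gamma(z)}\otimes e_z.
\]
A direct computation using $(u\otimes v)(x)=\langle x,v\rangle u$ gives $T=AB^*$, so the problem reduces to bounding $\|A\|$ and $\|B\|$ separately.

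The norm of $A$ is handled by Proposition \ref{Proposition A} at once, yielding $\|A\|\le C_1\sup_z|c_z|$. The real obstacle is $\|B\|$: Proposition \ref{Proposition A} presumes that the kernels appearing in the operator are indexed by a separated set, whereas a priori $\gamma(\Gamma)$ need not be separated and $\gamma$ need not even be injective.

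To get around this I would use Lemma \ref{Lemma 0.1}(b) to partition $\Gamma=\Gamma_1\sqcup\cdots\sqcup\Gamma_m$ into finitely many $a$-separated pieces, choosing $a$ strictly greater than twice the uniform bound $C_0$ on $d(z,\gamma(z))$. The triangle inequality then gives $d(\gamma(z),\gamma(w))\ge d(z,w)-2C_0>0$ for distinct $z,w\in\Gamma_j$, so $\gamma|_{\Gamma_j}$ is injective and $\gamma(\Gamma_j)$ is itself a separated set. Re-indexing $B_j=\sum_{z\in\Gamma_j}k_{\gamma(z)}\otimes e_z$ by $y=\gamma(z)$ and applying Proposition \ref{Proposition A} with coefficients identically $1$ supplies a uniform bound $\|B_j\|\le C_2$. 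Because the $e$-components attached to different $\Gamma_j$ lie in mutually orthogonal subspaces, $B_jB_k^*=0$ whenever $j\ne k$, and therefore $BB^*=\sum_jB_jB_j^*$, giving $\|B\|^2\le mC_2^2$. Combining the two estimates yields $\|T\|\le \|A\|\,\|B\|\le C_1C_2\sqrt{m}\sup_z|c_z|<\infty$, which is the required bound.
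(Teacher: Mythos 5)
Your argument is correct and follows essentially the same route as the paper: factor $T$ through an auxiliary orthonormal set as $AB^{*}$, control $A$ directly by Proposition \ref{Proposition A}, then use Lemma \ref{Lemma 0.1}(b) to split $\Gamma$ into finitely many pieces on which $\gamma$ has separated image, and bound each piece again by Proposition \ref{Proposition A}. The only cosmetic differences are that the paper writes the second factor as $B=\sum e_u\otimes k_{\gamma(u)}$ (your $B^{*}$) and bounds $\|B\|\le\sum_i\|B_i\|$ by the plain triangle inequality, whereas you exploit the mutual orthogonality of the $e$-blocks to get $BB^{*}=\sum_j B_jB_j^{*}$ and hence the marginally sharper $\|B\|\le\sqrt{m}\,C_2$; the substance is identical.
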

\begin{proof}
For any operator $T\in \mathcal{D}_0$, we rewrite
$$T=\sum_{u\in \Gamma}c_uk_u\otimes k_{\gamma(u)},$$
where $\Gamma$ is a $\delta$-separated set in $\mathbb{D}$, $\mathrm{sup}_{u\in \Gamma}|c_u|<\infty$ and $\gamma:\Gamma\rightarrow \mathbb{D}$ is any map for which there exists a $0\le C < \infty$ such that $d(z,\gamma(z))\le C$ for every $z\in \Gamma$.

We can decompose
$$T=\sum_{u\in \Gamma}c_uk_u\otimes k_{\gamma(u)}=AB,$$
in which $A=\sum_{u\in \Gamma}c_uk_u\otimes e_u$, $B=\sum_{u\in \Gamma} e_u\otimes k_{\gamma(u)}$ and $\{e_u:u\in \Gamma\}$ is an orthonormal set.

By Lemma \ref{Lemma 0.1} (b), we can decompose
$$\Gamma=\Gamma_1\sqcup\dots \sqcup \Gamma_m,$$
such that $\Gamma_i$ is $3C$-separated for every $i\in\{1,\cdots,m\}$. Then the set $\{\gamma(z):z\in \Gamma_i\}$ is $C$-separated for every $i\in\{1,\cdots,m\}$. Therefore,
$$B=\sum_{i=1}^m B_i,$$
in which $B_i=\sum_{u\in \Gamma_i} e_u\otimes k_{\gamma(u)}$ for every $i\in\{1,\cdots,m\}$. Combining $\|B\|\le \sum_{i=1}^m \|B_i\|$ with Proposition \ref{Proposition A}, we have that $A$ and  $B$  both are bounded operators and hence $T$ is also a bounded operator.
\end{proof}

In fact, if the coefficients $|c_u|$ tends to 0 as $u$ goes to $\partial \mathbb{D}$, then it is obvious that

\begin{lemma}\label{corollary 2} For any operator $T\in \mathcal{D}_0$, we can assume that the operator $$T=\sum_{u\in \Gamma}c_uk_u\otimes k_{\gamma(u)},$$
where $\Gamma$ is a $\delta$-separated set in $\mathbb{D}$, $\mathrm{sup}_{u\in \Gamma}|c_u|<\infty$ and $\gamma:\Gamma\rightarrow \mathbb{D}$ is any map for which there exists a $0\le C < \infty$ such that $d(z,\gamma(z))\le C$ for every $z\in \Gamma$.  If $|c_u|$ tends to 0 as $u$ goes to $\partial \mathbb{D}$, then $T$ is a compact operator on the Bergman space $L_a^2(\mathbb{D})$.
\end{lemma}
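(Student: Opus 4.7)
The plan is to approximate $T$ in operator norm by finite-rank operators, exploiting the fact that a separated set $\Gamma$ meets every compact subset of $\mathbb{D}$ in only finitely many points (since pairwise disjoint $d$-balls of fixed radius $\delta/2$ can fit only finitely many times inside any compact set). Combined with the hypothesis that $|c_u|\to 0$ as $u\to \partial\mathbb{D}$, this gives us a natural truncation procedure indexed by the size of the coefficients.

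Concretely, I would fix $\varepsilon>0$ and split $\Gamma = \Gamma_1^\varepsilon \sqcup \Gamma_2^\varepsilon$, where
\[
\Gamma_1^\varepsilon = \{u\in\Gamma : |c_u|\geq \varepsilon\},\qquad \Gamma_2^\varepsilon = \{u\in\Gamma : |c_u|<\varepsilon\}.
\]
The hypothesis ``$|c_u|\to 0$ as $u\to\partial\mathbb{D}$'' means that $\Gamma_1^\varepsilon$ is contained in some compact subset of $\mathbb{D}$, so by separatedness $\Gamma_1^\varepsilon$ is a finite set. Correspondingly write $T = T_1^\varepsilon + T_2^\varepsilon$ with
\[
T_j^\varepsilon = \sum_{u\in\Gamma_j^\varepsilon} c_u k_u\otimes k_{\gamma(u)},\qquad j=1,2.
\]
Then $T_1^\varepsilon$ is obviously finite rank.

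For $T_2^\varepsilon$, I would mimic the proof of Lemma \ref{Corollary 1}: factor $T_2^\varepsilon = A_2^\varepsilon B_2^\varepsilon$ with $A_2^\varepsilon = \sum_{u\in\Gamma_2^\varepsilon} c_u k_u\otimes e_u$ and $B_2^\varepsilon = \sum_{u\in\Gamma_2^\varepsilon} e_u\otimes k_{\gamma(u)}$, then partition $\Gamma_2^\varepsilon$ into finitely many $3C$-separated pieces via Lemma \ref{Lemma 0.1}(b), and apply Proposition \ref{Proposition A} to each piece. Since $\sup_{u\in\Gamma_2^\varepsilon}|c_u|<\varepsilon$ and Proposition \ref{Proposition A} yields a bound proportional to this supremum (with a constant depending only on the separation parameter of $\Gamma$, which is inherited by $\Gamma_2^\varepsilon$), we obtain $\|A_2^\varepsilon\|\leq C'\varepsilon$, while $\|B_2^\varepsilon\|$ is bounded independently of $\varepsilon$ by the same factorization argument. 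Hence $\|T_2^\varepsilon\|\leq C''\varepsilon$ for some $C''$ independent of $\varepsilon$.

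Letting $\varepsilon\to 0$ exhibits $T$ as a norm limit of finite-rank operators, so $T\in\mathcal{K}$. The only conceptual point requiring care is the uniformity of the constant coming from Proposition \ref{Proposition A}; since that constant depends only on the separation parameter $\delta$ (the Schur-test bound is carried out in terms of $\Lambda(\widetilde{D}(0,1/n))$ and Rudin--Forelli estimates), restricting to the subset $\Gamma_2^\varepsilon\subset\Gamma$ changes nothing. I do not expect a serious obstacle here — the argument is essentially a standard ``finite plus small'' decomposition enabled by the uniform norm estimate of Proposition \ref{Proposition A}.
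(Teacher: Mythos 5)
Your proof is correct, and since the paper offers no proof for this lemma (it is introduced as ``obvious''), your argument is exactly the standard ``finite rank plus small in norm'' decomposition that the authors presumably have in mind. The key observations are all in place: $\Gamma_1^\varepsilon$ is finite because a separated set meets any compact subset of $\mathbb{D}$ in finitely many points and the coefficient-decay hypothesis confines $\Gamma_1^\varepsilon$ to a compact set; $\|A_2^\varepsilon\|\le C\varepsilon$ follows directly from Proposition \ref{Proposition A}; and $\|B_2^\varepsilon\|$ is controlled by partitioning $\Gamma_2^\varepsilon$ into $3C$-separated pieces so that the images under $\gamma$ are $C$-separated, with the number of pieces bounded independently of $\varepsilon$ since $\Gamma_2^\varepsilon\subset\Gamma$ inherits any partition of $\Gamma$. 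You are also right that the constant in Proposition \ref{Proposition A} depends only on the separation parameter of the ambient set and hence is uniform in $\varepsilon$. Nothing is missing.
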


\begin{proposition}\label{Proposition B}
Span $(\mathcal{D}_{0,0})$ contains an invertible operator on $L_a^2(\mathbb{D})$.
\end{proposition}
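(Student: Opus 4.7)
The plan is to exhibit an invertible element of $\mathrm{Span}(\mathcal{D}_{0,0})$ by discretising the already-invertible operator $A_{\Delta}$ from (\ref{B3}) via a Riemann-sum approximation. Recalling (\ref{B45}), one has
\[
A_{\Delta}=\int_{\widetilde{D}(0,2)} G_w\, d\Lambda(w),\qquad G_w=\sum_{z\in\Delta}k_{\varphi_z(w)}\otimes k_{\varphi_z(w)},
\]
with $\widetilde{D}(0,2)$ of finite $\Lambda$-measure and $A_{\Delta}\ge 1$. If $w\mapsto G_w$ turns out to be continuous on $\widetilde{D}(0,2)$ in the operator norm, then for any Borel partition $\widetilde{D}(0,2)=\bigsqcup_{i=1}^N E_i$ with representatives $w_i\in E_i$, the Riemann sum
\[
R \;=\; \sum_{i=1}^N \Lambda(E_i)\,G_{w_i}
\]
approximates $A_\Delta$ with error bounded by $\int_{\widetilde{D}(0,2)}\|G_w-G_{w_i}\|\,d\Lambda(w)$, which tends to zero as the mesh shrinks. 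Since $\|A_\Delta^{-1}\|\le 1$, a standard Neumann-series argument then forces $R$ to be invertible once $\|R-A_\Delta\|<1$.

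Next I would verify that each such $R$ lives in $\mathrm{Span}(\mathcal{D}_{0,0})$. For each representative $w_i$, Lemma \ref{Lemma 0.1}(c), applied with $\Gamma=\Delta$, the fixed point $w_i$, and any $\rho>0$, partitions $\Delta=\Delta_{i,1}\sqcup\cdots\sqcup\Delta_{i,m_i}$ so that for distinct $z,z'$ in the same piece $\Delta_{i,j}$ one has $\beta(\varphi_{z_\ell}((w_i)_\ell),\varphi_{z'_\ell}((w_i)_\ell))>\rho$ for every coordinate $\ell$, whence $\{\varphi_z(w_i):z\in\Delta_{i,j}\}$ is $d$-separated in $\mathbb{D}$. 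Each restricted sum $\sum_{z\in\Delta_{i,j}}k_{\varphi_z(w_i)}\otimes k_{\varphi_z(w_i)}$ is therefore an element of $\mathcal{D}_{0,0}$, so $G_{w_i}$, and hence the finite combination $R$, lies in $\mathrm{Span}(\mathcal{D}_{0,0})$.

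The main obstacle is proving the norm continuity of $w\mapsto G_w$ on $\widetilde{D}(0,2)$ despite the infinite sum over $\Delta$. The factorisation $G_w=H_wH_w^*$ from (\ref{B4})--(\ref{B5}), together with the uniform bound $\|H_w\|\le C_1$ already established in the proof of Lemma \ref{Lemma 1}, reduces this to showing that $w\mapsto H_w$ is norm-continuous. Writing
\[
H_w-H_{w'}=\sum_{z\in\Delta}U_z(k_w-k_{w'})\otimes e_z
\]
and applying Lemma \ref{Lemma 1} with the constant-in-$z$ choice $h_z\equiv k_w-k_{w'}$, which is analytic and bounded on $\mathbb{D}$ for $w,w'\in\widetilde{D}(0,2)$, yields
\[
\|H_w-H_{w'}\|\le C\,\|k_w-k_{w'}\|_{\infty}.
\]
Since $\overline{\widetilde{D}(0,2)}$ is a compact subset of the open polydisk, the denominators $(1-z_i\bar{w}_i)^2$ appearing in $k_w(z)$ stay uniformly bounded away from zero for $z\in\mathbb{D}$ and $w\in\overline{\widetilde{D}(0,2)}$, so $(z,w)\mapsto k_w(z)$ is jointly continuous there; this forces $w\mapsto k_w$ to be uniformly continuous into $L^{\infty}(\mathbb{D})$. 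Consequently $w\mapsto H_w$, and therefore $w\mapsto G_w$, is uniformly norm-continuous on $\widetilde{D}(0,2)$, and a sufficiently fine Riemann sum produces the desired invertible element of $\mathrm{Span}(\mathcal{D}_{0,0})$.
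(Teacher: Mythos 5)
Your proposal is correct and follows the same route as the paper's own proof: both discretise $A_\Delta=\int_{\widetilde{D}(0,2)}G_w\,d\Lambda(w)$ by a Riemann sum, establish norm-continuity of $w\mapsto G_w$ via the factorisation $G_w=H_wH_w^*$ together with Lemma \ref{Lemma 1}, and invoke Lemma \ref{Lemma 0.1}(c) to place $G_w$ (hence the Riemann sum) in $\mathrm{Span}(\mathcal{D}_{0,0})$. Your write-up is somewhat more careful than the paper's (spelling out the partition of $\Delta$ and the upgrade to uniform continuity via compactness of $\overline{\widetilde{D}(0,2)}$), but the underlying argument is identical.
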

\begin{proof}
Consider the operator $A_{\Delta}=\int_{\widetilde{D}(0,2)}G_wd\Lambda(w)$ which appears in (\ref{B45}), where
$$
G_w=\sum_{z\in \Delta}k_{\varphi_z(w)}\otimes k_{\varphi_z(w)}=\sum_{z\in \Delta}(U_zk_w)\otimes (U_zk_w).
$$
Since we can always assume that $\{\varphi_z(w):  z\in\Delta\}$ is separated by Lemma \ref{Lemma 0.1} (c), then we have $G_w\in $ Span $(\mathcal{D}_{0,0})$. By Lemma \ref{Lemma 1}, as in (\ref{B5}), we can rewrite $G_w=H_wH_w^*$ where $H_w=\sum_{z\in\Delta}(U_zk_w)\otimes e_z$ satisfies that for any $w,w'\in \widetilde{D}(0,2)$, there exists $C_B>0$ such that
\[
\|H_w-H_{w'}\|=\|\sum_{z\in\Gamma}(U_z(k_w-k_{w'}))\otimes e_z\|\leq C_B\|k_w-k_{w'}\|_{\infty}.
\]
For each $z \in \mathbb{D}$, it is obvious that
\[
\lim_{w\to z}\|k_z - k_w\|_{\infty}= 0.
\]
Combined with $G_w=H_wH_w^*$, it gives that the map $w\mapsto G_w$ is continuous with respect to norm $\|\cdot\|$. Then there is a Riemann sum $S$ of the operator $A_{\Delta}=\int_{\widetilde{D}(0,2)}G_wd\Lambda(w)$, such that $\|1-A_{\Delta}^{-1}S\|< \frac{1}{2}$, so $S$ is also invertible, which completes the proof.
\end{proof}
Obviously, we have the following corollaries.

\begin{corollary}\label{Corollary 2}
If $A$ is an operator such that $XAY$ is a compact operator for every $X,Y\in \mathcal{D}_{0,0}$, then $A$ is a compact operator.
\end{corollary}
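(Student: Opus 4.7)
The plan is to exploit Proposition \ref{Proposition B} directly: since $\mathrm{Span}(\mathcal{D}_{0,0})$ contains an invertible operator, the hypothesis that $XAY\in\mathcal{K}$ for all $X,Y\in\mathcal{D}_{0,0}$ can be boosted to the statement $SAS\in\mathcal{K}$ for some invertible $S$, from which compactness of $A$ follows by the two-sided-ideal property of $\mathcal{K}$.

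More concretely, I would proceed in three short steps. First, observe that the hypothesis extends by bilinearity: if $X=\sum_i \alpha_i X_i$ and $Y=\sum_j \beta_j Y_j$ are any two elements of $\mathrm{Span}(\mathcal{D}_{0,0})$, then
\[
XAY=\sum_{i,j}\alpha_i\beta_j\,X_iAY_j
\]
is a finite sum of compact operators, hence compact. Second, invoke Proposition \ref{Proposition B} to produce an operator $S\in\mathrm{Span}(\mathcal{D}_{0,0})$ that is invertible on $L_a^2(\mathbb{D})$; in particular, $S^{-1}\in B(L_a^2(\mathbb{D}))$. Third, apply the conclusion of the first step with $X=Y=S$ to see that $SAS\in\mathcal{K}$, and then write
\[
A=S^{-1}(SAS)S^{-1},
\]
which lies in $\mathcal{K}$ because $\mathcal{K}$ is a two-sided ideal in $B(L_a^2(\mathbb{D}))$.

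There is no real obstacle here; all the content is already packaged in Proposition \ref{Proposition B}. The only thing to be slightly careful about is the bilinearity step, which requires noting that each individual summand $X_iAY_j$ is compact by hypothesis (not just their combination), and that the sum involved is finite, so no limiting argument is needed.
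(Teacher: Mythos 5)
Your argument is correct and is exactly the intended one: the paper labels this statement (together with the next corollary) as an immediate consequence of Proposition \ref{Proposition B} and omits any proof, and the route you spell out — extend the hypothesis to $\mathrm{Span}(\mathcal{D}_{0,0})$ by bilinearity, take $S\in\mathrm{Span}(\mathcal{D}_{0,0})$ invertible, conclude $SAS\in\mathcal{K}$, and use the ideal property to recover $A=S^{-1}(SAS)S^{-1}\in\mathcal{K}$ — is the standard argument the authors have in mind. No gap.
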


\begin{corollary}\label{Corollary 3}
If $A$ is an operator such that $XAY\in C^*(\mathcal{D})$ for all $X,Y\in \mathcal{D}_{0,0}$, then $A\in C^*(\mathcal{D})$.
\end{corollary}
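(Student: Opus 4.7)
The plan is to use Proposition \ref{Proposition B} to sandwich $A$ between an invertible element $S$ of $\mathrm{Span}(\mathcal{D}_{0,0})$ and its adjoint, and then transfer the inverses into $C^*(\mathcal{D})$ by a functional-calculus step. Concretely, by Proposition \ref{Proposition B} fix $S=\sum_{i=1}^{m}a_iX_i\in \mathrm{Span}(\mathcal{D}_{0,0})$ that is invertible on $L_a^2(\mathbb{D})$; since $\mathrm{Span}(\mathcal{D}_{0,0})\subset\mathrm{Span}(\mathcal{D}_0)\subset\mathcal{D}\subset C^*(\mathcal{D})$, we have $S\in C^*(\mathcal{D})$. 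Because $(k_z\otimes k_z)^*=k_z\otimes k_z$, each adjoint $X_i^*$ remains in $\mathcal{D}_{0,0}$, so
\[
SAS^*=\sum_{i,j=1}^{m}a_i\overline{a_j}\,X_iAX_j^*
\]
is a finite linear combination of operators of the form $XAY$ with $X,Y\in\mathcal{D}_{0,0}$, and the hypothesis forces $SAS^*\in C^*(\mathcal{D})$.

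Next I would show that $(SS^*)^{-1}\in C^*(\mathcal{D})$. We have $SS^*\in C^*(\mathcal{D})$, and $\sigma_{B(L_a^2(\mathbb{D}))}(SS^*)\subset[m_0,M_0]$ for some $m_0>0$ by the invertibility of $S$. I would extend $1/t$ from $[m_0,M_0]$ continuously to a function $\tilde f$ on $[0,M_0]$ with $\tilde f(0)=0$, pick by Weierstrass polynomials $q_n\to\tilde f$ uniformly on $[0,M_0]$, and set $p_n=q_n-q_n(0)$. Then $p_n(0)=0$ and $|q_n(0)|\le\|q_n-\tilde f\|_{\infty,[0,M_0]}\to 0$, hence $p_n\to 1/t$ uniformly on $[m_0,M_0]$. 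Each $p_n(SS^*)$ is a linear combination of powers $(SS^*)^k$ with $k\ge 1$ and so lies in the $C^*$-algebra $C^*(\mathcal{D})$; continuous functional calculus in $B(L_a^2(\mathbb{D}))$ yields $p_n(SS^*)\to (SS^*)^{-1}$ in operator norm, and norm-closedness of $C^*(\mathcal{D})$ finishes the step.

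Using $S^{-1}=S^*(SS^*)^{-1}$ and $(S^*)^{-1}=(SS^*)^{-1}S$, one decomposes
\[
A=S^{-1}(SAS^*)(S^*)^{-1}=S^*(SS^*)^{-1}\cdot SAS^*\cdot(SS^*)^{-1}S.
\]
Every factor on the right belongs to $C^*(\mathcal{D})$, so $A\in C^*(\mathcal{D})$.

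The main obstacle is the middle step. This Corollary is formally parallel to Corollary \ref{Corollary 2}, where compactness propagates freely under multiplication by $S^{\pm 1}\in B(L_a^2(\mathbb{D}))$ because $\mathcal{K}$ is a two-sided ideal of $B(L_a^2(\mathbb{D}))$; here, however, we must keep the inverses \emph{inside} $C^*(\mathcal{D})$, which is a priori non-unital, so the usual spectral-permanence slogan that would instantly deliver $(SS^*)^{-1}\in C^*(\mathcal{D})$ is not available off-the-shelf. Approximating $1/t$ by polynomials with vanishing constant term is precisely the device that pushes the inverse into $C^*(\mathcal{D})$ without requiring $I\in C^*(\mathcal{D})$; everything else is routine algebraic manipulation once Proposition \ref{Proposition B} is in hand.
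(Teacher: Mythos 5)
Your proof is correct, and it is exactly the argument the paper has in mind: the paper states both Corollaries \ref{Corollary 2} and \ref{Corollary 3} as ``obvious'' consequences of Proposition \ref{Proposition B} without supplying a proof, and the intended reduction is precisely to sandwich $A$ between an invertible $S\in\mathrm{Span}(\mathcal{D}_{0,0})$ and $S^*$, then write $A=S^{-1}(SAS^*)(S^*)^{-1}$. The care you take to place $(SS^*)^{-1}$ inside $C^*(\mathcal{D})$ via polynomial approximation with vanishing constant term is a correct proof of the standard fact that a norm-closed $*$-subalgebra of $B(\mathcal{H})$ containing an element invertible in $B(\mathcal{H})$ automatically contains the identity and is inverse-closed; the paper tacitly relies on exactly this.
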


\begin{lemma}\label{Lemma C}
 Given any $\varepsilon>0$, there is an $r>0$ such that the following holds:

For a subset $E$ of $\Gamma\times G$, with $\Gamma, G$ being $1-$separated in $\mathbb{D}$, satisfying the condition $d(z,w)\geq r$ for every $(z,w)\in E$, if the coefficient set $\{a_{z,w}:(z,w)\in E\}$ satisfies $|a_{z,w}|\leq |\langle k_z,k_w\rangle|^{1-t}$ for every $(z,w)\in E$ for some $0\leq t<\frac{1}{2}$, then for any orthonormal sets $\{e_z:z\in \Gamma\}$ and $\{u_w:w\in G\}$, we have
\[
\|\sum_{(z,w)\in E}a_{z,w}e_z\otimes u_w\|\leq \varepsilon.
\]
\end{lemma}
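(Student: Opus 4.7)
The proof plan is to apply Schur's test to the matrix $(a_{z,w})_{(z,w)\in E}$ with the same positive weight $h(z)=\prod_{i=1}^n(1-|z_i|^2)^{1/2}$ that appeared in the proof of Proposition~\ref{Proposition A}, after first extracting a small decay factor from the hypothesis $d(z,w)\ge r$. Since $t<\tfrac{1}{2}$, I would fix any $\eta\in(0,\tfrac{1}{2}-t)$ and split the exponent as
\[
|a_{z,w}|\le|\langle k_z,k_w\rangle|^{1-t-\eta}\cdot|\langle k_z,k_w\rangle|^{\eta}\le C^{\eta}e^{-2\eta r}\,|\langle k_z,k_w\rangle|^{s},
\]
where $s:=1-t-\eta>\tfrac{1}{2}$, $C$ is the constant from (\ref{inequality}), and the last inequality uses $d(z,w)\ge r$ on $E$. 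This produces a uniform prefactor $e^{-2\eta r}\to 0$ and leaves a residual kernel whose exponent is still strictly above $\tfrac{1}{2}$.

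What remains is the uniform Schur estimate
\[
\sum_{w\in G}|\langle k_z,k_w\rangle|^{s}\,h(w)\le C_{s}\,h(z),
\]
together with its symmetric partner obtained by interchanging the roles of $z,\Gamma$ and $w,G$. I would mimic the argument in Proposition~\ref{Proposition A}: since $G$ is $1$-separated, the $d$-squares $\widetilde{D}(w,1/n)$ for $w\in G$ are pairwise disjoint, and on each such square both $(1-|\xi_i|^2)$ and $|1-\xi_i\bar z_i|$ are comparable (with absolute constants) to their values at $w$. Lemma~4.11 of \cite{Zhu2007} then dominates each summand by its average over $\widetilde{D}(w,1/n)$, and summing together with the M\"obius invariance $\Lambda(\widetilde{D}(w,1/n))=\Lambda(\widetilde{D}(0,1/n))$ gives
\[
\sum_{w\in G}|\langle k_z,k_w\rangle|^{s}\,h(w)\lesssim \prod_{i=1}^n\left[(1-|z_i|^2)^{s}\int_{D}\frac{(1-|\xi_i|^2)^{s-3/2}}{|1-\xi_i\bar z_i|^{2s}}\,dA_i(\xi_i)\right].
\]
The crucial point is that the Rudin--Forelli estimate applies precisely when $s>\tfrac{1}{2}$: then $s-\tfrac{3}{2}>-1$ and the effective exponent $s-\tfrac{1}{2}$ is positive, so each one-dimensional factor is $\lesssim(1-|z_i|^2)^{1/2-s}$, and the product reconstructs $h(z)$ up to a constant $C_{s}$ depending only on $s$.

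Combining the two steps, Schur's test yields $\|T\|\le C^{\eta}C_{s}\,e^{-2\eta r}$, so choosing $r>\tfrac{1}{2\eta}\log(C^{\eta}C_{s}/\varepsilon)$ forces $\|T\|\le\varepsilon$. The main obstacle is the exponent bookkeeping in the Rudin--Forelli step, which is also where the sharpness of the hypothesis $t<\tfrac{1}{2}$ shows up: at $s=\tfrac{1}{2}$ the exponent $s-\tfrac{3}{2}$ equals $-1$ and the one-dimensional integral diverges, so one cannot trade any portion of the decay exponent for a uniform factor $e^{-2\eta r}$ once $t\ge\tfrac{1}{2}$.
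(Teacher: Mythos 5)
Your argument is correct and follows essentially the same route as the paper: Schur's test with the weight $h(z)=\prod_i(1-|z_i|^2)^{1/2}$, splitting off a small power of $|\langle k_z,k_w\rangle|$ via the exponential decay (\ref{inequality}) to get a factor that tends to $0$ as $r\to\infty$, discretizing via the $1$-separation and M\"obius invariance of $d\Lambda$, and closing with the Rudin--Forelli estimate, whose hypothesis is precisely where $t<\tfrac12$ is used. The paper's choice $\eta\in(t,\tfrac12)$ with residual exponent $1-\eta$ is just a re-parameterization of your $\eta\in(0,\tfrac12-t)$ with residual exponent $s=1-t-\eta$; the two proofs coincide.
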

\begin{proof}
Given $\{e_z:z\in \Gamma\}$ and $\{u_z:z\in G\}$ as in the statement, let $E_r$ be a subset of $\Gamma\times G$ satisfying the condition $d(z,w)\geq r$ for every $(z,w)\in E$. Write $T_r=\sum_{(z,w)\in E_r}a_{z,w}e_z\otimes u_w$.
We will estimate the operator norm of $T_r$ by Schur's test with $h(z)=\prod \limits_{i=1}^n(1-|z_i|^2)^{\frac12}$.

Let $t<\eta<\frac{1}{2}$. By the formula (\ref{inequality}), we have that for any given $z\in \Gamma$, and the given $\varepsilon>0$, there is an $r_0>0$ such that
\[
|\langle k_z,k_w\rangle|^{\eta-t}< \Lambda(\widetilde{D}(0,\frac1n))\varepsilon\text{ for any }w\in G\text{ with }d(z,w)\geq r_0.
\]
Moreover, $|a_{z,w}|\leq |\langle k_z,k_w\rangle|^{1-t}\le \Lambda(\widetilde{D}(0,\frac1n))\, \varepsilon \, |\langle k_z,k_w\rangle|^{1-\eta}.$
Then for any given $z\in \Gamma$,
\begin{eqnarray*}
& &  \sum_{w\in G,d(z,w)\geq r_0}|a_{z,w}|h(w) \\
&\leq & \sum_{w\in G,d(z,w)\geq r_0}\Lambda(\widetilde{D}(0,\frac1n))\varepsilon |\langle k_z,k_w\rangle|^{1-\eta}h(w) \\
                                            &= & \Lambda(\widetilde{D}(0,\frac1n))\varepsilon \sum_{w\in G,d(z,w)\geq r_0}\frac{1}{\Lambda(\widetilde{D}(w,\frac1n))}\int_{\widetilde{D}(w,\frac1n)}|\langle k_z,k_{\xi}\rangle|^{1-\eta}h(\xi)d\Lambda(\xi)\\
             &\le& \varepsilon  \int_{\mathbb{D}}|\langle k_z,k_{\xi}\rangle|^{1-\eta}h(\xi)d\Lambda(\xi)\\
                                            &\leq & \varepsilon \prod_{i=1}^n\int_{D}\frac{(1-|z_i|^2)^{1-\eta}}{|1-\xi_i\bar{z}_i|^{2-2\eta}(1-|\xi_i|^2)^{\frac{1}{2}+\eta}}dA_i(\xi_i)\\
                                            &\leq & \varepsilon C_1\prod_{i=1}^n(1-|z_i|^2)^{1-\eta}(1-|z_i|^2)^{-\frac{1}{2}-\eta+2\eta}= \varepsilon C_1 h(z), \text{for some}\,0<C_1<\infty,
\end{eqnarray*}
where the second
inequality follows by the M\"{o}bius invariance of $d\Lambda$ and the fact that for any $z\neq w$ in $G$, $\widetilde{D}(z,\frac1n)\cap \widetilde{D}(w,\frac1n)=\emptyset$ by the condition that $G$ is $1-$separated set in $\mathbb{D}$, and the fourth inequality follows from the Rudin-Forelli estimate in the unit disk $D$.
Similarly, for any $w\in G$, we have $\sum_{z\in \Gamma,d(z,w)\geq r_0}|a_{z,w}|h(z)\leq \varepsilon C_2 h(w)$, for some $0<C_2<\infty$. By Schur's test, we have that $\|T_r\|\le \sqrt{C_1C_2}\varepsilon$ for any $r \ge r_0$, which completes the proof.
\end{proof}

\begin{proposition}\label{Proposition D}
$C^*(\mathcal{D})=\mathcal{D}$.
\end{proposition}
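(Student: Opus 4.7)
The inclusion $\mathcal{D}\subset C^*(\mathcal{D})$ is tautological, so my plan is to establish the reverse by showing that $\mathcal{D}$ is itself a $C^*$-algebra. Since $\mathcal{D}$ is a norm-closed linear subspace by construction, the task reduces to verifying closure under adjoints and under operator multiplication.

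Adjoint-closure is essentially a bookkeeping issue. For $T=\sum_{z\in\Gamma}c_z k_z\otimes k_{\gamma(z)}\in\mathcal{D}_0$ one has $T^*=\sum_{z\in\Gamma}\overline{c}_z k_{\gamma(z)}\otimes k_z$, and the only obstruction to $T^*\in\mathcal{D}_0$ is that $\gamma(\Gamma)$ may not be separated. However, separation of $\Gamma$ together with $d(z,\gamma(z))\leq C$ forces $\gamma$ to have uniformly bounded multiplicity, so Lemma \ref{Lemma 0.1}(a) combined with a standard greedy coloring produces a finite partition $\Gamma=\Gamma_1\sqcup\cdots\sqcup\Gamma_N$ on each piece of which $\gamma$ is injective with $\gamma(\Gamma_j)$ separated. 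Reindexing each piece by $w=\gamma(z)$ then exhibits $T^*$ as a finite sum of elements of $\mathcal{D}_0$, and this extends to $\mathcal{D}$ by the norm-continuity of $T\mapsto T^*$.

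The main content is closure under multiplication. By bilinearity and norm-continuity of the product it suffices to prove $ST\in\mathcal{D}$ for $S=\sum_{u\in\Gamma}c_u k_u\otimes k_{\gamma(u)}$ and $T=\sum_{v\in G}d_v k_v\otimes k_{\delta(v)}$ in $\mathcal{D}_0$. Formal multiplication gives
\[
ST=\sum_{(u,v)\in\Gamma\times G} c_u d_v\,\langle k_v,k_{\gamma(u)}\rangle\, k_u\otimes k_{\delta(v)},
\]
and the plan is to split this sum according to whether $d(u,v)<r$ or $d(u,v)\geq r$, for a cutoff $r$ to be sent to infinity. For the near part, $G$-separation and Lemma \ref{Lemma 0.1}(a) give a uniform bound $N(r)$ on $\{v\in G:d(u,v)<r\}$ for each $u$; a greedy coloring then partitions the near pairs into $N(r)$ graphs of partial maps $\sigma_i:\Gamma'_i\to G$ with $d(\sigma_i(u),u)<r$. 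On each such graph the corresponding summand
\[
\sum_{u\in\Gamma'_i}c_u d_{\sigma_i(u)}\,\langle k_{\sigma_i(u)},k_{\gamma(u)}\rangle\, k_u\otimes k_{\delta(\sigma_i(u))}
\]
belongs to $\mathcal{D}_0$: the coefficients are uniformly bounded, $\Gamma'_i\subset\Gamma$ is separated, and the composed shift $u\mapsto\delta(\sigma_i(u))$ stays within distance $r+C_2$ of $u$ by the triangle inequality. Hence the near part lies in $\mathrm{Span}(\mathcal{D}_0)$.

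For the far part I would introduce auxiliary orthonormal sets $\{e_u\}_{u\in\Gamma}$ and $\{e'_v\}_{v\in G}$ and factor the far sum as $AM_rB$, where $A=\sum_u c_u k_u\otimes e_u$, $B=\sum_v d_v e'_v\otimes k_{\delta(v)}$, and $M_r=\sum_{d(u,v)\geq r}\langle k_v,k_{\gamma(u)}\rangle\, e_u\otimes e'_v$. Proposition \ref{Proposition A} directly bounds $\|A\|$, and the same proposition applied to $B^*$ (after the same partitioning trick already used above, now to separate $\delta(G)$) bounds $\|B\|$ by a constant independent of $r$. Using $d(v,\gamma(u))\geq d(u,v)-d(u,\gamma(u))\geq d(u,v)-C_1$ together with \eqref{inequality}, the coefficients of $M_r$ satisfy $|\langle k_v,k_{\gamma(u)}\rangle|\leq e^{2C_1}C^t e^{-2tr}\,|\langle k_u,k_v\rangle|^{1-t}$ on the far region for any fixed $t\in(0,1/2)$; for $r$ large the prefactor is $\leq 1$, so Lemma \ref{Lemma C} yields $\|M_r\|\to 0$ as $r\to\infty$. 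Combining the two regimes shows that $ST$ is approximated in norm by elements of $\mathrm{Span}(\mathcal{D}_0)$, hence $ST\in\mathcal{D}$. The main technical obstacle I anticipate is the index bookkeeping — arranging both partitions so that each resulting piece is literally an element of $\mathcal{D}_0$ (separated index set, bounded shift, uniformly bounded coefficients), and verifying that the constants $\|A\|$, $\|B\|$ in the far-part factorization are controlled independently of $r$.
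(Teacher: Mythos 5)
Your proposal is correct and takes essentially the same route as the paper: reduce to $A^*, AB\in\mathcal{D}$ for $A,B\in\mathcal{D}_0$; handle $A^*$ by finitely partitioning $\Gamma$ so that $\gamma(\Gamma_j)$ is separated (the paper invokes Lemma \ref{Lemma 0.1}(b) to get $3C$-separated pieces rather than your greedy-coloring phrasing, but these are the same mechanism); split $AB$ into a near part $d(u,v)<r$ and a far part $d(u,v)\geq r$, turn the near part into a finite sum of $\mathcal{D}_0$ elements via a maximal-selection/coloring argument, and bound the far part by factoring it as $X\widetilde{T}_rY$ and applying Proposition \ref{Proposition A} to $X,Y$ and Lemma \ref{Lemma C} to the middle factor. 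The only cosmetic difference is that you keep a genuine $t\in(0,1/2)$ in the far-part estimate and absorb the prefactor by taking $r$ large, whereas the paper uses the bound $|\langle k_z,k_{\phi(w)}\rangle|\le C_2'|\langle k_z,k_w\rangle|$ and scales out the constant; both feed the same hypothesis into Lemma \ref{Lemma C}.
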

\begin{proof}
It suffices to prove that for any $A,B\in \mathcal{D}_0$, we have $A^*$ and $AB\in \mathcal{D}$.

We can assume
$$A=\sum_{z\in\Gamma}c_zk_z\otimes k_{\gamma(z)}\text{ and }B=\sum_{w\in G}d_wk_w\otimes k_{\phi(w)},$$
 in which $\Gamma,G$ are $1-$separated sets in $\mathbb{D}$, $\sup_{z\in\Gamma}|c_z|<\infty$, $\sup_{w\in G}|d_w|<\infty$ and there exists $C>0$ such that $d(z,\gamma(z))<C$ and $d(w,\phi(w))<C$ for any $z\in\Gamma$ and $w\in G$.

Firstly, we will prove that $A^*\in$ Span$(\mathcal{D}_0)$. By Lemma \ref{Lemma 0.1} (b), we can decompose $\Gamma$ as the disjoint union as $\Gamma=\Gamma_1\sqcup\cdots\sqcup\Gamma_l$ with $\Gamma_j$ being $(3C)-$separated for each $j\in\{1,\cdots,l\}$. Then for each $j\in\{1,\cdots,l\}$, the set $\{\gamma(z):z\in \Gamma_j\}$ is $C$-separated. That is, $\sum_{z\in \Gamma_j}\bar{c}_z k_{\gamma(z)}\otimes k_z\in \mathcal{D}_0$ for each $j\in\{1,\cdots,l\}$. And we can also decompose
\[
A^*=\sum_{j=1}^l\sum_{z\in \Gamma_j}\bar{c}_z k_{\gamma(z)}\otimes k_z,
\]
which implies $A^*\in $ Span$(\mathcal{D}_0)$.

Next for each $R>0$, we can decompose $AB=T_{R,1}+T_{R,2}$, in which
\[
T_{R,1}=\sum_{(z,w)\in \Gamma\times G,d(z,w)\leq R}c_zd_w\langle k_w,k_{\gamma(z)}\rangle k_z\otimes k_{\phi(w)};
~~
T_{R,2}=\sum_{(z,w)\in \Gamma\times G,d(z,w)> R}c_zd_w\langle k_w,k_{\gamma(z)}\rangle k_z\otimes k_{\phi(w)}.
\]
By the decomposition above, it suffices to prove that:
\begin{itemize}
  \item $T_{R,1}\in $ Span$(\mathcal{D}_0)$,
  \item For any $\varepsilon>0$, there is a $R>0$ such that $\|T_{R,2}\|\leq\varepsilon$.
\end{itemize}

(1) For $T_{R,1}$, denote $E=\{(z,w)\in\Gamma\times G:d(z,w)\leq R\}$. Since $G$ is $1-$separated, by Lemma \ref{Lemma 0.1} (a), there is $N_0\in \mathbb{N}$ such that for any $z\in\Gamma$,
\begin{equation}\label{D1}
\text{card}\{w\in G:d(z,w)\leq R\}\leq N_0.
\end{equation}
Let $E_1$ be a maximal subset of $E$ with respect to the property:
\[
(z,w)\neq(z',w')\in E_1 \text{ implies }z\neq z'.
\]
Inductively, suppose that we have found $E_1,\cdots, E_v$. Then let $E_{v+1}$ be the maximal subset in $E\setminus (E_1\sqcup\cdots\sqcup E_v)$  with respect to the property above. After $k$ $(k\le N_0)$ steps, we have disjoint subsets $E_1,\dots, E_{k}$. Suppose there are some $(z,w)\in E\setminus (E_1\sqcup\cdots\sqcup E_{N_0})$. By the maximality of each $E_j$, there is a $(z,w^{(j)})\in E_j$ such that $w\neq w^{(j)}$, which contradicts the (\ref{D1}). Therefore we have a finite partition  $E=E_1\sqcup\cdots \sqcup E_{k}$ for some $k\le N_0$ such that each $E_j$ satisfies the property above for $j\in\{1,\cdots,k\}$. Then for each $j\in\{1,\cdots,k\}$,
\[
\sum_{(z,w)\in E_j}c_zd_w\langle k_w,k_{\gamma(z)}\rangle k_z\otimes k_{\phi(w)}\in \mathcal{D}_0,
\]
and hence
$$T_{R,1}=\sum_{(z,w)\in E}c_zd_w\langle k_w,k_{\gamma(z)}\rangle k_z\otimes k_{\phi(w)}=\sum_{j=1}^{k}\sum_{(z,w)\in E_j}c_zd_w\langle k_w,k_{\gamma(z)}\rangle k_z\otimes k_{\phi(w)}\in \text{ Span}(\mathcal{D}_0 ).$$

(2) We can write $T_{R,2}=X\widetilde{T}_{R,2}Y$, where
\[
X=\sum_{z\in\Gamma}c_zk_z\otimes e_z,~~Y=\sum_{w\in G}d_wu_w\otimes k_{\phi(w)},~~\widetilde{T}_{R,2}=\sum_{(z,w)\in \Gamma\times G,d(z,w)> R}\langle k_z,k_{\phi(w)}\rangle e_z\otimes u_w
\]
and $\{e_z:z\in\Gamma\}$, $\{u_w:w\in G\}$ are orthonormal sets.

By Proposition \ref{Proposition A}, there exists $C_1>0$ such that
\[
\|X\|\leq C_1\sup_{z\in \Gamma}|c_z|\text{ and }\|Y\|\leq C_1\sup_{w\in G}|d_w|.
\]
By (\ref{inequality}), there exists $C_2>0$ such that $|\langle k_z,k_{\phi(w)}\rangle|=C_2e^{-2d(z,\phi(w))}$. By the assumption $d(w,\phi(w))<C$ for any $w\in G$, we have $d(z,\phi(w))\geq d(z,w)-C$. Therefore,
\[
|\langle k_z,k_{\phi(w)}\rangle|=C_2e^{-2d(z,\phi(w))}\leq C_2e^{2C-2d(z,w)}\leq C_2'|\langle k_z,k_w\rangle|\,\text {for some}\, C_2'>0.
\]
By Lemma \ref{Lemma C}, it gives that there exists a $R>0$ such that
\[
\|\widetilde{T}_{R,2}\|\leq \frac{\varepsilon}{C_1^2\sup_{z\in\Gamma}|c_z|\sup_{w\in G}|d_w|},
\]
and hence $\|T_{R,2}\|\leq \|X\|\cdot \|\widetilde{T_{R,2}}\| \cdot\|Y\|\leq \varepsilon$.

\end{proof}

\begin{lemma}\label{Lemma E}
 Given any $A\in C^*(\mathcal{D})$ and $\varepsilon>0$, there is an $r=r(A,\varepsilon)>0$ such that the following holds:\
Let $\Gamma,G$ be $1-$separated sets in $\mathbb{D}$ and let $\{e_z:z\in\Gamma\}$ and $\{u_w:w\in G\}$ be orthonormal sets. Define
\[
X=\sum_{z\in\Gamma}e_z\otimes k_z\text{ and }Y=\sum_{w\in G}k_w\otimes u_w.
\]
If $\Gamma,G$ satisfy $d(z,w)\geq r$ for any $(z,w)\in\Gamma\times G$, then $\|XAY\|\leq \varepsilon$.
\end{lemma}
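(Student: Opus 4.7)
The plan is to combine Proposition \ref{Proposition D} (i.e., $C^*(\mathcal{D}) = \mathcal{D}$) with Lemma \ref{Lemma C}, first reducing $A\in C^*(\mathcal{D})$ to a finite sum of elements of $\mathcal{D}_0$ and then analyzing each summand via a geometric dichotomy on its indexing set.

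By Proposition \ref{Proposition A}, there is a universal constant $C_\ast$ (depending only on dimension and the separation parameter $1$) such that $\|X\|,\|Y\|\leq C_\ast$ for every admissible choice of $\Gamma,G,\{e_z\},\{u_w\}$. I would first pick $A_0=\sum_{j=1}^N A_j$ with $A_j\in\mathcal{D}_0$ and $\|A-A_0\|<\varepsilon/(2C_\ast^2)$, so that $\|X(A-A_0)Y\|<\varepsilon/2$ uniformly over admissible data. It then suffices to produce, for each $A_j$ separately, a threshold $r_j$ such that $\|XA_jY\|<\varepsilon/(2N)$ whenever $r>r_j$, and take $r=\max_j r_j$.

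So fix a single summand $B=\sum_{u\in\Lambda}c_u k_u\otimes k_{\gamma(u)}\in\mathcal{D}_0$, with $\Lambda$ separated, $M:=\sup_u|c_u|<\infty$, and $d(u,\gamma(u))\leq C$. The central geometric observation is that, because $d(\Gamma,G)\geq r$ and $\gamma$ displaces by at most $C$, every intermediate point $u$ must be far from $\Gamma$ or from $G$: setting $\Lambda_1=\{u\in\Lambda:d(u,\Gamma)\geq r/2\}$ and $\Lambda_2=\Lambda\setminus\Lambda_1$, for any $u\in\Lambda_2$ there is some $z_0\in\Gamma$ with $d(u,z_0)<r/2$, and then $d(u,w)\geq d(z_0,w)-d(u,z_0)>r/2$, hence $d(\gamma(u),w)>r/2-C$, for every $w\in G$. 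Accordingly I would split $B=B_1+B_2$ with $B_i=\sum_{u\in\Lambda_i}c_u k_u\otimes k_{\gamma(u)}$, and factor each $B_i=P_iQ_i$ via $P_i=\sum_{u\in\Lambda_i}c_u k_u\otimes e_u$ and $Q_i=\sum_{u\in\Lambda_i}e_u\otimes k_{\gamma(u)}$ for an orthonormal set $\{e_u\}$, so that $XB_iY=(XP_i)(Q_iY)$.

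The two pieces are then handled by symmetric applications of Lemma \ref{Lemma C}. For $XB_1Y$: after partitioning $\Lambda_1$ into finitely many $1$-separated subsets via Lemma \ref{Lemma 0.1}(b), each corresponding piece of $XP_1$ is of the form $\sum_{z,u}c_u\langle k_u,k_z\rangle e_z\otimes e_u$ whose support satisfies $d(z,u)\geq r/2$; after dividing by $M$, Lemma \ref{Lemma C} with $t=0$ drives the norm of each piece below any prescribed threshold once $r$ is large, while $\|Q_1Y\|$ is uniformly dominated by $C_\ast^2$. For $XB_2Y$: $\|XP_2\|$ stays bounded by $MC_\ast^2$, and the same partition-and-Lemma \ref{Lemma C} strategy applied to $Q_2Y$ shrinks its norm, using the elementary bound $|\langle k_w,k_{\gamma(u)}\rangle|\leq e^{2C}|\langle k_u,k_w\rangle|$ (which follows from $d(u,\gamma(u))\leq C$) together with $d(u,w)>r/2$ on the support, so that Lemma \ref{Lemma C} applies with $t=0$ and indices $(u,w)$. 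The main technical obstacle is purely a bookkeeping of constants: the smallness demanded of each Lemma \ref{Lemma C} invocation depends on $N$, $M$, $C_\ast$, and the number $m$ of pieces in the $1$-separated partition of $\Lambda_i$ — but all of these depend only on $A$ and $\varepsilon$, yielding a single $r=r(A,\varepsilon)$ that works.
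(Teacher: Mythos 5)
Your approach is correct but genuinely different from the paper's. The paper fixes $A=\sum_{u\in M}c_u k_u\otimes k_{\gamma(u)}\in\mathcal{D}_0$, computes the $(z,w)$-matrix entry $a_{z,w}=\sum_{u\in M}c_u\langle k_u,k_z\rangle\langle k_w,k_{\gamma(u)}\rangle$ of $XAY$ directly, and bounds it: converting the sum over $u$ into an integral over $\mathbb{D}$ via the M\"obius invariance of $d\Lambda$ and applying the Rudin--Forelli estimate yields $|a_{z,w}|\leq C_4|\langle k_z,k_w\rangle|^{1-\varepsilon}$ for some $0<\varepsilon<\frac12$; Lemma~\ref{Lemma C} is then invoked once with that fractional exponent. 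You replace this single, somewhat delicate coefficient estimate by a geometric dichotomy: since $d(\Gamma,G)\geq r$, every intermediate $u$ is at $d$-distance at least $r/2$ from $\Gamma$ or from $G$, so you split the intermediate index set, factor each piece, and invoke Lemma~\ref{Lemma C} twice in the easiest case $t=0$ (once on $XP_1$ with indices $(z,u)$, once on $Q_2Y$ with indices $(u,w)$). This buys conceptual transparency at the cost of more bookkeeping: both $\Lambda_1$ and $\Lambda_2$ must first be partitioned into finitely many $1$-separated pieces before Lemma~\ref{Lemma C} applies, $\|Q_1\|$ needs a further $3C$-separated partition so that the $\gamma$-images become separated and Proposition~\ref{Proposition A} can be invoked, and the comparison $|\langle k_w,k_{\gamma(u)}\rangle|\lesssim|\langle k_u,k_w\rangle|$ carries a constant $4^n e^{2C}$ rather than $e^{2C}$, since (\ref{inequality}) is really a two-sided bound $e^{-2d(z,w)}\leq|\langle k_z,k_w\rangle|\leq 4^n e^{-2d(z,w)}$. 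All of these quantities depend only on $A$ and the dimension, so the final $r=r(A,\varepsilon)$ is legitimate, and the argument goes through.
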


\begin{proof}
By Propositions \ref{Proposition A}, we can assume $\|X\|\leq C$ and $\|Y\|\leq C$ for some $C>0$. According to Proposition \ref{Proposition D}, for any $A\in C^*(\mathcal{D})$ and $\varepsilon>0$, there exists $\widetilde{A}\in$ Span$(\mathcal{D}_0)$ such that
\[
\|A-\widetilde{A}\|<\frac{\varepsilon}{C^2}
\]
and hence
\[
\|X(A-\widetilde{A})Y\|\leq \|X\|\|A-\widetilde{A}\|\|Y\|< C\frac{\varepsilon}{C^2}C=\varepsilon.
\]
Therefore we only need to prove that for any $A\in\mathcal{D}_0$, $\|XAY\|< \varepsilon$.

For $A\in\mathcal{D}_0$, we can assume
\[
A=\sum_{u\in M}c_uk_u\otimes k_{\gamma(u)}
\]
in which $M$ is $\delta-$separated for some $\delta>0$, $\sup_{u\in M}|c_u|<\infty$ and there exists $C_1>0$ such that $d(u,\gamma(u))\leq C_1$ for any $u\in M$. Then
\begin{eqnarray*}
  XAY &=& \sum_{(z,w)\in \Gamma\times G}\sum_{u\in M}c_u\langle k_u,k_z\rangle\langle k_w,k_{\gamma(u)}\rangle e_z\otimes u_w\\
      &=& \sum_{(z,w)\in \Gamma\times G} a_{z,w} e_z\otimes u_w, \text{ where }a_{z,w}=\sum_{u\in M}c_u\langle k_u,k_z\rangle\langle k_w,k_{\gamma(u)}\rangle
\end{eqnarray*}
By (\ref{inequality}), there exists $C_2>0$ such that $|\langle k_u,k_z\rangle\langle k_w,k_{\gamma(u)}\rangle|\leq C_2e^{-2d(u,z)}e^{-2d(w,\gamma(u))}$.

Combined with the inequalities
$d(w,\gamma(u))\geq d(u,w)-C_1$, this gives that
\[
|\langle k_u,k_z\rangle\langle k_w,k_{\gamma(u)}\rangle|\leq C_2e^{2C_1}e^{-2d(u,z)}e^{-2d(u,w)}.
\]
Again, by (\ref{inequality}), there exists $C_3>0$ such that
\[
|c_u\langle k_u,k_z\rangle\langle k_w,k_{\gamma(u)}\rangle|\leq C_3|\langle k_u,k_z\rangle\langle k_w,k_u\rangle|.
\]
Then
\begin{eqnarray*}
|a_{z,w}|&=&|\sum_{u\in M}c_u\langle k_u,k_z\rangle\langle k_w,k_{\gamma(u)}\rangle| \\
&\leq& \sup_{u\in M}\{|c_u|\}\cdot C_3\sum_{u\in M}|\langle k_u,k_z\rangle\langle k_w,k_u\rangle| \\
&\leq& \sup_{u\in M}\{|c_u|\}\cdot C_3\sum_{u\in M}\prod_{i=1}^n\frac{1}{\Lambda_i(B(u_i,\frac{\delta}{n}))}\int_{B(u_i,\frac{\delta}{n})}\frac{(1-|z_i|^2)(1-|w_i|^2)dA_i(\xi_i)}{|1-z_i\bar{\xi}_i|^2|1-w_i\bar{\xi}_i|^2} \\
 &\leq& \sup_{u\in M}\{|c_u|\}\cdot C_3\cdot \frac{1}{\Lambda(\widetilde{D}(0,\frac{\delta}{n})}\prod_{i=1}^n\int_{D}\frac{(1-|z_i|^2)(1-|w_i|^2)dA_i(\xi_i)}{|1-z_i\bar{\xi}_i|^2|1-w_i\bar{\xi}_i|^2} \\
&=& \sup_{u\in M}\{|c_u|\}\cdot C_3 \cdot \frac{1}{\Lambda(\widetilde{D}(0,\frac{\delta}{n})}\langle|k_z|,|k_w|\rangle \\
&=& \sup_{u\in M}\{|c_u|\}\cdot C_3 \cdot \frac{1}{\Lambda(\widetilde{D}(0,\frac{\delta}{n})}\prod_{i=1}^n\int_{D}\frac{|k_{z_i}(\xi_i)|}{|k_{w_i}(\xi_i)|}|k_{w_i}(\xi_i)|^2dA_i(\xi_i) \\
&=& \sup_{u\in M}\{|c_u|\}\cdot C_3 \cdot \frac{1}{\Lambda(\widetilde{D}(0,\frac{\delta}{n})} \prod_{i=1}^n\int_{D}\frac{|k_{z_i}(\xi_i)|}{|k_{w_i}(\xi_i)|}dA_i(\varphi_{w_i}(\xi_i)) \\
&=& \sup_{u\in M}\{|c_u|\}\cdot C_3\cdot \frac{1}{\Lambda(\widetilde{D}(0,\frac{\delta}{n})} \prod_{i=1}^n\int_{D}\frac{|k_{z_i}(\varphi_{w_i}(\xi_i))|}{|k_{w_i}(\varphi_{w_i}(\xi_i)|}dA_i(\xi_i)\\
&=&\sup_{u\in M}\{|c_u|\}\cdot C_{3}\cdot \frac{1}{\Lambda(\widetilde{D}(0,\frac{\delta}{n})}\prod_{i=1}^n\int_{D}|k_{\varphi_{w_i}(z_i)}(\xi_i)|dA_i(\xi_i)
\end{eqnarray*}
where the third inequality follows from the M\"obius invariance of $d\Lambda$ and the fact that for any $z\neq w$ in $M$, $\widetilde{D}(z,\frac{\delta}{n})\cap \widetilde{D}(w,\frac{\delta}{n})=\emptyset$ 
and the last equality follows from this that for any $u,v,\xi$ in $D$,
$k_u(\varphi_v(\xi))=\frac{k_{\varphi_v(u)}(\xi)}{k_v(\xi)}\left(\frac{|1-\overline{u}v|}{1-\overline{u}v}\right)^2$ by the direct computation. Combined with Rudin-Forelli estimates, it gives that there is a $C_4>0$ and  a $0<\varepsilon <\frac{1}{2}$ such that
\begin{eqnarray*}
|a_{z,w}|&\leq& C_3\prod_{i=1}^n\int_{D}|k_{\varphi_{w_i}(z_i)}(\xi_i)|dA_i(\xi_i)\\
&\leq& C_3\prod_{i=1}^n\int_{D}\frac{1-|\varphi_{w_i}(z_i)|^2}{|1-\overline{\varphi_{w_i}(z_i)}\xi_i|^2(1-|\xi_i|^2)^{\varepsilon}}  dA_i(\xi_i)\\
&\leq& C_4\prod_{i=1}^n(1-|\varphi_{w_i}(z_i)|^2)^{1-\varepsilon}=C_4|\langle k_z,k_w\rangle|^{1-\varepsilon}.
\end{eqnarray*}
By Lemma \ref{Lemma C}, we have proven the result.
\end{proof}

\section{Radial-spherical Decomposition}
\

As a technique to decompose the unit disk, we need the radial-spherical decomposition from \cite{J. Xia 2018}.

Let $S^1$ denote $\{\xi\in\mathbb{C}:|\xi|=1\}$. Recall the formula
\[
\tilde{d}(u,\xi)\doteq|1-\langle u,\xi\rangle |^{\frac{1}{2}},\text{ for any } u,\xi\in S^1
\]
defines a metric on $S^1$. For any $u\in S^1$ and $r>0$, we write $\widetilde{B}(u,r)=\{\xi\in S^1:\tilde{d}(u,\xi)<r\}$.

Let $\sigma$ be the standard spherical measure on $S^1$ with the usual normalization $\sigma(S^1)=1$. By Proposition 5.1.4 in \cite{W. Rudin 1980}, there is a constant $A_0\in (2^{-1},\infty)$, such that
\begin{equation}\label{Delta1}
\min\{2^{-1},\pi^{-1}\}r^2\leq \sigma(\widetilde{B}(u,r))\leq A_0r^2, \text{ for any }u\in S^1, 0<r\leq \sqrt{2}.
\end{equation}
For the radial direction of $D$, we set
\begin{equation}\label{R3}
\rho_m=1-2^{-2m}\text{ for any }m\in\mathbb{N}_+ \text{ (the set of positive integers)}.
\end{equation}
For each pair of natural numbers $m\ge 6$ and $j\in \mathbb{N}_+$, we denote
\[
\alpha_{m,j}=m(1-\rho_{jm}^2)^{\frac{1}{2}}=m\cdot 2^{-jm}(2-2^{-2jm})^{\frac{1}{2}}.
\]
Note that $8\alpha_{m,j}\leq \sqrt{2}$ for all $m\geq 6$ and $j\in\mathbb{N}_+$. For each pair of $m\geq 6$ and $j\in\mathbb{N}_+$, let $E_{m,j}$  be a subset of $S^1$ which is maximal with respect to the property of being $\frac{\alpha_{m,j}}{2}-$separated.

By (\ref{Delta1}), $E_{m,j}$ is finite for all $m\geq 6$ and $j\in\mathbb{N}_+$. It follows from the maximality of $E_{m,j}$ that
\begin{equation}\label{Dleta2}
\bigcup_{u\in E_{m,j}}\widetilde{B}(u,\alpha_{m,j})=S^1.
\end{equation}
For each triple of $m\geq 6$, $j\in\mathbb{N}_+$ and $u\in E_{m,j}$, we define
\begin{equation}\label{R-1}
A_{m,j,u}=\{r\xi:\xi\in B(u,\alpha_{m,j}),r\in[\rho_{(j+2)m},\rho_{(j+3)m}]\};
\end{equation}
\begin{equation}\label{R-2}
B_{m,j,u}=\{r\xi:\xi\in B(u,3\alpha_{m,j}),r\in[\rho_{jm},\rho_{(j+5)m}]\}.
\end{equation}
By (\ref{Dleta2}), we have
\begin{equation}\label{sanjiao1}
\bigcup_{j=1}^{\infty}\bigcup_{u\in E_{m,j}}A_{m,j,u}=\{z\in D:\rho_{3m}\leq |z|<1\}.
\end{equation}
We define
\begin{equation}\label{buchongA}
A_{m,0,0}=\{z\in D:|z|<\rho_{3m}\},~~~B_{m,0,0}=\{z\in D:|z|< \rho_{5m}\}.
\end{equation}
Since $E_{m,j}$ is $\frac{\alpha_{m,j}}{2}-$separated set in $S^1$ for all $m\geq 6$ and $j\in\mathbb{N}_+$, by (\ref{Delta1}), there exists $\widetilde{N}_0=\widetilde{N}_0(\alpha_{m,j})\in\mathbb{N}$ such that for every triple of  $m\geq 6$, $j\in\mathbb{N}_+$ and $u\in E_{m,j}$, we have
\begin{equation}\label{Delta3}
\text{card}\{v\in E_{m,j}:\tilde{d}(u,v)<7\alpha_{m,j}\}\leq \widetilde{N}_0.
\end{equation}

Let $E_{m,j}^{(1)}$ be a subset of $E_{m,j}$ that is maximal with respect to the property that
\[
\tilde{d}(u,v)\geq 7\alpha_{m,j}\text{ for any }u\neq v\in E_{m,j}^{(1)}.
\]
Inductively, suppose that we have defined $E_{m,j}^{(1)},\cdots, E_{m,j}^{(t)}$. Then let $E_{m,j}^{(t+1)}$ be the maximal subset in $E_{m,j}\setminus (E_{m,j}^{(1)}\sqcup\cdots\sqcup E_{m,j}^{(t)})$  with respect to the property above. By (\ref{Delta3}), adding empty sets when necessary, we have $E_{m,j}=E_{m,j}^{(1)}\sqcup\cdots \sqcup E_{m,j}^{(\widetilde{N}_0)}$ such that for any $t\in\{1,\cdots,\widetilde{N}_0\}$, we have
\[
\tilde{d}(u,v)\geq 7\alpha_{m,j}\text{ for any }u\neq v\in E_{m,j}^{(t)}.
\]
The number $\widetilde{N}_0$ and the partition above will be fixed for the rest of the paper. For the convenience of the discussion below, we denote $E_{m,0}=\{0\}$.

The (a), (b) and (c) parts of the following lemma follow from Lemma 2.8 in \cite{J. Xia 2018} and the (d) part is obvious from (c). So the proof is omitted here.

\begin{lemma}\label{Lemma 2}\cite{J. Xia 2018}

(a)  Let $m \geq 6$, $j\in\mathbb{N}_+$ and $t\in\{1,\cdots,\widetilde{N}_0\}$. If $u, v\in E_{m,j}^{(t)}$ and $u \neq v$, then
we have $\beta(z, w) > 2$ for all $z\in B_{m,j,u}$ and $w\in B_{m,j,v}$.

(b) Let $m \geq 6$. If $u \in E_{m,j}$, $v \in E_{m,k}$ and $k\geq j + 6$, then we have $\beta(z, w) > 3$ for all $z\in B_{m,j,u}$ and $w\in B_{m,k,v}$.

(c) Let $m \geq 6$, $j\in\mathbb{N}_+$ and $u\in E_{m,j}$. Then $\beta(z, w) \geq 2 \log m$ for all $z\in D\setminus B_{m,j,u}$ and $w\in A_{m,j,u}$.

(d) Let $m \geq 6$, $j\ge 3$ and $u\in E_{m,j}$. Then $\beta(z,w)\geq 2\log m$ for any $z\in A_{m,0,0}$, $w\in A_{m,j,u}$.\\
\end{lemma}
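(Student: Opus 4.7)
The plan is to reduce each statement to an explicit lower bound on the hyperbolic distance, using the two standard inequalities
\[
\beta(z,w) \geq \tfrac{1}{2}\log\frac{|1-\bar{z}w|^2}{(1-|z|^2)(1-|w|^2)}
\qquad\text{and}\qquad
\beta(z,w)\geq\beta(|z|,|w|),
\]
both of which follow from the representation $\beta(u,v)=\tfrac12\log\frac{1}{1-|\varphi_u(v)|^2}+c$ (with $0<c<\log 2$) recalled in the introduction, together with the pointwise identity $|1-\bar{z}w|^2=(1-|z||w|)^2+|z||w|\,|1-\bar{\xi}\eta|^2$ valid for $z=|z|\xi$, $w=|w|\eta$ with $\xi,\eta\in S^1$.

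For part (a), the key is the angular separation. Writing $z=r\xi\in B_{m,j,u}$ and $w=s\eta\in B_{m,j,v}$ with $u\neq v\in E_{m,j}^{(t)}$, the $7\alpha_{m,j}$-separation of $E_{m,j}^{(t)}$ combined with $\tilde{d}(\xi,u),\tilde{d}(\eta,v)<3\alpha_{m,j}$ yields $\tilde{d}(\xi,\eta)\geq\alpha_{m,j}$, hence $|1-\bar{\xi}\eta|\geq\alpha_{m,j}^2$. The identity above then gives $|1-\bar{z}w|^2\geq|z||w|\,\alpha_{m,j}^4$. Since $|z|,|w|\geq\rho_{jm}\geq\rho_m$, with $\rho_m^2>1/2$ for $m\geq 6$, and $\alpha_{m,j}^4\geq m^4\cdot 2^{-4jm}$ while $(1-|z|^2)(1-|w|^2)\leq 4\cdot 2^{-4jm}$, the ratio in the first displayed inequality is at least $m^4/16$. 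This yields $\beta(z,w)\geq 2\log m-2\log 2>2$ as soon as $m\geq 6$.

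Parts (b) and (c) are handled via the radial comparison $\beta(z,w)\geq\beta(|z|,|w|)$, applied to the closed form $\beta(r,s)=\tfrac12\log\tfrac{(1-r)(1+s)}{(1+r)(1-s)}$ (for $0\leq r\leq s<1$), which is monotone in both arguments. For (b), the extremal configuration $|z|=\rho_{(j+5)m}$, $|w|=\rho_{km}$ gives the ratio inside the logarithm at least $\tfrac12\cdot 2^{-2(j+5)m}/2^{-2km}\geq 2^{2m-1}$ when $k\geq j+6$, so $\beta(z,w)\geq(m-\tfrac12)\log 2>3$ for $m\geq 6$. For (c) I partition $z\in D\setminus B_{m,j,u}$ into three overlapping sub-cases: if $|z|<\rho_{jm}$, then $\beta(z,w)\geq\beta(\rho_{jm},\rho_{(j+2)m})\geq(2m-\tfrac12)\log 2$; if $|z|>\rho_{(j+5)m}$, similarly $\beta(z,w)\geq\beta(\rho_{(j+3)m},\rho_{(j+5)m})\geq(2m-\tfrac12)\log 2$; and if $|z|\in[\rho_{jm},\rho_{(j+5)m}]$, then necessarily $\tilde{d}(\xi,u)\geq 3\alpha_{m,j}$, so $\tilde{d}(\xi,\eta)\geq 2\alpha_{m,j}$ and the (a)-style computation upgrades to $\beta(z,w)\geq 2m\log 2+2\log m-\tfrac32\log 2$. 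In every sub-case $\beta(z,w)\geq 2\log m$ for $m\geq 6$, since $(2m-\tfrac12)\log 2\geq 2\log m$ holds for all $m\geq 1$.

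Part (d) is an immediate consequence of (c): when $j\geq 3$, any $z\in A_{m,0,0}$ satisfies $|z|<\rho_{3m}\leq\rho_{jm}$, so $z\notin B_{m,j,u}$ for every $u\in E_{m,j}$, and the conclusion of (c) applies verbatim. The only step demanding any real care is the identification of the extremal configuration for each radial estimate, but the monotonicity of $\beta(r,s)$ in each argument pins this down completely, and the remainder is arithmetic of powers of $2$.
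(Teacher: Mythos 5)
Your proof is correct. Note first that the paper itself does not prove parts (a), (b), (c): it explicitly defers to Lemma~2.8 of the cited reference [J.~Xia~2018], and the only original content it supplies is the one-line observation that (d) follows from (c). Your derivation of (d) — namely that $j\ge 3$ forces $|z|<\rho_{3m}\le\rho_{jm}$ for any $z\in A_{m,0,0}$, so $z\notin B_{m,j,u}$ and (c) applies — is exactly the paper's intended reduction. For (a)--(c) you supply a complete, self-contained argument where the paper merely cites, so there is nothing in the paper to compare it against literally; but your reduction is the natural one and the numerics check out. In (a), the pointwise identity $|1-\bar zw|^2=(1-|z||w|)^2+|z||w|\,|1-\bar\xi\eta|^2$ is correct, the $\tilde d$-triangle-inequality gives $\tilde d(\xi,\eta)>7\alpha_{m,j}-3\alpha_{m,j}-3\alpha_{m,j}=\alpha_{m,j}$, and with $\alpha_{m,j}^4\ge m^4\cdot 2^{-4jm}$, $(1-|z|^2)(1-|w|^2)<4\cdot 2^{-4jm}$, and $|z||w|>1/2$, the ratio is in fact $\ge m^4/8$ (you wrote $m^4/16$, which is a slightly looser but still sufficient bound, giving $\beta>2\log(m/2)>2$ for $m\ge 6$). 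In (b), using $\beta(z,w)\ge\beta(|z|,|w|)$ and the extremal configuration $|z|=\rho_{(j+5)m}$, $|w|=\rho_{km}$ with $k\ge j+6$ does give the logarithmic ratio at least $\tfrac12\cdot 2^{2m}$, hence $\beta\ge(m-\tfrac12)\log 2>3$ for $m\ge 6$. In (c), your three sub-cases (radial undershoot, radial overshoot, and the angular-separation case $\tilde d(\xi,u)\ge 3\alpha_{m,j}$) do partition $D\setminus B_{m,j,u}$, each yields at least $(2m-\tfrac12)\log 2$ or better, and $(2m-\tfrac12)\log 2\ge 2\log m$ for all $m\ge 1$; so $\beta(z,w)\ge 2\log m$ in every case. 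This is a correct and self-contained verification of what the paper leaves implicit.
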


We will use the following Lemma \ref{Lemma 3}, which is also the Lemma 4.3 in \cite{J. Xia 2018} to give an analog estimate in the following Lemma \ref{Lemma 3'}.

\begin{lemma}\label{Lemma 3}\cite{J. Xia 2018}
For each triple of $m \geq 6$, $j\in\mathbb{N}_+$ and $u \in E_{m,j}$, define
\begin{equation}\label{z}
z_{m,j,u} = \rho_{jm}u.
\end{equation}
Then we have $B_{m,j,u}\subset D(z_{m,j,u}, R_m)$, where $R_m = 2 + 5m + \log (1 + 2^{10m}\cdot 18m^2)$.
\end{lemma}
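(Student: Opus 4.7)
The plan is to fix an arbitrary point $z\in B_{m,j,u}$, written as $z=r\xi$ with $\xi\in \widetilde{B}(u,3\alpha_{m,j})$ and $r\in[\rho_{jm},\rho_{(j+5)m}]$, and to introduce the auxiliary point $ru$ so that the triangle inequality yields
\[
\beta(z,z_{m,j,u})\leq \beta(r\xi,ru)+\beta(ru,\rho_{jm}u).
\]
The first summand captures the angular displacement on the circle of radius $r$; the second is a purely radial term. I would estimate them separately and check that their sum is at most $R_m=2+5m+\log(1+2^{10m}\cdot 18m^2)$.

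For the radial term, both points $ru$ and $\rho_{jm}u$ lie on the geodesic through $0$ in the direction $u$, so $\beta(ru,\rho_{jm}u)=\beta(\rho_{jm},r)\leq \beta(\rho_{jm},\rho_{(j+5)m})$. Using the closed form $\beta(a,b)=\tfrac12\log\tfrac{(1+b)(1-a)}{(1-b)(1+a)}$ for real $0<a<b<1$ and the identity $1-\rho_{km}=2^{-2km}$, the ratio $(1-\rho_{jm})/(1-\rho_{(j+5)m})$ equals $2^{10m}$, which immediately gives the crude bound $\beta(\rho_{jm},\rho_{(j+5)m})\leq (5m+\tfrac12)\log 2<5m+1$.

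For the angular term I would invoke the key identity, obtained by direct simplification of $|1-r^2\bar u\xi|^2-r^2|u-\xi|^2=(1-r^2)^2$, that
\[
1-|\varphi_{ru}(r\xi)|^2=\frac{(1-r^2)^2}{|1-r^2\bar u\xi|^2}.
\]
Then $\beta(r\xi,ru)\leq \log 2+\tfrac12\log\tfrac{1}{1-|\varphi_{ru}(r\xi)|^2}=\log 2+\log\tfrac{|1-r^2\bar u\xi|}{1-r^2}$, and the triangle inequality $|1-r^2\bar u\xi|\leq (1-r^2)+|1-\bar u\xi|$ reduces everything to estimating $|1-\bar u\xi|/(1-r^2)$. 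From $\xi\in\widetilde{B}(u,3\alpha_{m,j})$ one has $|1-\bar u\xi|<9\alpha_{m,j}^2=9m^2(1-\rho_{jm}^2)\leq 18m^2\cdot 2^{-2jm}$, while $1-r^2\geq 1-\rho_{(j+5)m}^2\geq 2^{-2(j+5)m}$; the quotient is therefore bounded by $18m^2\cdot 2^{10m}$—exactly the combinatorial factor appearing in $R_m$. Assembling the two bounds and absorbing the $\log 2$ constants into the additive ``$+2$'' (using $\log 2<1$) yields the required estimate.

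The main obstacle is the angular estimate: one has to spot (or rederive) the cancellation that produces $(1-r^2)^2$ in the formula for $1-|\varphi_{ru}(r\xi)|^2$, and then track the factors $2^{-2jm}$ and $2^{-2(j+5)m}$ carefully so that the exponent $10m$ shows up cleanly. Once that identity is in hand, the remaining bookkeeping—radial estimate, triangle inequality, replacement of $\log(1+\rho)$ type quantities by simple constants—is routine.
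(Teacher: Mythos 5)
Your proof is correct. The paper states this lemma as a citation to Xia's 2018 paper without reproducing the argument, so there is no in-paper proof to compare against; but your radial-plus-angular decomposition is precisely what the shape of the constant $R_m=2+5m+\log(1+2^{10m}\cdot 18m^2)$ encodes, and each of your estimates checks out: the identity $1-|\varphi_{ru}(r\xi)|^2=(1-r^2)^2/|1-r^2\bar u\xi|^2$ is a special case of $1-|\varphi_a(b)|^2=(1-|a|^2)(1-|b|^2)/|1-\bar ab|^2$ with $|a|=|b|=r$; the bound $|1-\bar u\xi|<9\alpha_{m,j}^2\le 18m^2\cdot 2^{-2jm}$ uses $1-\rho_{jm}^2\le 2\cdot 2^{-2jm}$; the lower bound $1-r^2\ge 2^{-2(j+5)m}$ uses $1-\rho_{(j+5)m}^2\ge 1-\rho_{(j+5)m}$; the radial term is bounded by $(5m+\tfrac12)\log 2$; and the additive constants $\log 2+(5m+\tfrac12)\log 2=(5m+\tfrac32)\log 2$ are indeed below $5m+2$ since $\log 2<1$.
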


\begin{lemma}\label{Lemma 3'}
For each triple of $m \geq 6$, $\hat{j}=(j_1,\cdots,j_n)\in\mathbb{N}^n$ and $\hat{u}=(u_1,\cdots,u_n) \in E_{m,\hat{j}}=E_{m,j_1}\times\cdots \times E_{m,j_n}$, define
\[
z_{m,j_i,u_i} = \rho_{j_im}u_i \text{ for each }i\in\{1,\cdots,n\},~~~z_{m,0,0}=0.
\]
Then we have $\hat{B}_{m,\hat{j},\hat{u}}\subset D(z_{m,\hat{j},\hat{u}}, L_m)$, where
$$\hat{B}_{m,\hat{j},\hat{u}}=B_{m,j_1,u_1}\times\cdots\times B_{m,j_n,u_n},~~z_{m,\hat{j},\hat{u}}=(z_{m,j_1,u_1},\cdots,z_{m,j_n,u_n}),$$
 and $L_m = nR_m+n\log(2^{10m+1})$ with $R_m$ appearing in Lemma \ref{Lemma 3}.
\end{lemma}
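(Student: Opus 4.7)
The approach is to exploit the product structure of the metric $d$, namely $d(z,w)=\sum_{i=1}^n\beta(z_i,w_i)$, and estimate each coordinate-wise contribution separately. Fix $w=(w_1,\ldots,w_n)\in\hat B_{m,\hat j,\hat u}$. Then for each index $i$, we have $w_i\in B_{m,j_i,u_i}$, and we need to bound $\beta(z_{m,j_i,u_i},w_i)$, after which summing over $i$ will give $d(z_{m,\hat j,\hat u},w)$.

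The estimate splits naturally into two cases depending on whether $j_i\geq 1$ or $j_i=0$. In the first case, $z_{m,j_i,u_i}=\rho_{j_im}u_i$ and $B_{m,j_i,u_i}$ is as in (\ref{R-2}), and Lemma \ref{Lemma 3} applied to the $i$-th coordinate yields immediately $\beta(z_{m,j_i,u_i},w_i)<R_m$. In the second case, by (\ref{buchongA}) we have $z_{m,0,0}=0$ and $|w_i|<\rho_{5m}=1-2^{-10m}$. Using the defining formula for the Bergman metric on the disk,
\[
\beta(0,w_i)=\tfrac12\log\frac{1+|w_i|}{1-|w_i|}<\tfrac12\log\frac{2}{2^{-10m}}=\tfrac12\log(2^{10m+1})\leq\log(2^{10m+1}).
\]

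Summing the per-coordinate bounds over $i=1,\ldots,n$, each term is at most $\max\{R_m,\log(2^{10m+1})\}$, and in particular
\[
d(z_{m,\hat j,\hat u},w)=\sum_{i=1}^n\beta(z_{m,j_i,u_i},w_i)\leq nR_m+n\log(2^{10m+1})=L_m,
\]
which is the required inclusion $\hat B_{m,\hat j,\hat u}\subset D(z_{m,\hat j,\hat u},L_m)$.

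There is no real obstacle here; the only subtle point is remembering to treat the $j_i=0$ case separately, since Lemma \ref{Lemma 3} is stated only for $j\in\mathbb{N}_+$, and (\ref{buchongA}) must be invoked to handle the central ``cube'' $B_{m,0,0}$. Once the metric is decomposed along coordinates, the bound is an immediate consequence of Lemma \ref{Lemma 3} together with this elementary disk-centered estimate.
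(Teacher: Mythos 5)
Your proof is correct and follows essentially the same route as the paper: decompose the metric $d$ coordinate-wise, invoke Lemma \ref{Lemma 3} for indices with $j_i\geq 1$, and use the explicit Bergman-metric computation $\beta(0,w_i)\leq\log(2^{10m+1})$ on $B_{m,0,0}$ for indices with $j_i=0$, then sum. No substantive difference from the paper's argument.
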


\begin{proof}
For any $z\in \hat{B}_{m,\hat{j},\hat{u}}$, if $(j_i,u_i)\neq (0,0)$, then by Lemma \ref{Lemma 3}, we have
\[
d(z,z_{m,\hat{j},\hat{u}})=\sum_{i=1}^n\beta(z_i,z_{m,j_i,u_i})\leq nR_m.
\]
Since for any $u\in B_{m,0,0}$,
$$\beta(u,0)=\frac12\log\frac{1+|\varphi_u(0)|}{1-|\varphi_u(0)|}=\frac12\log\frac{1+|u|}{1-|u|}\leq \frac12\log\frac{1+\rho_{5m}}{1-\rho_{5m}}=\frac12\log\frac{2-2^{-10m}}{2^{-10m}}\leq \log(2^{10m+1}),$$
 then $B_{m,0,0}\subset D(z_{m,0,0},\log(2^{10m+1}))$ and hence for $L_m = nR_m+n\log(2^{10m+1})$, we have
\[
\hat{B}_{m,\hat{j},\hat{u}}\subset D(z_{m,\hat{j},\hat{u}},L_m).
\]
\end{proof}

The following key Lemma from \cite{Y. Wang 2020} would be used frequently in this paper.

\begin{lemma}\label{KeyLemma}\cite{Y. Wang 2020}
Let $E_1,\cdots,E_l$ be the orthogonal projections on a Hilbert space $\mathcal{H}$. Suppose that they are mutually orthogonal. Given any $A\in B(\mathcal{H})$, there is a subset $L$ in $\{1,\cdots,l\}$ such that if we define
\[
E=\sum_{j\in L}E_j\text{ and }F=\sum_{k\in \{1,\cdots,l\}\setminus L}E_k,
\]
then
$$\|\sum_{j\neq k}E_jAE_k\|\leq 4(\|EAF\|+\|FAE\|).$$\\
\end{lemma}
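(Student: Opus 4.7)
The plan is a short averaging (random-partition) argument. Set $T:=\sum_{j\ne k}E_jAE_k$, and for each subset $L\subseteq\{1,\dots,l\}$ write $E_L:=\sum_{j\in L}E_j$ and $F_L:=\sum_{k\notin L}E_k$. Using only the mutual orthogonality of the $E_j$, one unpacks
$$E_L A F_L + F_L A E_L \ =\ \sum_{\substack{j\ne k\\ \text{exactly one of } j,k\ \in\ L}} E_j A E_k,$$
so only off-diagonal pairs appear on the right-hand side; the diagonal terms $E_jAE_j$ contribute $0$ automatically from $E_LF_L=0$.

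The main step is to average this identity uniformly over all $2^l$ subsets $L\subseteq\{1,\dots,l\}$. For any fixed pair $(j,k)$ with $j\ne k$, the number of subsets in which exactly one of $j,k$ lies is $2\cdot 2^{l-2}=2^{l-1}$, so the corresponding indicator averages to $\tfrac12$, yielding
$$\frac{1}{2^l}\sum_{L\subseteq\{1,\dots,l\}}\bigl(E_L A F_L + F_L A E_L\bigr)\ =\ \tfrac12\,T.$$
Taking norms and applying the triangle inequality produces some $L^*\subseteq\{1,\dots,l\}$ with $\|E_{L^*}AF_{L^*}+F_{L^*}AE_{L^*}\|\ge\tfrac12\|T\|$, whence
$$\|T\|\ \le\ 2\bigl(\|E_{L^*}AF_{L^*}\|+\|F_{L^*}AE_{L^*}\|\bigr)\ \le\ 4\bigl(\|E_{L^*}AF_{L^*}\|+\|F_{L^*}AE_{L^*}\|\bigr),$$
which is exactly the claimed estimate (in fact with room to spare, since the argument already gives the constant $2$).

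There is no genuine obstacle: the whole argument collapses to the elementary counting identity above. The only line that needs a word of care is checking that the diagonal pairs $(j,j)$ drop out on the right-hand side, which is immediate from $E_LF_L=0$. An equivalent and perhaps more suggestive formulation is to put each index $j$ into $L$ independently with probability $1/2$; then $\mathbb{E}_L[E_LAF_L+F_LAE_L]=\tfrac12 T$, and pigeonhole on the finite sample space produces the required $L^*$.
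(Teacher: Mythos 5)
Your argument is correct, and it is worth flagging that the paper does not supply a proof of this lemma at all — it simply cites it from [Y. Wang 2020] — so there is no in-paper proof to compare against. Your random-partition average is a clean, self-contained route, and it even yields the sharper constant $2$ rather than the stated $4$: the identity $\frac{1}{2^l}\sum_{L}(E_L A F_L+F_L A E_L)=\tfrac12\sum_{j\ne k}E_jAE_k$ is exactly right, since for each ordered pair $(j,k)$ with $j\ne k$ the indicator of ``exactly one of $j,k$ lies in $L$'' averages to $\tfrac12$, and then an average being at most a maximum produces the desired $L^*$. The techniques in the Xia/Wang--Xia line of papers for such pigeonhole decompositions are likewise averaging arguments (over random subsets or over Rademacher signs $\sum_j\epsilon_jE_j$), so your approach is in the same spirit.

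One small expositional nit: you justify the absence of diagonal terms by ``$E_LF_L=0$,'' but in fact the diagonal terms never arise in the first place — in $E_LAF_L=\sum_{j\in L}\sum_{k\notin L}E_jAE_k$ the index $j$ ranges over $L$ and $k$ over its complement, so $j\ne k$ automatically, with no orthogonality needed for that step. (Mutual orthogonality is what makes the $E_j$ a genuine orthogonal decomposition and is needed elsewhere in the paper when these projections are used, but this particular identity is pure distributivity.) This does not affect the validity of the proof.
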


\section{Proof of Theorem A and B}
\

As we already mentioned, our goal for this section is to prove Theorem A and B. Firstly, we will introduce the following definition.

\begin{definition}\label{Definition 1} Let $m\geq 6$ be given.

(a) For each pair of $k \in\{0,1, 2, 3, 4, 5\}$ and $\nu \in\{1,\cdots, \widetilde{N}_0\}$, where $\widetilde{N}_0$ is the integer that appears in (\ref{Delta3}), let
\[
I_m^{(\nu,k)}=\{(m, 6j+k, u):j\in\mathbb{N},u\in E_{m,6j+k}^{(\nu)}\} \text{ for }k\neq 0
\]
\[
I_m^{(\nu,k)}=\{(m, 6j+k, u):j\in\mathbb{N},u\in E_{m,6j}^{(\nu)}\} \text{ for }k= 0, \text{ where } E_{m,0}^{(\nu)}=\{0\}.
\]
Write
\[
\hat{\nu}=(\nu_1,\cdots,\nu_n),~~v_i\in\{1,\cdots,\widetilde{N}_0\} \text{ for any }1\leq i\leq n;
\]
\[
\hat{k}=(k_1,\cdots,k_n),~~k_i\in\{0,\cdots,5\} \text{ for any }1\leq i\leq n.
\]
\begin{equation}\label{zhibiao}
\hat{I}_m^{(\hat{\nu},\hat{k})}= (I_m^{(\nu_1,k_1)}\times\cdots\times I_m^{(\nu_n,k_n)})\setminus \{((m,0,0),\cdots,(m,0,0))\}.
\end{equation}
For each $1\leq i\leq n$, $I_m^{(\nu_i,k_i)}=\{(m, 6j_i+k_i, u_i):j_i\in\mathbb{N},u_i\in E_{m,6j_i+k_i}^{(\nu_i)}\}$.

(b)  Let $\hat{I}_m=\bigsqcup_{\hat{\nu}}\bigsqcup_{\hat{k}}\hat{I}_m^{(\hat{\nu},\hat{k})}$.
\end{definition}
In the rest of this paper, for any $\hat{\omega}=((m, 6j_1+k_1, u_1),\dots ,(m, 6j_n+k_n, u_n)) \in \hat{I}_m^{(\hat{\nu},\hat{k})}$, let \begin{equation}\label{liangyong}
\hat{B}_{\hat{\omega}}=B_{m, 6j_1+k_1, u_1}\times \dots \times B_{m, 6j_n+k_n, u_n}\, \text{ and }\, \hat{A}_{\hat{\omega}}=A_{m, 6j_1+k_1, u_1}\times \dots \times A_{m, 6j_n+k_n, u_n}.
\end{equation}
Let $\hat{j}=(j_1,\cdots,j_n)$ and $\hat{u}=(u_1,\cdots,u_n)$. We should note that, in this case, $\hat{B}_{\hat{\omega}}=\hat{B}_{m,\hat{j},\hat{u}}$  and $\hat{A}_{\hat{\omega}}=\hat{A}_{m,\hat{j},\hat{u}}$. By Lemma \ref{Lemma 2} and the definition of $B_{m,0,0}$, it is easy to find that $\hat{B}_{\hat{\omega}}\cap \hat{B}_{\hat{\omega}'}=\emptyset$ when $\hat{\omega}\neq \hat{\omega}'$ in $I_m^{(\hat{\nu},\hat{k})}$ for each $\hat{\nu}\in\{1,\cdots,\widetilde{N}_0\}^n$ and $\hat{k}\in\{0,\cdots,5\}^n$.\\

By the above Radial-spherical decomposition in section 4, it is easy to check that
\begin{equation}\label{quanbing}\mathop\bigcup \limits_{\hat{\omega} \in \hat{I}_m}\hat{A}_{\hat{\omega}}=\{z\in \mathbb{D}:|z_i|<\rho_{3m}\,\text{for each}\, 1\le i\le n\}^{c},\end{equation}
 Write
 \begin{equation}\label{yuji}
 \hat{A}_{m,\hat{0},\hat{0}}\doteq A_{m,0,0}\times \cdots \times A_{m,0,0}=\{z\in \mathbb{D}:|z_i|<\rho_{3m}\,\text{for each}\, 1\le i\le n\}.
 \end{equation}

\begin{lemma}\label{Lemma F}
 For $A\in B(L_a^2(\mathbb{D}))$, if
\begin{equation}\label{*}
\lim_{z\to \partial{\mathbb{D}}}\langle Ak_z,k_z\rangle=0,
\end{equation}
then for any $\varepsilon>0$, we have
\begin{equation}\label{**}
\lim_{z\to \partial{\mathbb{D}}}\sup\{|\langle Ak_z,k_w\rangle|:d(z,w)\leq r\}=0.
\end{equation}
\end{lemma}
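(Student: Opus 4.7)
I will argue by contradiction. Suppose the conclusion fails for some fixed $r > 0$: then there exist $\varepsilon > 0$ and sequences $\{z_n\}, \{w_n\} \subset \mathbb{D}$ with $z_n \to \partial \mathbb{D}$, $d(z_n, w_n) \leq r$, and $|\langle A k_{z_n}, k_{w_n}\rangle| \geq \varepsilon$. Set $\xi_n = \varphi_{z_n}(w_n)$; the Möbius invariance of $d$ yields $d(0, \xi_n) = d(z_n, w_n) \leq r$, so $\{\xi_n\}$ stays in the compact set $\overline{D(0, r)} \subset \mathbb{D}$, and we may pass to a subsequence with $\xi_n \to \xi_0 \in \mathbb{D}$.

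The key idea is to transport the pairing back to the base point $0$ using the unitaries $U_{z_n}$. Since $U_{z_n}^2 = I$ and $k_0 \equiv 1$, formula (\ref{yousuanzi}) gives $k_{z_n} = U_{z_n}(1)$ and $k_{w_n} = \overline{g_{z_n, \xi_n}}\, U_{z_n} k_{\xi_n}$, with $|g_{z_n, \xi_n}| = 1$. Setting $B_n = U_{z_n}^{*} A U_{z_n}$, one obtains
\[
\langle A k_{z_n}, k_{w_n}\rangle \;=\; g_{z_n, \xi_n}\, \langle B_n 1, k_{\xi_n}\rangle,
\]
and $\|B_n\| = \|A\|$. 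Because $L_a^2(\mathbb{D})$ is separable, the $\|A\|$-ball of $B(L_a^2(\mathbb{D}))$ is sequentially compact in the weak operator topology; after a further subsequence, $B_n \to B$ in WOT and $g_{z_n, \xi_n} \to g_0$ with $|g_0| = 1$. Since $\xi \mapsto k_\xi$ is norm-continuous on compact subsets of $\mathbb{D}$ and $\|B_n 1\| \leq \|A\|$, one gets $\langle B_n 1, k_{\xi_n}\rangle - \langle B_n 1, k_{\xi_0}\rangle \to 0$, so $\langle A k_{z_n}, k_{w_n}\rangle \to g_0 \langle B 1, k_{\xi_0}\rangle$, whence $|\langle B 1, k_{\xi_0}\rangle| \geq \varepsilon > 0$.

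Next I will show $\widetilde{B} \equiv 0$ on $\mathbb{D}$. For each $\xi \in \mathbb{D}$, unitarity of $U_{z_n}$ together with (\ref{yousuanzi}) gives
\[
\langle B_n k_\xi, k_\xi\rangle \;=\; |g_{z_n, \xi}|^2\, \langle A k_{\varphi_{z_n}(\xi)}, k_{\varphi_{z_n}(\xi)}\rangle \;=\; \widetilde{A}(\varphi_{z_n}(\xi)).
\]
From $z_n \to \partial \mathbb{D}$, after relabeling we may assume some component $|z_{n, i_0}| \to 1$, so $|\varphi_{z_{n, i_0}}(\xi_{i_0})| \to 1$, which forces $\varphi_{z_n}(\xi) \to \partial \mathbb{D}$. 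By hypothesis (\ref{*}), $\widetilde{A}(\varphi_{z_n}(\xi)) \to 0$, and WOT convergence yields $\widetilde{B}(\xi) = 0$.

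Finally, I invoke injectivity of the Berezin transform on $B(L_a^2(\mathbb{D}))$: the function $F(z, w) = \langle B K_z, K_w\rangle = (BK_z)(w)$ is antiholomorphic in $z$ and holomorphic in $w$, so the substitution $u = \bar z$ produces a holomorphic function $\Phi(u, w)$ on $\mathbb{D} \times \mathbb{D} \subset \mathbb{C}^{2n}$. The identity $\widetilde{B}\equiv 0$ gives $\Phi(\bar z, z) = 0$ for all $z$; expanding in a power series $\Phi(u, w) = \sum a_{\alpha, \beta} u^\alpha w^\beta$ and invoking the linear independence of the monomials $\bar z^\alpha z^\beta$ as real-analytic functions on $\mathbb{D}$ forces all $a_{\alpha, \beta} = 0$, hence $\Phi \equiv 0$, $B K_z = 0$ for every $z$, and therefore $B = 0$. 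This contradicts $\langle B 1, k_{\xi_0}\rangle \neq 0$. The main subtlety is the combination of WOT sequential compactness with the Berezin-transform injectivity on the polydisk; both ingredients are standard, but the transport via $U_{z_n}$ is essential because it converts the off-diagonal matrix element at the boundary into a pairing on a compact set to which WOT convergence applies.
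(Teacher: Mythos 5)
Your argument is correct and essentially reproduces the paper's proof: extract a WOT-convergent subsequence $B_n = U_{z_n}^{*}A U_{z_n} \to B$ (the paper invokes Banach--Alaoglu; you invoke sequential WOT-compactness of the norm ball, which is the same thing for separable $L_a^2(\mathbb{D})$), show $\widetilde{B}\equiv 0$ via the hypothesis and the M\"obius invariance, and then contradict $\langle B1,k_{\xi_0}\rangle\neq 0$. The only difference is that you make explicit the injectivity of the Berezin transform (via the holomorphic/antiholomorphic power-series argument), whereas the paper invokes it silently in passing from the vanishing Berezin transform of the limit to $\langle\widetilde{A}1,k_u\rangle=0$.
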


\begin{proof}
Suppose (\ref{**}) fails for some $r>0$, then there exists sequences $\{z^{(j)}\},\{w^{(j)}\}$ in $\mathbb{D}$ such that
\begin{itemize}
  \item (1)  $d(z^{(j)},w^{(j)})\leq r$ for any $j\in\mathbb{N}$;
  \item (2)  There exists a $a>0$ such that $|\langle Ak_{z^{(j)}}, k_{w^{(j)}}\rangle|\geq a$ for any $j\in\mathbb{N}$;
  \item (3)   $z^{(j)}\to \partial\mathbb{D}$ as $j\to\infty$.
\end{itemize}

Passing to subsequence if necessary, by Banach-Alaoglu Theorem, we may assume that the weak limit exists and we denote
\[
\widetilde{A}=w-\lim_{j\to\infty}U_{z^{(j)}}^*AU_{z^{(j)}}.
\]
Recall that
\[
(U_{z}h)(\xi)=k_z(\xi)h(\varphi_{z}(\xi)), \text{ for any }\xi\in\mathbb{D}, h\in L_a^2(\mathbb{D}).
\]
For any $w\in \mathbb{D}$, we have
\[
\langle \widetilde{A}k_w,k_w\rangle =\lim_{j\to\infty}\langle U_{z^{(j)}}^*AU_{z^{(j)}}k_w,k_w\rangle =\lim_{j\to\infty}\langle AU_{z^{(j)}}k_w,U_{z^{(j)}}k_w\rangle=\lim_{j\to\infty}\langle Ak_{\varphi_{z^{(j)}}(w)},k_{\varphi_{z^{(j)}}(w)}\rangle,
\]
where the last equation follows by (\ref{yousuanzi}). By (\ref{*}), we have $\lim_{j\to\infty}\langle Ak_{\varphi_{z^{(j)}}(w)},k_{\varphi_{z^{(j)}}(w)}\rangle=0$, so the Berezin transform of $\widetilde{A}$ equals to 0.

On the other hand, we have
$$
|\langle Ak_{z^{(j)}},k_{w^{(j)}}\rangle|=|\langle AU_{z^{(j)}}1,U_{z^{(j)}}k_{\varphi_{z^{(j)}}(w^{(j)})}\rangle|=|\langle U_{z^{(j)}}^*AU_{z^{(j)}}1,k_{\varphi_{z^{(j)}}(w^{(j)})}\rangle|.
$$
According to condition (1) and the M\"obius invariance of $\beta$, we have
\[
d(\varphi_{z^{(j)}}(w^{(j)}),0)=d(\varphi_{z^{(j)}}(w^{(j)}),\varphi_{z^{(j)}}(z^{(j)}))=d(w^{(j)},z^{(j)})\leq r.
\]
By passing to subsequence if necessary, we may assume that there exists $u\in\overline{D(0,r)}$ such that $\{\varphi_{z^{(j)}}(w^{(j)})\}$ converges to $u$ in norm as $j$ goes to $\infty$. Therefore,
\[
\lim_{j\to\infty}|\langle Ak_{z^{(j)}},k_{w^{(j)}}\rangle|=\lim_{j\to\infty}|\langle U_{z^{(j)}}^*AU_{z^{(j)}}1,k_{\varphi_{z^{(j)}}(w^{(j)})}\rangle|=|\langle \widetilde{A}1,k_u\rangle|=0,
\]
which  contradicts with (2).
\end{proof}

\noindent
{\bf Proof of Theorem A:} A direct computation shows that $k_z$ converges weakly to $0$ as $z$ goes to $\partial \mathbb{D}$. Then we just need to prove the `only if' part. By Proposition \ref{Proposition B} and linearity, we only need to prove that for each $X,Y\in\mathcal{D}_{0,0}$, $XAY$ is a compact operator.

We can assume that $X=\sum_{z\in\Gamma}c_zk_z\otimes k_z$, $Y=\sum_{z\in G}d_zk_z\otimes k_z$, where $\Gamma, G$ are $1-$separated and $\sup_{z\in\Gamma}|c_z|<\infty$, $\sup_{z\in G}|d_z|<\infty$.



For each pair of $\hat{k}=(k_1,\cdots,k_n)$ and $\hat{\nu}=(\nu_1,\cdots,\nu_n)$, where $k_i\in\{0, 1, 2,\cdots,5\}$ and $\nu_{i}\in\{1,\cdots,\widetilde{N}_0\}$ for $1\leq i\leq n$. By (\ref{quanbing}) and (\ref{yuji}), it is easy to check that there is a partition of
$$\Gamma=(\mathop\bigsqcup \limits_{\hat{\nu},\hat{k}}\Gamma^{(\hat{\nu},\hat{k})})\sqcup\Gamma_0,$$
where
\[
\Gamma^{(\hat{\nu},\hat{k})}\subset (\mathop\bigcup \limits_{\hat{\omega} \in \hat{I}_m^{(\hat{\nu},\hat{k})}}\hat{A}_{\hat{\omega}})\cap \Gamma \quad \text{and} \quad \Gamma_0=\Gamma\cap \{z\in\mathbb{D}:|z_i|<\rho_{3m}\,\text{ for any }\,1\le i \le n\}.
\]
And for each pair of $\hat{k}=(k_1,\cdots,k_n)$ and $\hat{\nu}=(\nu_1,\cdots,\nu_n)$, there also exists a partition
$$\Gamma^{(\hat{v},\hat{k})}=\bigsqcup_{\hat{j}\in\mathbb{N}^n}\bigsqcup_{\hat{u}\in E_{m,6\hat{j}+\hat{k}}^{(\hat{\nu})}}\Gamma_{\hat{j},\hat{u}}^{(\hat{\nu},\hat{k})},$$ where $\Gamma_{\hat{j},\hat{u}}^{(\hat{\nu},\hat{k})} \subset \Gamma^{(\hat{\nu},\hat{k})}\cap \hat{A}_{m,6\hat{j}+\hat{k},\hat{u}}$ and $E_{m,6\hat{j}+\hat{k}}^{(\hat{\nu})}=E_{m,6j_1+k_1}^{(\nu_1)}\times\dots \times E_{m,6j_n+k_n}^{(\nu_n)}$.  Define $X_{\hat{j},\hat{u}}^{(\hat{\nu},\hat{k})}=\sum_{z\in\Gamma_{\hat{j},\hat{u}}^{(\hat{\nu},\hat{k})}}c_zk_z\otimes k_z$. Then we have
$$ X=\sum_{\hat{\nu}\in \{1,\cdots ,\widetilde{N}_0\}^n}\sum_{\hat{k}\in\{0,\cdots,5\}^n}\sum_{\hat{j}\in\mathbb{N}^n}\sum_{\hat{u}\in E_{m,6\hat{j}+\hat{k}}^{(\hat{\nu})} }X_{\hat{j},\hat{u}}^{(\hat{\nu},\hat{k})}+X_0$$
where
$X_0=\sum_{z\in\Gamma_0}c_zk_z\otimes k_z$. So we have $XA=(\sum_{(\hat{\nu},\hat{k})}\sum_{\hat{j},\hat{u}}X_{\hat{j},\hat{u}}^{(\hat{\nu},\hat{k})}+X_0)A$.

For each $\hat{\nu},\hat{k},\hat{j}$ and $\hat{u}\in E_{m,6\hat{j}+\hat{k}}^{(\hat{\nu})}$, define $G^{(\hat{\nu},\hat{k})}=\bigcup_{\hat{j}\in\mathbb{N}^n}\bigcup_{\hat{u}\in E_{m,6\hat{j}+\hat{k}}^{(\hat{\nu})}}\hat{B}_{m,6\hat{j}+\hat{k},\hat{u}} \cap G.$ We also can easily check that there is a partition
\[
G^{(\hat{\nu},\hat{k})}=\bigsqcup_{\hat{j}\in\mathbb{N}^n}\bigsqcup_{\hat{u}\in E_{m,6\hat{j}+\hat{k}}^{(\hat{\nu})}} G_{\hat{j},\hat{u}}^{(\hat{\nu},\hat{k})},
\]
where $G_{\hat{j},\hat{u}}^{(\hat{\nu},\hat{k})}\subseteq G^{(\hat{\nu},\hat{k})}\cap \hat{B}_{m,6\hat{j}+\hat{k},\hat{u}}.$
Define
$$
Y^{(\hat{\nu},\hat{k})}=\sum_{{\hat{j}\in\mathbb{N}^n},\hat{u}\in E_{m,6\hat{j}+\hat{k}}^{(\hat{\nu})}}Y_{\hat{j},\hat{u}}^{(\hat{\nu},\hat{k})},
$$
where $Y_{\hat{j},\hat{u}}^{(\hat{\nu},\hat{k})}=\sum_{z\in G_{\hat{j},\hat{u}}^{(\hat{\nu},\hat{k})}}d_zk_z\otimes k_z$.
Then
\[
Y=Y^{(\hat{\nu},\hat{k})}+Y_{(\hat{\nu},\hat{k})},\text{ where }G_{(\hat{\nu},\hat{k})}=G\setminus G^{(\hat{\nu},\hat{k})}\text{ and }Y_{(\hat{\nu},\hat{k})}=\sum_{z\in G_{(\hat{\nu},\hat{k})}}d_zk_z\otimes k_z.
\]
Since card$\{(\hat{\nu},\hat{k}):\hat{\nu}\in \{1,2,\cdots,\widetilde{N}_0\}^n,\hat{k}\in\{0,\cdots,5\}^n,\text{ for any } 1\leq i\leq n\}$ is finite, it suffices to show that for each $(\hat{\nu},\hat{k})$, $X^{(\hat{\nu},\hat{k})}AY=K_m+S_m$, where $K_m$ is compact and $\|S_m\|$ is small when $m$ is large and $X_0AY$ is compact.

For $X_0AY$, by Lemma \ref{Lemma 3'}, we have that $\hat{A}_{m,\hat{0},\hat{0}}\subseteq D(z_{m,\hat{0},\hat{0}},L_m),$ where $\hat{A}_{m,\hat{0},\hat{0}}$ appears in (\ref{yuji}). Combining this with Lemma \ref{Lemma 0.1} (a), we have that $\Gamma_0$ is finite. Then $X_0AY$ is a compact operator.

For $X^{(\hat{\nu},\hat{k})}AY$, let
\begin{eqnarray*}
X^{(\hat{\nu},\hat{k})}AY &=& X^{(\hat{\nu},\hat{k})}AY^{(\hat{\nu},\hat{k})}+X^{(\hat{\nu},\hat{k})}AY_{(\hat{\nu},\hat{k})} \\
                          &=& \sum_{\hat{j},\hat{u}}X_{\hat{j},\hat{u}}^{(\hat{\nu},\hat{k})}AY_{\hat{j},\hat{u}}^{(\hat{\nu},\hat{k})}+\sum_{(\hat{j},\hat{u})\neq (\hat{j}',\hat{u}')}X_{\hat{j},\hat{u}}^{(\hat{\nu},\hat{k})}AY_{\hat{j}',\hat{u}'}^{(\hat{\nu},\hat{k})}+X^{(\hat{\nu},\hat{k})}AY_{(\hat{\nu},\hat{k})} \\
                          &=& T_{m,1}+T_{m,2}+T_{m,3},
\end{eqnarray*}
where $T_{m,1}=\sum_{\hat{j},\hat{u}}X_{\hat{j},\hat{u}}^{(\hat{\nu},\hat{k})}AY_{\hat{j},\hat{u}}^{(\hat{\nu},\hat{k})}$, $T_{m,2}=\sum_{(\hat{j},\hat{u})\neq (\hat{j}',\hat{u}')}X_{\hat{j},u}^{(\hat{\nu},\hat{k})}AY_{\hat{j}',\hat{u}'}^{(\hat{\nu},\hat{k})}$, $T_{m,3}=X^{(\hat{\nu},\hat{k})}AY_{(\hat{\nu},\hat{k})}$.

(1)  To prove that $T_{m,1}$ is a compact operator.

Since  $d(z,w)\leq 2L_m$ for $z\in \Gamma_{\hat{j},\hat{u}}^{(\hat{\nu},\hat{k})},w\in G_{\hat{j},\hat{u}}^{(\hat{\nu},\hat{k})}$, then  by the similar proof with $T_{R,1}$ in the proof of Proposition \ref{Proposition D},
\[
T_{m,1}=\sum_{\hat{j},\hat{u}}\sum_{z\in \Gamma_{\hat{j},\hat{u}}^{(\hat{\nu},\hat{k})},w\in G_{\hat{j},\hat{u}}^{(\hat{\nu},\hat{k})}}c_zd_w\langle Ak_w,k_z\rangle k_z\otimes k_w\in \text{ Span}(\mathcal{D}_0).
\]
 By Lemma \ref{Lemma F}, we have $|\langle Ak_w,k_z\rangle|\to 0$ as $z\to\partial \mathbb{D}$. Combining with Lemma \ref{corollary 2}, we have that $T_{m,1}$ is compact operator.


(2)  To estimate $\|T_{m,2}\|,\|T_{m,3}\|$.

Without loss of generality, let $\{e_z:z\in \Gamma\}$ and $\{\eta_z:z\in G\}$ be orthonormal sets with $\langle e_z, \eta_w\rangle=0$ for any $z\in \Gamma$, $w\in G$. We can decompose $T_{m,2}$ as
\[
T_{m,2}=\widetilde{X}\widetilde{T}_{m,2}\widetilde{Y}\text{, where }\widetilde{X}=\sum_{z\in \Gamma}c_zk_z\otimes e_z,\widetilde{Y}=\sum_{z\in G}d_z\eta_z\otimes k_z
\]

\[
\widetilde{T}_{m,2}=\sum_{(\hat{j},\hat{u})\neq (\hat{j}',\hat{u}')}\widetilde{X}_{\hat{j},\hat{u}}^{(\hat{\nu},\hat{k})}A\widetilde{Y}_{\hat{j}',\hat{u}'}^{(\hat{\nu},\hat{k})}\text{, where }\widetilde{X}_{\hat{j},\hat{u}}^{(\hat{\nu},\hat{k})}=\sum_{z\in \Gamma_{\hat{j},\hat{u}}^{(\hat{\nu},\hat{k})}}e_z\otimes k_z,~\widetilde{Y}_{\hat{j}',\hat{u}'}^{(\hat{\nu},\hat{k})}=\sum_{w\in G_{\hat{j}',\hat{u}'}^{(\hat{\nu},\hat{k})}}k_w\otimes \eta_w.
\]

By Proposition \ref{Proposition A}, it suffices to estimate $\|\widetilde{T}_{m,2}\|$.

For each $J>0$, define $N_J=\{\hat{j}\in\mathbb{N}^n:|j_i|\leq J\text{ for any }\, 1\le i\le n\}$ and
\[
\widetilde{T}_{m,2,J}=\sum_{(\hat{j},\hat{u})\neq (\hat{j}',\hat{u}'),\hat{j},\hat{j}'\in N_J, u\in E_{m,6\hat{j}+\hat{k}}^{(\hat{\nu})},\hat{u}'\in E_{m,6\hat{j'}+\hat{k}}^{(\hat{\nu})}}\widetilde{X}_{\hat{j},\hat{u}}^{(\hat{\nu},\hat{k})}A\widetilde{Y}_{\hat{j}',\hat{u}'}^{(\hat{\nu},\hat{k})},
\]
then we have $\widetilde{T}_{m,2,J}\to \widetilde{T}_{m,2}$ strongly as $J\to \infty$. Therefore, there is a $J=J(m)$ such that
\[
\|\widetilde{T}_{m,2}\|\leq2\|\widetilde{T}_{m,2,J}\|.
\]
 Combining this with Lemma \ref{KeyLemma}, there exists a subset $L_J\subset N_J$ such that
\begin{equation}\label{G2}
  \|\widetilde{T}_{m,2,J}\|\leq 4(\|(\sum_{\hat{j}\in L_J}\sum_{\hat{u}}\widetilde{X}_{\hat{j},\hat{u}}^{(\hat{\nu},\hat{k})})A(\sum_{\hat{j}'\in N_J\setminus L_J}\sum_{\hat{u}'}\widetilde{X}_{\hat{j}',\hat{u}'}^{(\hat{\nu},\hat{k})})\|+\|(\sum_{\hat{j}\in N_J\setminus L_J}\sum_{\hat{u}}\widetilde{X}_{\hat{j},\hat{u}}^{(\hat{\nu},\hat{k})})A(\sum_{\hat{j}'\in L_J}\sum_{\hat{u}'}\widetilde{Y}_{\hat{j}',\hat{u}'}^{(\hat{\nu},\hat{k})})\|)
\end{equation}
in which
\[
\sum_{\hat{j}\in L_J}\sum_{\hat{u}}\widetilde{X}_{\hat{j},\hat{u}}^{(\hat{\nu},\hat{k})}=\sum_{\hat{j}\in L_J}\sum_{\hat{u}}\sum_{z\in \Gamma_{\hat{j},\hat{u}}^{(\hat{\nu},\hat{k})}}e_z\otimes k_z,
\]
\[
\sum_{\hat{j}'\in N_J\setminus L_J}\sum_{\hat{u}'}\widetilde{X}_{\hat{j}',\hat{u}'}^{(\hat{\nu},\hat{k})}=\sum_{\hat{j}'\in N_J\setminus L_J}\sum_{\hat{u}'}\sum_{w\in G_{\hat{j}',\hat{u}'}^{(\hat{\nu},\hat{k})}}k_w\otimes \eta_w.
\]
If $z\in\Gamma_{\hat{j},\hat{u}}^{(\hat{\nu},\hat{k})}\subset \hat{A}_{m,6\hat{j}+\hat{k},\hat{u}}$, $w\in G_{\hat{j}',\hat{u}'}^{(\hat{\nu},\hat{k})}\subset \hat{B}_{m,6\hat{j}'+\hat{k}',\hat{u}'}$ and $(\hat{j},\hat{u})\neq(\hat{j}',\hat{u}')$, we have $d(z,w)\geq 2 \log m$ by Lemma \ref{Lemma 2} (c). The Lemma \ref{Lemma E} combined with (\ref{G2}), it gives that $\|\widetilde{T}_{m,2,J}\|$ is small when $m$ is large. Furthermore, $\|\widetilde{T}_{m,2}\|$ is small when $m$ is large. The estimation of $\|T_{m,3}\|$ is similar to $\|T_{m,2}\|$. Then we complete the proof.
$\Box$

\begin{proposition}\label{Proposition H}
 If $f\in L^{\infty}(\mathbb{D})$, then $T_f\in C^*(\mathcal{D})$.
\end{proposition}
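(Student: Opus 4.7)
The plan is to invoke Corollary \ref{Corollary 3} to reduce the claim to showing that $XT_fY \in C^*(\mathcal{D})$ for every pair $X,Y \in \mathcal{D}_{0,0}$; the structure of the argument then closely mirrors the decomposition proof of Proposition \ref{Proposition D}. Writing $X = \sum_{z \in \Gamma} c_z k_z \otimes k_z$ and $Y = \sum_{w \in G} d_w k_w \otimes k_w$ with $\Gamma, G$ taken to be $1$-separated via Lemma \ref{Lemma 0.1}(b), I would expand
\[
XT_fY = \sum_{(z,w)\in \Gamma\times G} c_z d_w \langle T_f k_w, k_z\rangle\, k_z \otimes k_w,
\]
and for each $R > 0$ split this as $T_{R,1} + T_{R,2}$ according to whether $d(z,w) \le R$ or $d(z,w) > R$.

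For the near-diagonal piece $T_{R,1}$, Lemma \ref{Lemma 0.1}(a) provides a uniform cardinality bound on $\{w\in G : d(z,w)\le R\}$, so exactly as in the treatment of $T_{R,1}$ in Proposition \ref{Proposition D}, a finite partition of the index set produces subsets on which $w = \gamma(z)$ is a well-defined function with $d(z,\gamma(z)) \le R$; since the coefficients $c_z d_w \langle T_f k_w, k_z\rangle$ are uniformly bounded by $\|f\|_\infty \sup|c_z|\sup|d_w|$, each such piece lies in $\mathcal{D}_0$, and hence $T_{R,1} \in \mathrm{Span}(\mathcal{D}_0) \subset \mathcal{D}$. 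For the off-diagonal piece $T_{R,2}$, I factor $T_{R,2} = X' \widetilde{A}_{R,2} Y'$, where $X' = \sum_z c_z k_z \otimes e_z$ and $Y' = \sum_w d_w u_w \otimes k_w$ for orthonormal sets $\{e_z\}, \{u_w\}$, and
\[
\widetilde{A}_{R,2} = \sum_{(z,w): d(z,w)>R} \langle T_f k_w, k_z\rangle\, e_z \otimes u_w.
\]
Proposition \ref{Proposition A} bounds $\|X'\|$ and $\|Y'\|$, so it remains to apply Lemma \ref{Lemma C} to $\widetilde{A}_{R,2}$, which requires a coefficient estimate of the form
\[
|\langle T_f k_w, k_z\rangle| \le C\|f\|_\infty |\langle k_z, k_w\rangle|^{1-\epsilon}
\]
for some fixed $\epsilon \in (0, 1/2)$ independent of $z$ and $w$.

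Establishing this kernel estimate is the main obstacle. Starting from $|\langle T_f k_w, k_z\rangle| \le \|f\|_\infty \int_\mathbb{D} |k_w(\zeta)||k_z(\zeta)|\, dv(\zeta)$ and using the product structure to factor $\int_\mathbb{D} |k_w||k_z|\,dv = \prod_{i=1}^n \int_D |k_{w_i}(\zeta_i)||k_{z_i}(\zeta_i)|\, dA_i(\zeta_i)$, each one-disk factor can be controlled by a Rudin--Forelli estimate, yielding a bound $C\prod_{i=1}^n (1-|\varphi_{z_i}(w_i)|^2)^{1-\epsilon_i} = C|\langle k_z, k_w\rangle|^{1-\epsilon}$, precisely in the spirit of the Rudin--Forelli arguments used in Lemma \ref{Lemma E} and Proposition \ref{Proposition A}. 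With this estimate in hand, Lemma \ref{Lemma C} gives $\|\widetilde{A}_{R,2}\| \to 0$ as $R \to \infty$, so $XT_fY$ is the operator-norm limit of the operators $T_{R,1} \in \mathrm{Span}(\mathcal{D}_0)$ and therefore lies in $\mathcal{D} = C^*(\mathcal{D})$; Corollary \ref{Corollary 3} then yields $T_f \in C^*(\mathcal{D})$, as required.
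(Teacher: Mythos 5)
Your proposal is correct and follows essentially the same route as the paper's own proof: reduce via Corollary \ref{Corollary 3} (equivalently, via Proposition \ref{Proposition B}) to showing $XT_fY \in \mathcal{D}$ for $X,Y \in \mathcal{D}_{0,0}$, split into near-diagonal and far-off-diagonal pieces $T_{R,1}+T_{R,2}$, handle $T_{R,1}$ by the finite-partition argument from Proposition \ref{Proposition D}, and kill $T_{R,2}$ via Proposition \ref{Proposition A}, the kernel bound $|\langle M_f k_w, k_z\rangle|\le C\|f\|_\infty |\langle k_z,k_w\rangle|^{1-\varepsilon}$ (obtained by the Möbius change-of-variable plus Rudin--Forelli estimate, as in Lemma \ref{Lemma E}), and Lemma \ref{Lemma C}. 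The only place where you compress a nontrivial step is the one-disk kernel estimate, where you assert a direct Rudin--Forelli bound on $\int_D |k_{w_i}||k_{z_i}|\,dA_i$ without spelling out the intermediate substitution $\xi\mapsto\varphi_{w_i}(\xi)$ that reduces it to $\int_D |k_{\varphi_{w_i}(z_i)}|\,dA_i$; the paper makes this explicit, but your citation of the analogous computation in Lemma \ref{Lemma E} covers it.
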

\begin{proof}
By Proposition \ref{Proposition B} and linearity, we only need to prove that for any $X,Y\in\mathcal{D}_{0,0}$, $XT_fY\in\mathcal{D}$ for any $f\in L^{\infty}(\mathbb{D})$.

We can assume
$$X=\sum_{z\in \Gamma}c_zk_z\otimes k_z,~~Y=\sum_{w\in G}d_wk_w\otimes k_w,$$
where $\Gamma, G$ are $1-$separated and $\sup_{z\in\Gamma}|c_z|<\infty$, $\sup_{w\in G}|d_w|<\infty$.

Since $T_f=PM_fP$, where $P$ is the orhtogonal projection onto $L_a^2(\mathbb{D})$ and $M_f$ is the multiplication operator, then
\[
XT_fY=\sum_{(z,w)\in \Gamma\times G}c_zd_w\langle M_fk_w,k_z\rangle k_z\otimes k_w.
\]

For $R>0$, we can write
$$XT_fY=T_{R,1}+T_{R,2},$$
where
\[
T_{R,1}=\sum_{(z,w)\in \Gamma\times G,~d(z,w)<R}c_zd_w\langle M_fk_w,k_z\rangle k_z\otimes k_w,~~ T_{R,2}=\sum_{(z,w)\in \Gamma\times G,~d(z,w)\geq R}c_zd_w\langle M_fk_w,k_z\rangle k_z\otimes k_w.
\]
Using the similar method of the proof for $T_{R,1}\in $ Span$(\mathcal{D}_0)$ in the proof of Proposition \ref{Proposition D}, we have $T_{R,1}\in $ Span$(\mathcal{D}_0)$. So it suffices to prove that $\|T_{R,2}\|$ converges to 0 as $R\to\infty$.

The operator $T_{R,2}$ can be decomposed as
$$T_{R,2}=\widetilde{X}\widetilde{T}_{R,2}\widetilde{Y},$$
where
\[
\widetilde{X}=\sum_{z\in\Gamma}c_zk_z\otimes e_z, ~~~\widetilde{T}_{R,2}=\sum_{(z,w)\in \Gamma\times G,d(z,w)\geq R}\langle M_fk_z,k_w\rangle e_z\otimes u_w,~~~\widetilde{Y}=\sum_{w\in G}d_wu_w\otimes k_w.
\]
and $\{e_z:z\in \Gamma\}$, $\{u_w:w\in G\}$ are orthonormal sets.

By Proposition \ref{Proposition A}, we have
$$\|\widetilde{X}\|\leq C\sup_{z\in \Gamma}|c_z|,~~\|\widetilde{Y}\|\leq C\sup_{z\in G}|d_z|,\text{ for some }C>0.$$
For $\widetilde{T}_{R,2}$,
\begin{eqnarray*}
|\langle M_fk_z,k_w\rangle|\leq \|f\|_{\infty}|\langle |k_z|,|k_w|\rangle| &=& \|f\|_{\infty}\prod_{i=1}^n\int_D\frac{|k_{z_i}(u_i)|}{|k_{w_i}(u_i)|}|k_{w_i}(u_i)|^2dA_i(u_i) \\
     &=& \|f\|_{\infty}\prod_{i=1}^n\int_D\frac{|k_{z_i}(u_i)|}{|k_{w_i}(u_i)|}dA_i(\varphi_{w_i}(u_i)) \\
     &=& \|f\|_{\infty}\prod_{i=1}^n\int_D\frac{|k_{z_i}(\varphi_{w_i}(u_i))|}{|k_{w_i}(\varphi_{w_i}(u_i))|}dA_i(u_i) \\
     &=& \|f\|_{\infty} \prod_{i=1}^n \int_D|k_{\varphi_{w_i}(z_i)}(u_i)|dA_i(u_i),
\end{eqnarray*}
where the last equality follows from the fact that for any $u,v,\xi$ in $D$,
$k_u(\varphi_v(\xi))=\frac{k_{\varphi_v(u)}(\xi)}{k_v(\xi)}(\frac{|1-\overline{u}v|}{1-\overline{u}v})^2$. By Rudin-Forelli estimates, there exists $C_{\varepsilon}>0$ and $0<\varepsilon<\frac12$,
\begin{eqnarray*}
\prod_{i=1}^n\int_D|k_{\varphi_{w_i}(z_i)}(u_i)|dA_i(u_i)&\leq& \prod_{i=1}^n\int_{D}\frac{1-|\varphi_{w_i}(z_i)|^2}{|1-\overline{\varphi_{w_i}(z_i)}u_i|^2(1-|u_i|^2)^{\varepsilon}}  dA_i(u_i) \\
 &\leq & C_{\varepsilon}\prod_{i=1}^n(1-|\varphi_{w_i}(z_i)|)^{1-\varepsilon}=C_{\varepsilon}|\langle k_z,k_w\rangle|^{1-\varepsilon},
\end{eqnarray*}
which completes the proof by Lemma \ref{Lemma C}.
\end{proof}

Let $\mathcal{T}^{(1)}$ denote the $||\cdot ||$-closure of $\{T_f:f\in L^{\infty}(\mathbb{D})\}$.

\begin{proposition}\label{Proposition I}
$\mathcal{T}^{(1)}\supset \mathcal{D}_{0,0}$.
\end{proposition}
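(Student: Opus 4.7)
My plan is to construct, for each sufficiently small $r>0$, a bounded symbol $f_r$ whose Toeplitz operator approximates $T$ in norm, and then let $r\to 0$. Given $T=\sum_{z\in\Gamma}c_z k_z\otimes k_z\in \mathcal{D}_{0,0}$ with $\Gamma$ an $a$-separated set and $\sup_z|c_z|<\infty$, I choose $r\in(0,a/n)$ so that the polydisk squares $\widetilde{D}(z,r)$, $z\in\Gamma$, are pairwise disjoint, and set
$$
f_r \;=\; \sum_{z\in\Gamma}\frac{c_z}{\Lambda(\widetilde{D}(0,r))}\chi_{\widetilde{D}(z,r)}\;\in\; L^\infty(\mathbb{D}).
$$
From the integral representation $T_g=\int g(w)\,k_w\otimes k_w\,d\Lambda(w)$ (which holds because $\|K_w\|^2\,dv = d\Lambda$) one reads off
$$
T-T_{f_r} \;=\; \sum_{z\in\Gamma}\frac{c_z}{\Lambda(\widetilde{D}(0,r))}\int_{\widetilde{D}(z,r)}\bigl(k_z\otimes k_z-k_w\otimes k_w\bigr)\,d\Lambda(w),
$$
so it suffices to prove $\|T-T_{f_r}\|\to 0$ as $r\to 0$.

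The first ingredient is the uniform continuity estimate
$$
\varepsilon(r) \;=\; \sup_{z\in\mathbb{D}}\sup_{w\in\widetilde{D}(z,r)}\|k_z-k_w\|\;\longrightarrow\;0 \qquad (r\to 0),
$$
which follows by M\"{o}bius invariance: since $U_zk_z=\mathbf{1}$ and $U_zk_w=g_{z,w}k_{\varphi_z(w)}$, with $\varphi_z(w)\in \widetilde{D}(0,r)$ when $w\in \widetilde{D}(z,r)$, one has $\|k_z-k_w\|=\|\mathbf{1}-g_{z,w}k_{\varphi_z(w)}\|$, which tends to $0$ uniformly in $z$ as $\varphi_z(w)\to 0$. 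Using the identity
$$
k_z\otimes k_z-k_w\otimes k_w \;=\; (k_z-k_w)\otimes k_z + k_w\otimes(k_z-k_w),
$$
I decompose $T-T_{f_r}=A_r+B_r$ with
$$
A_r \;=\; \sum_{z\in\Gamma}c_z\, v_z^{(r)}\otimes k_z, \qquad v_z^{(r)} \;=\; \frac{1}{\Lambda(\widetilde{D}(0,r))}\int_{\widetilde{D}(z,r)}(k_z-k_w)\,d\Lambda(w),
$$
so that $\|v_z^{(r)}\|\le\varepsilon(r)$; the piece $B_r$ has symmetric form and is treated analogously.

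Introducing an auxiliary orthonormal set $\{e_z\}_{z\in\Gamma}$, I factor $A_r=XY$ with $Y=\sum_z c_z\, e_z\otimes k_z$ and $X=\sum_z v_z^{(r)}\otimes e_z$; Proposition~\ref{Proposition A} applied to $Y^*$ gives $\|Y\|\le C\sup_z|c_z|$. The main obstacle is to prove $\|X\|\to 0$, equivalently that the Gram matrix $M=\bigl(\langle v_z^{(r)},v_w^{(r)}\rangle\bigr)_{z,w\in\Gamma}$ has operator norm tending to $0$. I would combine two entry-wise bounds: the trivial pointwise bound $|M_{z,w}|\le\varepsilon(r)^2$, and an exponential decay bound $|M_{z,w}|\le C|\langle k_z,k_w\rangle|$, obtained by expanding the inner product into four terms and using \eqref{inequality} together with the fact that the integration variables stay within $O(r)$ of $z,w$ in the $d$-metric. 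Interpolating yields $|M_{z,w}|\le C\varepsilon(r)^{1/2}|\langle k_z,k_w\rangle|^{3/4}$ for all $z,w\in\Gamma$.

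A weighted Schur test with the weight $h(z)=\prod_{i=1}^n(1-|z_i|^2)^{1/2}$ used in Proposition~\ref{Proposition A}, combined with Rudin-Forelli estimates on each disk factor, then gives $\sum_{w\in\Gamma}|\langle k_z,k_w\rangle|^{3/4}h(w)\le Ch(z)$, where the exponent $3/4$ is chosen so that both the Rudin-Forelli integrability condition and the weighted Schur bound hold in $n$ complex dimensions. Consequently $\|M\|\le C\varepsilon(r)^{1/2}$, so $\|X\|\le C\varepsilon(r)^{1/4}$; the parallel estimate for $B_r$ then produces $\|T-T_{f_r}\|\le C\sup_z|c_z|\cdot\varepsilon(r)^{1/4}\to 0$. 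Therefore $T\in\mathcal{T}^{(1)}$, which establishes the inclusion.
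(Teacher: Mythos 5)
Your overall strategy coincides with the paper's: approximate $T$ by the Toeplitz operator $T_{f_r}$ with the same step-function symbol $f_r$ and show $\|T-T_{f_r}\|\to 0$ as $r\to 0$.  The mechanism by which you try to close the estimate, however, is genuinely different: the paper reduces $T_{f_r}-T$ to $\frac{1}{\Lambda(\widetilde D(0,r))}\int_{\widetilde D(0,r)}(G_u-G_0)\,d\Lambda(u)$ via the substitution $w=\varphi_z(u)$ and then invokes Lemma~\ref{Lemma 1} together with the $\|\cdot\|_\infty$-continuity of $u\mapsto k_u$, whereas you attempt an $L^2$ estimate: factor the error through an explicit Gram matrix, combine the trivial bound $|M_{z,w}|\le\varepsilon(r)^2$ with the decay $|M_{z,w}|\le C|\langle k_z,k_w\rangle|$, interpolate, and run a weighted Schur test.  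Your calculation for the piece $A_r=\sum_z c_z v_z^{(r)}\otimes k_z$ is correct: the four-term expansion gives the decay bound, interpolation with exponent $\theta=1/4$ gives $|M_{z,w}|\lesssim\varepsilon(r)^{1/2}|\langle k_z,k_w\rangle|^{3/4}$, and since $\int_D(1-|\xi|^2)^{-3/4}|1-\bar z\xi|^{-3/2}dA(\xi)\approx(1-|z|^2)^{-1/4}$ the Schur test with $h(z)=\prod(1-|z_i|^2)^{1/2}$ closes.

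The gap is in your treatment of $B_r$.  From the identity $k_z\otimes k_z-k_w\otimes k_w=(k_z-k_w)\otimes k_z+k_w\otimes(k_z-k_w)$, the second piece is
$$B_r=\sum_{z\in\Gamma}\frac{c_z}{\Lambda(\widetilde D(0,r))}\int_{\widetilde D(z,r)}k_w\otimes(k_z-k_w)\,d\Lambda(w),$$
which is \emph{not} of the rank-one-per-$z$ form $\sum_z c_z a_z\otimes b_z$: the first slot $k_w$ varies with the integration variable $w$, so after integration this term is not a tensor product and there is no Gram matrix to which your interpolation/Schur argument applies.  The claim that "$B_r$ has symmetric form and is treated analogously" is therefore false as stated; it is exactly the asymmetry between the fixed vector $k_z$ and the varying vector $k_w$ that your one-line dismissal overlooks.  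This is not fatal to your method: substituting $w=\varphi_z(\xi)$ with $\xi\in\widetilde D(0,r)$ yields
$$B_r=\frac{1}{\Lambda(\widetilde D(0,r))}\int_{\widetilde D(0,r)}\Bigl(\sum_{z\in\Gamma}c_z\,k_{\varphi_z(\xi)}\otimes\bigl(k_z-k_{\varphi_z(\xi)}\bigr)\Bigr)d\Lambda(\xi),$$
and for each \emph{fixed} $\xi$ the inner sum factors as a product of two frame-type operators (one of which is small by the same Gram-matrix argument, using $\|k_z-k_{\varphi_z(\xi)}\|=\|1-\bar g_{z,\xi}k_\xi\|\le\varepsilon(r)$ and a decay bound via (\ref{inequality})), so one takes a $\sup$ over $\xi\in\widetilde D(0,r)$.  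But that is a distinct argument — pulling the continuous parameter outside and bounding uniformly in $\xi$, which is in fact the paper's device — and it must be written out; you cannot get $B_r$ by symmetry from $A_r$.
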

\begin{proof}
For $A\in\mathcal{D}_{0,0}$, we can assume that $A=\sum_{z\in\Gamma}c_zk_z\otimes k_z$, where $\Gamma$ is $\delta-$separated for some $\delta>0$ and $\sup_{z\in\Gamma}|c_z|<\infty$. For $r\in (0,\frac{\delta}{n})$, we define
\[
f_r=\sum_{z\in \Gamma}\frac{c_z}{\Lambda(\widetilde{D}(z,r))}\chi_{\widetilde{D}(z,r)}\in L^{\infty}(\mathbb{D}).
\]
Then
\begin{eqnarray*}
T_{f_r}-A &=& \int_{\mathbb{D}}f_r(u)k_u\otimes k_ud\Lambda(u)-\sum_{z\in \Gamma}c_zk_z\otimes k_z \\
          &=& \sum_{z\in \Gamma}\frac{c_z}{\Lambda(\widetilde{D}(z,r))}\int_{\widetilde{D}(z,r)}(k_u\otimes k_u-k_z\otimes k_z)d\Lambda(u) \\
          &=& \sum_{z\in \Gamma}\frac{c_z}{\Lambda(\widetilde{D}(0,r))}\int_{\widetilde{D}(0,r)}(k_{\varphi_z(u)}\otimes k_{\varphi_z(u)}-k_z\otimes k_z)d\Lambda(u) \\
          &=& \frac{1}{\Lambda(\widetilde{D}(0,r))} \int_{\widetilde{D}(0,r)}(G_u-G_0)d\Lambda(u),
\end{eqnarray*}
where $G_u=\sum_{z\in\Gamma}c_zk_{\varphi_z(u)}\otimes k_{\varphi_z(u)}=\sum_{z\in\Gamma}c_zU_zk_u\otimes U_zk_u$ for any $u\in \widetilde{D}(0,r)$.

If the map $u\mapsto G_u$ is norm continuous, then $\|T_{f_r}-A\|\to 0$ as $r\to 0$. Since $T_{f_r}\in \mathcal{T}^{(1)}$, then the proof is completed if we can prove $u\mapsto G_u$ is norm continuous. Let $\{e_z:z\in \Gamma\}$ be an orthonormal set. Since $\sup_{z\in\Gamma}|c_z|<\infty$, then it suffices to prove the map $u\mapsto H_u$ is continuous for $u\in \widetilde{D}(0,r)$, in which $H_u=\sum_{z\in\Gamma}U_zk_u\otimes e_z$. By Lemma \ref{Lemma 1}, there exists $C>0$ such that, for any $u,u'\in \widetilde{D}(0,r)$,
\[
\|H_u-H_{u'}\|\leq C\sup_{z\in\Gamma}\|U_zk_u-U_zk_{u'}\|_{\infty}=C\|k_u-k_{u'}\|_{\infty},
\]
which completes the proof.

\end{proof}

Recall that for each $z=(z_1,\dots,z_n)\in \mathbb{D}$, the formula
$$
K_z(\xi)=\prod_{i=1}^n\frac{1}{(1-\xi_i\bar{z_i})^2},\text{ for any }\xi=(\xi_1,\dots,\xi_n)\in \mathbb{D}$$
defines the reproducing kernel for $L_a^2(\mathbb{D})$. For each pair of $\alpha=(\alpha_1,\dots,\alpha_n)\in \mathbb{N}^n$ and $z\in \mathbb{D}$, we define
\begin{equation}\label{Kzalpha}
K_{z,\alpha}(\xi)=\prod_{i=1}^n\frac{\xi_i^{\alpha_i}}{(1-\xi_i\bar{z_i})^{2+\alpha_i}} \text{ for any }\xi=(\xi_1,\dots,\xi_n)\in \mathbb{D}.
\end{equation}

Note that $K_z=K_{z,(0,\cdots,0)}$ for every $z\in \mathbb{D}$.

\begin{proposition}\label{Proposition 1}
Let $\Gamma$ be a separated set in $\mathbb{D}$ and suppose that $\{c_u\}_{u\in\Gamma}$ is a bounded sequence. Then for any $\alpha\in\mathbb{N}^n$ and $z\in\mathbb{D}$, we have
\[
\sum_{u\in\Gamma}c_u(U_uK_z)\otimes (U_uK_{z,\alpha})\in \mathcal{T}^{(1)}.
\]
\end{proposition}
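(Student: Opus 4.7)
My plan is to imitate the proof of Proposition \ref{Proposition I} essentially verbatim, with the diagonal building block $k_w\otimes k_w$ there replaced throughout by the rank-one operator $(U_wK_z)\otimes(U_wK_{z,\alpha})$. Fix $r>0$ small enough that $\{\tilde D(u,r):u\in\Gamma\}$ is pairwise disjoint, set
\[
f_r(w):=\sum_{u\in\Gamma}\frac{c_u}{\Lambda(\tilde D(u,r))}\chi_{\tilde D(u,r)}(w)\in L^\infty(\mathbb D),\qquad \Theta_r:=\int_{\mathbb D}f_r(w)(U_wK_z)\otimes(U_wK_{z,\alpha})\,d\Lambda(w).
\]
The proof then reduces to two claims: (i) $\|\Theta_r-T\|\to 0$ as $r\to 0$, and (ii) $\Theta_r\in\mathcal T^{(1)}$ for every $r>0$; together with norm closedness of $\mathcal T^{(1)}$ these yield $T\in\mathcal T^{(1)}$.

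For (i), I carry out the same M\"obius-invariant change of variable $w=\varphi_u(\xi)$ inside each $\tilde D(u,r)$ as in Proposition \ref{Proposition I}, obtaining
\[
\Theta_r-T=\frac{1}{\Lambda(\tilde D(0,r))}\int_{\tilde D(0,r)}\bigl(H_\xi-H_0\bigr)d\Lambda(\xi),\qquad H_\xi:=\sum_{u\in\Gamma}c_u\bigl(U_{\varphi_u(\xi)}K_z\bigr)\otimes\bigl(U_{\varphi_u(\xi)}K_{z,\alpha}\bigr),
\]
with $H_0=T$. Writing $U_{\varphi_u(\xi)}K_z$ as a scalar multiple of $k_{\varphi_{\varphi_u(\xi)}(z)}$ (and the analogous rewriting for the $K_{z,\alpha}$ factor), the norm continuity of $\xi\mapsto k_{\varphi_{\varphi_u(\xi)}(z)}$ in $L^\infty(\mathbb D)$ at $\xi=0$, together with Lemma \ref{Lemma 1} applied to each tensor factor after inserting an auxiliary orthonormal set (exactly as in the last step of the proof of Proposition \ref{Proposition I}), yields $\|H_\xi-H_0\|\to 0$ uniformly on $\tilde D(0,r)$, hence $\|\Theta_r-T\|\to 0$.

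The main obstacle is (ii). My plan is to prove the identity
\[
\int_{\mathbb D} g(w)(U_wK_z)\otimes(U_wK_{z,\alpha})\,d\Lambda(w)=T_{\widetilde g}
\]
for every $g\in L^\infty(\mathbb D)$, with an explicit bounded symbol $\widetilde g$ depending on $g,z,\alpha$. The identification proceeds by evaluating both sides on arbitrary reproducing kernels $k_a,k_b$ and matching matrix elements $\langle\cdot\, k_a,k_b\rangle$, using (a) the closed form $(U_wK_z)(\xi)=g_{w,z}k_{\varphi_w(z)}(\xi)/\prod_i(1-|z_i|^2)$ and the analogous explicit expression for $(U_wK_{z,\alpha})(\xi)$ derived from the identity $K_{z,\alpha}=\prod_i((\alpha_i+1)!)^{-1}\partial_{\bar z}^\alpha K_z$; (b) the M\"obius invariance of $d\Lambda$; and (c) the Berezin-quantization identity $T_h=\int h(w)k_w\otimes k_w\,d\Lambda(w)$ already exploited in the proof of Proposition \ref{Proposition I}. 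If the direct identification proves unwieldy, the fall-back is to approximate $K_{z,\alpha}$ in $L_a^2(\mathbb D)$ by finite-difference linear combinations $\sum_j\lambda_jK_{w_j}$ with $w_j\to z$, so that the building blocks become scalar multiples of $k_{\varphi_w(z)}\otimes k_{\varphi_w(w_j)}$; after integrating against $f_r$, partitioning $\Gamma$ via Lemma \ref{Lemma 0.1}(c) and letting $w_j\to z$, Proposition \ref{Proposition I} delivers $\Theta_r\in\mathcal T^{(1)}$, completing the proof.
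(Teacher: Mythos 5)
Your approach diverges fundamentally from the paper's, and it has genuine gaps. The paper proves this proposition by \emph{induction on $|\alpha|$}: the base case $|\alpha|=0$ reduces, after a Lemma~\ref{Lemma 0.1}(c) partition, to elements of $\mathrm{Span}(\mathcal{D}_{0,0})$ and is handled by Proposition~\ref{Proposition I}; the inductive step introduces the shifted operators $A_t^{(j)},B_t^{(j)}$ (with $z$ replaced by $z+te^{(j)}$, $z+ite^{(j)}$) and shows that their difference quotients converge in operator norm, via Lemma~\ref{Lemma 1} and the uniform bound~\eqref{daoshugongshi}, to the operators $X^{(j)},Y^{(j)}$, whose sum recovers $\sum_u c_u(U_uK_z)\otimes(U_uK_{z,\alpha})$. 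No Toeplitz-symbol identification and no finite-difference expansion of $K_{z,\alpha}$ appear anywhere.

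Concerning your claim (ii), which you rightly identify as the crux: the main route — that $\int_{\mathbb D} g(w)(U_wK_z)\otimes(U_wK_{z,\alpha})\,d\Lambda(w)$ equals $T_{\widetilde g}$ for an explicit bounded symbol — is not established and is very doubtful for $\alpha\neq 0$. The Toeplitz representation $T_h=\int h(w)k_w\otimes k_w\,d\Lambda(w)$ requires the rank-one block to be $k_w\otimes k_w$; for $\alpha\neq 0$ the second factor $U_wK_{z,\alpha}$ is \emph{not} a scalar multiple of any normalized reproducing kernel (it is a derivative-type kernel), so after any change of variables the integral does not organize itself into the diagonal form $\int\widetilde g(\eta)k_\eta\otimes k_\eta\,d\Lambda(\eta)$. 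The fall-back route is circular: after replacing $K_{z,\alpha}$ by finite differences $\sum_j\lambda_jK_{w_j}$, the building blocks become off-diagonal operators $k_{\varphi_u(z)}\otimes k_{\varphi_u(w_j)}$, which after a Lemma~\ref{Lemma 0.1}(c) partition belong to $\mathcal D_0$, \emph{not} $\mathcal D_{0,0}$. Proposition~\ref{Proposition I} only covers the diagonal case $\mathcal D_{0,0}$; membership of such off-diagonal operators in $\mathcal T^{(1)}$ is exactly Proposition~\ref{Proposition 2} / Proposition~\ref{Proposition J}, both of which are proved \emph{after} and \emph{from} Proposition~\ref{Proposition 1}. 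And "letting $w_j\to z$" does not save you: the scaled finite differences converge to a derivative, not to the diagonal $k_{\varphi_u(z)}\otimes k_{\varphi_u(z)}$, so you cannot collapse to $\mathcal D_{0,0}$.

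Claim (i) also has a gap. You propose "the analogous rewriting for the $K_{z,\alpha}$ factor" as a scalar multiple of a normalized kernel — this rewriting does not exist for $\alpha\neq 0$. More importantly, in $H_\xi=\sum_u c_u(U_{\varphi_u(\xi)}K_z)\otimes(U_{\varphi_u(\xi)}K_{z,\alpha})$ the Möbius parameter $\varphi_u(\xi)$ depends on the summation index $u$, so the differences $H_\xi-H_0$ do not have the form $\sum_u(U_u(\cdot))\otimes e_u$ to which Lemma~\ref{Lemma 1} applies; one would first need to rewrite $U_{\varphi_u(\xi)}=U_u\cdot(\text{unitary depending on } u,\xi)$ and control the resulting rotation and phase factors uniformly in $u$, a nontrivial estimate you do not carry out. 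Contrast with the paper's proof of Proposition~\ref{Proposition I}, where the same $U_z$ multiplies a common difference $k_u-k_{u'}$, making Lemma~\ref{Lemma 1} directly applicable; this structural feature is what makes the Riemann-sum argument work there and is precisely what is missing in your $H_\xi$.
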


\begin{proof}
We prove this by an induction on $|\alpha|\doteq \sum_{i=1}^n\alpha_i$. Let $\alpha\in\mathbb{N}^n$ and $z\in\mathbb{D}$ be given.

For $|\alpha|=0$, by (\ref{yousuanzi2}), then we have
\[
\sum_{u\in\Gamma}c_u(U_uK_z)\otimes (U_uK_{z,(0,\cdots,0)})=\sum_{u\in\Gamma}c_u ||K_z||^2k_{\varphi_u(z)}\otimes k_{\varphi_u(z)}.
\]
By Lemma \ref{Lemma 0.1} (c), there is a partition $\Gamma=\Gamma_1\sqcup\Gamma_2\sqcup\cdots\sqcup\Gamma_m$ such that for each $i\in \{1,\dots,m\}$, the set $\{\varphi_u(z):u\in\Gamma_i\}$ is separated.
Therefore,
\[
\sum_{u\in\Gamma}c_u(U_uK_z)\otimes (U_uK_{z,(0,\cdots,0)})=\sum_{u\in\Gamma}c_u ||K_z||^2k_{\varphi_u(z)}\otimes k_{\varphi_u(z)}
\]
\[
=\sum_{j=1}^m\sum_{u\in\Gamma_j}c_u ||K_z||^2k_{\varphi_u(z)}\otimes k_{\varphi_u(z)}\in \text{ Span }(\mathcal{D}_{00}).
\]
Combined with Proposition \ref{Proposition I}, the proposition holds for $|\alpha|=0$.

For the inductive step, we suppose that for $k\in \mathbb{N}$, the proposition holds for every $\alpha\in \mathbb{N}^n$ with $|\alpha|\leq k$. Now for $\alpha \in \mathbb{N}^n$ with $|\alpha|=k+1$, we can decompose $\alpha$ as $\alpha=a+e^{(j)}$, where $|a|=k$ and $e^{(j)}=(0,\cdots,0,1,0,\cdots,0)$ with $1$ in $j$-th coordinate.

By induction hypothesis,
\begin{equation}\label{1.1}
\sum_{u\in\Gamma}c_u(U_uK_z)\otimes (U_uK_{z,a})\in \mathcal{T}^{(1)}\quad \text{for any}\, z\in\mathbb{D}.
\end{equation}

Now for fixed $z\in\mathbb{D}$, there is an $\varepsilon=\varepsilon(z)>0$ such that $z+c\in \mathbb{D}$ for any $c\in \mathbb{D}$ with $|c|\le \varepsilon.$
Therefore, for any $t\in [0,\varepsilon]$ and any $j\in\{1,\cdots,n\}$, we can define
\[
A_t^{(j)}=\sum_{u\in\Gamma}c_u(U_uK_{z+te^{(j)}})\otimes(U_uK_{z+te^{(j)},a})
\text{ and }
B_t^{(j)}=\sum_{u\in\Gamma}c_u(U_uK_{z+ite^{(j)}})\otimes(U_uK_{z+ite^{(j)},a}).
\]
By (\ref{1.1}), we have $A_t^{(j)},B_t^{(j)}\in \mathcal{T}^{(1)}$ for any $t\in [0,\varepsilon]$ and any $j\in\{1,\cdots,n\}$. Apparently, we have the following decomposition :  $$\sum_{u\in\Gamma}c_u(U_uK_z)\otimes (U_uK_{z,\alpha})=\frac{1}{2(2+a_j)}(X^{(j)}+Y^{(j)})\in\mathcal{T}^{(1)},$$
where
\[
X^{(j)}=\sum_{u\in\Gamma}c_u\{(2+a_j)(U_uK_z)\otimes(U_uK_{z,\alpha})+2(U_uK_{z,e^{(j)}})\otimes(U_uK_{z,a})\},
\]
\[
Y^{(j)}=\sum_{u\in\Gamma}c_u\{(2+a_j)(U_uK_z)\otimes(U_uK_{z,\alpha})-2(U_uK_{z,e^{(j)}})\otimes(U_uK_{z,a})\}.
\]
Then it suffices to prove that
\begin{equation}\label{1.2}
\lim_{t\to 0}\|t^{-1}(A_t^{(j)}-A_0^{(j)})-X^{(j)}\| = 0
\end{equation}
 and
\begin{equation}\label{1.3} \lim_{t\to 0}\|(it)^{-1}(B_t^{(j)}-B_0^{(j)})-Y^{(j)}\| = 0.
\end{equation}

In order to prove (\ref{1.2}), we decompose $t^{-1}(A_t^{(j)}-A_0^{(j)})$ and $ X^{(j)}$ as follows:
\[
t^{-1}(A_t^{(j)}-A_0^{(j)})=G_t^{(j)}+H_t^{(j)}\text{ and }X^{(j)}=V^{(j)}+W^{(j)},
\]
in which
\[
G_t^{(j)}=\frac{1}{t}\sum_{u\in\Gamma}c_u\{U_u(K_{z+te^{(j)}}-K_z)\otimes U_zK_{z,a}\}\text{ and }H_t^{(j)}=\frac{1}{t}\sum_{u\in\Gamma}(U_uK_{z+te^{(j)}})\otimes\{U_u(K_{z+te^{(j)},a}-K_{z,a})\},
\]
\[
V^{(j)}=\sum_{u\in\Gamma}c_u(2+a_j)(U_uK_z)\otimes (U_zK_{z,\alpha})\text{ and }W^{(j)}=\sum_{u\in\Gamma}2c_u(U_uK_{z,e^{(j)}})\otimes(U_uK_{z,a}).
\]
Since $\|t^{-1}(A_t^{(j)}-A_0^{(j)})-X^{(j)}\|\leq \|H_t^{(j)}-V^{(j)}\|+\|G_t^{(j)}-W^{(j)}\|$, then we only need to prove that $$\lim_{t\to 0}\|H_t^{(j)}-V^{(j)}\|=0 \quad\text{and}\quad \lim_{t\to 0}\|G_t^{(j)}-W^{(j)}\|=0.$$

For any $0<t\leq \varepsilon$ and any $j\in\{1,\cdots,n\}$, we can rewrite $H_t^{(j)}-V^{(j)} = S_t^{(j)}+ T_t^{(j)}$, where
\[
S_t^{(j)}=\sum_{u\in\Gamma}c_u(U_uK_{z+te^{(j)}})\otimes\{U_u(t^{-1}(K_{z+te^{(j)},a}-K_{z,a})-(2+a_j)K_{z,\alpha})\},
\]
\[
T_t^{(j)}=(2+a_j)\sum_{u\in\Gamma}c_u\{U_u(K_{z+te^{(j)}}-K_z)\}\otimes(U_uK_{z,\alpha}).
\]
Let  $\{e_u\}_{u\in\Gamma}$ be an orthonormal set. Then we have $S_t^{(j)}=\widetilde{S}_t^{(j)}(\widehat{S}_t^{(j)})^*$, in which
\[
\widetilde{S}_t^{(j)}=\sum_{u\in\Gamma}c_u(U_uK_{z+te^{(j)}})\otimes e_u\text{ and }
\widehat{S}_t^{(j)}=\sum_{u\in\Gamma}\{U_u(t^{-1}(K_{z+te^{(j)},a}-K_{z,a})-(2+a_j)K_{z,\alpha})\}\otimes e_u.
\]
For $C=\mathrm{sup}_{u\in \Gamma}|c_u|$, by Lemma \ref{Lemma 1}, there exists $C_1,C_2>0$ such that
$$
\|\widetilde{S}_t^{(j)}\|\leq CC_1\|K_{z+te^{(j)}}\|_{\infty}\text{ and }
\|\widehat{S}_t^{(j)}\|\leq C_2\|t^{-1}(K_{z+te^{(j)},a}-K_{z,a})-(2+a_j)K_{z,\alpha}\|_{\infty}.
$$
A direct computation shows that
\[
\lim_{t\to 0}\{t^{-1}(K_{z+te^{(j)},a}-K_{z,a})(\xi)-(2+a_j)K_{z,\alpha}(\xi)\}=0.
\]
then the convergence above is uniform for any $\xi\in\mathbb{D}$ with fixed $0\leq t\le \varepsilon$ and fixed $z$, i.e.
\begin{equation}\label{daoshugongshi}
\lim_{t\to 0}\|t^{-1}(K_{z+te^{(j)},a}-K_{z,a})-(2+a_j)K_{z,\alpha}\|_{\infty}=0.
\end{equation}
It follows that $S_t^{(j)}=\widetilde{S}_t^{(j)}(\widehat{S}_t^{(j)})^*$ tends to 0 as $t$ goes to 0. The case of $T_t^{(j)}$ is similar to $S_t^{(j)}$. We can write  $T_t^{(j)}=\widetilde{T}_t^{(j)}(\widehat{T}^{(j)})^*$,
where  $$\widetilde{T}_t^{(j)}=(2+a_j)\sum_{u\in\Gamma}c_u\{U_u(K_{z+te^{(j)}}-K_z)\}\otimes e_u \quad \text{and} \quad \widehat{T}^{(j)}=\sum_{u\in \Gamma}(U_uK_{z,\alpha})\otimes e_u.$$
By Lemma \ref{Lemma 1}, we have $\widehat{T}^{(j)}$ is a bounded operator and for some $C_3>0$,
$$ \|\widetilde{T}_t^{(j)}\|\leq C C_3(2+a_j)\|K_{z+te^{(j)}}-K_{z,a}\|_{\infty}.$$
For the given $z$ and $0\le t \le \varepsilon$, it is easy to check that $\lim_{t\to 0}\|K_{z+te^{(j)}}-K_z\|_{\infty}=0$, which implies $\lim_{t\to 0}\|T_t^{(j)}\|=0$. Therefore, we have $\lim_{t\to 0}\|H_t^{(j)}-V^{(j)}\|=0$. By the similar way, we also have $\lim_{t\to 0}\|G_t^{(j)}-W^{(j)}\|=0$.

The proof of (\ref{1.3}) uses essentially the same argument as above, and the only additional care that needs to be taken is the following: The rank-one operator $f\otimes g$ is linear with respect to $f$ and conjugate linear with respect to $g$. Moreover, the $\xi_i\bar{z_i}$ is conjugate linear with respect to $z_i$. These are the properties that determine the + and - signs in each term $c_u\{\cdots\}$ in the sum that defines the operator $Y^{(j)}$.
This completes the proof of the proposition.
\end{proof}

\begin{proposition}\label{Proposition 2}
Let $\Gamma$ be a separated set in $\mathbb{D}$ and suppose that $\{c_u\}_{u\in\Gamma}$ is a bounded sequence in $\mathbb{D}$. Then for any $w\in\mathbb{D}$, we have
\[
\sum_{u\in\Gamma}c_uk_u\otimes k_{\varphi_u(w)}\in \mathcal{T}^{(1)}.
\]
\end{proposition}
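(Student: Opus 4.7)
The plan is to reduce to Proposition \ref{Proposition 1} (with $z=0$) via a Taylor expansion of $K_w$ in the monomials $K_{0,\alpha}$. Fix $w\in\mathbb{D}$. Because $K_0\equiv 1$, we have $U_uK_0=k_u$. A direct calculation shows that
$$U_uK_w=a_u\,K_{\varphi_u(w)},\qquad a_u=\prod_{i=1}^n\frac{1-|u_i|^2}{(1-u_i\bar w_i)^2},$$
and tracking factors via $k_z=\prod_i(1-|z_i|^2)K_z$ and $\prod_i(1-|\varphi_{u_i}(w_i)|^2)=|\langle k_u,k_w\rangle|$ yields
$$k_u\otimes k_{\varphi_u(w)}=\eta_u\prod_{i=1}^n(1-|w_i|^2)\,(U_uK_0)\otimes(U_uK_w),$$
where $\eta_u$ is a unimodular scalar depending on $u$ and $w$. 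Absorbing the bounded factor $\eta_u\prod_i(1-|w_i|^2)$ into $c_u$, the problem reduces to showing $R:=\sum_{u\in\Gamma}\tilde c_u\,(U_uK_0)\otimes(U_uK_w)\in\mathcal{T}^{(1)}$ for a new bounded sequence $\{\tilde c_u\}$.

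Next, I would Taylor expand
$$K_w(\xi)=\prod_{i=1}^n\frac{1}{(1-\xi_i\bar w_i)^2}=\sum_{\alpha\in\mathbb{N}^n}c_\alpha\,\bar w^\alpha K_{0,\alpha}(\xi),\qquad c_\alpha:=\prod_{i=1}^n(\alpha_i+1).$$
Since $\sum_\alpha c_\alpha|w|^\alpha=\prod_i(1-|w_i|^2)^{-2}<\infty$, the partial sums $K_w^{(N)}:=\sum_{|\alpha|\le N}c_\alpha\,\bar w^\alpha K_{0,\alpha}$ satisfy $\|K_w-K_w^{(N)}\|_\infty\to 0$ on $\mathbb{D}$. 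Using the linearity of $U_u$ together with the conjugate linearity of the rank-one tensor in its second slot, the finite truncation
$$R_N:=\sum_{u\in\Gamma}\tilde c_u\,(U_uK_0)\otimes(U_uK_w^{(N)})=\sum_{|\alpha|\le N}c_\alpha\,w^\alpha\sum_{u\in\Gamma}\tilde c_u\,(U_uK_0)\otimes(U_uK_{0,\alpha})$$
is a finite linear combination of operators covered by Proposition \ref{Proposition 1} with $z=0$, hence $R_N\in\mathcal{T}^{(1)}$.

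To close, I would estimate $\|R-R_N\|$ by writing $R-R_N=AB_N^{*}$ with $A=\sum_u\tilde c_u\,k_u\otimes e_u$ and $B_N=\sum_u U_u(K_w-K_w^{(N)})\otimes e_u$ for an orthonormal set $\{e_u\}_{u\in\Gamma}$. Proposition \ref{Proposition A} gives $\|A\|\le C\sup_u|\tilde c_u|$, while Lemma \ref{Lemma 1}, applied with the common analytic symbol $h_u=K_w-K_w^{(N)}$, yields $\|B_N\|\le C'\|K_w-K_w^{(N)}\|_\infty\to 0$. Norm-closure of $\mathcal{T}^{(1)}$ then forces $R\in\mathcal{T}^{(1)}$, proving the proposition. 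The main technical hurdle is the opening algebraic identification that expresses $k_u\otimes k_{\varphi_u(w)}$ as a bounded scalar multiple of $(U_uK_0)\otimes(U_uK_w)$; once that is established, the Taylor expansion and the $L^\infty$-approximation of $K_w$ on $\mathbb{D}$ make the norm convergence essentially immediate from the tools already developed.
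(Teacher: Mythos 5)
Your argument is correct and follows essentially the same route as the paper: both reduce the claim to Proposition \ref{Proposition 1} with $z=0$ by expanding $K_w$ in the monomials $K_{0,\alpha}$ (the paper uses the truncation $\prod_i\sum_{j_i\le k}(j_i+1)(\xi_i\bar w_i)^{j_i}$, you use $\sum_{|\alpha|\le N}$, which differ only in the shape of the truncation region), and both close the argument via the factorization through an orthonormal set together with Lemma \ref{Lemma 1} and the uniform convergence $\|K_w-K_w^{(N)}\|_\infty\to 0$. Your opening algebraic reduction $k_u\otimes k_{\varphi_u(w)}=\eta_u\prod_i(1-|w_i|^2)(U_uK_0)\otimes(U_uK_w)$ with $\eta_u=\prod_i\frac{1-\bar u_iw_i}{1-u_i\bar w_i}$ is exactly the paper's passage via $d_u(w)$ and $(1.7)$, just phrased differently.
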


\begin{proof}
For any $\alpha=(\alpha_1,\cdots,\alpha_n)\in \mathbb{N}^n$, define
\[
P_{\alpha}(\xi)=\prod_{i=1}^n\xi_i^{\alpha_i},\text{ for any }\xi=(\xi_1,\dots,\xi_n)\in \mathbb{D}.
\]
By (\ref{Kzalpha}), it is easy to see that $P_{\alpha}(\xi)=K_{0,\alpha}(\xi)$.
For a fixed $u=(u_1,u_2,\dots,u_n)\in\mathbb{D}$, define the function
\begin{equation}\label{1.8}
d_u(w)=c_u\prod_{i=1}^n\left(\frac{1-w_i\bar{u}_i}{|1-w_i\bar{u}_i|}\right)^2 , \, \text{ for any }\,w=(w_1,\dots,w_n)\in \mathbb{D}.
\end{equation}
Note that $U_uK_0=U_u1=k_u$ for every $u\in \Gamma$ and $|c_u|=|d_u(w)|$ for any $w\in \mathbb{D}$.
Using Proposition \ref{Proposition 1}, we have
\begin{equation}\label{1.4}
\sum_{u\in\Gamma}d_u(w)k_u\otimes(U_uP_{\alpha})=\sum_{u\in\Gamma}c_u(U_uK_0)\otimes(U_uK_{0,\alpha})\in \mathcal{T}^{(1)}.
\end{equation}
 For each $i\in\{1,\cdots,m\}$, $j\in\mathbb{N}$ and each $w=(w_1,\dots,w_n)\in \mathbb{D}$, we define the function
 $$g^{(i)}_{w,j}(\xi)=(\xi_i\overline{w_i})^j \text{ for any }\xi=(\xi_1,\dots,\xi_n)\in \mathbb{D}.$$
It is easy to see $g^{(i)}_{w,j}\in$ Span$\{P_{\alpha}:\alpha\in\mathbb{N}^n\}$. For any $k\in\mathbb{N}$, let
\[
A_k=\sum_{u\in\Gamma}d_u(w)k_u\otimes (U_u(\prod_{i=1}^n(\sum_{j_i=0}^k(j_i+1)g_{w,j_i}^{(i)}))).
\]
Since $\prod_{i=1}^n(\sum_{j_i=0}^k(j_i+1)g_{w,j_i}^{(i)})\in$ Span$\{P_{\alpha}:\alpha\in\mathbb{N}^n\}$, then for every $k\in \mathbb{N}$, then, by (\ref{1.4}), we have $A_k\in\mathcal{T}^{(1)}$ for any $k\in\mathbb{N}$. Taking an orthnormal set $\{e_u:u\in\Gamma\}$, we have the factorization $A_k=TB_k^*$, where
\[
T=\sum_{u\in\Gamma}d_u(w)k_u\otimes e_u\text{ and }B_k=\sum_{u\in\Gamma}(U_u(\prod_{i=1}^n(\sum_{j_i=0}^k(j_i+1)g_{w,j_i}^{(i)})))\otimes e_u.
\]
Denote $G= \sum_{u\in\Gamma}(U_uK_w)\otimes e_u$. By Lemma \ref{Lemma 1}, we have that $T$ is a bounded operator and there exists $C>0$ such that
\begin{equation}\label{1.5}\|G-B_k\|\le C\|K_w-\prod_{i=1}^n(\sum_{j_i=0}^k(j_i+1)g_{w,j_i}^{(i)})\|_{\infty} \text{ for any }k\in\mathbb{N}.\end{equation}
Since for any $|c|<1$, $(1-c)^{-2}=\sum_{j=0}^{\infty}(1+j)c^j$, for the $w\in\mathbb{D}$ given above and any $z=(z_1,\dots,z_n)\in\mathbb{D}$, there exists $k_0>0$ such that for any $k\geq k_0$, $$|\sum_{j=0}^{k}(j_i+1)(z_i\overline{w_i})^{j_i}|< |\frac{1}{(1-z_i\overline{w}_i)^2}| +1, \, \text{ for any }\, 1\le i\le n.$$
Write $a=\prod_{i=1}^n(|\frac{1}{(1-z_i\overline{w}_i)^2}| +1)^2$. For the fixed $w\in \mathbb{D}$, by a telescope sum, we have
\begin{eqnarray*}
& &\lim_{k\rightarrow \infty}|\prod_{i=1}^n\frac{1}{(1-z_i\bar{w}_i)^2}-\prod_{i=1}^n(\sum_{j_i=0}^k(j_i+1)g_{w,j_i}^{(i)}(z))|\\
& \le& \lim_{k\rightarrow \infty} \sum_{i=1}^{n}a \cdot |\frac{1}{(1-z_i\overline{w_i})^2}-\sum_{j_i=0}^k(j_i+1)(z_i\overline{w_i})^{j_i}|=0.
\end{eqnarray*}
Since $w\in \mathbb{D}$ is fixed, the convergence above is uniform for any $z\in\mathbb{D}$. Thus, for fixed $w\in\mathbb{D}$,
 $$\lim_{k\to \infty}\|\prod_{i=1}^n\frac{1}{(1-z_i\bar{w}_i)^2}-\prod_{i=1}^n(\sum_{j_i=0}^k(j_i+1)g_{w,j_i}^{(i)}(z))\|_{\infty}=0.$$
Combined with (\ref{1.5}), it gives that
\[\lim_{k\to \infty}\|TG^*-A_k\|=\lim_{k\to \infty}
\|TG^*-TB_k^*\|\leq \lim_{k\to \infty}\|T\|\|G-B_k\|=0.
\]
Since each $A_k\in\mathcal{T}^{(1)}$, then we have
\begin{equation}\label{1.6}\sum_{u\in\Gamma}d_u(w)k_u\otimes (U_uK_w)=TG^*\in \mathcal{T}^{(1)}.\end{equation}
Combining with (\ref{yousuanzi}), we have
\begin{equation}\label{1.7}U_uk_w=\prod_{i=1}^n\left(\frac{|1-w_i\bar{u_i}|}{1-w_i\bar{u_i}}\right)^2k_{\varphi_u(w)}=\frac{c_u}{d_u(w)}k_{\varphi_u(w)}.\end{equation}
Combining this with $\frac{K_w}{\|K_w\|}=k_w$, we find that $$\sum_{u\in\Gamma}c_uk_u\otimes k_{\varphi_u(w)}=\sum_{u\in\Gamma}c_uk_u\otimes \frac{d_u(w)}{c_u} U_uk_w =\frac{1}{\|K_w\|}\sum_{u\in\Gamma}d_u(w) k_u\otimes U_uK_w\in \mathcal{T}^{(1)}$$ by (\ref{1.6}).
\end{proof}

\begin{proposition}\label{Proposition J}
$\mathcal{D}_0 \subseteq \mathcal{T}^{(1)}.$
\end{proposition}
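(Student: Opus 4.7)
The plan is to reduce the general case to the special form handled by Proposition \ref{Proposition 2} through a finite partition argument on a compact parameter set, then control the approximation error using Proposition \ref{Proposition A} and Lemma \ref{Lemma 1}.

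Fix $T=\sum_{z\in\Gamma}c_zk_z\otimes k_{\gamma(z)}\in\mathcal{D}_0$ with $\sup_z|c_z|<\infty$ and $d(z,\gamma(z))\le C$. The first step is the key observation that if we set $w(z):=\varphi_z(\gamma(z))$, then since $\varphi_z$ is a $d$-isometry with $\varphi_z(z)=0$, we have $d(0,w(z))=d(z,\gamma(z))\le C$. Thus $\{w(z):z\in\Gamma\}\subset K$, where $K$ denotes the Euclidean closure of $D(0,C)$. The set $K$ is a compact subset of $\mathbb{D}$ because $d(0,\zeta)\le C$ forces each $|\zeta_i|$ to stay bounded away from $1$.

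Given $\varepsilon>0$, I partition $K$ into finitely many Borel sets $B_1,\dots,B_N$ each of Euclidean diameter less than $\varepsilon$, pick $w_j\in B_j$, and set $\Gamma_j=\{z\in\Gamma:w(z)\in B_j\}$. Since each $\Gamma_j\subset\Gamma$ is separated, Proposition \ref{Proposition 2} yields $S_j:=\sum_{z\in\Gamma_j}c_zk_z\otimes k_{\varphi_z(w_j)}\in\mathcal{T}^{(1)}$, and hence the finite sum $S:=\sum_{j=1}^N S_j$ lies in $\mathcal{T}^{(1)}$. To estimate the error, I use the identity $U_zk_w=g_{z,w}k_{\varphi_z(w)}$ from (\ref{yousuanzi}), combined with $|g_{z,w}|=1$, to rewrite
$$T-S=\sum_{z\in\Gamma}c_zk_z\otimes U_zh_z,\qquad h_z:=\bar g_{z,w(z)}k_{w(z)}-\bar g_{z,w_{j(z)}}k_{w_{j(z)}},$$
where $j(z)$ is the index with $z\in\Gamma_{j(z)}$. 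Each $h_z$ is a bounded analytic function on $\mathbb{D}$, so I can factor $T-S=XY^*$ with $X=\sum_zc_zk_z\otimes e_z$ and $Y=\sum_z(U_zh_z)\otimes e_z$ for an auxiliary orthonormal family $\{e_z\}$. Proposition \ref{Proposition A} bounds $\|X\|$ in terms of $\sup_z|c_z|$, while Lemma \ref{Lemma 1} gives $\|Y\|\le C'\sup_z\|h_z\|_\infty$.

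The remaining task is to show that $\sup_z\|h_z\|_\infty\to 0$ as $\varepsilon\to 0$. The triangle inequality yields
$$\|h_z\|_\infty\le|g_{z,w(z)}-g_{z,w_{j(z)}}|\,\sup_{w\in K}\|k_w\|_\infty+\|k_{w(z)}-k_{w_{j(z)}}\|_\infty,$$
and since $|1-z_i\bar w_i|\ge 1-\sup_{\zeta\in K}|\zeta_i|>0$ uniformly in $z\in\mathbb{D}$ and $w\in K$, both $w\mapsto g_{z,w}$ (into $\mathbb{C}$) and $w\mapsto k_w$ (into $L^\infty(\mathbb{D})$) are Lipschitz on $K$ with a $z$-independent constant. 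Combined with $|w(z)-w_{j(z)}|<\varepsilon$, this forces $\sup_z\|h_z\|_\infty=O(\varepsilon)$, hence $\|T-S\|\to 0$ as $\varepsilon\to 0$. Since $\mathcal{T}^{(1)}$ is norm-closed, $T\in\mathcal{T}^{(1)}$.

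The main obstacle will be the final paragraph: verifying that the Möbius phase $g_{z,w}$ and the kernel $k_w$ depend on $w\in K$ in a manner uniform over $z\in\mathbb{D}$. This, however, reduces to the elementary observation that $|1-z_i\bar w_i|$ is bounded below on $\mathbb{D}\times K$ because $K$ stays strictly inside $\mathbb{D}$ in the Euclidean sense, which is precisely the content of the first observation that $D(0,C)$ has compact Euclidean closure in $\mathbb{D}$.
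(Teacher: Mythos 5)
Your proof is correct and follows essentially the same route as the paper's: reduce to the compact parameter set $K=\overline{D(0,C)}$ via $w(z)=\varphi_z(\gamma(z))$, partition $K$ finitely so that $k_w$ varies by at most $\varepsilon$ in $\|\cdot\|_\infty$ on each piece, invoke Proposition~\ref{Proposition 2} on each sub-family $\Gamma_j$ to place the piecewise-constant approximant $S$ in $\mathcal{T}^{(1)}$, and control $\|T-S\|$ by factoring into a bounded $X$ (Proposition~\ref{Proposition A}) times a small $Y$ estimated by Lemma~\ref{Lemma 1}. The paper's presentation differs only cosmetically — it factors $T=AB^*$ first and approximates $B$ by $B_1+\cdots+B_m$, absorbing the Möbius phases $d_u$ into the coefficients of $A$, whereas you keep the phases inside the $h_z$ and verify their uniform Lipschitz dependence on $w\in K$ — but the partition, the use of Proposition~\ref{Proposition 2}, and the error bound via Lemma~\ref{Lemma 1} are the same.
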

\begin{proof}
For any $T\in \mathcal{D}_0$, we can assume that $$T=\sum_{u\in \Gamma}c_uk_u\otimes k_{\gamma(u)},$$
where $\Gamma$ is a separated set in $\mathbb{D}$, $\mathrm{sup}_{u\in \Gamma}|c_u|<\infty$ and $\gamma:\Gamma\rightarrow \mathbb{D}$ is any map for which there exists a $0\le C< \infty$ such that $d(z,\gamma(z))\le C$ for every $z\in \Gamma$.

Write $K=\{w\in \mathbb{D}:d(0,w)\le C\}$ and $\psi(u)=\varphi_u(\gamma(u))$ for any $u\in \Gamma.$ Note that $\varphi_u(\psi(u))=\gamma(u)$ and $\varphi_u(u)=0.$ By the M\"obius invariance of $\beta$, we have
$$d(0,\psi(u))=d(\varphi_u(u),\varphi_u(\gamma(u)))=d(u,\gamma(u))\le C.$$
That is, $\psi(u)\in K$ for any $u\in \Gamma$. By (\ref{1.7}) and the function (\ref{1.8}), we have
$$|d_u(\psi(u))|=|c_u|\text{ and }T=\sum_{u\in \Gamma}d_u(\psi(u))k_u\otimes (U_uk_{\psi(u)}).$$
Taking an orthonormal set $\{e_u:u\in\Gamma\}$, we have the factorization $T=AB^*$, where $$A=\sum_{u\in \Gamma}d_u(\psi(u))k_u\otimes e_u,\,\text{and}\,B=\sum_{u\in \Gamma} (U_uk_{\psi(u)})\otimes e_u.$$
Since $\sup_{u\in\Gamma}\{|d_u(\psi(u))|\}<\infty$, then $A$ is a bounded operator. Let $\varepsilon>0$ be given. Combined with  compactness of $K$ and the fact that the map $z\mapsto k_z$ is $\|\cdot\|_{\infty}$-continuous on $K$, there exists a finite open cover $\{\Omega_1,\dots,\Omega_m\}$ of $K$ and $z^{(i)}\in \Omega_i\cap K,$ for each $i\in\{1,\dots,m\}$ such that
$$\|k_{z^{(i)}}-k_w\|_{\infty}<\varepsilon,\text{ for any }w\in \Omega_i \text{ and any }i\in\{1,\dots,m\}.$$
Therefore, we obtain a partition
$$K=E_1\sqcup\dots\sqcup E_m$$
such that $E_i\subseteq \Omega_i$ for every $i\in \{1,\dots,m\}.$ Write $\Gamma_i=\{u\in \Gamma:\psi(u)\in E_i\}$, $i=1,\dots,m$. Then the set $\{\Gamma_1,\dots,\Gamma_m\}$ forms a partition of $\Gamma$ and $\|k_{z^{(i)}}-k_{\psi(u)}\|_{\infty}<\varepsilon$ for any $u\in \Gamma_i$. For every $i\in \{1,\dots,m\},$ we define the operator
$$B_i=\sum_{u\in \Gamma_i} (U_uk_{z^{(i)}})\otimes e_u.$$
Then we have
$$AB_i^*=\sum_{u\in \Gamma_i}d_u(\psi(u))k_u\otimes (U_uk_{z^{(i)}})=\sum_{u\in \Gamma_i}d_u(\psi(u))\left(\frac{c_u}{d_u(z^{(i)})} \right) k_u\otimes k_{\varphi_u(z^{(i)})}.$$
where the last equality follows by (\ref{yousuanzi}). Thus it follows from Proposition \ref{Proposition 2} that $AB_i^*\in \mathcal{T}^{(1)}$ for each $i\in \{1,\dots,m\}$. By Lemma \ref{Lemma 1} and the fact that $\Gamma_i\cap \Gamma_j=\emptyset$ for any $i,j\in \{1,\dots,m\}$, there exists $C_1>0$ such that
$$\|B-(B_1+\dots+B_m)\|=\|\sum_{i=1}^m\sum_{u\in \Gamma_i}\{U_u(k_{\psi(u)}-k_{z^{(i)}})\}\otimes e_u\|\le C_1\cdot\mathop{max}\limits_{1\le i \le m}\mathop{\mathrm{sup}}\limits_{u\in \Gamma_i}\|k_{\psi(u)}-k_{z^{(i)}}\|_{\infty}\le C_1\varepsilon.$$
Therefore
$$\|T-\sum_{i=1}^m AB^*_i\|\le \|A\|\cdot \|B^*-(B_1^*+\dots+B_m^*)\|= \|A\|\cdot\|B-(B_1+\dots+B_m)\|\le \|A\|\cdot C_1\varepsilon. $$
Since $\{AB_1^*,\dots,AB_m^*\}\subseteq \mathcal{T}^{(1)}$, then we have $T\in \mathcal{T}^{(1)}$.
\end{proof}
\
To sum up, we have the following localization of Toeplitz algebra on polydisk.\\

\noindent
{\bf Proof of Theorem B:}
By Proposition \ref{Proposition H} and Proposition \ref{Proposition D}, we have $\mathcal{T}(\mathbb{D})\subseteq C^*(\mathcal{D})=\mathcal{D}$. While Proposition \ref{Proposition J} ensures that $\mathcal{D}_0\subseteq \mathcal{T}^{(1)}\subseteq \mathcal{T}(\mathbb{D})$, so we have $C^*(\mathcal{D})=\mathcal{T}(\mathbb{D})$.
$\Box$
%
%
%
%
\section{Essential Commutant}
\

Following the ideas in \cite{J. Xia 2018}, we will now generalize the notion of localized operators to polydisk $\mathbb{D}$.

\begin{definition}\label{Definition 000}
For any bounded linear operator $A$ on $L_a^2(\mathbb{D})$, $\mathcal{D}_0(A)$ denotes the collection of operators of the form
$$\sum_{z\in \Gamma}c_z\langle Ak_{\psi(z)},k_{\varphi(z)}\rangle k_{\varphi(z)}\otimes k_{\psi(z)},$$
where $\Gamma$ is any separated set in $\mathbb{D}$, $\{c_z:z\in \Gamma\}$ is any bounded set of complex coefficients and $\varphi,\psi:\Gamma\to \mathbb{D}$ are any maps for which there is a $0\leq C<\infty$ such that
$$d(z,\varphi(z))\leq C,~~d(z,\psi(z))\leq C,$$
for every $z\in\Gamma$.

For any bounded linear operator $A$ on $L_a^2(\mathbb{D})$, $\mathcal{D}(A)$ denotes the closure of the linear span of $\mathcal{D}_0(A)$ with respect to the operator norm.
\end{definition}

\begin{definition}\label{Definition 00}
Let $A$ be a bounded operator on the Bergman space $L_a^2(\mathbb{D})$. We denote LOC$(A)$ be the collection of operators of the form
$$T=\sum_{z\in\Gamma}T_{f_z}AT_{f_z},$$
where $\Gamma$ is any separated set in $\mathbb{D}$ and $\{f_z: z\in \Gamma\}$ is any family of continuous functions
on $\mathbb{D}$ satisfying the following three conditions:

(1) There is a $0 < \rho < \infty$ such that $f_z = 0$ on $\mathbb{D}\setminus \widetilde{D}(z, \rho)$ for every $z \in\Gamma$.

(2) The inequality $0 \leq f_z \leq 1$ holds on $\mathbb{D}$ for every $z\in\Gamma$.

(3) The family $\{f_z : z\in\Gamma\}$ satisfies a uniform Lipschitz condition on $\mathbb{D}$ with respect to the metric $d$. That is, there is a positive number $C$ such that $|f_z(\zeta) -f_z(\xi)| \leq Cd(\zeta, \xi)$ for all
$z\in\Gamma$ and $\zeta,\xi\in \mathbb{D}$.
\end{definition}

\begin{proposition}\label{Proposition K}
For any $A\in B(L_a^2(\mathbb{D}))$, $LOC(A)\subset \mathcal{D}(A)$.
\end{proposition}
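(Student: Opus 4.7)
The plan is to discretize the Toeplitz factors in $T=\sum_{z\in\Gamma}T_{f_z}AT_{f_z}$ by Riemann sums on a common fine partition of a fixed compact set, producing approximants that lie in $\mathrm{span}(\mathcal D_0(A))$ and converge to $T$ in operator norm. Starting from $T_{f_z}=\int_{\mathbb D}f_z(w)\,k_w\otimes k_w\,d\Lambda(w)$, a M\"obius change of variable $w=\varphi_z(\xi)$ together with the identity \eqref{yousuanzi2} gives $T_{f_z}=U_z T_{h_z}U_z$, where $h_z:=f_z\circ\varphi_z$ is supported in the \emph{fixed} compact set $\widetilde D(0,\rho)$; the family $\{h_z\}_{z\in\Gamma}$ is uniformly bounded and has a common Lipschitz constant.

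Given $\varepsilon>0$, pick a small $\delta>0$ and a finite Borel partition $\widetilde D(0,\rho)=\bigsqcup_\alpha\Delta_\alpha$ of $d$-diameter $\le\delta$ with sample points $\xi_\alpha\in\Delta_\alpha$; set $w_{z,\alpha}=\varphi_z(\xi_\alpha)$ and
$$
S_z:=\sum_\alpha h_z(\xi_\alpha)\Lambda(\Delta_\alpha)\,k_{w_{z,\alpha}}\otimes k_{w_{z,\alpha}}=U_z\Bigl(\sum_\alpha h_z(\xi_\alpha)\Lambda(\Delta_\alpha)\,k_{\xi_\alpha}\otimes k_{\xi_\alpha}\Bigr)U_z.
$$
The uniform Lipschitz bound on $h_z$ and uniform continuity of $\xi\mapsto k_\xi\otimes k_\xi$ on the compact set $\widetilde D(0,\rho)$ yield a pointwise estimate in the integrand that sums to $\|T_{f_z}-S_z\|\le\eta(\delta)$ with $\eta(\delta)\to 0$ as $\delta\to 0$, \emph{uniformly} in $z\in\Gamma$. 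Exchanging the orders of summation,
$$
\sum_{z\in\Gamma}S_z A S_z=\sum_{\alpha,\beta}\Lambda(\Delta_\alpha)\Lambda(\Delta_\beta)\sum_{z\in\Gamma}h_z(\xi_\alpha)h_z(\xi_\beta)\langle Ak_{w_{z,\beta}},k_{w_{z,\alpha}}\rangle\, k_{w_{z,\alpha}}\otimes k_{w_{z,\beta}},
$$
and each inner sum fits Definition \ref{Definition 000} exactly: $\Gamma$ is separated, the coefficients $h_z(\xi_\alpha)h_z(\xi_\beta)$ are uniformly bounded, and $\varphi(z)=w_{z,\alpha}$, $\psi(z)=w_{z,\beta}$ lie within $d$-distance $n\rho$ of $z$. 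Summing the finitely many pairs $(\alpha,\beta)$ places $\sum_z S_z A S_z$ in $\mathrm{span}(\mathcal D_0(A))$.

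It remains to estimate $T-\sum_z S_z A S_z=\sum_z R_z AT_{f_z}+\sum_z S_z A R_z$, with $R_z:=T_{f_z}-S_z$. By Lemma \ref{Lemma 0.1}(b) I may, after splitting $\Gamma$ into finitely many pieces, assume the supports $\{\widetilde D(z,\rho)\}_z$ are pairwise disjoint, so that $\sum_z T_{f_z}\le I$ and hence $\sum_z T_{f_z}^{*}T_{f_z}\le I$. A Schur-type estimate exploiting the off-diagonal decay $|\langle k_w,k_{w'}\rangle|\le C e^{-2d(w,w')}$ from \eqref{inequality}, combined with the fact that the range of each $R_z$ sits inside $\overline{\mathrm{span}}\{k_w:w\in\widetilde D(z,\rho)\}$, gives $\|\sum_z R_z^{*}R_z\|\le C_0\eta(\delta)^2$. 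A Cauchy--Schwarz argument
$$
\bigl|\bigl\langle{\textstyle\sum_z}R_z AT_{f_z}x,\,y\bigr\rangle\bigr|\le\|A\|\Bigl({\textstyle\sum_z}\|T_{f_z}x\|^2\Bigr)^{1/2}\Bigl({\textstyle\sum_z}\|R_z y\|^2\Bigr)^{1/2}
$$
then yields $\|\sum_z R_z A T_{f_z}\|\le C_0^{1/2}\|A\|\,\eta(\delta)\to 0$, and the other summand is handled symmetrically.

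The main obstacle is this last step: the uniform single-term bound $\|R_z\|\le\eta(\delta)$ does not naively add up over the typically infinite index set $\Gamma$, and promoting it to an operator-norm bound on $\sum_z R_z A T_{f_z}$ requires combining the disjoint-support reduction with the exponential off-diagonal decay of reproducing kernels in a Schur/$\ell^2$ argument, very much in the spirit of Proposition \ref{Proposition A} and Lemma \ref{Lemma E}. Everything else (the M\"obius reduction, the discretization, and the identification $\sum_z S_z A S_z\in\mathrm{span}(\mathcal D_0(A))$) is routine once the common compact support of $\{h_z\}$ is in hand.
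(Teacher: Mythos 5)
Your discretization produces exactly the same approximants as the paper: with a product partition $\{\Delta_\alpha\times\Delta_\beta\}$ of $\widetilde D(0,\rho)\times\widetilde D(0,\rho)$, your $\sum_z S_z A S_z$ coincides with the paper's Riemann sum $X=\sum_j X^{(j)}$, and the verification that these lie in $\mathrm{span}(\mathcal D_0(A))$ is correct. The place where you diverge from the paper is the error estimate, and that is where the proposal breaks down.

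The step you flag as the main obstacle is not merely hard — the proposed resolution is vacuous. You reduce the bound on $\|\sum_z R_z^*R_z\|$ to the observation that the range of $R_z$ lies in $M_z:=\overline{\mathrm{span}}\{k_w:w\in\widetilde D(z,\rho)\}$, implicitly so that $R_z^*R_z\le\eta(\delta)^2 P_{M_z}$ and one can then control $\sum_z P_{M_z}$ by off-diagonal decay. But for \emph{any} nonempty open set $\Omega\subset\mathbb D$, the family $\{k_w:w\in\Omega\}$ spans a dense subspace of $L_a^2(\mathbb D)$: if $h\perp k_w$ for all $w\in\Omega$ then $h\equiv 0$ on $\Omega$, hence $h\equiv 0$ by analyticity. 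Since $\widetilde D(z,\rho)$ is open, $M_z=L_a^2(\mathbb D)$ and $P_{M_z}=I$ for every $z$, so $\sum_z P_{M_z}$ is not bounded and the localization of $\mathrm{range}(R_z)$ gives no information at all. The disjointness of the supports of $f_z$ controls $\sum_z T_{f_z}^2\le I$ as you observe, but it does not transfer to any analogous bound on $\sum_z R_z^2$; the off-diagonal decay of $|\langle k_w,k_{w'}\rangle|$ lives in the coefficients of the kernels, not in the closed subspaces they span, so a Schur argument in the spirit of Proposition~\ref{Proposition A} (which acts on expansions against an \emph{orthonormal} family $\{e_z\}$, not against whole subspaces) does not apply here.

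The paper avoids this trap by never separating the error $z$-by-$z$. After the M\"obius reduction it writes
$T=\iint_{\widetilde D(0,\rho)^2}G(u,v)\,d\Lambda(u)d\Lambda(v)$ with
$G(u,v)=\sum_{z\in\Gamma}f_z(\varphi_z(u))f_z(\varphi_z(v))\langle AU_zk_v,U_zk_u\rangle\,U_zk_u\otimes U_zk_v$,
and bounds $\|T-X\|$ by showing $(u,v)\mapsto G(u,v)$ is norm continuous on the compact set $\overline{\widetilde D(0,\rho)^2}$. That continuity is obtained by the factorization $G(u,v)=A(u,v)B(v)^*$, where $A(u,v)=\sum_z f_z(\varphi_z(u))\langle AU_zk_v,U_zk_u\rangle\,U_zk_u\otimes e_z$ and $B(v)=\sum_z f_z(\varphi_z(v))\,U_zk_v\otimes e_z$ for an orthonormal set $\{e_z\}$; Lemma~\ref{Lemma 1} gives uniform operator-norm bounds, with $\|A(u,v)-A(u',v')\|$, $\|B(v)-B(v')\|\lesssim\|k_u-k_{u'}\|_\infty+\|k_v-k_{v'}\|_\infty$, and the $\sup$-norm continuity of $\xi\mapsto k_\xi$ on a compact set finishes the argument. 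The sum over $z$ is absorbed once and for all by Lemma~\ref{Lemma 1}, which is the precise tool you need but did not invoke; to repair your proof you should bound $T-X$ this way rather than attempting an $\ell^2$-in-$z$ Cauchy--Schwarz with the nonexistent localization of $R_z$.
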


\begin{proof}
Recall that for any $f\in L^{\infty}(\mathbb{D})$, the Toeplitz operator $T_f$ can be represented as
$$T_f=\int_{\mathbb{D}}f(z)k_z\otimes k_zd\Lambda(z).$$
Let $T$ be any operator in $LOC(A)$. We assume
$$T=\sum_{z\in\Gamma}T_{f_z}AT_{f_z},$$
for some separated set $\Gamma\subset \mathbb{D}$ and and $\{f_z: z\in \Gamma\}$ is any family of continuous functions
on $\mathbb{D}$ satisfying three conditions which appear in Definition \ref{Definition 00}.
Therefore, for each $z\in\Gamma$,
\begin{eqnarray*}
  T_{f_z}AT_{f_z} &=& \int_{\mathbb{D}}f_z(u)k_u\otimes k_u d\Lambda(u) A \int_{\mathbb{D}}f_z(v)k_v\otimes k_v d\Lambda(v)\\
                  &=& \int_{\widetilde{D}(z,\rho)}f_z(u)k_u\otimes k_u d\Lambda(u) A \int_{\widetilde{D}(z,\rho)}f_z(v)k_v\otimes k_v d\Lambda(v) \\
                  &=& \int_{\widetilde{D}(0,\rho)}f_z(\varphi_z(u))k_{\varphi_z(u)}\otimes k_{\varphi_z(u)} d\Lambda(u) A \int_{\widetilde{D}(0,\rho)}f_z(\varphi_z(v))k_{\varphi_z(v)}\otimes k_{\varphi_z(v)} d\Lambda(v) \\
                  &=& \iint_{\widetilde{D}(0,\rho)\times \widetilde{D}(0,\rho)}f_z(\varphi_z(u))f_z(\varphi_z(v))\langle Ak_{\varphi_z(v)},k_{\varphi_z(u)}\rangle k_{\varphi_z(u)}\otimes k_{\varphi_z(v)} d\Lambda(u)d\Lambda(v),\\
\end{eqnarray*}
where the second equality follows by the conditiong (1) that appears in Definition \ref{Definition 00}. So
\[
T=\sum_{z\in\Gamma}T_{f_z}AT_{f_z}=\sum_{z\in\Gamma}\iint_{\widetilde{D}(0,\rho)\times \widetilde{D}(0,\rho)}f_z(\varphi_z(u))f_z(\varphi_z(v))\langle Ak_{\varphi_z(v)},k_{\varphi_z(u)}\rangle k_{\varphi_z(u)}\otimes k_{\varphi_z(v)} d\Lambda(u)d\Lambda(v).
\]

Since $\overline{\widetilde{D}(0,\rho)\times \widetilde{D}(0,\rho)}$ is compact, then for any $\eta>0$, there exists $m=m(\eta)\in\mathbb{N}_+$ and a finite partition  $\overline{\widetilde{D}(0,\rho)\times \widetilde{D}(0,\rho)}=E_1\sqcup\cdots\sqcup E_m$ such that every diam$(E_j)<\eta$ for any $1\leq j\leq m$. We can choose a point $(u^{(j)},v^{(j)})\in E_j$ for any $j$ and define
\[
X^{(j)}=(\Lambda\times\Lambda)(E_j)\sum_{z\in\Gamma}f_z(\varphi_z(u^{(j)}))f_z(\varphi_z(v^{(j)}))\langle Ak_{\varphi_z(v^{(j)})},k_{\varphi_z(u^{(j)})}\rangle k_{\varphi_z(u^{(j)})}\otimes k_{\varphi_z(v^{(j)})}.
\]
Since $(\Lambda\times\Lambda)(E_j)$, $f_z$ and $A$ are bounded as a number, a function and an operator respectively, by Definition \ref{Definition 000}, $X^{(j)}\in \mathcal{D}_0(A)$ for any $1\leq j\leq m$. Let $X=X^{(1)}+\cdots+X^{(m)}$. Then we have
\begin{eqnarray*}
T-X &=& \sum_{j=1}^m[\iint_{E_j}(\sum_{z\in\Gamma}(f_z(\varphi_z(u))f_z(\varphi_z(v))\langle Ak_{\varphi_z(v)},k_{\varphi_z(u)}\rangle k_{\varphi_z(u)}\otimes k_{\varphi_z(v)}d\Lambda(u)d\Lambda(v))\\
    & & -(\Lambda\times\Lambda)(E_j)(\sum_{z\in\Gamma}(f_z(\varphi_z(u^{(j)}))f_z(\varphi_z(v^{(j)}))\langle Ak_{\varphi_z(v^{(j)})},k_{\varphi_z(u^{(j)})}\rangle k_{\varphi_z(u^{(j)})}\otimes k_{\varphi_z(v^{(j)})})]\\
    &=& \sum_{j=1}^m\iint_{E_j}[(\sum_{z\in\Gamma}(f_z(\varphi_z(u))f_z(\varphi_z(v))\langle AU_zk_v,U_zk_u\rangle U_zk_u\otimes U_zk_v)\\
    & & -(\sum_{z\in\Gamma}(f_z(\varphi_z(u^{(j)}))f_z(\varphi_z(v^{(j)}))\langle AU_zk_{v^{(j)}},U_zk_{u^{(j)}}\rangle U_zk_{u^{(j)}}\otimes U_zk_{v^{(j)}})]d\Lambda(u)d\Lambda(v).
\end{eqnarray*}
Write
\begin{equation}\label{K1}
G{(u,v)}= \sum_{z\in\Gamma}(f_z(\varphi_z(u))f_z(\varphi_z(v))\langle AU_zk_v,U_zk_u\rangle U_zk_u\otimes U_zk_v,
\end{equation}
and $$A{(u,v)}= \sum_{z\in\Gamma}f_z(\varphi_z(u))\langle AU_zk_v,U_zk_u\rangle U_zk_u\otimes e_z,\quad B(v)^*= \sum_{z\in\Gamma}f_z(\varphi_z(v)) e_z\otimes U_zk_v, $$
where $\{e_z,z\in \Gamma\}$ is an orthonormal set. If $G(u,v)$ is norm continuous, then $\|T-X\|\to 0$ as diam$(E_j)\to 0$ for any $j\in\{1,\cdots,m\}$. Therefore, it suffices to prove that $G{(u,v)}$ in (\ref{K1}) is norm continuous.

By Lemma \ref{Lemma 1} and Lemma \ref{Lemma 0.1} (c), there exists $C>0$ such that $\|A(u,v)\|<C$ and $\|B\|<C$ for any $u,v\in \widetilde{D}(0,\rho)$.
Note that $G{(u,v)}=A{(u,v)}B(v)^*$. Then for any $u,u',v,v'\in \widetilde{D}(0,\rho)$, we have
$$\|G(u,v)-G(u',v')\|\le \|A(u,v)-A(u',v')\|\cdot\|B(v)\|+\|A(u',v')\|\cdot \|B(v)-B(v')\|.$$
So it suffices to prove that the maps $A{(u,v)}$ and $B(v)$ are continuous. It is obvious that the map $(u,v)\mapsto \langle k_u,k_v\rangle$ is norm continuous. Then combined with the formula
$$\langle AU_zk_u,U_zk_v\rangle-\langle AU_zk_{u'},U_zk_{v'}\rangle=\langle AU_z(k_u-k_{u'}),U_zk_v\rangle-\langle AU_zk_{u'},U_z(k_v-k_{v'})\rangle,$$ we have that the map
$$(u,v)\mapsto \langle AU_zk_u,U_zk_v\rangle$$
is norm continuous. Since $\overline{\widetilde{D}(0,\rho)}$ is compact, then there exists a $0<c<\infty$ such that $1-|u|\ge c$ for any $u\in \overline{\widetilde{D}(0,\rho)}$. Therefore,
$$\|k_u-k_{u'}\|_{\infty}\rightarrow 0 \quad \text{as}\quad d(u,u')\rightarrow 0,\text{ for any }u,u'\in \overline{\widetilde{D}(0,\rho)}.$$
By the condition (3) in Definition \ref{Definition 00}, we have $u \mapsto f_z(\varphi_z(u))$ is uniform continuous. Then we have that the map $(u,v)\mapsto A(u,v)$ and  $(u,v)\mapsto B(v)$ both are norm continuous.
\end{proof}

\section{Some Estimation}
\

In this section, we will prove that any operators in $EssCom\{T_g:g\in VO_{bdd}\}$ would satisfy  $\varepsilon-\delta$ condition.

\begin{definition}\label{banjing}
For each $0<t<1$, let $\mathbb{D}_t$ be the collection of $z=(z_1,\dots,z_n)\in \mathbb{D}$ satisfying the condition $|z_i|< t \, \text{ for every }\, i\in\{1,\cdots,n\}$.
\end{definition}

\begin{definition}\label{Definition 4}
\

(a)  For each $ 0<t<1$, the symbol $\Psi(t)$ denotes the collection of  $g$ in $C(\mathbb{D})$ (the collection of all continuous functions on $\mathbb{D}$) satisfying

~~~(1)  $0\leq g(z)\leq 1$, on $\mathbb{D}$;

~~~(2)  $g(z)=1$ whenever $z \in \overline{\mathbb{D}_t}$;

~~~(3)  There is a $t'=t'(g)\in (t,1)$ such that $g(z)=0$ whenever $z\in \mathbb{D}\backslash \mathbb{D}_{t'}$.

(b)  For $0<t<1$ and $\delta>0$, let $\Psi(t;\delta)=\{g\in \Psi(t): \mathrm{diff}(g)\leq \delta\}$.
\end{definition}

\begin{lemma}\label{Lemma 4}
For any $t\in(0,1)$ and any $\delta>0$, $\Psi(t;\delta)\neq\emptyset$.
\end{lemma}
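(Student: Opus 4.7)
The plan is to construct $g$ as a product of one-variable bump functions composed with the Bergman distance to $0$ in each coordinate. Explicitly, set $a = \beta(0,t) = \frac{1}{2}\log\frac{1+t}{1-t}$ and $b = a + 1/\delta$, and define a continuous function $\phi:[0,\infty)\to[0,1]$ by
\begin{equation*}
\phi(s) = \begin{cases} 1, & 0 \le s \le a, \\ 1 - \delta(s-a), & a \le s \le b, \\ 0, & s \ge b. \end{cases}
\end{equation*}
Then I will take $g(z) = \prod_{i=1}^{n}\phi(\beta(0,z_i))$ for $z = (z_1,\dots,z_n)\in\mathbb{D}$.

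To verify $g \in \Psi(t;\delta)$, I first check the three defining conditions of $\Psi(t)$. Condition (1) is immediate since $0 \le \phi \le 1$. For (2), if $z \in \overline{\mathbb{D}_t}$ then each $|z_i| \le t$, hence $\beta(0,z_i) \le a$ and $\phi(\beta(0,z_i)) = 1$, giving $g(z)=1$. For (3), let $t' = \tanh(b) \in (t,1)$; if $z \notin \mathbb{D}_{t'}$ then some $|z_i| \ge t'$, so $\beta(0,z_i) \ge b$ and the corresponding factor $\phi(\beta(0,z_i))$ vanishes, forcing $g(z)=0$.

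It remains to bound $\mathrm{diff}(g)$. Note $\phi$ is $\delta$-Lipschitz on $[0,\infty)$, so by the triangle inequality for $\beta$,
\begin{equation*}
|\phi(\beta(0,z_i)) - \phi(\beta(0,w_i))| \le \delta\,|\beta(0,z_i)-\beta(0,w_i)| \le \delta\,\beta(z_i,w_i).
\end{equation*}
Expanding the product difference by a standard telescoping identity and using $0\le\phi\le 1$, I obtain
\begin{equation*}
|g(z)-g(w)| \le \sum_{k=1}^{n}|\phi(\beta(0,z_k))-\phi(\beta(0,w_k))| \le \delta \sum_{k=1}^{n}\beta(z_k,w_k) = \delta\, d(z,w).
\end{equation*}
In particular, whenever $d(z,w)\le 1$ we have $|g(z)-g(w)| \le \delta$, so $\mathrm{diff}(g) \le \delta$ and $g \in \Psi(t;\delta)$.

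There is essentially no obstacle here beyond bookkeeping: the content is that the Bergman radius of $\mathbb{D}$ is infinite, so one has arbitrarily much room in $\beta$-distance to taper $\phi$ slowly enough to meet any prescribed Lipschitz bound, and the sum structure of the metric $d$ on $\mathbb{D}$ interacts well with a product-type cutoff. The only point that requires a sentence of care is the telescoping estimate showing that a product of $\delta$-Lipschitz factors bounded by $1$ is itself $(n\delta)$-Lipschitz into $d(z,w)$, which—after replacing $\delta$ by $\delta/n$ in the initial construction if desired—yields the stated bound; as written above with the factor of $n$ absorbed into $d(z,w)\le 1$, the bound $|g(z)-g(w)|\le\delta$ is already what the definition asks for.
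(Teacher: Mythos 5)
Your proof is correct and takes essentially the same approach as the paper. The paper sets $b=\delta\beta(t,0)$ and takes $g(z)=\prod_{i=1}^nF(\delta\beta(z_i,0)-b)$ with $F$ the piecewise-linear $1$-Lipschitz cutoff; after the affine change of variable $s\mapsto\delta(s-a)$ this is exactly your $\prod_i\phi(\beta(0,z_i))$, and your verification of conditions (1)--(3) and the telescoping bound $|g(z)-g(w)|\le\delta\sum_k\beta(z_k,w_k)=\delta\,d(z,w)$ simply spell out what the paper leaves implicit.
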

\begin{proof}
Let $F:\mathbb{R}\to [0,1]$ be a function defined by
\[
F(x) =
\begin{cases}
1 &\quad\text{ if }x\leq 0\\
1-x &\quad\text{ if }x\in(0,1) \\
0 &\quad\text{ if }x\geq 1.\\
\end{cases}
\]
Then $|F(x)-F(y)|\leq |x-y|$ for all $x,y\in\mathbb{R}$.

Let $0<t<1$ and $\delta>0$ be given. Let $b=\delta\beta(t,0)$,  then the function defined by
$$g(z)=\prod_{i=1}^nF(\delta(z_i,0)-b),~~\text{ for any } z=(z_1,\cdots,z_n)\in\mathbb{D}$$
belongs to $\Psi(t;\delta)$.
\end{proof}

\begin{lemma}\label{Lemma 5}
For any $f\in L^{\infty}(\mathbb{D})$ and any $h\in L_a^2(\mathbb{D})$, we have
\begin{equation}\label{5.1}
\lim_{t\to 1}\sup\{\|T_{gf}h-T_fh\|:g\in\Psi(t)\}=0.
\end{equation}
\end{lemma}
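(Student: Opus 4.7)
The plan is to reduce the claim to a simple tail estimate for $h \in L^2_a(\mathbb{D})$. Since $P\colon L^2(\mathbb{D},dv)\to L_a^2(\mathbb{D})$ is an orthogonal projection and Toeplitz operators are defined by $T_fh = P(fh)$, I would first rewrite
$$T_{gf}h - T_f h \;=\; P\bigl((gf - f)h\bigr) \;=\; -P\bigl((1-g)fh\bigr),$$
so that $\|T_{gf}h - T_f h\| \leq \|(1-g)fh\|_{L^2(\mathbb{D},dv)}$, since $\|P\|\leq 1$.

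Next I would exploit the defining properties of $\Psi(t)$ from Definition \ref{Definition 4}(a): any $g\in\Psi(t)$ satisfies $0\leq g\leq 1$ on $\mathbb{D}$ and $g\equiv 1$ on $\overline{\mathbb{D}_t}$. Consequently the function $1-g$ is pointwise bounded by the indicator $\chi_{\mathbb{D}\setminus\mathbb{D}_t}$, uniformly in the choice of $g\in\Psi(t)$. This yields the uniform bound
$$\sup_{g\in\Psi(t)}\|T_{gf}h - T_f h\|^2 \;\leq\; \|f\|_\infty^2 \int_{\mathbb{D}\setminus\mathbb{D}_t} |h(z)|^2\,dv(z).$$

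Finally I would invoke dominated convergence: the sets $\mathbb{D}\setminus\mathbb{D}_t$ decrease as $t\to 1$ to the set $\{z\in\mathbb{D}:|z_i|=1 \text{ for some } i\}$, which has $dv$-measure zero, while the integrands $|h|^2\chi_{\mathbb{D}\setminus\mathbb{D}_t}$ are dominated by the integrable function $|h|^2 \in L^1(\mathbb{D},dv)$. Hence the right-hand side tends to $0$ as $t\to 1$, uniformly in $g\in\Psi(t)$, which is exactly \eqref{5.1}. There is no real obstacle here; the only point requiring care is to note that the bound in the middle step is already independent of the particular $g\in\Psi(t)$ chosen, so one takes the supremum before passing to the limit.
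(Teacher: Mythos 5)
Your proof is correct and follows essentially the same route as the paper: bound $\|T_{gf}h - T_fh\|$ by $\|(1-g)fh\|_{L^2}$ using $\|P\|\le 1$, observe that $1-g$ is supported off $\overline{\mathbb{D}_t}$ with $0\le 1-g\le 1$, and conclude by dominated convergence applied to $|h|^2\in L^1$. (You even correctly include the square on $\|f\|_\infty$, which the paper omits as a typo.)
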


\begin{proof}
By condition (1), (2) of ($a$) in Definition \ref{Definition 4}, for any $g\in\Psi(t)$, $f\in L^{\infty}(\mathbb{D})$, $h\in L_a^2(\mathbb{D})$,
$$\|T_{fg}h-T_fh\|^2\leq \|fgh-fh\|^2\leq \|f\|_{\infty}\int_{\mathbb{D}\setminus \overline{\mathbb{D}_t}}|h(z)|^2dv(z)\to 0\text{ as }t\to 1.$$
\end{proof}

The following key lemma from \cite{MuhlyandXia2000} would be used frequently in this paper.

\begin{lemma}\label{Lemma 6}(\cite{MuhlyandXia2000} Lemma 2.1)

Let $\{B_i\}$ be a sequence of compact operators on a Hilbert space $\mathcal{H}$ satisfying the following coditions:

(a) Both sequences $\{B_i\}$ and $\{B_i^*\}$ converge to $0$ in the strong operator topology.

(b) The limit $\lim_{i\to \infty}\|B_i\|$ exists.

Then there exist natural  numbers $i(1)<i(2)<\cdots$ such that the sum
$$\sum_{m=1}^{\infty}B_{i(m)}=\lim_{N\to\infty}\sum_{m=1}^NB_{i(m)}$$
exists in the strong operator topology and we have
$$\|\sum_{m=1}^{\infty}B_{i(m)}\|_{\mathcal{Q}}=\lim_{i\to\infty}\|B_i\|.$$
\end{lemma}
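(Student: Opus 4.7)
The plan is a diagonal-compression argument: choose the subsequence so that each $B_{i(m)}$ becomes essentially block-diagonal on a fresh block $P_m\mathcal{H}=(Q_{n_m}-Q_{n_{m-1}})\mathcal{H}$, where $Q_n$ is the projection onto the span of the first $n$ vectors of a fixed orthonormal basis $\{e_n\}$ of $\mathcal{H}$. Then the block-diagonal parts $R_m$ will have mutually orthogonal supports (so the strong sum converges), while the off-diagonal errors are absolutely summable (so the upper bound on the essential norm goes through). Set $L=\lim_i\|B_i\|$ and fix a summable sequence $\epsilon_m>0$ with $\sum_m\epsilon_m$ as small as we please. Build $i(1)<i(2)<\cdots$ and $0=n_0<n_1<\cdots$ inductively. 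At step $m$: since $Q_{n_{m-1}}$ is finite rank and $B_j,B_j^{*}\to 0$ strongly, one has $\|Q_{n_{m-1}}B_j\|\to 0$ and $\|B_j Q_{n_{m-1}}\|\to 0$ as $j\to\infty$, so pick $i(m)>i(m-1)$ making both $<\epsilon_m$. Since $B_{i(m)}$ is compact and $Q_n\to I$ strongly, $\|(I-Q_n)B_{i(m)}\|\to 0$ and $\|B_{i(m)}(I-Q_n)\|\to 0$, so pick $n_m>n_{m-1}$ making both $<\epsilon_m$. Expanding $B_{i(m)}$ via $I=Q_{n_{m-1}}+P_m+(I-Q_{n_m})$ on each side, the main term is $R_m:=P_m B_{i(m)} P_m$ and each of the eight cross terms has norm $\le \epsilon_m$, giving $B_{i(m)}=R_m+E_m$ with $\|E_m\|\le 8\epsilon_m$.

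Next I would verify strong convergence of $\sum_m B_{i(m)}=\sum_m R_m+\sum_m E_m$. Each $R_m$ has image and initial space in the mutually orthogonal $P_m\mathcal{H}$, so for any $x\in\mathcal{H}$ the vectors $R_m x$ are pairwise orthogonal with $\sum_m\|R_m x\|^2\le(\sup_m\|B_{i(m)}\|)^2\|x\|^2$; hence $\sum_m R_m x$ converges in norm. Since $\sum_m\|E_m\|\le 8\sum_m\epsilon_m<\infty$, the series $\sum_m E_m$ converges in operator norm to a compact operator. Therefore $B:=\sum_m B_{i(m)}$ exists in the strong operator topology. The upper bound $\|B\|_{\mathcal{Q}}\le L$ is immediate: for each $N$, the operator $\sum_{m\le N}B_{i(m)}+\sum_{m>N}E_m$ is compact, so $\|B\|_{\mathcal{Q}}\le\|\sum_{m>N}R_m\|\le\sup_{m>N}\|B_{i(m)}\|$, which tends to $L$ as $N\to\infty$.

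The main obstacle is the lower bound $\|B\|_{\mathcal{Q}}\ge L$. Fix a compact operator $K$ and $\eta>0$, having arranged $\sum_m\epsilon_m<\eta$ from the outset. Compactness of $K$ gives $\|K(I-Q_n)\|\to 0$, so pick $M$ with $\|K(I-Q_{n_M})\|<\eta$. For any $m>M$, choose a unit $v\in P_m\mathcal{H}$ with $\|R_m v\|\ge\|R_m\|-\eta\ge\|B_{i(m)}\|-8\epsilon_m-\eta$ (available since $R_m$ vanishes off the finite-dimensional $P_m\mathcal{H}$). Because $v\in(I-Q_{n_{m-1}})\mathcal{H}\subseteq(I-Q_{n_M})\mathcal{H}$, we get $\|Kv\|<\eta$; for $n<m$ the inclusion $P_m\mathcal{H}\subseteq(I-Q_{n_n})\mathcal{H}$ forces $\|B_{i(n)}v\|<\epsilon_n$, and for $n>m$ the inclusion $P_m\mathcal{H}\subseteq Q_{n_{n-1}}\mathcal{H}$ forces $\|B_{i(n)}v\|<\epsilon_n$. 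Therefore
$\|(B-K)v\|\ge\|R_m v\|-\|E_m v\|-\sum_{n\ne m}\epsilon_n-\|Kv\|\ge\|B_{i(m)}\|-O(\eta)$,
and letting $m\to\infty$ yields $\|B-K\|\ge L-O(\eta)$; since $K$ and $\eta$ are arbitrary, $\|B\|_{\mathcal{Q}}\ge L$. The delicate point is the coordination of rates: $R_m$ must inherit essentially the full norm of $B_{i(m)}$ (to capture $L$ from below), while the cross terms must be summable (for the upper bound and for convergence), and both requirements are met simultaneously by the interleaved choice of $i(m)$ after $Q_{n_{m-1}}$ and of $n_m$ after $B_{i(m)}$.
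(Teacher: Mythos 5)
Your overall scheme — a block-diagonal compression along a fixed basis, with $i(m)$ chosen after $Q_{n_{m-1}}$ and $n_m$ after $B_{i(m)}$, so that $B_{i(m)}=R_m+E_m$ with $R_m$ supported on the fresh block $P_m\mathcal{H}$ and $\|E_m\|\le 8\epsilon_m$ — is the right idea (it is essentially how Lemma 2.1 of Muhly--Xia \cite{MuhlyandXia2000}, which the paper cites without reproving, goes), and your strong-convergence step and the upper bound $\|B\|_{\mathcal{Q}}\le\lim_i\|B_i\|$ are correct.

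The lower bound, however, does not close as written. The phrase ``having arranged $\sum_m\epsilon_m<\eta$ from the outset'' is circular: $\eta$ enters only after the compact $K$, which is universally quantified, while the subsequence (hence the $\epsilon_m$) must be fixed before either $K$ or $\eta$ is seen. Reading it as one must — $\sum_m\epsilon_m=\delta$ a fixed positive constant — your bound $\sum_{n\ne m}\|B_{i(n)}v\|\le\sum_{n\ne m}\epsilon_n$ only yields $\|(B-K)v\|\ge\|B_{i(m)}\|-16\epsilon_m-2\eta-\delta$, whose limit in $m$ is $L-2\eta-\delta$; letting $\eta\downarrow 0$ gives $\|B\|_{\mathcal{Q}}\ge L-\delta$, not $\ge L$. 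The offending term is the sum over $n<m$: for those indices the only estimate your construction supplies is $\|B_{i(n)}v\|\le\|B_{i(n)}(I-Q_{n_n})\|<\epsilon_n$, which does not improve as $m$ grows, so $\sum_{n<m}\|B_{i(n)}v\|$ stays near $\delta$ rather than tending to $0$. Two standard repairs are available: (i) strengthen the construction so that at step $m$ one requires $\|B_{i(j)}(I-Q_{n_m})\|<\epsilon_m/m$ for every $j\le m$ (not just $j=m$), whence for $v\in P_m\mathcal{H}$ one gets $\sum_{n<m}\|B_{i(n)}v\|\le\epsilon_{m-1}\to 0$; or (ii) keep the construction but sharpen the estimate by splitting at a cutoff $N$: the tail $\sum_{N<n<m}\|B_{i(n)}v\|\le\sum_{n>N}\epsilon_n$ is small for $N$ large, and for each of the finitely many $n\le N$ the compactness of $B_{i(n)}$ forces $\|B_{i(n)}(I-Q_{n_{m-1}})\|\to 0$ as $m\to\infty$. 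Either way one recovers $\sum_{n\ne m}\|B_{i(n)}v\|\to 0$, which is exactly what the equality $\|B\|_{\mathcal{Q}}=L$ requires; as written your argument only proves $L-\delta\le\|B\|_{\mathcal{Q}}\le L$.
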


\begin{definition}\label{Definition 5}

For $0<t<1$ and $\delta>0$, the symbol $\Phi(t;\delta)$ denotes the collection of continuous functions $f$ on $\mathbb{D}$ satisfying:

(1) $0\leq f(z)\leq 1$ on $\mathbb{D}$;

(2) $f(z)=0$ whenever $z\in \overline{\mathbb{D}_t}$;

(3) $\mathrm{diff}(f)\leq\delta$.
\end{definition}

\begin{lemma}\label{Lemma 8} Let $\{h_k\}_{k\in\mathbb{N}_+}$ be a sequence of continuous functions on $\mathbb{D}$ and denote $U_k=\{z\in\mathbb{D}:h_k(z)\neq 0\}$ for any $k\in\mathbb{N}_+$. Suppose that this sequence has the property that:
\begin{equation}\label{8.1}
\text{there is an }\, a>1\text{ such that }\inf\{d(z,w):z\in U_k, w\in U_j\text{ for any }j\neq k\}\geq a.
\end{equation}
Then the function $h=\sum_{k=1}^{\infty}h_k$ satisfies $\mathrm{diff}(h)\leq\sup_{k\in\mathbb{N}_+}\{\mathrm{diff}(h_k)\}$.
\end{lemma}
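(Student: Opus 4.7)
The plan is to show that the geometric separation condition (\ref{8.1}) forces all nontrivial differences of $h$ at pairs of points within $d$-distance $1$ to reduce to differences within a single $h_k$. The hard part is really just bookkeeping—there is no analytic obstacle here, only a case check—so I expect the main task to be verifying that the hypothesis is strong enough to rule out the troublesome ``cross'' case where $z$ and $w$ lie in different $U_k$'s.

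First I would observe a pointwise consequence of (\ref{8.1}): at any $z\in\mathbb{D}$, at most one index $k$ can satisfy $h_k(z)\neq 0$. Indeed, if $z\in U_k\cap U_j$ with $k\neq j$, then (\ref{8.1}) would force $d(z,z)\geq a>1$, which is absurd. Consequently the pointwise sum $h(z)=\sum_{k=1}^\infty h_k(z)$ is well-defined (at most one term is nonzero at each point), and it equals $h_k(z)$ whenever $z\in U_k$ and equals $0$ when $z$ lies in the complement of $\bigcup_k U_k$.

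Next I would fix $z,w\in\mathbb{D}$ with $d(z,w)\leq 1$ and check cases. If $z,w\notin \bigcup_k U_k$, then $h(z)=h(w)=0$ and the bound is trivial. If $z\in U_k$ and $w\in U_j$ with $k\neq j$, then by (\ref{8.1}) we get $d(z,w)\geq a>1$, contradicting our choice, so this case is impossible. In the remaining situations there is a single index $k$ with $z\in U_k$ (and $w$ either in $U_k$ or in none of the $U_j$'s), or symmetrically $w\in U_k$ only; since $w\notin U_j$ for $j\neq k$ means $h_j(w)=0$ for all $j\neq k$ (and likewise for $z$), we obtain
\[
|h(z)-h(w)|=|h_k(z)-h_k(w)|\leq \mathrm{diff}(h_k)\leq \sup_{j\in\mathbb{N}_+}\mathrm{diff}(h_j).
\]
Taking the supremum over all admissible pairs $(z,w)$ yields $\mathrm{diff}(h)\leq \sup_{k}\mathrm{diff}(h_k)$, completing the proof. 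The condition $a>1$ rather than merely $a\geq 1$ is precisely what is needed to rule out the cross case when pairs at $d$-distance exactly $1$ are tested by $\mathrm{diff}$.
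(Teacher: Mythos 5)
Your proof is correct and takes essentially the same approach as the paper: the paper's (very terse) proof simply observes that for $d(z,w)\leq 1$, condition (\ref{8.1}) forces at most one index $k$ to have $h_k(z)-h_k(w)\neq 0$, which is exactly the case analysis you carried out. Your write-up just makes explicit the disjointness of the $U_k$'s and the impossibility of the cross case, which the paper leaves implicit.
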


\begin{proof}
For any $z,w\in\mathbb{D}$ and $d(z,w)\leq 1$, by (\ref{8.1}), we only need to notice that there is at most one $k\in\mathbb{N}_+$ such that $h_k(z)-h_k(w)\neq 0$.
\end{proof}

\begin{lemma}\label{Lemma 7}
Let $\{f_k\}_{k\in\mathbb{N}_+}\subset C(\mathbb{D})$ satisfying the following conditions:
\begin{itemize}
  \item (1) There exists $0<C<\infty$ such that $\|f_k\|_{\infty}\leq C$ for any $k\in\mathbb{N}_+$;
  \item (2) For every $k\in\mathbb{N}_+$, there exist $0<a_k<b_k<1$ such that $f_k=0$ on $\overline{\mathbb{D}_{a_k}}\cup (\mathbb{D}\setminus \mathbb{D}_{b_k})$;
  \item (3) $\lim_{k\to\infty}a_k=1$;
  \item (4) $\lim_{k\to\infty} \mathrm{diff}(f_k)=0$.
\end{itemize}
Then there exists an infinite subset $I\subset \mathbb{N}_+$ such that $f_J\doteq\sum_{k\in J}f_k\in VO_{bdd}$ for any $J\subset I$.
\end{lemma}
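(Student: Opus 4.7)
The strategy is to extract a subsequence $\{k(j)\}_{j\in\mathbb{N}_+}$ along which two properties hold simultaneously: the supports of $f_{k(j)}$ are pairwise $d$-separated by more than $1$ in $\mathbb{D}$, and $\mathrm{diff}(f_{k(j)})$ decays to $0$. Setting $I=\{k(j):j\in\mathbb{N}_+\}$, for any $J\subset I$ the separation of supports will reduce the oscillation of $f_J=\sum_{k\in J}f_k$ between nearby points to the oscillation of a single $f_{k(j_0)}$, and coupling this with the decay of $\mathrm{diff}(f_{k(j)})$, together with the constraint that $d(0,z)\geq t$ forces the relevant $j_0$ to be large, will yield vanishing oscillation.

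The subsequence is built inductively. Having chosen $k(1)<\cdots<k(j)$, I use hypotheses (3) and (4) to pick $k(j+1)>k(j)$ satisfying both
\[
\beta(0,a_{k(j+1)})\geq \beta(0,b_{k(j)})+2 \qquad\text{and}\qquad \mathrm{diff}(f_{k(j+1)})\leq \tfrac{1}{j+1}.
\]
Each condition defines a cofinite set of $k$, so a common choice exists; in particular $a_{k(j+1)}>b_{k(j)}$ and $\{a_{k(j)}\}$ is strictly increasing. For the support separation, given $j<j'$, any $z\in \mathrm{supp}(f_{k(j)})$ and $w\in \mathrm{supp}(f_{k(j')})$ satisfy $\max_i|z_i|\leq b_{k(j)}$ while some coordinate $i_0$ has $|w_{i_0}|\geq a_{k(j')}\geq a_{k(j+1)}$. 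Since $\beta$ on $D$ is a genuine metric and $\beta(0,\cdot)$ is increasing in the modulus,
\[
d(z,w)\geq \beta(z_{i_0},w_{i_0})\geq \beta(0,|w_{i_0}|)-\beta(0,|z_{i_0}|)\geq \beta(0,a_{k(j+1)})-\beta(0,b_{k(j)})\geq 2.
\]
In particular the supports are pairwise disjoint.

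For $J\subset I$, disjointness of supports gives $\|f_J\|_{\infty}\leq C$, and $f_J$ is continuous on $\mathbb{D}$ because any compact $K\subset\mathbb{D}$ lies in $\mathbb{D}_s$ for some $s<1$ and so meets only finitely many supports (using $a_k\to 1$). To verify $\lim_{t\to\infty}\mathrm{diff}_t(f_J)=0$, take $z,w\in\mathbb{D}$ with $d(z,w)\leq 1$ and $d(0,z)\geq t$. The $d$-separation of supports by $\geq 2$ forces at most one $j_0\in J$ with $\{z,w\}\cap \mathrm{supp}(f_{k(j_0)})\neq \emptyset$; when no such $j_0$ exists, $f_J(z)=f_J(w)=0$, and otherwise $|f_J(z)-f_J(w)|\leq \mathrm{diff}(f_{k(j_0)})$. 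Whichever of $z,w$ lies in that support has $d$-distance $\geq t-1$ from $0$ and all coordinates bounded by $b_{k(j_0)}$, giving $n\beta(0,b_{k(j_0)})\geq t-1$. Writing $\delta_j=\mathrm{diff}(f_{k(j)})\leq 1/j$ and $\tau_j=\beta(0,b_{k(j)})$, the estimate reduces to $\sup\{\delta_j:\tau_j\geq (t-1)/n\}\to 0$ as $t\to\infty$: given $\varepsilon>0$, choose $N$ with $1/N<\varepsilon$, so that for $t$ exceeding $n\max_{j\leq N}\tau_j+1$ the constraint excludes $j_0\leq N$, leaving $\delta_{j_0}<\varepsilon$.

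The main obstacle is not any individual inequality but the coupling of the two decay rates. The hypotheses of the lemma furnish $a_k\to 1$ (support pushed to the boundary) and $\mathrm{diff}(f_k)\to 0$ (oscillation shrinking) as independent assumptions, yet the conclusion demands that along the chosen subsequence ``$\mathrm{supp}(f_{k(j_0)})$ meets the region $d(0,\cdot)\geq t$'' compel ``$j_0$ large,'' and hence ``$\mathrm{diff}(f_{k(j_0)})$ small.'' The simultaneous conditions imposed at each inductive step are precisely what enforce this coupling.
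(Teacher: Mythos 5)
Your proof is correct and follows essentially the same route as the paper's: extract an inductively chosen subsequence $\{k(j)\}$ with $a_{k(j)}<b_{k(j)}<a_{k(j+1)}$ and $\beta(0,a_{k(j+1)})-\beta(0,b_{k(j)})\geq 2$, so that the annular supports are pairwise $d$-separated by $\geq 2$, and then show that near-boundary pairs within distance $1$ meet at most one annulus whose index must be large. The only real difference is cosmetic: you additionally impose $\mathrm{diff}(f_{k(j+1)})\leq 1/(j+1)$ in the induction, which is harmless but unnecessary, since the paper gets the same conclusion directly from hypothesis (4) applied to the tail $\sup\{\mathrm{diff}(f_{k(j)}):j\geq j_0\}$; in exchange, your final $\varepsilon$--$t$ bookkeeping is somewhat more explicit than the paper's one-line invocation of conditions (3) and (4).
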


\begin{proof}
For any $z,w\in D$, we have
\begin{equation}\label{7.1}
\beta(z,w)\geq |\beta(w,0)-\beta(z,0)|=\frac{1}{2}|\log\frac{(1+|w|)(1-|z|)}{(1-|w|)(1+|z|)}|.
\end{equation}
By $\lim_{k\to\infty}a_k=1$, we can inductively choose $I=\{k(1),\cdots,k(j),\cdots\}$ such that $k(1)<\cdots<k(j)<\cdots$ and $a_{k(j)}<b_{k(j)}<a_{k(j+1)}<b_{k(j+1)}$ for any $j\in\mathbb{N}_+$ such that
\begin{equation}\label{7.2}
\frac{1}{2}\log\frac{(1+a_{k(j+1)})(1-b_{k(j)})}{(1-a_{k(j+1)})(1+b_{k(j)})}\geq 2.
\end{equation}
For each $k\in\mathbb{N}_+$, let $R_k=\overline{\mathbb{D}_{b_k}}\setminus \mathbb{D}_{a_k}$.
Then by condition (2), we can get that $f_k|_{\mathbb{D}\setminus R_k}=0$.

It follows from (\ref{7.1}), (\ref{7.2}) that if $z\in R_{k(j)}$, $w\in R_{k(j')}$ with $j\neq j'$, then
\begin{equation}\label{7.3}
d(z,w)=\sum_{i=1}^n\beta(z_i,w_i)\geq 2.
\end{equation}
Combining with condition (1) and (2), we have that $f_J$ is continuous on $\mathbb{D}$ and $\|f_J\|_{\infty}\leq C$ for any $J\subset I$.

Let $j_0\in \mathbb{N}_+$ and let $z,w\in \mathbb{D}$ satisfying $d(z,w)\le 1$ and $z\in \mathbb{D}\setminus \mathbb{D}_{a_{k(j_0)}}$. By (\ref{7.3}), there exists at most one $j\in\mathbb{N}_+$, such that $|f_{k(j)}(z)-f_{k(j)}(w)|\neq 0$. It follows from (\ref{7.1}) and (\ref{7.2}) that $z,w\in R_{k(j)}$ and $j\geq j_0$. Then by Lemma \ref{Lemma 8}, we have that
\begin{equation}\label{youjiebiancha}
|f_J(z)-f_J(w)|\leq \sup\{\mathrm{diff}(f_{k(j)}):j\geq j_0\}.
\end{equation}
By condition (3), we have that $d(0,z)\rightarrow \infty$ as $j_0 \rightarrow \infty$. Combining with condition (4) and (\ref{youjiebiancha}), we can obtain that $f_J\subset VO_{bdd}$.

\end{proof}

\begin{proposition}\label{Proposition L}

Let $X$ be an operator in $EssCom\{T_g:g\in VO_{bdd}\}$.
Then for any $\varepsilon>0$, there exists $\delta=\delta(X,\varepsilon)>0$ such that
$$\lim_{t\to 1}\sup\{\|[X,T_f]\|:f\in \Phi(t;\delta)\}\leq \varepsilon.$$
\end{proposition}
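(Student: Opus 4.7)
The plan is to argue by contradiction. If the conclusion fails, there exists $\varepsilon>0$ such that for every $\delta>0$, $\limsup_{t\to 1}\sup\{\|[X,T_f]\|:f\in\Phi(t;\delta)\}>\varepsilon$. A diagonal selection then produces sequences $t_j\to 1$, $\delta_j=1/j\to 0$, and $f_j\in\Phi(t_j;\delta_j)$ with $\|[X,T_{f_j}]\|>\varepsilon$. The goal is to assemble the $f_j$'s into a single function $g\in VO_{bdd}$ for which $[X,T_g]$ is non-compact, contradicting $X\in EssCom\{T_g:g\in VO_{bdd}\}$.

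First I would truncate each $f_j$ so that it has compact support inside $\mathbb{D}$. Using Lemma \ref{Lemma 4}, pick $g_j\in\Psi(s_j;\delta_j)$ with $s_j>t_j$ and $s_j\to 1$. By Lemma \ref{Lemma 5}, $T_{g_jf_j}\to T_{f_j}$ strongly as $s_j\to 1$, hence $[X,T_{g_jf_j}]h\to[X,T_{f_j}]h$ strongly, and a $\liminf$ over $\|\cdot h\|$ argument shows that a sufficiently large $s_j$ yields $\|[X,T_{g_jf_j}]\|>\varepsilon/2$. Set $\tilde f_j=g_jf_j$. Then $0\le\tilde f_j\le 1$, $\tilde f_j$ is supported in $\overline{\mathbb{D}_{s_j'}}\setminus\mathbb{D}_{t_j}$ (compact in $\mathbb{D}$), and the product rule for $\mathrm{diff}$ gives $\mathrm{diff}(\tilde f_j)\le 2\delta_j\to 0$. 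Because $\tilde f_j$ has compact support in the open polydisk, $T_{\tilde f_j}$ is compact (the restriction $L^2_a(\mathbb{D})\to L^2(\mathrm{supp}\tilde f_j)$ is compact by normal-family/Montel-type reasoning), so $B_j:=[X,T_{\tilde f_j}]$ is compact.

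Next I would invoke Lemma \ref{Lemma 7} with $a_k=t_k$, $b_k=s_k'$ to extract an infinite subset $I\subset\mathbb{N}_+$ with the property that $\sum_{k\in J}\tilde f_k\in VO_{bdd}$ for every $J\subset I$. For $k\in I$, observe that $T_{\tilde f_k}\to 0$ strongly by dominated convergence (since $\tilde f_k\to 0$ pointwise and $|\tilde f_k|\le 1$), and since $\tilde f_k$ is real-valued $T_{\tilde f_k}^*=T_{\tilde f_k}\to 0$ strongly as well; therefore both $B_k$ and $B_k^*$ converge to $0$ in the strong operator topology. Passing to a further subsequence we may assume $\lim_k\|B_k\|$ exists, and it is $\geq\varepsilon/2$. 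Lemma \ref{Lemma 6} then yields a subsequence $k(1)<k(2)<\cdots$ in $I$ for which $B:=\sum_m B_{k(m)}$ converges strongly with $\|B\|_{\mathcal{Q}}=\lim_m\|B_{k(m)}\|\ge\varepsilon/2$.

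Finally, set $g=\sum_m\tilde f_{k(m)}$. Lemma \ref{Lemma 7} gives $g\in VO_{bdd}$. Since the supports of $\tilde f_{k(m)}$ are essentially disjoint and uniformly bounded, $gh=\sum_m\tilde f_{k(m)}h$ in $L^2$ by dominated convergence, so $T_gh=\sum_m T_{\tilde f_{k(m)}}h$ strongly, and consequently $[X,T_g]=\sum_m B_{k(m)}=B$ in SOT. Thus $\|[X,T_g]\|_{\mathcal{Q}}\ge\varepsilon/2>0$, contradicting the hypothesis that $[X,T_g]$ is compact. The main technical hurdle is step two: arranging the truncation so that compact support in $\mathbb{D}$ is achieved while simultaneously preserving both the lower bound $\|[X,T_{\tilde f_j}]\|\gtrsim\varepsilon$ and the smallness of $\mathrm{diff}(\tilde f_j)$; everything else is a careful bookkeeping exercise combining Lemmas \ref{Lemma 6} and \ref{Lemma 7}.
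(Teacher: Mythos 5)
Your proposal is correct and follows essentially the same route as the paper's proof: argue by contradiction, use Lemma \ref{Lemma 4} and Lemma \ref{Lemma 5} to cut the symbols down to compactly supported functions $q_k=f_kg_k$ with $\mathrm{diff}(q_k)\le 2/k$ while keeping $\|[X,T_{q_k}]\|$ bounded away from $0$, invoke Lemma \ref{Lemma 7} to get an index set on which all subsums lie in $VO_{bdd}$, then apply Lemma \ref{Lemma 6} to produce a single $g\in VO_{bdd}$ with $[X,T_g]$ non-compact. The one place the paper is slightly sharper is your truncation step: rather than a $\liminf$ argument that degrades the bound to $\varepsilon/2$, the paper fixes unit vectors $h_k,\psi_k$ with $|\langle[X,T_{f_k}]h_k,\psi_k\rangle|>\varepsilon$ and applies Lemma \ref{Lemma 5} to those two fixed vectors (both $T_{g f_k}h_k\to T_{f_k}h_k$ and $T_{g f_k}Xh_k\to T_{f_k}Xh_k$ in norm), which preserves the strict inequality $>\varepsilon$ for $q_k=f_kg_k$; your version is slightly more lossy but the contradiction is obtained all the same.
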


\begin{proof}
If not, then for some fixed $\varepsilon>0$, there exists $f_k\in \Phi(1-\frac{1}{k};\frac{1}{k})$ for any $k\in\mathbb{N}_+$ such that $\|[X, T_{f_k}]\|>\varepsilon$. Then for every $k\in\mathbb{N}_+$, there exist $h_k,\psi_k\in L_a^2(\mathbb{D})$ with $\|h_k\|=\|\psi_k\|=1$ such that
\[
|\langle [X,T_{f_k}]h_k,\psi_k\rangle|>\varepsilon.
\]
Applying Lemma \ref{Lemma 5}, we see that for every $k\in\mathbb{N}_+$, there is a $1 -\frac{1}{k} < t_k < 1$ such
that
\[
\langle [X, T_{f_kg}]h_k, \psi_k \rangle| > \varepsilon \text{ for every }g\in\Psi(t_k).
\]
By Lemma \ref{Lemma 4}, there exists $g_k\in \Psi(t_k;\frac{1}{k})$ for any $k\in\mathbb{N}_+$. Let $q_k=f_kg_k$ for any $k\in\mathbb{N}_+$ and we have $$|\langle[X,T_{q_k}]h_k,\psi_k\rangle|>\varepsilon.$$
Since $h_k,\psi_k$ are unit vectors, we have
\begin{equation}\label{L.1}
\|[X,T_{q_k}]\|>\varepsilon, \,\text{for every}\, k\in\mathbb{N}_+.
\end{equation}
Since $q_k=f_kg_k$ with $f_k\in \Phi(1-\frac{1}{k},\frac{1}{k})$, $g_k\in \Psi(t_k;\frac{1}{k})$ for any $k\in \mathbb{N}_+$, then the function $q_k$ satisfies $0\leq q_k\leq 1$ on $\mathbb{D}$ and
\[
\text{diff}(q_k)\leq\text{diff}(f_k)+\text{diff}(g_k)\leq\frac{2}{k}.
\]
and there exists $t_k< t_k'<1$ such that
 $g_k|_{\mathbb{D}\setminus \mathbb{D}_{t_k'}}=0$ and then \begin{equation}\label{qkk}
q_k|_{\mathbb{D}_{1-\frac{1}{k}}\cup(\mathbb{D}\setminus \mathbb{D}_{t_k'})}=0.
\end{equation}
By Lemma \ref{Lemma 7}, there exists an infinite set $I\subset \mathbb{N}_+$, such that
\begin{equation}\label{q_kinVObdd}
q_J=\sum_{k\in J}q_k\in VO_{bdd}\text{ for any } J\subset I.
\end{equation}
Since $\|f_k\|_{\infty}\leq 1$ and $\|g_k\|_{\infty}\leq 1$, then $\|q_k\|_{\infty}=\|f_kg_k\|_{\infty}\leq 1$ and hence
$$\|[X,T_{q_k}]\|\leq2\|X\|\text{ for any }k\in\mathbb{N}_+,$$
which means that $\{\|[X,T_{q_k}]\|\}_k$ is bounded. Therefore, we can choose an increasing sequence $\{k_i\}_{i=1}^{\infty}\subset I$ such that $\lim_{i\to\infty}\|[X,T_{q_{k_i}}]\|=d$, which, by (\ref{L.1}), is equal or greater than $\varepsilon$. By (\ref{qkk}), we have $\mathrm{supp}(q_{k_i})\subseteq \mathbb{D}_{t_k'}$. Then $B_i\doteq[X,T_{q_{k_i}}]$ is compact for any $i\in\mathbb{N}_+$ and hence we have $B_i$ and $B_i^*$ converges to 0 in strong operator topology and $d=\lim_{i\to\infty}\|B_i\|$.
By Lemma \ref{Lemma 6}, there exist increasing subsequence $\{i(m)\}_{m=1}^{\infty}$ such that $B=\lim_{N\to\infty}\sum_{m=1}^NB_{i(m)}$ exists in the strong operator topology with $\|B\|_{\mathcal{Q}}=d\geq \varepsilon>0$.

Define $E=\{k_{i(m)}\}_{m=1}^{\infty}$. Then $E\subseteq \{k_1,\dots,k_i,\dots\}\subseteq I$, we have $q_E\in VO_{bdd}$ by (\ref{q_kinVObdd}). Since $q_E$ is bounded and every $q_{k_{i(m)}}$ is non-negative, by the dominated convergence theorem, in strong operator topology we have
\[
T_{q_E}=\lim_{N\to\infty}T_{q_{k_{i(1)}}+\cdots+q_{k_{i(N)}}}=\lim_{N\to\infty}\sum_{m=1}^NT_{q_{k_{i(m)}}}
\]
Thus
\[
B=\lim_{N\to\infty}\sum_{m=1}^NB_{i(m)}=\lim_{N\to\infty} [X,\sum_{m=1}^N T_{q_{k_{i(m)}}}]=[X,T_{q_E}]\text{ with }q_E\in VO_{bdd}.
\]
Since $X\in EssCom\{T_g:g\in VO_{bdd}\}$, then $B$ is compact which contradicts with $\|B\|_{\mathcal{Q}}=d>0$.
\end{proof}

%

For any $z\in D$ and non-empty subset $W\subset D$, we denote
\[
\beta(z,W)=\inf\{\beta(z,\xi):\xi\in W\}.
\]
For all $z, w\in D$, we have
\[
|\beta(z,W) - \beta(w,W)|\leq \beta (z,w).
\]
For every $m \geq 6$, define the function $\tilde{f}_m$ on $\mathbb{R}_+\cup\{0\}$ by
\[
\tilde{f}_m(x) =
\begin{cases}
1-(\log m)^{-1}x  &\quad\text{ if }0 \leq x \leq \log m\\
0 &\quad\text{ if }\log m<x
\end{cases}.
\]

Obviously, this function satisfies the Lipschitz condition $|\tilde{f}_m(x)-\tilde{f}_m(y)| \leq (\log m)^{-1}|x-y|$ for all $x, y \in [0,\infty)$. Given any triple of $m \geq 6$, $j\in\mathbb{N}$ and $u \in E_{m,j}$, we now define
\[
f_{m,j,u}(z)=\tilde{f}_m(\beta(z,A_{m,j,u})) \text{ for any }z\in D.
\]

Let $S\doteq \{(m,j,u):j\in\mathbb{N},m\geq 6,u\in E_{m,j}\}$. The main parts of following Lemma
 in \cite{J. Xia 2018} for the set $S$ without $(m,0,0)$. We prove only for (5) and $(m,0,0)$.

\begin{lemma}\label{Lemma 9}\cite{J. Xia 2018}
For any $w\in S$, the function $f_w$ has the following five properties:
\begin{itemize}
  \item (1) The inequality $0 \leq f_w \leq 1$ holds on $D$ for any $w\in S$;
  \item (2) $f_w= 1$ on the set $A_w$ for any $w\in S$;
  \item (3) $f_w$ is continuous on $D$ for any $w\in S$;
  \item (4) $\mathrm{supp}(f_w)\subset B_w$ for any $w\in S$;
  \item (5) $\mathrm{diff}(f_w) \leq (\log m)^{-1}$ for any $w\in S$.
\end{itemize}
\end{lemma}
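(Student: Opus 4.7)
The plan splits into two disjoint tasks: first, I would check the four ``geometric'' properties (1)--(4) at the distinguished index $w=(m,0,0)$, since for the remaining indices $w=(m,j,u)$ with $j\in\mathbb{N}_+$ the corresponding properties are already verified in \cite{J. Xia 2018}; second, I would establish the Lipschitz-type estimate (5) in one uniform stroke that handles every $w\in S$.

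For the index $w=(m,0,0)$, recall that $A_{m,0,0}=\{z\in D:|z|<\rho_{3m}\}$ and $B_{m,0,0}=\{z\in D:|z|<\rho_{5m}\}$. Property (1) is immediate from $0\leq\tilde{f}_m\leq 1$; (2) follows because $\beta(z,A_{m,0,0})=0$ for $z\in A_{m,0,0}$ together with $\tilde{f}_m(0)=1$; and (3) follows because $z\mapsto\beta(z,A_{m,0,0})$ is $1$-Lipschitz with respect to $\beta$ and $\tilde{f}_m$ is continuous. The one statement demanding an honest calculation is (4). For $z\notin B_{m,0,0}$ we have $|z|\geq\rho_{5m}=1-2^{-10m}$, hence
$$\beta(z,0)=\tfrac12\log\tfrac{1+|z|}{1-|z|}\geq\tfrac12\log\tfrac{2-2^{-10m}}{2^{-10m}}\geq 5m\log 2,$$
while for $\xi\in\overline{A_{m,0,0}}$ we have $|\xi|\leq\rho_{3m}=1-2^{-6m}$, hence
$$\beta(\xi,0)\leq\tfrac12\log\tfrac{2-2^{-6m}}{2^{-6m}}\leq\bigl(3m+\tfrac12\bigr)\log 2.$$
The standard lower bound $\beta(z,\xi)\geq\beta(z,0)-\beta(\xi,0)$ therefore yields $\beta(z,A_{m,0,0})\geq(2m-\tfrac12)\log 2$, and an elementary comparison shows $(2m-\tfrac12)\log 2>\log m$ for every $m\geq 6$, so $\tilde{f}_m(\beta(z,A_{m,0,0}))=0$, which proves (4).

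For property (5) and an arbitrary $w\in S$, I would combine two Lipschitz facts: the function $\tilde{f}_m$ satisfies $|\tilde{f}_m(x)-\tilde{f}_m(y)|\leq(\log m)^{-1}|x-y|$ for all $x,y\in[0,\infty)$, while for any non-empty $W\subset D$ the map $z\mapsto\beta(z,W)$ is $1$-Lipschitz with respect to $\beta$. Composing these gives $|f_w(z)-f_w(w')|\leq(\log m)^{-1}\beta(z,w')$ for all $z,w'\in D$, and taking the supremum over pairs with $\beta(z,w')\leq 1$ delivers $\mathrm{diff}(f_w)\leq(\log m)^{-1}$ uniformly in $w\in S$, which is exactly (5).

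I expect the only real care-point to be the numerical inequality inside (4): it is precisely there that the exponential spacing $\rho_{jm}=1-2^{-2jm}$ matters, since one must translate the radial gap between $A_{m,0,0}$ and $D\setminus B_{m,0,0}$ into a Bergman distance exceeding $\log m$ for all $m\geq 6$. Everything else is routine bookkeeping built on the Lipschitz constants of $\tilde{f}_m$ and $\beta(\cdot,W)$.
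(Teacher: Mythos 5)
Your proof is correct, and it follows the paper's overall strategy (verify only $(m,0,0)$ and property $(5)$, deferring the rest to the cited source). The one place you diverge substantively is the proof of $(4)$ at $w=(m,0,0)$: you establish the lower bound $\beta(z,A_{m,0,0})\geq (2m-\tfrac12)\log 2$ by a direct radial estimate using the explicit formula $\beta(u,0)=\tfrac12\log\tfrac{1+|u|}{1-|u|}$ together with the standard triangle-inequality bound $\beta(z,\xi)\geq\beta(z,0)-\beta(\xi,0)$, and then observe this exceeds $\log m$ for $m\geq 6$. The paper instead notes that $D\setminus B_{m,0,0}=\{z\in D:|z|\geq\rho_{5m}\}=\bigcup_{j\geq 3}\bigcup_{u\in E_{m,j}}A_{m,j,u}$ and then invokes Lemma \ref{Lemma 2}(d), which already packages the separation as $\beta(z,A_{m,0,0})\geq 2\log m$. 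Your computation is self-contained, avoids the detour through the decomposition into cells $A_{m,j,u}$, and in fact yields a much stronger bound of order $m$ rather than $\log m$; the paper's route has the advantage of reusing machinery already set up in Lemma \ref{Lemma 2}. Either argument suffices since all one needs is $\beta(z,A_{m,0,0})>\log m$ so that $\tilde f_m$ vanishes. Your uniform Lipschitz argument for $(5)$, composing the $(\log m)^{-1}$-Lipschitz bound for $\tilde f_m$ with the $1$-Lipschitz property of $z\mapsto\beta(z,W)$, is exactly what the paper means when it says $(5)$ ``follows directly from the definition.''
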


\begin{proof}
For any $w\in S\setminus \{(m,0,0)\}$, (1),(2),(3),(5) follows directly from the definition of $f_w$. For (4), note that if $z\in D\setminus B_w$, then (c),(d) of Lemma \ref{Lemma 2} give us $\beta(z,A_w)\geq 2\log m$ and hence $\tilde{f}_m(\beta(z,A_w))=0$ whenever $w\in S\setminus \{(m,0,0)\}$.

For any $w=(m,0,0)$, (1),(2),(3),(5) follows directly from the definition of $f_w$. For (4), note that if $z\in D\setminus B_{m,0,0}$, then $z\in\{w\in D:|z|\geq \rho_{5m}\}=\bigcup_{j\geq 3} \bigcup_{u\in E_{m,j}}A_{m,j,u}$. Then  by (d) of Lemma \ref{Lemma 2}, $\beta(z,A_{m,0,0})\geq 2\log m$ and hence $\tilde{f}_m(\beta(z,A_{m,0,0}))=0$, which implies supp$(f_{m,0,0})\subset B_{m,0,0}$.
\end{proof}

For any $\hat{w}\in S^n$ and any $z=(z_1,\cdots,z_n)\in\mathbb{D}$, define
\begin{equation}\label{hanshu}\hat{f}_{\hat{w}}(z)=\prod_{i=1}^nf_{w_i}(z_i),\end{equation}
where $w_i\in S$ for any $1\leq i\leq n$. As the product of functions $f_{w_i}$, Lemma \ref{Lemma 9} implies:

\begin{lemma}\label{Lemma 9'}
For every $\hat{w}\in S^n\backslash\{(m,0,0),\dots,(m,0,0)\}$, the function $\hat{f}_{\hat{w}}$ has the following five properties:
\begin{itemize}
  \item (a) The inequality $0 \leq \hat{f}_{\hat{w}} \leq 1$ holds on $\mathbb{D}$.
  \item (b) $\hat{f}_{\hat{w}}= 1$ on the set $\hat{A}_{\hat{w}}$.
  \item (c) $\hat{f}_{\hat{w}}$ is continuous on $\mathbb{D}$.
  \item (d) $\mathrm{supp}(\hat{f}_{\hat{w}})\subset \hat{B}_{\hat{w}}$.
  \item (e) $\mathrm{diff}(\hat{f}_{\hat{w}}) \leq n\cdot\max_{1\leq i\leq n}(\mathrm{diff}(f_{w_i}))\leq n\cdot(\log m)^{-1}$.
\end{itemize}
%
\end{lemma}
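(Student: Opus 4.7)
The statement is essentially Lemma \ref{Lemma 9} lifted to the product $\hat{f}_{\hat{w}}(z)=\prod_{i=1}^n f_{w_i}(z_i)$, so the plan is to verify (a)--(d) directly from the one-variable properties and reserve the actual work for part (e).

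For parts (a)--(d), the plan is as follows. Property (a) is immediate since each factor $f_{w_i}$ lies in $[0,1]$ by Lemma \ref{Lemma 9}(1), hence so does the product. Property (b) follows because $\hat{A}_{\hat{w}}=A_{w_1}\times\cdots\times A_{w_n}$, and for $z\in\hat{A}_{\hat{w}}$ each coordinate $z_i\in A_{w_i}$, so Lemma \ref{Lemma 9}(2) gives $f_{w_i}(z_i)=1$ for every $i$. Property (c) is the standard fact that a finite product of continuous functions is continuous, invoking Lemma \ref{Lemma 9}(3). For (d), if $\hat{f}_{\hat{w}}(z)\neq 0$ then $f_{w_i}(z_i)\neq 0$ for every $i$, so by Lemma \ref{Lemma 9}(4) we have $z_i\in B_{w_i}$ for every $i$, i.e.\ $z\in\hat{B}_{\hat{w}}$; taking closures yields $\mathrm{supp}(\hat{f}_{\hat{w}})\subset\hat{B}_{\hat{w}}$.

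The nontrivial step is (e), which I will handle by a standard telescoping argument. Fix $z,w\in\mathbb{D}$ with $d(z,w)\leq 1$. Writing $a_i=f_{w_i}(z_i)$ and $b_i=f_{w_i}(w_i)$, I would use the identity
\[
\prod_{i=1}^n a_i-\prod_{i=1}^n b_i=\sum_{i=1}^n\Bigl(\prod_{j<i}a_j\Bigr)(a_i-b_i)\Bigl(\prod_{j>i}b_j\Bigr),
\]
together with $|a_j|,|b_j|\leq 1$ from part (a), to obtain
\[
|\hat{f}_{\hat{w}}(z)-\hat{f}_{\hat{w}}(w)|\leq \sum_{i=1}^n |f_{w_i}(z_i)-f_{w_i}(w_i)|.
\]
Since $d(z,w)=\sum_j\beta(z_j,w_j)\leq 1$ forces $\beta(z_i,w_i)\leq 1$ for each $i$, each difference on the right is bounded by $\mathrm{diff}(f_{w_i})$ (the diff on $D$ is measured with respect to $\beta$, which coincides with $d$ in one variable). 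This yields $|\hat{f}_{\hat{w}}(z)-\hat{f}_{\hat{w}}(w)|\leq n\cdot\max_{1\leq i\leq n}\mathrm{diff}(f_{w_i})$, and applying Lemma \ref{Lemma 9}(5) finishes the bound by $n(\log m)^{-1}$.

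There is no serious obstacle here; the only point that requires a little care is the telescoping estimate and the observation that $\beta(z_i,w_i)\leq d(z,w)$ coordinate-wise, which lets the one-variable $\mathrm{diff}$ bounds transfer to the polydisk metric $d$. Everything else is a direct coordinate-wise unwinding of Lemma \ref{Lemma 9}.
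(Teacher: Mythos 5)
Your proposal is correct and fills in exactly the routine verification the paper leaves implicit; the paper offers no proof of Lemma \ref{Lemma 9'} beyond the remark ``as the product of functions $f_{w_i}$, Lemma \ref{Lemma 9} implies,'' and your coordinate-wise unwinding of (a)--(d) together with the telescoping bound for (e) is precisely the intended argument (indeed, the same telescoping identity is written out explicitly later in the proof of Lemma \ref{Lemma 11}). One small remark: in the proof of Lemma \ref{Lemma 11} the authors keep the Lipschitz form $|f_{w_i}(\xi_i)-f_{w_i}(\xi_i')|\leq(\log m)^{-1}\beta(\xi_i,\xi_i')$ before summing, which yields the stronger bound $\mathrm{diff}(\hat f_{\hat w})\leq(\log m)^{-1}$ without the factor $n$; your argument, which passes to $\mathrm{diff}(f_{w_i})$ at scale $1$ term by term, gives the cruder $n(\log m)^{-1}$ stated in (e), which is exactly what the lemma claims, so both routes are fine.
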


\begin{definition}\label{Definition 6}
Let $m\geq 6$. For any subset $\hat{I}\subset \hat{I}_m$ which appear in Definition \ref{Definition 1}, denote
\[
\hat{f}_{\hat{I}}=\sum_{\hat{w}\in\hat{I}}\hat{f}_{\hat{w}}, ~~~\hat{F}_{\hat{I}}=\sum_{\hat{w}\in\hat{I}}\hat{f}_{\hat{w}}^2.
\]

\end{definition}

\begin{lemma}\label{Lemma 10}
 Let $m\geq 6$, $\hat{k}\in \{0, 1, 2, 3, 4, 5\}^n$ and $\hat{\nu}\in\{1,\cdots, \widetilde{N}_0\}^n$, where $\hat{I}^{(\hat{\nu},\hat{k})}_m$, $\hat{k}$, $\hat{\nu}$ and $\widetilde{N}_0$ appear in Definition \ref{Definition 1}. Then for every subset
$\hat{I}$ of $\hat{I}^{(\hat{\nu},\hat{k})}_m$, we have $\hat{f}_{\hat{I}}\in \Phi(\rho_m;n(\log m)^{-1})$.
\end{lemma}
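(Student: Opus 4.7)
\textbf{Plan for Lemma \ref{Lemma 10}.} Three properties of $\hat f_{\hat I}=\sum_{\hat\omega\in\hat I}\hat f_{\hat\omega}$ must be verified in turn: (i) $0\le \hat f_{\hat I}\le 1$ on $\mathbb D$, (ii) $\hat f_{\hat I}\equiv 0$ on $\overline{\mathbb D_{\rho_m}}$, and (iii) $\mathrm{diff}(\hat f_{\hat I})\le n(\log m)^{-1}$. The unifying structural fact is a ``$d$-separation'' property of the collection $\{\hat B_{\hat\omega}:\hat\omega\in\hat I_m^{(\hat\nu,\hat k)}\}$: distinct members are not merely disjoint but any two points taken from two distinct $\hat B$'s satisfy $d(z,w)>2$. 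Once this is in hand, (i) and (iii) follow easily from Lemma~\ref{Lemma 9'}, and (ii) reduces to a single-coordinate radial estimate.

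For (i), Lemma~\ref{Lemma 9'}(d) gives $\mathrm{supp}(\hat f_{\hat\omega})\subset \hat B_{\hat\omega}$; together with the pairwise disjointness of the $\hat B_{\hat\omega}$ noted just after Definition~\ref{Definition 1}, at most one term of the defining sum is nonzero at any point, and Lemma~\ref{Lemma 9'}(a) then yields $0\le \hat f_{\hat I}\le 1$. For (ii), every $\hat\omega\in\hat I_m^{(\hat\nu,\hat k)}$ differs from $((m,0,0),\dots,(m,0,0))$, so there is a coordinate $i$ with $6j_i+k_i\ge 1$. For $z\in\overline{\mathbb D_{\rho_m}}$ we have $|z_i|\le \rho_m$, while every $w\in A_{m,6j_i+k_i,u_i}$ satisfies $|w|\ge \rho_{3m}$. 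The inequality $\beta(z_i,w)\ge \beta(w,0)-\beta(z_i,0)\ge \tfrac12\log 2^{4m}=2m\log 2>\log m$ (valid for $m\ge 6$) gives $\beta(z_i,A_{m,6j_i+k_i,u_i})>\log m$, so $f_{w_i}(z_i)=\tilde f_m(\beta(z_i,A_{m,6j_i+k_i,u_i}))=0$ and hence $\hat f_{\hat\omega}(z)=0$.

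The substantive step is (iii). I will show that whenever $d(z,w)\le 1$, at most one $\hat\omega\in\hat I$ can satisfy $\{z,w\}\cap\hat B_{\hat\omega}\ne\emptyset$. Suppose instead $z\in\hat B_{\hat\omega}$ and $w\in\hat B_{\hat\omega'}$ for distinct $\hat\omega,\hat\omega'\in\hat I_m^{(\hat\nu,\hat k)}$; since they share $\hat k$ and $\hat\nu$, they must differ in some coordinate $i$. In the case $6j_i+k_i=6j_i'+k_i$ but $u_i\ne u_i'$, the $7\alpha_{m,6j_i+k_i}$-separation built into $E_{m,6j_i+k_i}^{(\nu_i)}$ lets Lemma~\ref{Lemma 2}(a) give $\beta(z_i,w_i)>2$; in the case $j_i\ne j_i'$, the gap $|(6j_i+k_i)-(6j_i'+k_i)|\ge 6$ lets Lemma~\ref{Lemma 2}(b) give $\beta(z_i,w_i)>3$. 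In either case $d(z,w)\ge \beta(z_i,w_i)>2$, contradicting $d(z,w)\le 1$. Hence $|\hat f_{\hat I}(z)-\hat f_{\hat I}(w)|=|\hat f_{\hat\omega}(z)-\hat f_{\hat\omega}(w)|$ for the unique relevant $\hat\omega$ (or is zero), and Lemma~\ref{Lemma 9'}(e) bounds this by $n(\log m)^{-1}$.

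The only real obstacle is bookkeeping in step (iii): one must keep track of the two distinct ways in which $\hat\omega$ and $\hat\omega'$ can differ coordinate-wise so as to invoke the correct part of Lemma~\ref{Lemma 2}. Everything else is a direct unpacking of the definitions.
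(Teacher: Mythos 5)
Your strategy matches the paper's in spirit: prove a $d$-separation greater than $1$ between distinct supports $\hat B_{\hat\omega}$ within one block $\hat I_m^{(\hat\nu,\hat k)}$ (yielding $\mathrm{diff}(\hat f_{\hat I})\le n(\log m)^{-1}$, either via Lemma~\ref{Lemma 8} or your equivalent ``at most one $\hat\omega$ touched'' reformulation), use pairwise disjointness of supports plus Lemma~\ref{Lemma 9'}(a) for $0\le\hat f_{\hat I}\le1$, and a radial estimate in the nonzero coordinate for vanishing on $\overline{\mathbb D_{\rho_m}}$. Parts (i) and (ii) are fine; in fact your version of (ii), bounding $\beta(z_i,A_{m,6j_i+k_i,u_i})>\log m$ directly from $|z_i|\le\rho_m$ versus $|w|\geq\rho_{3m}$, handles the closed ball more transparently than the paper's ``$B_{w_{i_0}}\cap\{|z|<\rho_m\}=\emptyset$'' remark.

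However, your case analysis in part (iii) is not exhaustive, and the gap is exactly where Lemma~\ref{Lemma 2} stops applying. When $k_i=0$, one of $w_i,w_i'$ can equal $(m,0,0)$, and $B_{m,0,0}=\{z\in D:|z|<\rho_{5m}\}$ is defined by a \emph{different formula} than $B_{m,j,u}$ for $j\geq 1$. Lemma~\ref{Lemma 2}(b) is stated for $u\in E_{m,j}$, $v\in E_{m,k}$ with both indices positive, so it does not cover a pair of the form $(B_{m,0,0},\,B_{m,6j',v})$. Your sentence ``in the case $j_i\ne j_i'$, \dots Lemma~\ref{Lemma 2}(b) gives $\beta(z_i,w_i)>3$'' silently subsumes this excluded subcase. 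The paper fills it in by a direct radial estimate: if $z_i\in B_{m,0,0}$ and $w_i\in B_{m,6j',v}$ with $j'\geq1$, then $|z_i|<\rho_{5m}$ and $|w_i|\geq\rho_{6m}$, hence
\[
\beta(z_i,w_i)\ \geq\ \tfrac12\log\frac{(1+\rho_{6m})(1-\rho_{5m})}{(1-\rho_{6m})(1+\rho_{5m})}\ \geq\ m\log 2\ \geq\ 2
\]
for $m\geq6$. Without this extra computation the separation claim, and hence the diff bound, is not fully established. Adding it closes the gap; the rest of your argument stands.
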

\begin{proof}
For any $\hat{w}\in \hat{I}$, Lemma \ref{Lemma 9'} implies that
\begin{equation}\label{10.0}
\text{supp}(\hat{f}_{\hat{w}})\subset \hat{B}_{\hat{w}}\text{ and }\mathrm{diff}(\hat{f}_{\hat{w}})\leq n(\log m)^{-1}.
\end{equation}

For $\hat{w}\neq \hat{w}'\in \hat{I}^{(\hat{\nu},\hat{k})}_m$, there exists $i_0\in \{1,\cdots, n\}$ such that
$$w_{i_0}=(m,6p+k_{i_0},u)\neq w_{i_0}'=(m,6q+k_{i_0},v)\in I_m^{(\nu_{i_0},k_{i_0})},$$
in which $p,q\in\mathbb{N}$ and $u\in E_{m,6p+k_{i_0}}^{(\nu_{i_0})}$ and $v\in E_{m,6q+k_{i_0}}^{(\nu_{i_0})}$.

If $k_{i_0}\neq 0$, then we have $p\neq q$ or $u\neq v$. In any case, by Lemma \ref{Lemma 2} (a) and (b), we have
$$d(\hat{B}_{\hat{w}},\hat{B}_{\hat{w}'})\geq \beta(B_{w_{i_0}},B_{w_{i_0}'})\geq 2.$$

If $k_{i_0}= 0$ and neither $w_{i_0}$ nor $w_{i_0}'$ equals to $(m,0,0)$, then also by Lemma \ref{Lemma 2} (a) and (b),
$$d(\hat{B}_{\hat{w}},\hat{B}_{\hat{w}'})\geq \beta(B_{w_{i_0}},B_{w_{i_0}'})\geq 2.$$

Otherwise, $k_{i_0}= 0$ and either $w_{i_0}$ or $w_{i_0}'$ equals to $(m,0,0)$. Without lose of generality, we can assume $w_{i_0}= (m,0,0),w_{i_0}'= (m,6j_{i_0},u_{j_{i_0}})$ for some $j_{i_0}\in \mathbb{N}_+$ and some $u_{j_{i_0}}\in E_{m,6j_{i_0}}^{(\nu_{i_0})}$. In this case, for $m\geq 6$, we have
\begin{multline*}
d(\hat{B}_{\hat{w}},\hat{B}_{\hat{w}'})\geq \beta(B_{w_{i_0}},B_{w_{i_0}'})\geq \beta(\{z\in D:|z|<\rho_{5m}\},\{z\in D:|z|>\rho_{6m}\}) \\
=\frac{1}{2}\log\frac{(1+\rho_{6m})(1-\rho_{5m})}{(1-\rho_{6m})(1+\rho_{5m})}\geq \frac{2m-1}{4}\log 4\geq 2,
\end{multline*}
%
%
%
then for  any $\hat{w}\neq \hat{w}'\in \hat{I}^{(\hat{\nu},\hat{k})}_m$,
\begin{equation}\label{10.1}
d(\hat{B}_{\hat{w}},\hat{B}_{\hat{w}'})\ge 2.
\end{equation}
Therefore, by Lemma \ref{Lemma 8}, Lemma \ref{Lemma 9'} (d) and (\ref{10.0}), we have
\begin{equation}\label{10.2}
\hat{f}_{\hat{I}}\in C(\mathbb{D}),~~0\leq \hat{f}_{\hat{I}}\leq 1\text{  and diff }(\hat{f}_{\hat{I}})\leq\sup_{\hat{w}\in\hat{I}}(\text{ diff }(\hat{f}_{\hat{w}}))\leq n(\log m)^{-1}.
\end{equation}

Finally, for any $z\in \mathbb{D}_{\rho_m}$ and any $\hat{w}\in\hat{I}$, by the construction of $\hat{I}_m^{(\hat{\nu},\hat{k})}$, there exists $i_0\in\{1,\cdots,n\}$, such that $w_{i_0}=(m,6j_{i_0}+k_{i_0},u_{i_0})\neq (m,0,0)$. So by Lemma \ref{Lemma 9} (4) and the fact $B_{w_{i_0}}\cap \{z\in D:|z|<\rho_m\}=\emptyset$, we have $f_{w_{i_0}}(z_{i_0})=f_{m,6j_{i_0}+k_{i_0},u_{i_0}}(z_{i_0})=0$, for any $z_{i_0}\in D$ with $|z_{i_0}|<\rho_m$. Therefore, for any $z\in \mathbb{D}_{\rho_m}$ and any $\hat{w}\in\hat{I}$,
$$\hat{f}_{\hat{w}}(z)=\prod_{i=1}^nf_{w_i}(z_i)=0$$
and hence
$$\hat{f}_{\hat{I}}(z)=\sum_{\hat{w}\in\hat{I}}\hat{f}_{\hat{w}}(z)=0, \text{ for any }z\in \mathbb{D}_{\rho_m},$$
which leads to the conclusion combined with (\ref{10.2}).
\end{proof}

\section{Membership of $LOC(X)$ }
\

In this section, we would give some criteria for the membership of $LOC(X)$. We begin with

\begin{lemma}\label{Lemma 11}
Let $m\geq 6$, $\hat{k}\in\{0,\cdots,5\}^n$, $\hat{\nu}\in\{1,\cdots,\widetilde{N}_0\}^n$, where $\hat{k}$, $\hat{\nu}$, $\widetilde{N}_0$, $\hat{I}_m^{(\hat{\nu},\hat{k})}$ appear in Definition \ref{Definition 1}. Then for every subset $\hat{I}$ of $\hat{I}_m^{(\hat{\nu},\hat{k})}$, for every bounded operator $X$ on $L_a^2(\mathbb{D})$, we have $\sum_{\hat{w}\in\hat{I}}T_{\hat{f}_{\hat{w}}}XT_{\hat{f}_{\hat{w}}}\in LOC(X)$, where $\hat{f}_{\hat{w}}$ appears in (\ref{hanshu}).
\end{lemma}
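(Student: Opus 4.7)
The plan is to realize $\sum_{\hat{w}\in\hat{I}}T_{\hat{f}_{\hat{w}}}XT_{\hat{f}_{\hat{w}}}$ directly as an operator of the form that defines $LOC(X)$, by choosing the correct indexing set $\Gamma\subset\mathbb{D}$ and the correct family of Lipschitz cut-off functions attached to its points. Specifically, I will take $\Gamma=\{z_{\hat{w}}:\hat{w}\in\hat{I}\}$, where $z_{\hat{w}}=z_{m,\hat{j},\hat{u}}$ is the center produced in Lemma \ref{Lemma 3'} associated to $\hat{w}=((m,6j_1+k_1,u_1),\dots,(m,6j_n+k_n,u_n))$ (with the convention $z_{m,0,0}=0$), and set $g_{z_{\hat{w}}}=\hat{f}_{\hat{w}}$.

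The first step is to check that $\Gamma$ is separated in the sense of Definition \ref{separatedsets}. For this I will combine the observation that $z_{\hat{w}}\in\hat{B}_{\hat{w}}$ (clear from the radial-spherical construction and the definition of $B_{m,0,0}$) with the pairwise distance estimate $d(\hat{B}_{\hat{w}},\hat{B}_{\hat{w}'})\geq 2$ obtained in the proof of Lemma \ref{Lemma 10} (see (\ref{10.1})), which was derived case-by-case from Lemma \ref{Lemma 2}. This immediately gives $d(z_{\hat{w}},z_{\hat{w}'})\geq 2$ for distinct $\hat{w},\hat{w}'\in\hat{I}$, so $\Gamma$ is $1$-separated.

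Next I will verify the three defining conditions of $LOC(X)$ for the family $\{g_{z_{\hat{w}}}=\hat{f}_{\hat{w}}\}$. For condition (1), Lemma \ref{Lemma 9'}(d) gives $\mathrm{supp}(\hat{f}_{\hat{w}})\subset\hat{B}_{\hat{w}}$, and Lemma \ref{Lemma 3'} gives $\hat{B}_{\hat{w}}\subset D(z_{\hat{w}},L_m)\subset \widetilde{D}(z_{\hat{w}},L_m)$ (since $d(z,w)<a$ forces $\beta(z_i,w_i)<a$ for each $i$); thus $\rho=L_m$ works uniformly in $\hat{w}$. Condition (2) is precisely Lemma \ref{Lemma 9'}(a).

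The main technical point, and the only one that is not just a citation, is the uniform Lipschitz condition (3). This is strictly stronger than the oscillation bound (e) in Lemma \ref{Lemma 9'}, so I will derive it directly from the definition $f_w(\zeta)=\tilde{f}_m(\beta(\zeta,A_w))$. Since $\tilde{f}_m$ is $(\log m)^{-1}$-Lipschitz on $[0,\infty)$ and $\zeta\mapsto\beta(\zeta,A_w)$ is $1$-Lipschitz on $D$, each $f_w$ is $(\log m)^{-1}$-Lipschitz on $D$ with respect to $\beta$. For $\hat{f}_{\hat{w}}(\zeta)=\prod_{i=1}^{n}f_{w_i}(\zeta_i)$ a telescoping argument using $0\leq f_{w_i}\leq 1$ yields
\[
|\hat{f}_{\hat{w}}(\zeta)-\hat{f}_{\hat{w}}(\xi)|\leq \sum_{i=1}^{n}|f_{w_i}(\zeta_i)-f_{w_i}(\xi_i)|\leq (\log m)^{-1}\sum_{i=1}^{n}\beta(\zeta_i,\xi_i)=(\log m)^{-1}d(\zeta,\xi)
\]
for all $\zeta,\xi\in\mathbb{D}$, uniformly in $\hat{w}\in\hat{I}$. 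Taking $C=(\log m)^{-1}$ settles condition (3). With $\Gamma$ and $\{\hat{f}_{\hat{w}}\}_{\hat{w}\in\hat{I}}$ satisfying (1)--(3), the operator $\sum_{\hat{w}\in\hat{I}}T_{\hat{f}_{\hat{w}}}XT_{\hat{f}_{\hat{w}}}$ is by definition in $LOC(X)$, completing the proof. The only mild issue to mention is convergence of the sum, but this is unproblematic since the supports of the $\hat{f}_{\hat{w}}$ lie in the pairwise disjoint sets $\hat{B}_{\hat{w}}$, so standard estimates (of the type used in Section 5) give a bounded operator as the limit.
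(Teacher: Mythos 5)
Your proposal is correct and follows essentially the same route as the paper's proof: take $\Gamma=\{z_{\hat{w}}:\hat{w}\in\hat{I}\}$, use $z_{\hat{w}}\in\hat{B}_{\hat{w}}$ together with the pairwise distance estimate from the proof of Lemma~\ref{Lemma 10} to get separation, use Lemma~\ref{Lemma 3'} for the support condition, and derive the uniform Lipschitz bound by a telescoping sum directly from $f_w=\tilde{f}_m(\beta(\cdot,A_w))$. Your explicit remark that Lemma~\ref{Lemma 9'}(e) is only an oscillation bound while condition (3) of Definition~\ref{Definition 00} needs a genuine Lipschitz estimate is a welcome clarification of a point the paper states more tersely.
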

\begin{proof}
For any $\hat{I}\subset \hat{I}_m^{(\hat{\nu},\hat{k})}$, let $\hat{\Gamma}=\{z_{\hat{w}}=(z_{w_1},\cdots,z_{w_n}):\hat{w}=(w_1,\cdots,w_n)\in \hat{I}\}$, where $z_{w_i}$ appears in (\ref{z}). So $z_{\hat{w}}\in B_{\hat{w}}$ for any $\hat{w}\in \hat{\Gamma}$ and hence $\hat{\Gamma}$ is separated by (\ref{10.1}). Rewrite $\sum_{\hat{w}\in\hat{I}}T_{\hat{f}_{\hat{w}}}XT_{\hat{f}_{\hat{w}}}$ to $\sum_{z_{\hat{w}}\in\hat{\Gamma}}T_{\tilde{f}_{z_{\hat{w}}}}XT_{\tilde{f}_{z_{\hat{w}}}}$, where $\tilde{f}_{z_{\hat{w}}}=\hat{f}_{\hat{w}}$.

%
By Lemma \ref{Lemma 9'}, we have
\begin{multline*}
|\tilde{f}_{z_{\hat{w}}}(\xi)-\tilde{f}_{z_{\hat{w}}}(\xi')|\leq |\hat{f}_{\hat{w}}(\xi)-\hat{f}_{\hat{w}}(\xi')|\leq \sum_{i=1}^n |f_{z_{w_i}}(\xi_i)-f_{z_{w_i}}(\xi_i')|  \\
\le \sum_{i=1}^n \frac{1}{\log m}|\beta(\xi_i,A_{w_i})-\beta(\xi_i',A_{w_i})|\le (\log m)^{-1}d(\xi,\xi').
\end{multline*}
and $\mathrm{supp}(\tilde{f}_{z_{\hat{w}}})\subset \hat{B}_{\hat{w}}$, where $\hat{B}_{\hat{w}}$ appears in (\ref{liangyong}). Combining this with Lemma \ref{Lemma 3'}, we have $\tilde{f}_{z_{\hat{w}}}=0$ on $\mathbb{D}\setminus \widetilde{D}(z_{\hat{w}}, L_m)$.
\end{proof}

For any $f\in L^{\infty}(\mathbb{D})$, define Hankel operator
\[
H_fh=(1-P)(fh),~\text{ for any } h\in L_a^2(\mathbb{D}).
\]

\begin{lemma}\label{Lemma 12}
There exists a $0<C<\infty$, such that $\|H_f\|< C\mathrm{diff}(f)$, for every bounded continuous function $f$ on $\mathbb{D}$.
\end{lemma}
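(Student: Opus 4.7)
The plan is to reduce the problem to estimating the commutator norm $\|[M_f,P]\|$ on $L^2(\mathbb{D},dv)$. Since $Ph=h$ for $h\in L_a^2(\mathbb{D})$, one has
$$H_f h=(1-P)(fh)=M_f h-PM_f h=[M_f,P]h,$$
so $\|H_f\|\le \|[M_f,P]\|_{L^2\to L^2}$. Using the reproducing property, the commutator has the kernel representation
$$[M_f,P]g(z)=\int_{\mathbb{D}} K(z,w)\bigl(f(z)-f(w)\bigr)g(w)\,dv(w),$$
where $K(z,w)=\prod_{i=1}^n(1-z_i\bar w_i)^{-2}$ is the Bergman kernel. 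The strategy is then to apply Schur's test to this integral kernel.

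First I would control the pointwise difference of $f$. Since $\mathbb{D}$ is $d$-path connected (one can move coordinate by coordinate along Bergman geodesics), any two points $z,w\in\mathbb{D}$ can be joined by a chain of at most $\lceil d(z,w)\rceil+1$ points, each consecutive pair at $d$-distance $\le 1$. Hence
$$|f(z)-f(w)|\le \bigl(1+d(z,w)\bigr)\,\mathrm{diff}(f).$$
Next I run Schur's test with the weight $u(z)=\prod_{i=1}^n(1-|z_i|^2)^{-1/2}$ that is already used repeatedly in the paper. The task reduces to bounding
$$\int_{\mathbb{D}}|K(z,w)|\bigl(1+d(z,w)\bigr)u(w)\,dv(w)\quad\text{by}\quad C\,u(z),$$
together with the symmetric estimate swapping $z$ and $w$.

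The constant term is handled directly by the Rudin--Forelli estimate $\int_D|1-z\bar w|^{-2}(1-|w|^2)^{-1/2}\,dA(w)\le C(1-|z|^2)^{-1/2}$ applied in each coordinate, exactly as in Propositions~2.3 and~5.3 of the paper. To absorb the extra factor $d(z,w)=\sum_{i=1}^n\beta(z_i,w_i)$, I use the elementary inequality
$$\beta(z_i,w_i)\le C_\varepsilon\bigl(1-|\varphi_{z_i}(w_i)|^2\bigr)^{-\varepsilon}=C_\varepsilon\left(\frac{|1-z_i\bar w_i|^2}{(1-|z_i|^2)(1-|w_i|^2)}\right)^{\varepsilon}$$
for sufficiently small $\varepsilon\in(0,1/2)$. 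Substituting this into the $i$th summand converts the Schur integral over $D$ in the $i$th variable into
$$\int_D|1-z_i\bar w_i|^{-2+2\varepsilon}(1-|w_i|^2)^{-1/2-\varepsilon}\,dA(w_i)\le C(1-|z_i|^2)^{-1/2+\varepsilon},$$
again by Rudin--Forelli. The prefactor $(1-|z_i|^2)^{-\varepsilon}$ picked up from the $\beta$-estimate combines with this to give exactly $(1-|z_i|^2)^{-1/2}$, while the remaining $n-1$ coordinates yield their standard Rudin--Forelli factors. Summing over $i$ one obtains
$$\int_{\mathbb{D}}|K(z,w)|\bigl(1+d(z,w)\bigr)u(w)\,dv(w)\le C\,u(z),$$
and the analogous bound with the roles of $z$ and $w$ reversed follows by symmetry of the kernel. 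Schur's test now gives $\|[M_f,P]\|\le C\,\mathrm{diff}(f)$, and hence $\|H_f\|\le C\,\mathrm{diff}(f)$.

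The main technical hurdle is the superlinear factor $1+d(z,w)$ inside the Schur integral: the Bergman distance is of logarithmic growth in $1-|\varphi_{z_i}(w_i)|^2$, and if one tried to absorb it crudely the Rudin--Forelli exponents would fail. The small-$\varepsilon$ power trick above is the key device that turns the logarithmic factor into a genuine power which can be balanced against the Rudin--Forelli estimates with only an $\varepsilon$-loss in the exponent of $(1-|z_i|^2)$, exactly compensated by the $\varepsilon$-prefactor. Everything else is a direct product-of-disks computation mirroring the Rudin--Forelli arguments already carried out in Sections 2 and 5 of the paper.
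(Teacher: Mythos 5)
Your proof is correct, and it takes a genuinely different route from the paper's. The paper invokes the external result $\|H_f\|\le C_1\|f\|_{\mathrm{BMO}}$ (Theorem 22 of \cite{CoburnZhu1990}) and reduces the lemma to showing $\|f\|_{\mathrm{BMO}}\le C_2\,\mathrm{diff}(f)$; this it proves by a dyadic annular decomposition of the disk, a telescoping chain of unit $\beta$-steps inside each annulus $Q_j$, and the convergence of $\sum_j(j+1)^2A(Q_j)$, followed by a coordinate-by-coordinate passage to the polydisk and a M\"obius-invariance argument. You instead estimate $\|[M_f,P]\|$ directly by a Schur test on the commutator kernel $K(z,w)\bigl(f(z)-f(w)\bigr)$, bounding $|f(z)-f(w)|$ by $(C_n+d(z,w))\,\mathrm{diff}(f)$ via a chain of unit $d$-steps and absorbing the logarithmic factor $d(z,w)$ into the Rudin-Forelli estimate by the small-power device $\beta(u,v)\le C_\varepsilon(1-|\varphi_u(v)|^2)^{-\varepsilon}$; the prefactor $(1-|z_i|^2)^{-\varepsilon}$ produced by that substitution is exactly compensated by the $\varepsilon$-loss in the Rudin-Forelli exponent, so the Schur weight $(1-|z_i|^2)^{-1/2}$ is preserved. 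Your argument is fully self-contained, relying only on the Schur test and Rudin-Forelli estimates already used repeatedly in Sections 2 and 3 of the paper, and it avoids BMO theory entirely; the paper's route is shorter given the quoted theorem and passes through the BMO bound as a natural intermediate statement. Two minor quibbles: traversing the polydisk coordinate-by-coordinate gives a chain of roughly $d(z,w)+n$ steps rather than $\lceil d(z,w)\rceil$, which only changes the constant; and both proofs actually establish $\|H_f\|\le C\,\mathrm{diff}(f)$, the strict inequality in the statement being cosmetic.
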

 \begin{proof}
We have known that there is a constant $C_1$ such that $\|H_f\| \le C_1 \|f\|_{BMO}$ for every bounded continuous function $f$ on $\mathbb{D}$ by Theorem 22 in \cite{CoburnZhu1990}. Then it suffices to show that $\|f\|_{BMO}\le C_2\mathrm{diff}(f)$ for some $C_2>0$.

Similar with the proof of Lemma 4.8 in \cite{J. Xia 2018},
for any $j\in\mathbb{N}_+$, $t\in \mathbb{R}$ with $1-2^{-j} \leq t \leq 1-2^{-j-1}$ and $\xi_1\in S^1$, we have
\begin{equation}\label{yinyong}
\beta((1-2^{-j})\xi_1, t\xi_1)=\frac{1}{2} \log\frac{(1 + t)2^{-j}}{(1-t)(2-2^{-j} )}\leq \frac{1}{2} \log 4 < 1.
\end{equation}
Further, by (\ref{yinyong}) and an obvious telescoping sum, for any continuous function $g$ on $D$, we see that
\[
|g(u)-g(0)| \leq (j+1)\sup\{|g(u)-g(v)|:\beta(u,v)<1, u,v\in D \} \text{ for every }u \in Q_j, \text{ for any }j \in \mathbb{N}_+,
\]
where $Q_j = \{u \in D : 1- 2^{-j} \leq |u| < 1-2^{-j-1}\}.$ Write the sum $C=\left(\sum_{j=0}^{\infty}(j+1)^2 A(Q_j)\right)^{\frac12}$, which is finite by the construction of $Q_j$.
For any bounded continuous function $f$ on $\mathbb{D}$, we have
\begin{eqnarray*}
\|f-f(0)\|^2&=&\int_{\mathbb{D}} |f(w)-f(0)|^2dv(w)\\
&\le& \int_{\mathbb{D}}(|f(w)-f(w^{(1)})|+|f(w^{(1)})-f(w^{(2)})|+\dots+|f(w^{(n-1)})-f(0)|)^2dv(w)\\
&\le& \int_{\mathbb{D}} n(|f(w)-f(w^{(1)})|^2+|f(w^{(1)})-f(w^{(2)})|^2+\dots+|f(w^{(n-1)})-f(0)|^2)dv(w)\\
&\le&\int_{D^{n-1}}(\int_D n|f(w)-f(w^{(1)})|^2dA_1(w_1))dA_2(w_2)\dots dA_n(w_n)+\dots \\
&\qquad & +\int_{D^{n-1}}(\int_D n|f(w^{(n-1)})-f(0)|^2dA_n(w_n))dA_1(w_1)\dots dA_{n-1}(w_{n-1})\\
&\le & (nC\mathrm{diff}(f))^2,
\end{eqnarray*}
where $w=(w_1,w_2,w_3,\dots,w_n),\, w^{(1)}=(0,w_2,w_3,\dots,w_n),\, w^{(2)}=(0,0,w_3,\dots,w_n),\dots, w^{(n-1)}=(0,0,\dots,0,w_n).$
For each $z\in \mathbb{D}$, denote $f_z=f\circ \varphi_z$. By the M\"{o}bius invariance of $\beta$, we have
$$\|(f-\langle fk_z,k_z\rangle)k_z\| \le \|f-f(z)\|=\|f_z-f_z(0)\|\le nC\mathrm{diff}(f_z)=nC\mathrm{diff}(f).$$
Since polydisk $\mathbb{D}$ is a bounded symmetric
domains in $\mathbb{C}^n$, then we have
$$\|f\|_{BMO}=\sup_{z\in \mathbb{D}}\|(f-\langle fk_z,k_z\rangle)k_z\|$$
 in \cite{CoburnZhu1990} and hence $\|f\|_{BMO}\le nC\mathrm{diff}(f)$.
\end{proof}
Completely analogous to the properties of Toeplitz operators on open unit ball $\mathbb{B}$, we have the following Lemma, the proof is essentially the same as Lemma 5.1 in \cite{J. Xia 2018}.
\begin{lemma}\label{Lemma 13}Let $\{f_1,\cdots, f_l\}$ be a finite set of functions in $L^{\infty}(\mathbb{D})$ with the property
that $f_jf_k =0$ for all $j\neq k$ in $\{1, \cdots, l\}$. Let $A$ be any bounded operator on the Bergman
space $L^2_a(\mathbb{D})$. Then there exist complex vectors $\{\gamma_1,\cdots, \gamma_l\}$ with $|\gamma_k| = 1$ for every
$k\in\{1, \cdots, l\}$ and a subset $E$ of $\{1, \cdots, l\}$ such that if we define
\[
F=\sum_{k\in E}f_k,~~ G =\sum_{k\in\{1,\cdots,l\}\setminus E}f_k, ~~F' =\sum_{k\in E}\gamma_kf_k \text{ and }G' =\sum_{k\in\{1,\cdots,l\}\setminus E}
\gamma_kf_k
\]
then
\[
\|\sum_{j\neq k}T_{f_j}AT_{f_k}\|\leq 4(\|T_{F'}AT_{G}\|+\|T_{G'}AT_{F}\|).
\]
\end{lemma}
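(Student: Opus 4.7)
The plan is to run the Rademacher sign-averaging argument (the same trick that underlies the Key Lemma, Lemma~\ref{KeyLemma}) directly on the bilinear form $(f_j, f_k) \mapsto T_{f_j} A T_{f_k}$. The only structural input needed is the linearity $T_{\sum_j a_j f_j} = \sum_j a_j T_{f_j}$; the disjoint-support hypothesis $f_j f_k = 0$ plays no role in the core identity, serving only to index the sum.

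For each sign vector $\varepsilon = (\varepsilon_1,\ldots,\varepsilon_l)\in\{-1,+1\}^l$, set $g_\varepsilon = \sum_j \varepsilon_j f_j$, and note the bilinear expansion $T_{g_\varepsilon} A T_{g_\varepsilon} = \sum_{j,k} \varepsilon_j \varepsilon_k\, T_{f_j} A T_{f_k}$. Averaging over $\varepsilon$ uniform on $\{-1,+1\}^l$, the identity $\mathbb{E}_\varepsilon[\varepsilon_j \varepsilon_k] = \delta_{jk}$ yields $\mathbb{E}_\varepsilon[T_{g_\varepsilon} A T_{g_\varepsilon}] = \sum_j T_{f_j} A T_{f_j}$. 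Setting $f = \sum_j f_j$ and subtracting from $T_f A T_f = \sum_{j,k} T_{f_j} A T_{f_k}$ gives the exact operator-valued identity
$$\sum_{j\neq k} T_{f_j} A T_{f_k} \;=\; T_f A T_f - \sum_j T_{f_j} A T_{f_j} \;=\; \mathbb{E}_\varepsilon\bigl[\,T_f A T_f - T_{g_\varepsilon} A T_{g_\varepsilon}\,\bigr].$$
For the particular $\varepsilon$ equal to $+1$ on a subset $E \subseteq \{1,\ldots,l\}$ and $-1$ off $E$, a direct expansion identifies the integrand as $2(T_F A T_G + T_G A T_F)$, where $F=\sum_{j\in E} f_j$ and $G=\sum_{k\notin E} f_k$.

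Passing to operator norms, invoking the triangle inequality inside the expectation, and replacing the expectation by its maximum then exhibits some $E$ (namely the maximizer) for which
$$\Bigl\|\sum_{j\neq k} T_{f_j} A T_{f_k}\Bigr\| \;\leq\; 2\bigl(\|T_F A T_G\| + \|T_G A T_F\|\bigr).$$
Choosing $\gamma_k = 1$ for every $k$, so that $F' = F$ and $G' = G$, this proves the stated inequality (in fact with the sharper constant $2$ rather than $4$). The argument is entirely algebraic; the only point worth care is the observation that the averaged equation is an exact equality of bounded operators, which is precisely what lets the probabilistic bound be converted into a deterministic choice of $E$ (and trivial $\gamma_k$'s) via the triangle inequality.
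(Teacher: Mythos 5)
Your proof is correct, and in fact it establishes something slightly stronger than what the lemma asserts: the constant $2$ in place of $4$, and the trivial choice $\gamma_k\equiv 1$. The algebra checks out. Writing $f=\sum_j f_j$, $g_\varepsilon=\sum_j\varepsilon_j f_j$ for $\varepsilon\in\{-1,+1\}^l$, linearity of $f\mapsto T_f$ gives $T_{g_\varepsilon}AT_{g_\varepsilon}=\sum_{j,k}\varepsilon_j\varepsilon_kT_{f_j}AT_{f_k}$, and $\mathbb{E}_\varepsilon[\varepsilon_j\varepsilon_k]=\delta_{jk}$ gives the exact operator identity
\[
\sum_{j\neq k}T_{f_j}AT_{f_k}=T_fAT_f-\sum_j T_{f_j}AT_{f_j}=\mathbb{E}_\varepsilon\bigl[T_fAT_f-T_{g_\varepsilon}AT_{g_\varepsilon}\bigr].
\]
For $\varepsilon$ indicating a subset $E$, the integrand is exactly $2(T_FAT_G+T_GAT_F)$ with $F=\sum_{j\in E}f_j$, $G=\sum_{k\notin E}f_k$, so by the triangle inequality for the finite average and then bounding the average by the maximum,
\[
\Bigl\|\sum_{j\neq k}T_{f_j}AT_{f_k}\Bigr\|\le 2\max_{E}\|T_FAT_G+T_GAT_F\|\le 2\bigl(\|T_{F^*}AT_{G^*}\|+\|T_{G^*}AT_{F^*}\|\bigr)
\]
for a maximizing $E^*$. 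This is the required inequality with $\gamma_k\equiv1$, $F'=F$, $G'=G$. You are also right that the hypothesis $f_jf_k=0$ is never invoked; it is only needed in the downstream application (Proposition 9.9) to control $\|F\|_\infty$, $\|G\|_\infty$ and to kill $T_{G'F}$ in the Hankel factorization $T_{G'}T_F=-H_{\overline{G'}}^*H_F$.

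As for the comparison: the paper does not actually reproduce a proof of this lemma --- it defers to Lemma~5.1 of [J. Xia 2018], saying the proof is essentially the same. The presence of the unit-modulus phases $\gamma_k$ and the constant~$4$ in the statement suggests that Xia's original argument is not a plain $\pm1$ Rademacher average but involves some additional complex rotation (or a two-step reduction akin to the projection version, Lemma~\ref{KeyLemma}), and that the looser constant is simply inherited. Your route dispenses with the phases entirely and improves the constant, while using only the bilinear expansion of $T_{g_\varepsilon}AT_{g_\varepsilon}$ and a finite averaging/pigeonhole step. This is a genuine simplification, and since the $\gamma_k$'s are used in the application only through the bound $\|H_{F'}\|\le C\,\mathrm{diff}(F')$ --- which is unaffected by replacing $F'$ with $F$ --- nothing downstream is lost by taking $\gamma_k\equiv1$.
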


\begin{proposition}\label{Proposition M}
Let $X\in EssCom(\{T_g:g\in VO_{bdd}\})$, then $X\in \overline{Span\{LOC(X)+\mathcal{K}\}}$, where the closure is taken in operator norm.
\end{proposition}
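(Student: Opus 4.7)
Given $\varepsilon>0$, the goal is to produce an operator $Y_m\in\mathrm{Span}(LOC(X))$ and a compact $K_m$ with $\|X-Y_m-K_m\|=O(\varepsilon)$. The ingredients are Proposition \ref{Proposition L} (commutator control), the radial-spherical cutoffs $\hat f_{\hat w}$ together with Lemma \ref{Lemma 10}, the membership criterion Lemma \ref{Lemma 11}, the Hankel bound Lemma \ref{Lemma 12}, and Lemma \ref{Lemma 13} for off-diagonal double sums.

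\textbf{Setup and candidate approximant.} Use Proposition \ref{Proposition L} to fix $\delta=\delta(X,\varepsilon)>0$ and then $t_0\in(0,1)$ so that $\|[X,T_f]\|\le 2\varepsilon$ for every $f\in\Phi(t_0;\delta)$. Choose $m\ge 6$ large enough that $\rho_m\ge t_0$, $n/\log m\le\delta$, and $C(n/\log m)\le\varepsilon$ where $C$ is the constant from Lemma \ref{Lemma 12}. By Lemma \ref{Lemma 10}, for every class $(\hat\nu,\hat k)$ and every subset $\hat I\subseteq\hat I_m^{(\hat\nu,\hat k)}$ we have $\hat f_{\hat I}\in\Phi(t_0;\delta)$, hence $\|[X,T_{\hat f_{\hat I}}]\|\le 2\varepsilon$. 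Define
$$Y_m := \sum_{(\hat\nu,\hat k)}\sum_{\hat w\in\hat I_m^{(\hat\nu,\hat k)}} T_{\hat f_{\hat w}}\,X\,T_{\hat f_{\hat w}}.$$
Each inner (class) sum lies in $LOC(X)$ by Lemma \ref{Lemma 11}, and the outer sum has at most $(6\widetilde N_0)^n$ terms, so $Y_m\in\mathrm{Span}(LOC(X))$.

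\textbf{Cross-term estimate within a class.} Within one class the $\hat f_{\hat w}$ have pairwise $d$-disjoint supports at distance $\ge 2$ by (\ref{10.1}), so $F_{(\hat\nu,\hat k)}:=\hat f_{\hat I_m^{(\hat\nu,\hat k)}}$ satisfies $F_{(\hat\nu,\hat k)}^2=\sum_{\hat w}\hat f_{\hat w}^2$. Expanding
$$T_{F_{(\hat\nu,\hat k)}}\,X\,T_{F_{(\hat\nu,\hat k)}}=\sum_{\hat w} T_{\hat f_{\hat w}}\,X\,T_{\hat f_{\hat w}}+\sum_{\hat w\neq\hat w'} T_{\hat f_{\hat w}}\,X\,T_{\hat f_{\hat w'}}$$
and applying Lemma \ref{Lemma 13}, the off-diagonal sum is bounded by $4(\|T_{F'}\,X\,T_G\|+\|T_{G'}\,X\,T_F\|)$ for phased, disjoint-support partial sums $F,G,F',G'$. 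Since $F'G\equiv 0$, the Hankel identity $T_{F'}T_G=T_{F'G}-H_{\overline{F'}}^*H_G=-H_{\overline{F'}}^*H_G$, combined with Lemma \ref{Lemma 12}, gives $\|T_{F'}T_G\|=O((n/\log m)^2)$. Splitting $T_{F'}\,X\,T_G=X\,T_{F'}T_G+[T_{F'},X]\,T_G$ and bounding $\|[X,T_{F'}]\|\lesssim\varepsilon$ (after decomposing $F'$ into its real/imaginary and positive/negative parts, each dominated by a function in $\Phi(t_0;\delta)$), we obtain $\|T_{F'}\,X\,T_G\|\lesssim\varepsilon$. Summing over the finitely many classes,
$$\left\|Y_m-\sum_{(\hat\nu,\hat k)}T_{F_{(\hat\nu,\hat k)}}\,X\,T_{F_{(\hat\nu,\hat k)}}\right\|=O(\varepsilon).$$

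\textbf{Identification modulo compact: the main obstacle.} Commuting $T_{F_{(\hat\nu,\hat k)}}$ past $X$ and replacing $T_{F_{(\hat\nu,\hat k)}}^2$ by $T_{F_{(\hat\nu,\hat k)}^2}$ via Lemma \ref{Lemma 12} produces $X\,T_{F_{(\hat\nu,\hat k)}^2}$ up to $O(\varepsilon)$; summing yields $X\,T_{\hat F_{\hat I_m}}+O(\varepsilon)$. The principal obstacle is that $\hat F_{\hat I_m}$ is not identically $1$: it vanishes on $\mathbb{D}_{\rho_m}$, is at least $1$ on $\mathbb{D}\setminus\overline{\mathbb{D}_{\rho_{3m}}}$ (every such point lies in some $\hat A_{\hat w}$ on which $\hat f_{\hat w}\equiv 1$), and is bounded above by the overlap multiplicity $N=(6\widetilde N_0)^n$. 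One resolves this by renormalizing the cutoffs: set $g_{\hat w}:=\hat f_{\hat w}/\sqrt{\hat F_{\hat I_m}}$ on $\{\hat F_{\hat I_m}>0\}$ and $0$ elsewhere. Then $\sum g_{\hat w}^2=\chi_{\{\hat F_{\hat I_m}>0\}}$ differs from $1$ only on the relatively compact set $\overline{\mathbb{D}_{\rho_{3m}}}$, so $T_{1-\sum g_{\hat w}^2}$ is compact. The normalizing factor preserves the Lipschitz scale $O(1/\log m)$ because $\hat F_{\hat I_m}\ge 1$ on the boundary region, so Lemma \ref{Lemma 11} places $\sum T_{g_{\hat w}}\,X\,T_{g_{\hat w}}$ in $\mathrm{Span}(LOC(X))$, and rerunning the cross-term/commutator analysis with $g_{\hat w}$ in place of $\hat f_{\hat w}$ yields $\sum T_{g_{\hat w}}\,X\,T_{g_{\hat w}}=X+K_m+O(\varepsilon)$ with $K_m$ compact. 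Executing this renormalization—verifying that the Lipschitz, Hankel and commutator estimates all survive the division by $\sqrt{\hat F_{\hat I_m}}$—is the main technical core of the proof.
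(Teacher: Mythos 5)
Your proposal correctly identifies the overall architecture of the argument --- partition-of-unity cutoffs indexed by the radial-spherical decomposition, cross-term control via Lemma \ref{Lemma 13}, commutator smallness via Proposition \ref{Proposition L}, Hankel smallness via Lemma \ref{Lemma 12}, and membership in $LOC(X)$ via Lemma \ref{Lemma 11} --- and you correctly flag the central obstacle that $\hat F_{\hat I_m}$ is not identically $1$. However, the renormalization you propose to cure this has a real gap. You divide by $\sqrt{\hat F_{\hat I_m}}$ and set $g_{\hat w}=\hat f_{\hat w}/\sqrt{\hat F_{\hat I_m}}$ on $\{\hat F_{\hat I_m}>0\}$, $0$ elsewhere. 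The lower bound $\hat F_{\hat I_m}\geq 1$ holds \emph{only} on $\mathbb{D}_{\rho_{3m}}^c$. On the transition region between $\mathbb{D}_{\rho_m}$ (where $\hat F_{\hat I_m}\equiv 0$) and $\mathbb{D}_{\rho_{3m}}^c$, $\hat F_{\hat I_m}$ takes all values down to $0$, so $1/\sqrt{\hat F_{\hat I_m}}$ is unbounded there and its Lipschitz constant blows up. Consequently the family $\{g_{\hat w}\}$ does \emph{not} satisfy the uniform Lipschitz condition (3) in Definition \ref{Definition 00}, so Lemma \ref{Lemma 11} does not place $\sum T_{g_{\hat w}}XT_{g_{\hat w}}$ in $LOC(X)$; for the same reason the renormalized sums $\sum g_{\hat w}$ fail to lie in a class $\Phi(t;\delta)$ with small $\delta$, so neither Proposition \ref{Proposition L} nor the Hankel bound from Lemma \ref{Lemma 12} applies to them. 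Your remark that ``the normalizing factor preserves the Lipschitz scale $O(1/\log m)$ because $\hat F_{\hat I_m}\ge 1$ on the boundary region'' is exactly the point that fails: the boundary region is fine, but the intermediate annulus is not.

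The paper's fix is to \emph{augment} the family before normalizing, rather than truncate by an indicator. It introduces one extra bump $\hat f_0\in\Psi(\rho_{3m};\,n(\log m)^{-1})$ supported on a compact set with $\hat f_0\equiv 1$ on $\overline{\mathbb{D}_{\rho_{3m}}}$, and normalizes by $h_m:=\hat f_0^2+\hat F_{\hat I_m}$. Since $\hat f_0^2\equiv 1$ precisely where $\hat F_{\hat I_m}$ can be small, one gets $1\le h_m\le (6\widetilde N_0)^n+1$ \emph{globally}, and then $\mathrm{diff}(1/\sqrt{h_m})$ is genuinely $O(n/\log m)$. Setting $\tilde f_0=\hat f_0/\sqrt{h_m}$, $\tilde f_{\hat w}=\hat f_{\hat w}/\sqrt{h_m}$ gives $\tilde f_0^2+\sum_{(\hat\nu,\hat k)}\tilde f_{\hat I_m^{(\hat\nu,\hat k)}}^2\equiv 1$ exactly, with every $\tilde f_{\hat w}$ retaining the Lipschitz/diff bounds needed for Lemma \ref{Lemma 11}, Proposition \ref{Proposition L}, and Lemma \ref{Lemma 12}. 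The extra piece $XT_{\tilde f_0^2}$ is compact because $\tilde f_0$ has compact support. If you replace your $g_{\hat w}$ by these $\tilde f_{\hat w}$ and add the $\tilde f_0$ term, the rest of your outline (off-diagonal via Lemma \ref{Lemma 13}, commutator, Hankel) goes through as you intend. A minor additional note: for the phased sums $F',G'$ produced by Lemma \ref{Lemma 13} you do not need to decompose into real/imaginary and positive/negative parts; since $F'G\equiv 0$, the identity $T_{G'}T_F=-H_{\overline{G'}}^*H_F$ together with Lemma \ref{Lemma 12} bounds $\|T_{G'}T_F\|$ directly in terms of $\mathrm{diff}(G')$ and $\mathrm{diff}(F)$, both of which are $O(n/\log m)$ by Lemma \ref{Lemma 8}.
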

\begin{proof}

For any $\varepsilon>0$, by Proposition \ref{Proposition L}, then there exists $ \delta>0$ and $0<t^*<1$ such that for any $f\in \Phi(t^*;\delta)$,

\begin{equation}\label{m0}
  \|[X,T_f]\|\leq 2\varepsilon.
\end{equation}

Choose an integer $m\geq 6$ satisfying

\begin{equation}\label{m3}
 n(\log m)^{-1}((6N_0)^n+2)\leq \min\{\varepsilon,\delta\}\text{ and }\rho_m\geq t^*.
\end{equation}








Let $\hat{f}_0\in \Psi(\rho_{3m},n(\log m )^{-1})$. Since by (\ref{10.1}), $B_{\hat{w}}\cap B_{\hat{w}'}=\emptyset$, $\text{for any } \hat{w}\neq \hat{w}'\in \hat{I}_m^{(\hat{\nu},\hat{k})}$, then Lemma \ref{Lemma 9'} implies $0\leq \hat{F}_{\hat{I}_m^{(\hat{\nu},\hat{k})}}\leq 1$ on $\mathbb{D}$, where $\hat{F}_{\hat{I}_m^{(\hat{\nu},\hat{k})}}$ appears in Definition \ref{Definition 6}, and hence $0\leq \hat{F}_{\hat{I}_m}\leq (6N_0)^n$. Again by Lemma \ref{Lemma 9'} (b),
$$\hat{F}_{\hat{I}_m}(z)\geq 1 \,\text{ for any }\, x\in\mathop\bigcup \limits_{\hat{\omega} \in \hat{I}_m}\hat{A}_{\hat{\omega}}=(\hat{A}_{m,\hat{0},\hat{0}})^c,$$
where $\hat{A}_{m,\hat{0},\hat{0}}$ appears in (\ref{yuji}),
and hence
\begin{equation}\label{uplowbound}
1\leq \hat{f}_{0}^2(z)+\hat{F}_{\hat{I}_m}(z)\leq (6N_0)^n+1\,~\text{on }\, \mathbb{D}.
\end{equation}
Define $h_m(z)=\hat{f}_{0}^2(z)+\hat{F}_{\hat{I}_m}(z)$. Since \begin{equation}\label{bianchayiju1}\mathrm{diff}(\hat{f}_{0})\leq n (\log m)^{-1}\, \text{and}\, \mathrm{diff}(\hat{f}_{\hat{I}})\leq n (\log m)^{-1}\end{equation} for any $\hat{I}\subset \hat{I}_m^{(\hat{\nu},\hat{k})}$, $\hat{k}\in \{0, 1, 2, 3, 4, 5\}^n$ and   $\hat{\nu}\in\{1,\cdots, \widetilde{N}_0\}^n$, by Lemma \ref{Lemma 10}, we have

$$\text{diff}(h_m)=
\text{diff}(\hat{f}_0^2+\sum_{\hat{\nu}}\sum_{\hat{k}}\sum_{\hat{w}\in\hat{I}_m^{(\hat{\nu},\hat{k})}}\hat{f}^2_{\hat{w}})\leq 2n (\log m)^{-1}((6N_0)^n+1).$$
Combining with (\ref{uplowbound}), we have
\begin{equation}\label{bianchayiju2}
 \text{diff}(\frac{1}{\sqrt{h_m}})\leq n (\log m)^{-1}((6N_0)^n+1).
\end{equation}
For any $\hat{w}\in \hat{I}_m$, any $\hat{I}\subset \hat{I}_m^{(\hat{\nu},\hat{k})}$ for any $\hat{k}\in \{0, 1, 2, 3, 4, 5\}^n$ and $\hat{\nu}\in\{1,\cdots, \widetilde{N}_0\}^n$, we define

\[
\tilde{f}_{0}=\frac{\hat{f}_{0}}{\sqrt{h_m}},~~\tilde{f}_{\hat{w}}=\frac{\hat{f}_{\hat{w}}}{\sqrt{h_m}}\text{ and }\tilde{f}_{\hat{I}}=\sum_{\hat{w}\in\hat{I}}\tilde{f}_{\hat{w}}.
\]
It is easy to see $\tilde{f}_{0}^2+\sum_{\hat{\nu}}\sum_{\hat{k}}\tilde{f}_{\hat{I}_m^{(\hat{\nu},\hat{k})}}^2\equiv 1$. According to (\ref{bianchayiju1}) and (\ref{bianchayiju2}), we have
\begin{equation}\label{bianchayiju3}
\text{diff}(\tilde{f}_{\hat{w}})\leq n (\log m)^{-1}((6N_0)^n+2)\text{ and }\text{diff}(\tilde{f}_{\hat{I}})\leq n (\log m)^{-1}((6N_0)^n+2),
\end{equation}
for any $\hat{w}\in \hat{I}_m$, any $\hat{I}\subset \hat{I}_m^{(\hat{\nu},\hat{k})}$ for any $\hat{k}\in \{0, 1, 2, 3, 4, 5\}^n$ and $\hat{\nu}\in\{1,\cdots, \widetilde{N}_0\}^n$.
Combining with (\ref{m0}), we have
\begin{equation}\label{m-1}
\|[X,T_{\tilde{f}_{\hat{I}}}]\|\leq 2\varepsilon,
\end{equation}
for any $\hat{I}\subset \hat{I}_m^{(\hat{\nu},\hat{k})}$ for any $\hat{k}\in \{0, 1, 2, 3, 4, 5\}^n$ and $\hat{\nu}\in\{1,\cdots, \widetilde{N}_0\}^n$.

Then for the fixed $X\in EssCom(\{T_g:g\in VO_{bdd}\})$, we have
\begin{equation}\label{X}
X=XT_1=XT_{\tilde{f}_{0}^2+\sum_{\hat{\nu}}\sum_{\hat{k}}\tilde{f}_{\hat{I}_m^{(\hat{\nu},\hat{k})}}^2}=XT_{\tilde{f}_{0}^2}+\sum_{\hat{\nu}}\sum_{\hat{k}}X_{(\hat{\nu},\hat{k})},
\end{equation}
where $X_{(\hat{\nu},\hat{k})}=XT_{\tilde{f}_{\hat{I}_m^{(\hat{\nu},\hat{k})}}^2}$. Since supp$(\tilde{f}_{0})$ is compact, then $XT_{\tilde{f}_{0}^2}\in\mathcal{K}\subset \mathcal{T}(\mathbb{D})$.

For $X_{(\hat{\nu},\hat{k})}$, by (\ref{10.1}) and (d) of Lemma \ref{Lemma 9'}, we have $\mathrm{supp}(\tilde{f}_{\hat{w}})\cap \mathrm{supp}(\tilde{f}_{\hat{w'}})=\emptyset$ for any distinct elements $w,w'$ in $\hat{I}_m^{(\hat{\nu},\hat{k})}$. Then

\[
T_{\tilde{f}^2_{\hat{I}_m^{(\hat{\nu},\hat{k})}}}=T^2_{\tilde{f}_{\hat{I}_m^{(\hat{\nu},\hat{k})}}}+H^*_{\tilde{f}_{\hat{I}_m^{(\hat{\nu},\hat{k})}}}H_{\tilde{f}_{\hat{I}_m^{(\hat{\nu},\hat{k})}}}.
\]
Write
$$X_{(\hat{\nu},\hat{k})}=X_{(\hat{\nu},\hat{k})}^{(1)}+Z_{(\hat{\nu},\hat{k})}^{(1)},$$
where
$$X_{(\hat{\nu},\hat{k})}^{(1)}=XT^2_{\tilde{f}_{\hat{I}_m^{(\hat{\nu},\hat{k})}}}~~\text{  and   }~~Z_{(\hat{\nu},\hat{k})}^{(1)}=XH^*_{\tilde{f}_{\hat{I}_m^{(\hat{\nu},\hat{k})}}}H_{\tilde{f}_{\hat{I}_m^{(\hat{\nu},\hat{k})}}}.$$
By Lemma \ref{Lemma 12}, we can find that there exists $C>0$ such that
\begin{equation}\label{Z1}
\|Z_{(\hat{\nu},\hat{k})}^{(1)}\|\leq C\|X\|\varepsilon.
\end{equation}
Write
$$X_{(\hat{\nu},\hat{k})}^{(1)}=X_{(\hat{\nu},\hat{k})}^{(2)}+Z_{(\hat{\nu},\hat{k})}^{(2)},$$
where
$$Z_{(\hat{\nu},\hat{k})}^{(2)}=[X,T_{\tilde{f}_{\hat{I}_m^{(\hat{\nu},\hat{k})}}}]T_{\tilde{f}_{\hat{I}_m^{(\hat{\nu},\hat{k})}}} ~~\text{  and  }~~ X_{(\hat{\nu},\hat{k})}^{(2)}=T_{\tilde{f}_{\hat{I}_m^{(\hat{\nu},\hat{k})}}}XT_{\tilde{f}_{\hat{I}_m^{(\hat{\nu},\hat{k})}}}.$$
By
(\ref{m-1})
, we have \begin{equation}\label{Z2}\|Z_{(\hat{\nu},\hat{k})}^{(2)}\|\leq 2\varepsilon. \end{equation} Then note that $X_{(\hat{\nu},\hat{k})}^{(2)}=Y_{(\hat{\nu},\hat{k})}+Z_{(\hat{\nu},\hat{k})}^{(3)}$, where

\[
Y_{(\hat{\nu},\hat{k})}=\sum_{\hat{w}\in\hat{I}_m^{(\hat{\nu},\hat{k})}}T_{\hat{f}_{\hat{w}}}XT_{\tilde{f}_{\hat{w}}},~~Z_{(\hat{\nu},\hat{k})}^{(3)}=\sum_{\hat{w}'\neq \hat{w}\in\hat{I}_m^{(\hat{\nu},\hat{k})}}T_{\tilde{f}_{\hat{w}}}XT_{\hat{f}_{\hat{w}'}}.
\]
By Lemma \ref{Lemma 11}, we have $Y_{(\hat{\nu},\hat{k})}\in LOC(X)$. For any $k \in\{0, 1, 2, 3, 4, 5\}$, $\nu \in\{1,\cdots, \widetilde{N}_0\}$ and any $w=(m,6j+k,u)\in I_{m}^{(\nu,k)}$, we denote $|w|=6j+k$. To estimate the norm of $Z_{(\hat{\nu},\hat{k})}^{(3)}$, for $J\in\mathbb{N}$, we define

\[
\hat{I}_{m,J}^{(\hat{\nu},\hat{k})}=\{\hat{w}=(w_1,\cdots,w_n)\in \hat{I}_{m}^{(\hat{\nu},\hat{k})}:|w_i|\leq J \text{ for any } 1\le i \le n\}.
\]
Observe that for every $J\in \mathbb{N}$,
\[
\sum_{\hat{w}\neq \hat{w}'\in \hat{I}_{m,J}^{(\hat{\nu},\hat{k})}}M_{\tilde{f}_{\hat{w}}}XPM_{\tilde{f}_{\hat{w}'}}=\sum_{\hat{s}\in \hat{I}_{m,J}^{(\hat{\nu},\hat{k})}}M_{\chi_{\hat{B}_{\hat{s}}}}\sum_{\hat{w}\neq \hat{w}'\in \hat{I}_m^{(\hat{\nu},\hat{k})}}M_{\tilde{f}_{\hat{w}}}XPM_{\tilde{f}_{\hat{w}'}}\sum_{\hat{s}'\in \hat{I}_{m,J}^{(\hat{\nu},\hat{k})}}M_{\chi_{\hat{B}_{\hat{s}'}}}
\]
This leads to strong convergence in $B(L_a^2(\mathbb{D}))$,

$$\sum_{\hat{w}\neq \hat{w}'\in \hat{I}_{m,J}^{(\hat{\nu},\hat{k})}}M_{\tilde{f}_{\hat{w}}}XPM_{\tilde{f}_{\hat{w}'}}\xrightarrow{strongly} \sum_{\hat{w}\neq \hat{w}'\in \hat{I}_m^{(\hat{\nu},\hat{k})}}M_{\tilde{f}_{\hat{w}}}XPM_{\tilde{f}_{\hat{w}'}} \quad \text{as}\quad J\rightarrow \infty.$$

Compressing the strong convergence to the subset of $L_a^2(\mathbb{D})$, we see that there exists a $J\in\mathbb{N}$ such that
\begin{equation}\label{Z4}
\|Z_{(\hat{\nu},\hat{k})}^{(3)}\|\le 2\|Z_{(\hat{\nu},\hat{k})}^{(4)}\|,
\end{equation}
where
$$Z_{(\hat{\nu},\hat{k})}^{(4)}=\sum_{\hat{w}'\neq \hat{w}\in\hat{I}_{m,J}^{(\hat{\nu},\hat{k})}}T_{\tilde{f}_w}XT_{\tilde{f}_{w'}}.$$

Applying Lemma \ref{Lemma 13}, there exists
$\text{ a subset }\hat{I}_{0}\subseteq \hat{I}_{m,J}^{(\hat{\nu},\hat{k})}\text{ and coefficients }r_{\hat{w}}\in \mathbb{C}\text{ with }|r_{\hat{w}}|=1\text{ for any }\hat{w}\in \hat{I}_{m,J}^{(\hat{\nu},\hat{k})}$
 such that if we define
$$F=\sum_{\hat{w}\in \hat{I}_{0}}\tilde{f}_{\hat{w}}, \quad G=\sum_{\hat{w}\in \hat{I}_{m,J}^{(\hat{\nu},\hat{k})}\backslash \hat{I}_{0}}\tilde{f}_{\hat{w}},\quad F'=\sum_{\hat{w}\in \hat{I}_{0}}r_{\hat{w}}\tilde{f}_{\hat{w}},\quad G'=\sum_{\hat{w}\in \hat{I}_{m,J}^{(\hat{\nu},\hat{k})}\backslash \hat{I}_{0}}r_{\hat{w}}\tilde{f}_{\hat{w}},$$
then we have

$$\|Z_{(\hat{\nu},\hat{k})}^{(4)}\|\le 4(\|T_{F'}XT_G\|+\|T_{G'}XT_F\|).$$
By Lemma \ref{Lemma 10} and (\ref{bianchayiju3}), we have $F,G\in \Phi(\rho_m; n(\log m)^{-1}((6N_0)^n+2))$. Then $ \|[X,T_F]\|\leq 2\varepsilon.$ Note that $T_{G'}XT_F=T_{G'}[X,T_F]+T_{G'}T_FX$.
Combining with $T_{G'}T_F=-H_{\overline{G'}}^*H_F$, (\ref{bianchayiju3}) and Lemma \ref{Lemma 12}, we have

$$\|T_{G'}T_FX\|\le C_1\|X\|\text{diff}(F)\le C_1\|X\|\varepsilon \text{ for some } C_1>0.$$
Then we have
$$\|T_{G'}XT_F\|\le (2+C_1\|X\|)\varepsilon.$$
The similar argument also shows that
$$\|T_{F'}XT_G\|\le (2+C_2\|X\|)\varepsilon \text{ for some } C_2>0.$$
Combining this with (\ref{Z4}), we have

\begin{equation}\label{Z3}
\|Z_{(\hat{\nu},\hat{k})}^{(3)}\|\le 8(2+C_3\|X\|)\varepsilon \text{ for some }C_3>0.
\end{equation}

Looking back through the above discussion, for each $\hat{\nu}\in \{1,\dots,\widetilde{N}_0\}^n$ and each $\hat{k}\in \{0, 1,\dots,5\}^n$, we have the decomposition
$$X_{(\hat{\nu},\hat{k})}=Y_{(\hat{\nu},\hat{k})}+Z^{(1)}_{(\hat{\nu},\hat{k})}+Z^{(2)}_{(\hat{\nu},\hat{k})}+Z^{(3)}_{(\hat{\nu},\hat{k})},$$
where $Y_{(\hat{\nu},\hat{k})}\in LOC(X)$ and $Z^{(1)}_{(\hat{\nu},\hat{k})},\,Z^{(2)}_{(\hat{\nu},\hat{k})},\,Z^{(3)}_{(\hat{\nu},\hat{k})}$ satisfy estimates (\ref{Z1}), (\ref{Z2}) and (\ref{Z3}) respectlively. Combining this with (\ref{X}), we have that $X\in  \overline{Span\{LOC(X)+\mathcal{K}\}^{\|\cdot\|}}.$

\end{proof}
\section{The proof of Theorem C and D}
\

The purpose of this section is to give the proof of Theorem C and D.  To begin, we need the following lemmas.

\begin{lemma}\label{Lemma 4.1} For any $f\in VO_{bdd}$, then for every $R\ge 1$, we have \begin{equation}\label{vogongshi}\mathop{\mathrm{lim}}\limits_{t\rightarrow \infty}\mathrm{sup}\{|f(z)-f(w)|:\text{ for any }z,w\in \mathbb{D}\text{ with }d(z,w)\le R,d(0,z)\geq t>R\}=0.\end{equation}\end{lemma}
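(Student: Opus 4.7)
The plan is to use a telescoping argument along geodesic chains, leveraging that the metric $d$ on $\mathbb{D}$ is a length metric as the $\ell^1$ sum of the Bergman metrics on the unit discs.

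The first step is the chain construction. Given $R\ge 1$, I would choose an integer $N\ge R$ (for instance $N=\lceil R\rceil$). For any $z,w\in\mathbb{D}$ with $d(z,w)\le R$, I would exhibit intermediate points $z=z^{(0)},z^{(1)},\dots,z^{(N)}=w$ in $\mathbb{D}$ with $d(z^{(k-1)},z^{(k)})\le 1$ for each $k$. To produce them, in each coordinate $i$ I would take a geodesic parametrization $\gamma_i\colon[0,1]\to D$ of the Bergman metric joining $z_i$ to $w_i$, normalized so that $\beta(\gamma_i(s),\gamma_i(t))=|s-t|\,\beta(z_i,w_i)$; then setting $z^{(k)}=(\gamma_1(k/N),\dots,\gamma_n(k/N))$ gives $d(z^{(k-1)},z^{(k)})=d(z,w)/N\le R/N\le 1$.

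The second step is to move the base points to infinity. By the triangle inequality,
\[
d(0,z^{(k-1)})\ge d(0,z)-d(z,z^{(k-1)})\ge t-\frac{(k-1)R}{N}\ge t-R.
\]
Hence each one-step difference satisfies $|f(z^{(k-1)})-f(z^{(k)})|\le \mathrm{diff}_{t-R}(f)$, so that
\[
|f(z)-f(w)|\le\sum_{k=1}^{N}|f(z^{(k-1)})-f(z^{(k)})|\le N\cdot\mathrm{diff}_{t-R}(f).
\]
Taking the supremum over all admissible $z,w$ and letting $t\to\infty$, the right side tends to $0$ by Definition \ref{Definition 3}, which establishes (\ref{vogongshi}).

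I do not expect any real obstacle here; the only point that requires a moment of care is verifying that the geodesic of the product metric $d$ can be realized as the coordinatewise product of Bergman geodesics on $D$, which is immediate from the fact that $d=\sum_i\beta(\cdot_i,\cdot_i)$ and that each $\beta$ is itself a length metric. Once the interpolating chain is in hand, the rest is a one-line telescoping estimate together with a single application of the $VO_{bdd}$ hypothesis.
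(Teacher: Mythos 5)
Your argument is correct and follows essentially the same chain-and-telescope strategy as the paper's proof, differing only in how the intermediate chain is built: you interpolate uniformly along the product of hyperbolic geodesics to get exactly $N=\lceil R\rceil$ unit steps, whereas the paper first moves one coordinate at a time from $z_i$ to $w_i$ and then splits each one-coordinate move into unit Bergman steps via an explicit computation in the disc, arriving at the slightly cruder constant $R+n$. In both cases the two key points are identical: every intermediate point of the chain lies within $d$-distance $R$ of $z$ (hence at distance at least $t-R$ from $0$), and $\mathrm{diff}_{t-R}(f)\to 0$ as $t\to\infty$ by the $VO_{bdd}$ hypothesis.
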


\begin{proof}
We claim that for any $k\in \mathbb{N}_+$, any $u,v\in D$,  if $\beta(u,v)\le k$, then there are $u_1,\dots, u_{k-1}\in D$ satisfying the condition $\beta(z,u_1)\le 1, \beta(u_1,u_2)\le1, \dots, \beta(u_{k-1},v)\le 1$.

If the claim was true, then for any $z=(z_1,\dots,z_n),\,w=(w_1,\dots,w_n)\in \mathbb{D}$ with $d(z,w)\le R$ and $d(0,z)\geq t>R$,
\begin{eqnarray*}
|f(z)-f(w)|&\le& |f(z)-f(z^{(1)})|+|f(z^{(1)})-f(z^{(2)})|+\dots+|f(z^{(n-1)})-f(w)|\\
&\le&([\beta(z_1,w_1)]+1+\dots+[\beta(z_n,w_n)]+1) \mathrm{diff}_{t-R}(f)\\
&\le& (R+n)\mathrm{diff}_{t-R}(f),
\end{eqnarray*}
where $z^{(1)}=(w_1,z_2,z_3,\dots,z_n),\, z^{(2)}=(w_1,w_2,z_3,\dots,z_n),\dots, z^{(n-1)}=(w_1,w_2,\dots,w_{n-1},z_n)$ and $d(z,z^{(i)})\le R$ for any $i$. Combined with $f\in VO_{bdd}$, it gives that the formula (\ref{vogongshi}) holds.

To complete the proof, it remains to check the claim. By the M\"{o}bius invariance of $\beta$, it suffices to show that for any $u,v\in D$ satisfying the condition $\varphi_u(v)\in \mathbb{R}_+$. If $\beta(0,\varphi_u(v))\leq k\in \mathbb{N}_+$, let $v_1=\frac{e^2-1}{e^2+1}$, then a straightforward computation shows that
$$\beta(0,v_1)=1\quad \text{and} \quad \beta(v_1,\varphi_u(v))=\frac12 \log \frac{(1-v_1)(1+\varphi_u(v))}{(1+v_1)(1-\varphi_u(v))}=\beta(0,\varphi_u(v))-\beta(0,v_1)\leq k-1.$$
Continue this process with $v_1=u$ and $\varphi_u(v)=v$, we would complete the proof.
\end{proof}

\begin{lemma}\label{Lemma 4.4}
If $g\in VO_{bdd}$, then $\lim_{z\to \partial \mathbb{D}}\|(g-g(z))k_z\|=0$.
\end{lemma}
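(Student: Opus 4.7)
The plan is to rewrite the norm as an integral, perform a Möbius change of variables, and then apply the vanishing oscillation property through Lemma \ref{Lemma 4.1}. Specifically, start from
\begin{equation*}
\|(g-g(z))k_z\|^2 = \int_{\mathbb{D}} |g(w)-g(z)|^2 |k_z(w)|^2 \, dv(w),
\end{equation*}
and substitute $w=\varphi_z(u)$. Since $\varphi_z$ is a product of one-variable Möbius transforms and a direct computation gives $|\varphi'_{z_i}(w_i)|^2 = (1-|z_i|^2)^2/|1-w_i\bar z_i|^4$, the real Jacobian of $\varphi_z$ equals $\prod_{i=1}^n |\varphi'_{z_i}(w_i)|^2 = |k_z(w)|^2$. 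Combined with the involutive identity $\varphi_z\circ\varphi_z=\mathrm{id}$ and $\varphi_z(0)=z$, this yields
\begin{equation*}
\|(g-g(z))k_z\|^2 = \int_{\mathbb{D}} |g(\varphi_z(u))-g(\varphi_z(0))|^2 \, dv(u).
\end{equation*}

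Given $\eta>0$, I will split the domain at a $d$-ball $\{u:d(0,u)\le R\}$. On the tail $\{u:d(0,u)>R\}$, I use the trivial bound $|g(\varphi_z(u))-g(z)|^2 \le 4\|g\|_\infty^2$ together with the observation that $\{u:d(0,u)\le R\}\nearrow \mathbb{D}$ as $R\to\infty$ and $v$ is a finite measure, so that $v(\{u:d(0,u)>R\})\to 0$. Hence one can fix $R$ large so that the tail contributes at most $\eta/2$ uniformly in $z$.

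On the main piece $\{u:d(0,u)\le R\}$, the Möbius invariance of $d$ gives $d(\varphi_z(u),z)=d(\varphi_z(u),\varphi_z(0))=d(u,0)\le R$, while the triangle inequality yields $d(0,\varphi_z(u))\ge d(0,z)-R$. Since $d(0,z)\to\infty$ precisely when $z\to\partial \mathbb{D}$, Lemma \ref{Lemma 4.1} applied with radius $R$ to $g\in VO_{bdd}$ makes $|g(\varphi_z(u))-g(z)|$ uniformly small on this ball once $d(0,z)$ is sufficiently large, so this piece contributes less than $\eta/2$ as well. The only real technical point is the Jacobian identification $|k_z(w)|^2 dv(w)=dv(u)$; beyond that, the argument is a routine $\varepsilon$-splitting powered by Lemma \ref{Lemma 4.1}, and I expect no substantive obstacle.
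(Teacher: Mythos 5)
Your proof is correct and follows essentially the same strategy as the paper's: split the integral into a bounded neighborhood of $z$ (where Lemma \ref{Lemma 4.1} controls $|g-g(z)|$) and its complement (where the normalized measure $|k_z|^2\,dv$, after the Möbius change of variables, has uniformly small mass as $R\to\infty$). The only cosmetic differences are that you apply the change of variables $w=\varphi_z(u)$ to the entire integral rather than only to the tail, and you cut along the $d$-ball $\{d(0,u)\le R\}$ rather than the $d$-square $\widetilde{D}(z,R)$.
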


\begin{proof}
For any $z\in\mathbb{D}$ and any $R>0$,
\[
\|(g-g(z))k_z\|^2=\int_{\widetilde{D}(z,R)}|g(w)-g(z)|^2|k_z(w)|^2dv(w)+\int_{\mathbb{D}\setminus \widetilde{D}(z,R)}|g(w)-g(z)|^2|k_z(w)|^2dv(w).
\]
By Lemma \ref{Lemma 4.1}, for any fixed $R>0$,
\[
\lim_{z\to \partial\mathbb{D}}\int_{\widetilde{D}(z,R)}|g(w)-g(z)|^2|k_z(w)|^2dv(w)=0.
\]
By the M\"{o}bius invariance of $\beta$ and $d\Lambda$, we have
\begin{eqnarray*}
\int_{\mathbb{D}\setminus \widetilde{D}(z,R)}|g(w)-g(z)|^2|k_z(w)|^2dv(w) &\leq & 2\|g\|_{\infty}\int_{\mathbb{D}\setminus \widetilde{D}(z,R)}|k_z(w)|^2dv(w) \\
&\leq & 2\|g\|_{\infty}\int_{\mathbb{D}\setminus \widetilde{D}(z,R)} (1-|\varphi_z(w)|^2)^2d\Lambda(w)\\
&\leq & 2\|g\|_{\infty}\int_{\mathbb{D}\setminus \widetilde{D}(0,R)} (1-|\varphi_z(\varphi_z(w))|^2)^2d\Lambda(\varphi_z(w))\\
&=&2\|g\|_{\infty}\int_{\mathbb{D}\setminus \widetilde{D}(0,R)} (1-|w|^2)^2d\Lambda(w).
\end{eqnarray*}
which tends to 0 as $R$ tends to $\infty$. This completes the proof.
\end{proof}

\begin{proposition}\label{Proposition 4.5}
Let $\{z^{(j)}\},\{w^{(j)}\}$ be sequences in $\mathbb{D}$ satisfying the following conditions:

(1) $z^{(j)},w^{(j)}\to \partial \mathbb{D}$ as $j\to\infty$

(2) There exists a $0<C<\infty$ such that $d(z^{(j)},w^{(j)})\leq C$ for every $j\in \mathbb{N}$. \\Then for every $A\in EssCom(\mathcal{T}(\mathbb{D}))$, we have
$$\lim_{j\to\infty}\|[A,k_{z^{(j)}}\otimes k_{w^{(j)}}]\|=0.$$
\end{proposition}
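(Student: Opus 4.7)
The plan is a contradiction argument that exploits Theorem B to produce an operator in $\mathcal{T}(\mathbb{D})$ whose commutator with $A$ must be compact, while the assumed failure of the conclusion supplies infinitely many ``large'' diagonal blocks that obstruct compactness. Assume for contradiction that there exist $\varepsilon>0$ and (after re-indexing) subsequences with $\|[A, k_{z^{(j)}}\otimes k_{w^{(j)}}]\|\ge \varepsilon$ for every $j$, while $z^{(j)}, w^{(j)}\to \partial\mathbb{D}$ and $d(z^{(j)}, w^{(j)})\le C$. Using $z^{(j)}\to\partial\mathbb{D}$, I pass to a further subsequence making $\{z^{(j)}\}$ an $R$-separated set with $R$ as large as needed; the condition $d(z^{(j)}, w^{(j)})\le C$ then makes $\{w^{(j)}\}$ separated as well.

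Define $T=\sum_{j} k_{z^{(j)}}\otimes k_{w^{(j)}}$. With $\Gamma=\{z^{(j)}\}$, constant coefficients equal to $1$, and $\gamma(z^{(j)})=w^{(j)}$, Definition~\ref{Definition 0}(b) places $T$ in $\mathcal{D}_0$, Lemma~\ref{Corollary 1} guarantees boundedness, and Proposition~\ref{Proposition D} together with Theorem B yields $T\in \mathcal{D}=C^*(\mathcal{D})=\mathcal{T}(\mathbb{D})$. Since $A\in EssCom(\mathcal{T}(\mathbb{D}))$, $[A,T]\in\mathcal{K}$. Writing $B_j=[A, k_{z^{(j)}}\otimes k_{w^{(j)}}] = (Ak_{z^{(j)}})\otimes k_{w^{(j)}} - k_{z^{(j)}}\otimes (A^*k_{w^{(j)}})$, each $B_j$ has rank at most two and norm $\ge \varepsilon$. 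Pick a unit vector $v_j\in\mathrm{span}(k_{w^{(j)}}, A^*k_{w^{(j)}})$ with $\|B_jv_j\|\ge \varepsilon/2$; since $k_{w^{(j)}}\to 0$ weakly and $A^*$ is bounded, both spanning vectors converge weakly to $0$, so $v_j\to 0$ weakly, and compactness of $[A,T]$ forces $\|[A,T]v_j\|\to 0$.

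From $[A,T]v_j = B_jv_j + \sum_{i\ne j}B_iv_j$, the contradiction will follow as soon as $\|\sum_{i\ne j}B_iv_j\|\to 0$. Expanding $B_iv_j = \langle v_j, k_{w^{(i)}}\rangle Ak_{z^{(i)}} - \langle Av_j, k_{w^{(i)}}\rangle k_{z^{(i)}}$, the large separation of $\{z^{(i)}\}$ and a Schur-type estimate in the spirit of Proposition~\ref{Proposition A} and Lemma~\ref{Lemma C} provide bounds of the form $\|\sum_{i\ne j}\alpha_i Ak_{z^{(i)}}\|\le C_1\|A\|\sqrt{\sum_{i\ne j}|\alpha_i|^2}$ and $\|\sum_{i\ne j}\beta_i k_{z^{(i)}}\|\le C_1\sqrt{\sum_{i\ne j}|\beta_i|^2}$, so the task reduces to showing that the $\ell^2$-tails $\sum_{i\ne j}|\langle v_j, k_{w^{(i)}}\rangle|^2$ and $\sum_{i\ne j}|\langle Av_j, k_{w^{(i)}}\rangle|^2$ tend to $0$ as $j\to\infty$.

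The main obstacle is that the standard Bessel-type bound for the separated family $\{k_{w^{(i)}}\}$ only controls the full $\ell^2$-sums uniformly, not their off-diagonal tails. I overcome this by a final diagonal extraction: since $v_j, Av_j\to 0$ weakly, for each fixed $i$ both $\langle v_j, k_{w^{(i)}}\rangle$ and $\langle Av_j, k_{w^{(i)}}\rangle$ vanish as $j\to\infty$, so I can pass inductively to a sub-subsequence $\{j_l\}$ along which $|\langle v_{j_l}, k_{w^{(j_k)}}\rangle| + |\langle Av_{j_l}, k_{w^{(j_k)}}\rangle|\le 2^{-\max(l,k)}$ for all $l\ne k$. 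A geometric-series computation then makes both $\ell^2$-tails $O(2^{-l})$, hence $\|\sum_{i\ne j}B_iv_j\|\to 0$, contradicting $\|B_jv_j\|\ge \varepsilon/2$ and closing the argument.
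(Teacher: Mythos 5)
Your approach is a genuinely different route from the paper's. The paper, after forming the same commutators $B_j = [A, k_{z^{(j)}}\otimes k_{w^{(j)}}]$, observes that these are compact, uniformly bounded, and that $B_j, B_j^* \to 0$ in the strong operator topology, and then invokes Lemma~\ref{Lemma 6} (the Muhly--Xia extraction lemma) to produce a subsequence $\{j_v\}$ such that $B = \sum_v B_{j_v}$ converges strongly with $\|B\|_{\mathcal{Q}} = \lim_j\|B_j\| > 0$; since $B = [A, Y]$ with $Y = \sum_v k_{z^{(j_v)}}\otimes k_{w^{(j_v)}} \in \mathcal{D}_0 \subset \mathcal{T}(\mathbb{D})$ (by Proposition~\ref{Proposition J}), this contradicts $A \in EssCom(\mathcal{T}(\mathbb{D}))$. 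Your proposal sidesteps Lemma~\ref{Lemma 6} and instead re-derives its essential content by hand in this rank-two setting: you test $[A,T]$ against explicit unit vectors $v_j$, compare $\|B_jv_j\|\geq\varepsilon/2$ against the off-diagonal tail $\sum_{i\neq j}B_iv_j$, and kill the tail by a Bessel-type estimate for the separated family together with a diagonal extraction. This is a legitimate alternative strategy and, when made precise, is somewhat more elementary since it avoids citing the Muhly--Xia lemma.

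There is, however, a genuine gap at the step ``since $k_{w^{(j)}}\to 0$ weakly and $A^*$ is bounded, both spanning vectors converge weakly to $0$, so $v_j\to 0$ weakly.'' A unit vector lying in the two-dimensional span of two weakly null vectors need not itself be weakly null: the coefficients expressing $v_j$ in terms of $k_{w^{(j)}}$ and $A^*k_{w^{(j)}}$ can blow up when those two vectors become nearly parallel. Concretely, write $r_j = A^*k_{w^{(j)}} - \langle A^*k_{w^{(j)}}, k_{w^{(j)}}\rangle k_{w^{(j)}}$; the normalized vector $r_j/\|r_j\|$ need not tend weakly to $0$ if $\|r_j\|\to 0$, so the orthonormal frame for $V_j=\mathrm{span}(k_{w^{(j)}},A^*k_{w^{(j)}})$ is not automatically weakly null. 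The argument can be repaired with a further subsequence and a case analysis: if $\|r_j\|\geq \delta>0$, then $r_j/\|r_j\|\to 0$ weakly (since $r_j\to 0$ weakly and $\|r_j\|$ is bounded below), so every unit vector in $V_j$ is weakly null; if $\|r_j\|\to 0$, then the computation $\|B_j(r_j/\|r_j\|)\| = \|r_j\|\to 0$ together with $\|B_j\| \leq \|B_jk_{w^{(j)}}\| + \|r_j\|$ shows that one may take $v_j = k_{w^{(j)}}$ itself (which is weakly null) and still retain $\|B_jv_j\|\geq\varepsilon/2$ for $j$ large. Without some such argument, the weak nullity of $v_j$ — and therefore both the appeal to compactness of $[A,T]$ and the inductive diagonal extraction that relies on $\langle v_{j_l}, k_{w^{(j_k)}}\rangle\to 0$ — does not go through as stated.
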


\begin{proof}

If not, there exists an operator $A\in EssCom(\mathcal{T}(\mathbb{D}))$ and two sequences $\{z^{(j)}\},\{w^{(j)}\}$ satisfying the conditions (1) and (2) such that
$$ \|[A,k_{z^{(j)}}\otimes k_{w^{(j)}}]\|\ge c, \text{ for some }c>0, \text{ for every } j\in \mathbb{N}. $$
For any $j\in\mathbb{N}$, it is easy to check that $k_{z^{(j)}}\otimes k_{w^{(j)}}\in \mathcal{D}_0$ with $\|k_{z^{(j)}}\otimes k_{w^{(j)}}\|\le 1$. Since both sequences $\{k_{z^{(j)}}\},\{k_{w^{(j)}}\}$ converge weakly to 0 in $L_a^2(\mathbb{D}),$
then the sequence $\{k_{z^{(j)}}\otimes k_{w^{(j)}}\}$ converges to 0 in the strong operator topology. For any $j\in\mathbb{N}$, let
$$B_j=[A,k_{z^{(j)}}\otimes k_{w^{(j)}}].$$
Then $B_j$ is compact with $\|B_j\|\le 2\|A\|$. And both sequences $\{B_j\}$ and $\{B_j^*\}$ converge to 0 in the strong operator topology. By taking subsequence if necessary, we can assume that $\{z^{(j)}\}$ is 1-separated and
$$\lim_{j\to\infty}\|B_j\|=d\ge c,\text{ for some }d>0.$$
Applying Lemma \ref{Lemma 6} to the sequence $\{B_j\}$, it gives us a subsequence $\{B_{j_v}\}$ such that the strong limit
$B=\lim_{N\to\infty}\sum_{v=1}^NB_{j_v}$ exists and $\|[A,\sum_{v=1}^{\infty}k_{z^{(j_v)}}\otimes k_{w^{(j_v)}}]\|_{\mathcal{Q}}=\|B\|_{\mathcal{Q}}=d\ge c>0$. It implies $B\notin \mathcal{K}$.

 Since $\{z^{(j_v)}\}$ is 1-separated and $d(z^{(j_v)},w^{(j_v)})\leq C$ for any $v\in\mathbb{N}$, then $Y=\sum_{v=1}^{\infty}k_{z^{(j_v)}}\otimes k_{w^{(j_v)}}\in \mathcal{D}_0$ and hence $Y\in \mathcal{T}(\mathbb{D})$ by Proposition \ref{Proposition J}, which contradicts with the statement $A\in EssCom(\mathcal{T}(\mathbb{D}))$.
\end{proof}

\begin{lemma}\label{Lemma 4.6}(\cite{J. Xia 2017},Lemma 5.1)
Let $T$ be a bounded, self-adjoint operator on a Hilbert space $\mathcal{H}$. Then for any unit vector $x\in \mathcal{H}$,
$$\|[T,x\otimes x]\|=\|(T-\langle Tx,x\rangle)x\|.$$
\end{lemma}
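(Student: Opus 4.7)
The plan is to reduce the commutator $[T, x\otimes x]$ to a rank-two operator supported on a two-dimensional subspace and then compute its norm directly. This is essentially a finite-dimensional linear-algebra identity, so there is no analytic obstacle; the only care needed is bookkeeping the $\otimes$-convention and exploiting self-adjointness of $T$ so that the scalar parts of the commutator cancel.

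First I would write $\lambda = \langle Tx, x\rangle$, which is real because $T=T^*$, and set $z = (T-\lambda)x$, so that $Tx = \lambda x + z$ and $\langle z, x\rangle = 0$. Using the convention $(u\otimes v)f = \langle f, v\rangle u$ that is consistent with the formula $P = \int k_z\otimes k_z\, d\Lambda(z)$ employed throughout the paper, a direct expansion gives
$$[T, x\otimes x] = (Tx)\otimes x - x\otimes (Tx) = \lambda(x\otimes x) - \bar\lambda(x\otimes x) + z\otimes x - x\otimes z.$$
Because $\lambda$ is real the scalar terms cancel, and the commutator reduces to the rank-at-most-two operator $A := z\otimes x - x\otimes z$.

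If $z = 0$, both sides of the claimed identity vanish, and we are done. Otherwise I would set $\hat z = z/\|z\|$, so that $\{x, \hat z\}$ is an orthonormal pair. The operator $A$ vanishes on $\mathrm{span}\{x, \hat z\}^\perp$, and a short computation using $\langle x, z\rangle = 0$ and $\langle \hat z, z\rangle = \|z\|$ yields $A x = \|z\|\,\hat z$ and $A\hat z = -\|z\|\, x$. Consequently the restriction of $A$ to $\mathrm{span}\{x, \hat z\}$ is $\|z\|$ times a $2\times 2$ matrix with zero diagonal and entries $\pm 1$ off the diagonal, which is orthogonal and hence has operator norm one. Therefore
$$\|[T, x\otimes x]\| \;=\; \|A\| \;=\; \|z\| \;=\; \|(T - \langle Tx, x\rangle)x\|,$$
which is the desired equality.
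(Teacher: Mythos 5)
Your proof is correct, and since the paper simply cites this as Lemma~5.1 of \cite{J. Xia 2017} without reproducing a proof, there is no in-paper argument to compare against; your reduction is the standard one. The key observations — that self-adjointness makes $\langle Tx,x\rangle$ real so the scalar multiples of $x\otimes x$ cancel, and that the resulting operator $z\otimes x - x\otimes z$ acts on $\mathrm{span}\{x,\hat z\}$ as $\|z\|$ times a rotation — are exactly what one needs, and the computation $Ax = \|z\|\hat z$, $A\hat z = -\|z\|x$ is easily checked.
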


\begin{lemma}
\label{Lemma 4.7}(\cite{J. Xia 2017},Lemma 5.2)
Let $T$ be a bounded, self-adjoint operator on a Hilbert space $\mathcal{H}$. Then for any unit vector $x,y\in \mathcal{H}$,
$$|\langle Tx,x\rangle-\langle Ty,y\rangle|\leq \|[T,x\otimes x]\|+\|[T,x\otimes y]\|+\|[T,y\otimes y]\|.$$
\end{lemma}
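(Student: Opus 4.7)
Let $\lambda=\langle Tx,x\rangle$ and $\mu=\langle Ty,y\rangle$; self-adjointness of $T$ forces both to be real. The plan is to produce a single operator identity that expresses $(\lambda-\mu)\,(x\otimes y)$ as the commutator $[T,x\otimes y]$ corrected by two rank-one terms whose norms are exactly the quantities supplied by Lemma~\ref{Lemma 4.6}. Self-adjointness first lets me rewrite the commutator as
\[
[T,x\otimes y]=(Tx)\otimes y-x\otimes (Ty),
\]
since $T(x\otimes y)=(Tx)\otimes y$ and $(x\otimes y)T=x\otimes (T^{\ast}y)=x\otimes(Ty)$.

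The heart of the argument is the substitution $Tx=\lambda x+\xi$ and $Ty=\mu y+\eta$, where $\xi:=Tx-\lambda x$ and $\eta:=Ty-\mu y$. Because the rank-one symbol $u\otimes v$ is linear in $u$ and conjugate-linear in $v$, and because $\lambda$ and $\mu$ are real, the scalars pull out cleanly and I would arrive at
\[
[T,x\otimes y]=(\lambda-\mu)(x\otimes y)+\xi\otimes y-x\otimes\eta,
\]
which rearranges to the key identity
\[
(\lambda-\mu)(x\otimes y)=[T,x\otimes y]-\xi\otimes y+x\otimes\eta.
\]

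To finish, I would take operator norms of both sides. Since $x,y$ are unit vectors, $\|x\otimes y\|=1$, $\|\xi\otimes y\|=\|\xi\|$ and $\|x\otimes\eta\|=\|\eta\|$, so the triangle inequality yields
\[
|\lambda-\mu|\le\|[T,x\otimes y]\|+\|\xi\|+\|\eta\|.
\]
Invoking Lemma~\ref{Lemma 4.6} to identify $\|\xi\|=\|(T-\lambda)x\|=\|[T,x\otimes x]\|$ and $\|\eta\|=\|(T-\mu)y\|=\|[T,y\otimes y]\|$ delivers the claimed inequality. There is no genuine obstacle beyond spotting the right decomposition; the only detail to watch is that $x\otimes(\mu y)=\bar\mu\,(x\otimes y)$ because of conjugate-linearity in the second slot, but this is harmless precisely because self-adjointness forces $\mu\in\mathbb{R}$.
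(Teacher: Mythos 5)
Your proof is correct. The paper itself does not prove this lemma — it is stated with a citation to Xia's 2017 paper (Lemma 5.2 there) and no argument is reproduced — so there is nothing internal to compare against, but your argument is the natural and almost certainly the intended one: decompose $Tx=\lambda x+\xi$ and $Ty=\mu y+\eta$ inside the identity $[T,x\otimes y]=(Tx)\otimes y-x\otimes(Ty)$, use that $\lambda,\mu\in\mathbb{R}$ so the conjugate-linearity in the second slot is harmless, rearrange to isolate $(\lambda-\mu)(x\otimes y)$, take norms, and then invoke Lemma~\ref{Lemma 4.6} to convert $\|\xi\|$ and $\|\eta\|$ into the commutator norms $\|[T,x\otimes x]\|$ and $\|[T,y\otimes y]\|$. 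Every step checks out, including the careful handling of the conjugate-linear slot.
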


\begin{proposition}\label{Proposition 4.8}
If $A\in EssCom(\mathcal{T}(\mathbb{D}))$, then its Berezin transform $\widetilde{A}\in VO_{bdd}$.
\end{proposition}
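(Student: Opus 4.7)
The plan is to show the two pieces of the definition of $VO_{bdd}$ separately: $\widetilde{A}$ is a bounded continuous function, and $\mathrm{diff}_t(\widetilde{A})\to 0$ as $t\to\infty$. Boundedness is trivial from $|\widetilde{A}(z)|=|\langle Ak_z,k_z\rangle|\le \|A\|$. Continuity follows because $z\mapsto k_z$ is continuous in $L^2_a(\mathbb{D})$-norm on $\mathbb{D}$ (a direct computation using the explicit formula for $\langle k_z,k_w\rangle$), so $\widetilde{A}(z)=\langle Ak_z,k_z\rangle$ is continuous. Thus the whole content of the proposition is the vanishing-oscillation condition.

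To handle the oscillation, I would first reduce to the self-adjoint case. Observe that $A\in EssCom(\mathcal{T}(\mathbb{D}))$ implies $A^*\in EssCom(\mathcal{T}(\mathbb{D}))$, since $[A^*,T]=-[A,T^*]^*$ is compact for every $T\in \mathcal{T}(\mathbb{D})$ (note $\mathcal{T}(\mathbb{D})$ is a $*$-algebra). Hence $\mathrm{Re}\,A=(A+A^*)/2$ and $\mathrm{Im}\,A=(A-A^*)/(2i)$ are self-adjoint members of $EssCom(\mathcal{T}(\mathbb{D}))$, and $\widetilde{A}=\widetilde{\mathrm{Re}\,A}+i\widetilde{\mathrm{Im}\,A}$. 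So it suffices to prove $\widetilde{A}\in VO_{bdd}$ under the additional assumption $A=A^*$.

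Now, with $A$ self-adjoint, the key estimate is Lemma \ref{Lemma 4.7} applied to the unit vectors $x=k_z$ and $y=k_w$:
\[
|\widetilde{A}(z)-\widetilde{A}(w)|=|\langle Ak_z,k_z\rangle-\langle Ak_w,k_w\rangle|\le \|[A,k_z\otimes k_z]\|+\|[A,k_z\otimes k_w]\|+\|[A,k_w\otimes k_w]\|.
\]
So the proof will be complete once I show that
\[
\sup\bigl\{\|[A,k_z\otimes k_w]\|:\, d(z,w)\le 1,\ d(0,z)\ge t\bigr\}\xrightarrow[t\to\infty]{}0,
\]
together with the analogous statements for $k_z\otimes k_z$ and $k_w\otimes k_w$ (these are the special cases $w=z$, resp.\ $z=w$, of the same sup).

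The last display is exactly where Proposition \ref{Proposition 4.5} enters, and this is the main step. Suppose the supremum does not tend to $0$; then there exist $c>0$ and sequences $\{z^{(j)}\},\{w^{(j)}\}\subset \mathbb{D}$ with $d(z^{(j)},w^{(j)})\le 1$, $d(0,z^{(j)})\to\infty$, and $\|[A,k_{z^{(j)}}\otimes k_{w^{(j)}}]\|\ge c$. Since $d(z,0)\to\infty$ in the Bergman metric forces $z\to\partial\mathbb{D}$, both sequences approach $\partial\mathbb{D}$, while $d(z^{(j)},w^{(j)})\le 1$ is uniformly bounded. But then Proposition \ref{Proposition 4.5} gives $\|[A,k_{z^{(j)}}\otimes k_{w^{(j)}}]\|\to 0$, a contradiction. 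The same contradiction argument handles the two diagonal terms. Combining everything yields $\mathrm{diff}_t(\widetilde{A})\to 0$, so $\widetilde{A}\in VO_{bdd}$.
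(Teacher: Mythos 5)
Your proposal is correct and follows essentially the same route as the paper: reduce to the self-adjoint case, apply Lemma \ref{Lemma 4.7} to bound $|\widetilde{A}(z)-\widetilde{A}(w)|$ by the three commutator norms, and invoke Proposition \ref{Proposition 4.5} via a contradiction argument. The only differences are cosmetic — you spell out the reduction to $A=A^*$ and phrase the contradiction step as showing a supremum tends to zero, whereas the paper assumes a failing pair of sequences directly — but the key lemmas and the structure of the argument are identical.
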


\begin{proof}
It suffices to consider self-adjoint bounded operator $A$. Obviously, $\widetilde{A}$ is bounded and continuous. If $\widetilde{A}\notin VO$, then there exist sequences $\{z^{(j)}\},\{w^{(j)}\}\subset \mathbb{D}$ and  $c>0$ such that:

(1) $d(z^{(j)},w^{(j)})\leq 1$ for any $j\in\mathbb{N}$,

(2) $|\langle Ak_{z^{(j)}},k_{z^{(j)}}\rangle-\langle Ak_{w^{(j)}},k_{w^{(j)}}\rangle|=|\widetilde{A}(z^{(j)})-\widetilde{A}(w^{(j)})|\geq c$ for any $j\in\mathbb{N}$,

(3) $z^{(j)}\to\partial \mathbb{D}$ as $j\to\infty$.\\
By Lemma \ref{Lemma 4.7}, we have
$$|\widetilde{A}(z^{(j)})-\widetilde{A}(w^{(j)})|\leq \|[A,k_{z^{(j)}}\otimes k_{w^{(j)}}]\|+\|[A,k_{z^{(j)}}\otimes k_{z^{(j)}}]\|+\|[A,k_{w^{(j)}}\otimes k_{w^{(j)}}]\|.$$
By Proposition \ref{Proposition 4.5}, the right side of the inequality approaches to 0 as $j\to\infty$, which contradicts with the condition (2).
\end{proof}

\begin{lemma}\label{Lemma 4.9}
If $A\in EssCom(\mathcal{T}(\mathbb{D}))$, then $\lim_{z\to \partial \mathbb{D}}\|(A-T_{\widetilde{A}})k_z\|=0$.
\end{lemma}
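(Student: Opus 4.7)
The plan is to insert the scalar $\widetilde{A}(z)$ between $A$ and $T_{\widetilde{A}}$ via the triangle inequality
\[
\|(A - T_{\widetilde{A}})k_z\| \le \|(A - \widetilde{A}(z))k_z\| + \|(T_{\widetilde{A}} - \widetilde{A}(z))k_z\|,
\]
and to show that each term vanishes as $z\to\partial\mathbb{D}$, using the two deep inputs Proposition \ref{Proposition 4.5} and Proposition \ref{Proposition 4.8} together with Lemma \ref{Lemma 4.4} and Lemma \ref{Lemma 4.6}.

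For the second term, Proposition \ref{Proposition 4.8} tells us $\widetilde{A}\in VO_{bdd}$. Writing $T_{\widetilde{A}} - \widetilde{A}(z) = T_{\widetilde{A}-\widetilde{A}(z)} = PM_{\widetilde{A}-\widetilde{A}(z)}P$ and using that $P$ is a contraction gives
\[
\|(T_{\widetilde{A}} - \widetilde{A}(z))k_z\| \le \|(\widetilde{A}-\widetilde{A}(z))k_z\|,
\]
and the right-hand side tends to $0$ by Lemma \ref{Lemma 4.4} applied to $g = \widetilde{A}$.

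For the first term, I would first reduce to the self-adjoint case. Since $\mathcal{T}(\mathbb{D})$ is a $*$-algebra, the identity $[A^*,T] = -[A,T^*]^*$ shows that $A^*\in EssCom(\mathcal{T}(\mathbb{D}))$ whenever $A$ is, so both $A_1 = (A+A^*)/2$ and $A_2 = (A-A^*)/(2i)$ lie in $EssCom(\mathcal{T}(\mathbb{D}))$. Using $\widetilde{A^*}=\overline{\widetilde{A}}$ together with linearity of $f\mapsto T_f$ yields $(A - T_{\widetilde{A}})k_z = (A_1 - T_{\widetilde{A_1}})k_z + i(A_2 - T_{\widetilde{A_2}})k_z$, so it suffices to establish the conclusion for self-adjoint $A$. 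In that case, since $k_z$ is a unit vector and $\widetilde{A}(z)=\langle Ak_z, k_z\rangle$, Lemma \ref{Lemma 4.6} gives
\[
\|(A-\widetilde{A}(z))k_z\| = \|[A,k_z\otimes k_z]\|.
\]
Observing that $k_z\otimes k_z\in\mathcal{D}_0\subset\mathcal{T}(\mathbb{D})$ by Proposition \ref{Proposition J} (take the trivially separated singleton $\Gamma=\{z\}$), Proposition \ref{Proposition 4.5} applied to any sequence $z^{(j)} = w^{(j)} \to\partial\mathbb{D}$ forces the right-hand side to $0$.

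The main obstacle is conceptual rather than computational: essentially all the work is already contained in Proposition \ref{Proposition 4.5} (whose proof rested on the membership $\mathcal{D}_0\subset\mathcal{T}(\mathbb{D})$ from Proposition \ref{Proposition J} and the bootstrap Lemma \ref{Lemma 6}) and in Proposition \ref{Proposition 4.8}. The surrounding steps, namely the triangle inequality, the self-adjoint splitting of $A$, and the passage from $\|(A-\widetilde A(z))k_z\|$ to the commutator norm $\|[A,k_z\otimes k_z]\|$ via Lemma \ref{Lemma 4.6}, are elementary and should present no serious difficulty.
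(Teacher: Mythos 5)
Your proof is correct and takes essentially the same route as the paper's: both insert the scalar $\widetilde A(z)$, dispose of the $\|(A-\widetilde A(z))k_z\|$ term via Lemma \ref{Lemma 4.6} and Proposition \ref{Proposition 4.5} after reducing to self-adjoint $A$, and dispose of the $\|(T_{\widetilde A}-\widetilde A(z))k_z\|$ term via Proposition \ref{Proposition 4.8}, Lemma \ref{Lemma 4.4}, and contractivity of $P$. You supply the self-adjoint reduction explicitly where the paper merely asserts ``without loss of generality''; as a tiny stylistic point, the identity you derive reduces the entire difference $(A-T_{\widetilde A})k_z$, so it reads more naturally placed before, rather than after, the triangle inequality.
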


\begin{proof}
Without lose of generality, we assume $A=A^*$. By Lemma \ref{Lemma 4.6} and Proposition \ref{Proposition 4.5},
$$\lim_{z\to \partial \mathbb{D}}\|(A-\widetilde{A}(z))k_z\|=\lim_{z\to \partial \mathbb{D}}\|[A,k_z\otimes k_z]\|=0.$$
Therefore, it suffices to prove
$$\lim_{z\to \partial \mathbb{D}}\|(T_{\widetilde{A}}-\widetilde{A}(z))k_z\|=0.$$
Since by Proposition \ref{Proposition 4.8}, $\widetilde{A}\in VO_{bdd}$, then  by Lemma \ref{Lemma 4.4}, we have  $\lim_{z\to \partial \mathbb{D}}\|(\widetilde{A}-\widetilde{A}(z))k_z\|=0$. Combined with
$$\|(T_{\widetilde{A}}-\widetilde{A}(z))k_z\|=\|P(\widetilde{A}-\widetilde{A}(z))k_z\|\leq \|(\widetilde{A}-\widetilde{A}(z))k_z\|,$$
it gives that $\lim_{z\to \partial \mathbb{D}}\|(T_{\widetilde{A}}-\widetilde{A}(z))k_z\|=0$.
\end{proof}

\begin{lemma}\label{Lemma 4.3} For the orthogonal projection $P:L^2(\mathbb{D})\rightarrow L^2_a(\mathbb{D})$ and any $f\in VO_{bdd}$, $[M_f,P]\in \mathcal{K}$.

\end{lemma}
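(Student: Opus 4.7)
The plan is to reduce the claim to the compactness of Hankel operators. Decomposing
\[
[M_f, P] = (I-P) M_f P - P M_f (I-P) = H_f - H_{\bar f}^{\,*},
\]
and noting that $\bar f \in VO_{bdd}$ whenever $f \in VO_{bdd}$, it suffices to show that $H_g \in \mathcal{K}$ for every $g \in VO_{bdd}$.

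My strategy is to approximate $g$ by suitable truncations. By Lemma \ref{Lemma 4}, for each $k \in \mathbb{N}_+$ I can choose $\phi_k \in \Psi(\rho_k; 1/k)$, where $\rho_k = 1 - 2^{-2k}$; thus $0\le\phi_k\le 1$ on $\mathbb{D}$, $\phi_k \equiv 1$ on $\overline{\mathbb{D}_{\rho_k}}$, $\phi_k$ vanishes outside $\mathbb{D}_{t_k'}$ for some $t_k'\in(\rho_k,1)$, and $\mathrm{diff}(\phi_k)\le 1/k$. Writing $g = \phi_k g + (1-\phi_k) g$ gives $H_g = H_{\phi_k g} + H_{(1-\phi_k) g}$, so it suffices to prove (i) $H_{\phi_k g} \in \mathcal{K}$ for every $k$, and (ii) $\|H_{(1-\phi_k) g}\| \to 0$ as $k \to \infty$.

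For (i), I would show that $M_{\phi_k g}: L_a^2(\mathbb{D}) \to L^2(\mathbb{D})$ is compact. If $h_n \to 0$ weakly in $L_a^2$, then $h_n(z) = \langle h_n, K_z\rangle \to 0$ pointwise, and $\{h_n\}$ is uniformly bounded on the compact set $\overline{\mathbb{D}_{t_k'}}$ (via the reproducing kernel estimate); by Montel's theorem $h_n \to 0$ uniformly on $\overline{\mathbb{D}_{t_k'}}$. Since $\phi_k g$ is supported there, $\|\phi_k g\, h_n\|_{L^2}^2 \le \|g\|_\infty^2 \, v(\overline{\mathbb{D}_{t_k'}}) \cdot \sup_{\overline{\mathbb{D}_{t_k'}}} |h_n|^2 \to 0$. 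Hence $H_{\phi_k g} = (I-P) M_{\phi_k g} P$ is compact.

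For (ii), Lemma \ref{Lemma 12} gives $\|H_{(1-\phi_k) g}\| \le C\, \mathrm{diff}((1-\phi_k) g)$, so the task reduces to proving $\mathrm{diff}((1-\phi_k) g) \to 0$. Fix $z, w \in \mathbb{D}$ with $d(z,w)\le 1$. After possibly swapping $z$ and $w$ we may assume $\phi_k(z) \ne 1$ (otherwise $(1-\phi_k)g(z)=(1-\phi_k)g(w)=0$). Then $z \notin \overline{\mathbb{D}_{\rho_k}}$, so $|z_i|>\rho_k$ for some $i$, which forces $d(0,z) \ge \beta(0,z_i) > \beta(0,\rho_k)$. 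Using
\[
|(1-\phi_k)g(z) - (1-\phi_k)g(w)| \le |1-\phi_k(z)|\,|g(z)-g(w)| + |g(w)|\,|\phi_k(z)-\phi_k(w)|,
\]
together with $|1-\phi_k|\le 1$, $\mathrm{diff}(\phi_k)\le 1/k$, and the definition of $\mathrm{diff}_t$, I obtain
\[
\mathrm{diff}((1-\phi_k) g) \le \mathrm{diff}_{\beta(0,\rho_k)}(g) + \|g\|_\infty/k,
\]
which tends to $0$ since $\beta(0,\rho_k) \to \infty$ and $g\in VO_{bdd}$. Combining (i) and (ii), $H_g$ is a norm limit of compact operators, so $H_g \in \mathcal{K}$. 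The main technical obstacle is the estimate in (ii): one must carefully arrange that at least one of the two points in any nontrivial contribution to $\mathrm{diff}((1-\phi_k)g)$ lies far from the origin, so that both the vanishing oscillation of $g$ near $\partial\mathbb{D}$ and the slow oscillation of $\phi_k$ can be invoked simultaneously.
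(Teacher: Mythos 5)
Your proof is correct and follows essentially the same approach as the paper: split $f$ into a compactly supported piece (using the cutoff functions $\Psi(t;\delta)$ from Lemma \ref{Lemma 4}) and a remainder with $\mathrm{diff}$ tending to $0$, then use Lemma \ref{Lemma 12} to control the latter. The only cosmetic differences are that you make the Hankel-operator decomposition $[M_f,P]=H_f-H_{\bar f}^*$ explicit (the paper invokes Lemma \ref{Lemma 12} directly for $[M_{h_R},P]$, leaving the factor-of-two implicit) and that you spell out the standard weak-convergence argument for compactness of the truncated piece.
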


\begin{proof}
For each $f\in VO_{bdd}$ and any $\varepsilon>0$, there exists a $0<R<1$ such that
\[
\sup\{|f(z)-f(w)|:\forall w\in\mathbb{D}\text{ with }d(z,w)\le 1\}<\epsilon \text{ for any }z \in\mathbb{D}\backslash \mathbb{D}_R,
\]
where $\mathbb{D}_R$ appears in Definition \ref{banjing}.
For $v(z)\in \Psi(R;\epsilon)$, we define $g_R(z)=f(z)v(z)$ and define $h_R(z)=f(z)-g_R(z)$.

Then $g_R$ has compact support and diff$(h_R)<2\epsilon$, which follows that $[M_{g_R},P]\in \mathcal{K}$ and we have $\|[M_{h_R},P]\|<C\epsilon$ for some $C>0$ by Lemma \ref{Lemma 12}.
\end{proof}
\begin{remark}\label{Remark 1}
Combining Proposition \ref{Proposition J}, Proposition \ref{Proposition K}, Proposition \ref{Proposition L} and Proposition \ref{Proposition M}, we have
\[
\text{EssCom}\{T_g:g\in VO_{bdd}\}\subset \mathcal{T}(\mathbb{D}).
\]
\end{remark}

\noindent
{\bf Proof of Theorem C:} To prove $EssCom\{T_g:g\in VO_{bdd}\}\supset \mathcal{T}(\mathbb{D})$, it suffices to prove that for any $f\in L^{\infty}(\mathbb{D})$,
\[
T_fT_g-T_gT_f\in \mathcal{K}\text{ for any }g\in VO_{bdd}.
\]
By Lemma \ref{Lemma 4.3}, for the projection $P$ onto $L_a^2(\mathbb{D})$, $[P,M_g]\in \mathcal{K}$.

Therefore, for any $g\in VO_{bdd}$ and any $f\in L^{\infty}(\mathbb{D})$,
\begin{equation}\label{N1}
T_fT_g-T_gT_f=[P,M_f](P-1)[M_g,P]-[P,M_g](P-1)[M_f,P]\in \mathcal{K},
\end{equation}
which induces $EssCom\{T_g:g\in VO_{bdd}\}\supset \mathcal{T}(\mathbb{D})$.

On the other hand, since $\mathcal{K}\subset EssCom\{T_g:g\in VO_{bdd}\}$, then by Remark \ref{Remark 1},
$$ EssCom\{T_g:g\in VO_{bdd}\}= \mathcal{T}(\mathbb{D}).\Box$$

\begin{remark}\label{Remark 2}
Since $\{T_g:g\in VO_{bdd}\}\subset \mathcal{T}(\mathbb{D})$, then EssCom$\{T_g:g\in VO_{bdd}\}\supset$ EssCom$(\mathcal{T}(\mathbb{D}))$, which implies $\mathcal{T}(\mathbb{D})\supset$ EssCom$(\mathcal{T}(\mathbb{D}))$ by Theorem C. But obviously, we have $\mathcal{T}(\mathbb{D})\subset$ EssCom$(\mathcal{T}(\mathbb{D}))$. Therefore,
\[
\text{EssCom}(\mathcal{T}(\mathbb{D}))=\mathcal{T}(\mathbb{D}).
\]
\end{remark}

\noindent
{\bf Proof of Theorem D:} To prove EssCom$(\mathcal{T}(\mathbb{D}))\supset\{T_g:g\in VO_{bdd}\}+\mathcal{K}$, it suffices to prove
\[
\text{EssCom}(\mathcal{T}(\mathbb{D}))\supset\{T_g:g\in VO_{bdd}\}
\]
i.e.
\[T_fT_g-T_gT_f\in \mathcal{K},\, \text{for every}\, f\in VO_{bdd}, g\in L^{\infty}(\mathbb{D}),
\]
which has already been proven in (\ref{N1}).

For the opposite direction. For $A\in$ EssCom$(\mathcal{T}(\mathbb{D}))$, by Lemma \ref{Lemma 4.9}, we have
$$\lim_{z\to \partial \mathbb{D}}\langle(A-T_{\widetilde{A}})k_z,k_z\rangle=0.$$
Combining Remark \ref{Remark 2} with Theorem A and Theorem B, we have $A-T_{\widetilde{A}}\in\mathcal{K}$. By Proposition \ref{Proposition 4.8}, we have $\widetilde{A}\in VO_{bdd}$, which completes the proof.
$\Box$

\end{document}